\documentclass{amsart}

\usepackage{bbm}
\usepackage{amssymb}
\usepackage{amsmath}
\usepackage{xfrac}
\usepackage{array}
\usepackage{mathtools}
\usepackage{stmaryrd}
\usepackage{enumitem}
\usepackage{hyperref}
\usepackage{tikz-cd}
\usepackage{amsthm}
\usepackage{color}

\newtheorem{prop}{Proposition}
\newtheorem{definition}[prop]{Definition}
\newtheorem{theorem}[prop]{Theorem}
\newtheorem*{theorem*}{Theorem}

\newtheorem{thm}[prop]{Theorem}
\newtheorem{cor}[prop]{Corollary}
\newtheorem{lemma}[prop]{Lemma}
\newtheorem{remark}[prop]{Remark}

\numberwithin{prop}{section}

\newcommand{\RR}{\ensuremath{\mathbb{R}}}
\newcommand{\CC}{\ensuremath{\mathbb{C}}}

\newcommand{\NN}{\ensuremath{\mathbb{N}}}
\newcommand{\HH}{\ensuremath{\mathbb{H}}}
\newcommand{\ZZ}{\ensuremath{\mathbb{Z}}}

\newcommand{\SL}[2]{\textrm{SL}_{#1}\left(#2\right)}

\newcommand{\ti}[1]{\ensuremath{\tilde{#1}}}

\newcommand{\CAT}{\ensuremath{\mathrm{CAT}(0)}}

\newcommand{\Rips}{\ensuremath{\mathrm{R}}}

\DeclarePairedDelimiter\abs{\lvert}{\rvert}

\newcommand{\nrm}{\ensuremath{\vartriangleleft }}

\DeclarePairedDelimiter\gener{\langle}{\rangle}
\newcommand{\Is}[1]{\ensuremath{\mathrm{Isom}(#1)}}
\newcommand{\centralizer}[2]{\mathrm{C}_{#1}\left(#2\right)}
\newcommand{\Sub}[1]{\mathrm{Sub}\left(#1\right)}
\newcommand{\Subb}[2]{\mathrm{Sub}_{#2}(#1)}
\newcommand{\IRS}[1]{\mathrm{IRS}\left(#1\right)}

%\title{Local Rigidity in Locally Compact Groups}
\title{Local Rigidity Of Uniform Lattices}

\author{Tsachik Gelander and Arie Levit}

\begin{document}

\begin{abstract}
	
	We establish  topological local rigidity for uniform lattices in compactly generated groups, extending the result of Weil from the realm of Lie groups. 
	%For connected Lie groups this was first proved by Weil. 
	We generalize the classical local rigidity theorem of Selberg, Calabi and Weil to irreducible uniform lattices in $\text{Isom}(X)$ where $X$ is a proper $\text{CAT}(0)$ space with no Euclidian factors, not isometric to the hyperbolic plane. 
	We deduce an analog of Wang's finiteness theorem for certain non-positively curved metric spaces.

	%Local topological rigidity, in the sense of Weil's 1960 paper, is established for all compactly generated groups.
	%%, that is, small deformations of uniform lattices in compactly generated groups are isomorphic uniform lattices. 
	%The classical local rigidity theorem of Selberg, Calabi and Weil is generalized to irreducible uniform lattices in $\text{Isom}(X)$ where $X$ is a proper CAT(0) space with no Euclidian factors, not isometric to the hyperbolic plane. An analog of Wang's finiteness theorem for certain non-positively curved spaces follows.
	%

	%
	%We generalize Weil's local topological rigidity theorem to locally compact groups.
	%, that is, small deformations of uniform lattices in compactly generated groups are isomorphic uniform lattices. 
	%We extend the classical local rigidity theorem of Selberg, Calabi and Weil to irreducible uniform lattices in $\text{Isom}(X)$ where $X$ is a proper CAT(0) space with no Euclidian factors, not isometric to the hyperbolic plane. We deduce an analog of Wang's finiteness theorem for certain non-positively curved spaces.
	%

	%       Additionally we show that uniform lattices in certain $\CAT$ spaces are locally rigid, i.e. small deformations arise from conjugation. Both theorems extend classical results of Weil.
	%       
	%       We deduce new finiteness results for lattices of bounded co-volume, as well as lattices containing a given lattice, generalizing Wang's classical theorems.
	%       
	%       Finally we discuss the relationship of the deformation space of a uniform lattice with the Chabauty space and the space of invariant random subgroups.
\end{abstract}

\maketitle

\section{Introduction}

Over the last fifty years, 
the study of rigidity phenomena  played a central role in group theory and its interplay with number theory, geometry and dynamics.
% since its inauguration in the early 1960's. Fundamental rigidity theorems include local rigidity, strong rigidity and super rigidity. 
%Among its remarkable achievements are local rigidity, strong rigidity and super-rigidity. 
Three fundamental results of the theory are local rigidity, strong rigidity and super-rigidity. 

In the last two decades there has been a comprehensive, and quite successful, effort to extend this theory from the classical setting of Lie groups and  symmetric spaces to the much wider framework of locally compact groups and non-positively curved metric spaces. Much of the attempt has been dedicated towards generalising the  latter two phenomena, namely Mostow strong rigidity and Margulis super-rigidity, see e.g. \cite{burger2000groups,burger2000lattices,shalom2000rigidity,Mo,GKM,CM2,bader2013algebraic,bader2013integrable,bader2014boundaries,shalom2013commensurated}. 
%Other important  works extending the classical theory of lattices to the realm of locally compact groups \cite{bader2006factor,farb2008problems,bader2015structure,bader2016lattices}. 
The present paper focuses on {\it local rigidity}.
%, which was the first of the above properties to be observed.

\subsection*{Overview}

Let $\Gamma$ be a uniform lattice in the locally compact group $G$. 
%Deformations are understood in the sense of point-wise convergence in the representation space of $\Gamma$ inside $G$. 
The lattice $\Gamma$ is  \emph{topologically locally rigid} if every homomorphism from $\Gamma$ to $G$ which is sufficiently close  to the inclusion mapping  in the point-wise convergence topology is  injective and its image is a uniform lattice  in $G$.
The lattice  $\Gamma$ is 
%{\it algebraically locally rigid} or shortly 
\emph{locally rigid} if every such homomorphism is in fact conjugate in $G$ to the inclusion mapping.
%arises by inner automorphisms of $G$.\footnote{In the general context of locally compact groups it is natural to consider a weaker definition and declare that $\Gamma$ is locally rigid if any small deformation is given by some $\sigma\in\text{Aut}(G)$ rather than  $\sigma\in\text{Inn}(G)$. However, we do not adopt this modification and stick to the old tradition.}  
See \S \ref{sub:the representation space} for the precise definitions.

One main goal of this paper is to establish  topological local rigidity for uniform lattices in general compactly generated groups  --- extending Weil's theorem \cite{We60} from the realm of Lie groups to  locally compact groups. Another main goal is to generalize the classical local rigidity theorems of Selberg, Calabi and Weil \cite{Selberg, Calabi, Weil2} to the general context of $\CAT$ groups.  Furthermore we study local rigidity from another perspective, in terms of the Chabauty topology.

We  apply these results to obtain new finiteness statements on non-positively curved metric spaces in analogy with Wang's classical finiteness theorem.

\subsection*{Topological Local   Rigidity} A. Weil proved that uniform lattices in connected Lie groups are  topologically locally rigid \cite{We60}. 
We complete the picture by extending  this result to all compactly generated groups.

\begin{theorem}
	\label{thm:local rigidity for lc groups}
	Uniform lattices in compactly generated groups are  topologically  locally rigid.
	%A uniform lattice $\Gamma$ in a locally compact group $G$ is locally topologically rigid if and only if $G$ is compactly generated.
\end{theorem}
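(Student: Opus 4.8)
The plan is to run a geometric deformation argument: realize both the inclusion $\iota$ and any nearby representation $\rho$ as actions on one fixed proper geodesic space, and then build a continuous $\rho$-equivariant comparison map that is a coarse equivalence. First I would set up the geometry. Since $G$ is compactly generated and locally compact, it acts properly, cocompactly and continuously by isometries on a connected, locally finite graph $X$ (a Cayley--Abels graph); by the Milnor--Schwarz lemma $\Gamma$ is finitely generated and the restricted $\Gamma$-action on $X$ is again proper and cocompact, the orbit map $\Gamma \to X$ being a quasi-isometry. Fix a finite generating set $S$ of $\Gamma$, so that $\mathrm{Hom}(\Gamma,G)\hookrightarrow G^{S}$ and the topology of pointwise convergence is the subspace topology. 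All the combinatorics of the $\Gamma$-action are encoded by finite data: representatives $v_1,\dots,v_k$ for the (finite) vertex set of $\Gamma\backslash X$, together with the finite set of ``incidence elements'' $\gamma\in\Gamma$ carrying one chosen representative to a neighbour of another. Each such $\gamma$ is a fixed word in $S$, so $\rho(\gamma)$ depends continuously on $\rho$ and tends to $\gamma$ as $\rho\to\iota$.

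The heart of the proof is to construct, for every $\rho$ in a suitable neighbourhood $\mathcal U$ of $\iota$, a continuous $\rho$-equivariant map $f_\rho\colon X\to X$ (meaning $f_\rho(\gamma\cdot x)=\rho(\gamma)\cdot f_\rho(x)$) with $f_\iota=\mathrm{id}$, depending continuously on $\rho$. I would first define $f_\rho$ on vertices by transporting the finite data, sending $\gamma\cdot v_i$ to $\rho(\gamma)\cdot v_i$. When $\Gamma$ is torsion-free the vertex stabilisers are trivial, so this is well defined, and it extends across each edge because the finitely many incidence elements are moved only slightly by $\rho\in\mathcal U$: the points $\rho(\gamma)v_i$ and $\rho(\gamma')v_j$ remain uniformly close and can be joined by a short, continuously varying path. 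Since $\rho$ agrees with $\iota$ on this bounded local data while $f_\rho$ is forced to be globally equivariant, $f_\rho$ is a quasi-isometry of $X$ whose constants tend to $(1,0)$ as $\rho\to\iota$. The three required conclusions then follow from the converse Milnor--Schwarz principle: the $\rho$-orbit map $\gamma\mapsto \rho(\gamma)\cdot x_0=f_\rho(\gamma\cdot x_0)$ is a quasi-isometric embedding with coarsely dense image, so $\rho(\Gamma)$ acts properly and cocompactly on $X$ and is therefore a discrete cocompact subgroup, i.e.\ a uniform lattice. Injectivity also follows: for an infinite-order $\gamma$ the relation $\rho(\gamma)=e$ would give $f_\rho(\gamma\cdot x)=f_\rho(x)$ while $\gamma$ has unbounded displacement, contradicting that $f_\rho$ is a coarse embedding, and for torsion elements injectivity is secured by faithfulness on the finite vertex stabilisers (see below).

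I expect the construction and control of $f_\rho$ to be the main obstacle, for two reasons. First, unlike the $\CAT$ setting treated later in the paper, a general compactly generated $G$ carries no convex or barycentric structure, so $f_\rho$ cannot be produced by averaging; it must be built by hand from the finite local combinatorial data, and one must verify that the finitely many gluing (cocycle) conditions, which hold exactly for $\iota$, survive under small perturbation. This is precisely where pointwise closeness of $\rho$ to $\iota$ enters, and it is crucial that the argument never invokes finite presentability of $\Gamma$, which may genuinely fail here. Second, torsion in $\Gamma$ produces nontrivial finite vertex stabilisers $\Gamma_i$, for which the naive vertex rule $\gamma v_i\mapsto\rho(\gamma)v_i$ is ill defined; I would handle this by choosing the local model $\Gamma$-equivariantly and shrinking $\mathcal U$ so that each $\rho(\Gamma_i)$ remains an injective copy of $\Gamma_i$ stabilising the relevant cell, and by defining $f_\rho$ compatibly on stabiliser orbits. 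Granting this construction, the remaining points --- local finiteness of the covering, the quasi-isometry estimate, and continuity of $\rho\mapsto f_\rho$ --- are routine.
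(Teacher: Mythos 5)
Your proposal contains two genuine gaps, and the second one sits exactly at the main difficulty of the theorem. The first is the setup: a Cayley--Abels graph exists only when $G$ has a compact open subgroup. A continuous action on a graph has open vertex stabilizers (the vertex set is discrete), and metric properness forces them to be compact; hence any group acting properly, cocompactly and continuously on a connected locally finite graph has compact identity component. So for $G=\SL{2}{\RR}$, or any compactly generated group whose connected component is non-compact, the space $X$ you start from does not exist. This is precisely why the paper works instead with the Rips $2$-complex $\Rips_c^2(G,d)$ of an adapted pseudo-metric on $G$ itself (Proposition \ref{prop:exists a pseudo-metric d for which Rips complex is simply connected}), a space which is neither locally finite nor locally compact, and why the ``bubble'' machinery of \S\ref{sec:compactly presented} replaces your finite incidence data. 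Your construction is close in spirit to the paper's treatment of totally disconnected groups in \S\ref{sec:totally disconnected groups}, and only in that restricted setting could it be salvaged at all.

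The fatal gap is the claim that $f_\rho$ is a quasi-isometry ``whose constants tend to $(1,0)$.'' Closeness of $\rho$ to $\iota$ on the finite incidence data together with equivariance yields only the upper (Lipschitz) bound: images of adjacent vertices stay uniformly close, so $d(f_\rho(x),f_\rho(y))\lesssim d(x,y)$. The lower bound --- that $f_\rho$ does not collapse large distances --- is equivalent to properness of the deformed action, i.e.\ to discreteness of $\rho(\Gamma)$ and injectivity of $\rho$; it is the \emph{content} of the theorem, not an input, and nothing in your construction delivers it. Nothing local rules out that $\rho$ kills an element $\gamma$ of enormous word length: such a $\rho$ satisfies every one of your finitely many gluing conditions, yet $f_\rho(\gamma\cdot x_0)=f_\rho(x_0)$. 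Converting local control into global control requires coarse simple connectedness of the model space: in the paper this is the covering-map step (an $s$-local isometry onto a connected, simply connected length space is a homeomorphism; Lemma \ref{lem:a local isometry is a covering} and Corollary \ref{cor:f is a covering map}), which is exactly where compact presentation of $G$ enters, followed by the separate reduction of \S\ref{sec:locally compact case} from compactly generated to compactly presented groups via an extension $p:\tilde{G}\to G$ with discrete kernel. Your remark that the argument ``never invokes finite presentability'' is therefore a symptom of the gap rather than a feature: for a uniform lattice that is finitely generated but not finitely presented --- e.g.\ the lamplighter group $(\ZZ/2\ZZ)\wr\ZZ$ acting on the Diestel--Leader graph $DL(2,2)$, a uniform lattice in the compactly generated but not compactly presented group $\Is{DL(2,2)}$ --- relations occur at all scales, the graph is not coarsely simply connected, and no finite package of local gluing conditions controls a deformation along such large loops. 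Any correct proof must either pass through simple connectedness, as the paper does, or supply some other global mechanism; your proposal contains none.
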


It is rather straightforward that a small deformation of a uniform subgroup remains uniform. The main point is in showing that it stays  discrete and injective.

Compact generation of $G$ is equivalent to  finite generation of a uniform lattice $\Gamma$ in $G$. Without this requirement lattices need not be  topologically locally rigid. For instance, let $G=\Gamma$ be an infinite rank free group and note that the identity map $i:\Gamma\to \Gamma$ is approximated in the representation space by non-injective maps sending all but finitely many generators to the trivial element.\footnote{Note that there are examples of  topologically  locally rigid lattices in non-compactly generated groups. For instance, the additive group $\ZZ\left[\frac{1}{2}\right]$ with the discrete topology is  topologically locally rigid regarded as a lattice in itself.} 

Let us recall that a topological group admitting a uniform lattice is automatically locally compact \cite[2.2]{montgomery}.

%The proof of Weil \cite{We60} in the Lie group case relies heavily on the differentiable structure of $G$ and in particular on the implicit function theorem. It also relies on
%the connectedness of $G$ and the existence of a simply connected cover.  

Weil's proof relied on the  differentiable structure of $G$ as well as the connectedness and the existence of a simply connected cover in the classical case of Lie groups.  These notions are not available in the general context under consideration.
%Hence it cannot be applied in the general context under consideration.
%fact that such a group admits a connected and simply-connected universal cover. 
Instead, our approach is to make use of the action of $G$ on an appropriate Rips complex. 
% This allows us to follow a path somewhat parallel to \cite{We60}. %Weil's originial proof.
% in the general context under consideration.
%deal with general groups that are  not necessarily connected. Our argument 
%adapt some of Weil's ideas to the context of non-connected groups, and 
%follows a path somewhat parallel to Weil's original proof. 

The Rips complex is simply connected whenever $G$ is compactly presented and for this reason we first prove Theorem \ref{thm:local rigidity for lc groups} in \S \ref{sec:compactly presented} under this additional assumption and then explain in \S \ref{sec:locally compact case} how it  can be removed.  \S \ref{sec:preliminaries} is dedicated to several geometric preliminaries that are required for the proof. \S \ref{sec:totally disconnected groups} contains an independent proof of  topological local rigidity for certain compactly presented  totally disconnected groups, having the advantage of being rather elementary while providing an insight into the more complicated general case.

%It is interesting to note that our strategy in Theorem \ref{thm:local rigidity for lc groups} is to deduce local topological rigidity from local rigidity inside a  larger group, namely in the full group of isometries of the Rips complex. 

%Our strategy in Theorem \ref{thm:local rigidity for lc groups} is to represent small deformations of a given lattice in the group of all isometries of the Rips complex. \marginpar{perhaps we want to move this par to \S 4?}
%Whenever $G$ is already the group of isometries of a suitable space this method can be promoted to obtain local rigidity, as in Theorem \ref{cor:local rigidity for td}. 

\subsection*{Local Rigidity}

Local rigidity was first proved by Selberg \cite{Selberg} for uniform
lattices in  $\SL{n}{\RR }$, $n\geq 3$  and by Calabi \cite{Calabi}
for uniform lattices in  $\mathrm{PO}(n,1)=\Is{\HH^n}$, $n\geq 3$.
Weil \cite{Weil2}
generalized these results to uniform irreducible lattices in any connected semisimple Lie group $G$,
assuming 
that $G$ is not locally isomorphic to $\SL{2}{\RR}$, the isometry group of the hyperbolic plane $\HH^2$. In the latter case lattices have many non-trivial deformations.
% --- the Teichmuller spaces. 

Quite surprisingly, it turns out that $\mathbb{H}^2$ is the only obstruction to local rigidity of uniform lattices in the much greater generality of irreducible $\CAT$ spaces.

%We prove a local rigidity theorem extending the above to $\CAT$ spaces.

\begin{theorem}
	\label{thm:local rigidity of uniform in general cat0-intro}
	%[Theorem \ref{thm:local rigidity of uniform in general cat0}]
	Let $X$ be a proper geodesically complete $\CAT$ space without Euclidean
	factors and with $\Is{X}$ acting cocompactly. 
	Let $\Gamma$ be a uniform lattice
	in $\Is{X}$. Assume that for every de Rham factor $Y$ of $X$ isometric
	to the hyperbolic plane the projection of $\Gamma$ to $\Is{Y}$
	is non-discrete. Then $\Gamma$ is locally rigid.
\end{theorem}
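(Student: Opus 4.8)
The plan is to upgrade the topological local rigidity of Theorem~\ref{thm:local rigidity for lc groups} to genuine local rigidity. Fix the inclusion $i\colon\Gamma\hookrightarrow G:=\Is{X}$ and a homomorphism $\rho$ close to $i$ in the topology of pointwise convergence. By Theorem~\ref{thm:local rigidity for lc groups}, for $\rho$ sufficiently close to $i$ the map $\rho$ is injective and $\rho(\Gamma)$ is a uniform lattice, so both $i$ and $\rho$ define proper cocompact isometric actions of $\Gamma$ on $X$. Local rigidity is the assertion that the $G$-conjugation orbit of $i$ is a neighborhood of $i$ in the representation space. Since $X$ has no Euclidean factors there are no nontrivial Clifford translations, and an isometry centralizing the cocompact group $\Gamma$ has $\Gamma$-invariant, hence bounded, displacement; thus the centralizer of $\Gamma$ in $G$ is trivial and the orbit is an injective continuous image of $G$. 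It therefore suffices to produce, for each such $\rho$, an element $g\in G$ with $g\,\rho(\gamma)\,g^{-1}=\gamma$ for all $\gamma\in\Gamma$.

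My first step is to manufacture an equivariant comparison map between the two actions. Choosing a basepoint and a compact fundamental domain and using the canonical barycenter available in a complete $\CAT$ space, I would build a continuous $\Gamma$-equivariant map $f\colon X\to X$ satisfying $f(\gamma\cdot x)=\rho(\gamma)\cdot f(x)$ and lying within bounded distance of the identity, the bound tending to $0$ as $\rho\to i$. Such an $f$ is a $\Gamma$-equivariant quasi-isometry and therefore extends to a $\Gamma$-equivariant homeomorphism of the boundary $\partial X$ intertwining the two boundary actions. The element $g$ sought above should be recovered as the isometry that $f$ is ultimately forced to be.

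To finish I would feed in the structure theory of Caprace--Monod. Decomposing $X$ into its de Rham (irreducible) factors $X\cong\prod_k X_k$, a finite-index open subgroup of $G$ splits compatibly, each $\Is{X_k}$ being either a connected almost simple Lie group, with $X_k$ the associated symmetric space, or totally disconnected; after regrouping, a finite-index subgroup of $\Gamma$ is a product of irreducible uniform lattices in sub-products. On a Lie factor not isometric to $\HH^2$ whose $\Gamma$-projection is discrete, hence a lattice, the classical local rigidity of Selberg, Calabi and Weil forces $f$ to agree there with an isometry. On a totally disconnected factor one conjugates $\rho$ back to $i$ by the elementary compact-open argument of the totally disconnected case, using that the deformation moves a fixed compact open subgroup inside a neighborhood of itself. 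The role of the standing hypothesis is precisely to dispose of the hyperbolic-plane factors: a factor $Y\cong\HH^2$ carrying a discrete Fuchsian projection of $\Gamma$ would be a genuinely deformable surface group, so requiring the projection to $\Is{Y}$ to be non-discrete forces $Y$ to sit inside a strictly larger irreducible piece, where superrigidity for irreducible lattices (in the sense of Margulis and its non-positively curved extensions) rules out any nontrivial deformation.

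The main obstacle, and the technical heart of the proof, is the passage from the coarse datum --- a $\Gamma$-equivariant quasi-isometry close to the identity, or equivalently its boundary map --- to the honest isometry $g$ realizing the conjugacy, carried out factor by factor and then reassembled, including the verification that the deformation is diagonal with respect to the factorization. Everything hinges on the dichotomy for the irreducible factors together with the exclusion of flexible $\HH^2$-pieces: once each factor is individually rigid, by Calabi--Weil on standalone symmetric factors, by the elementary argument on totally disconnected factors, and by superrigidity on the mixed pieces absorbing the hyperbolic planes, the equivariant map is pinned down to a single isometry and local rigidity follows.
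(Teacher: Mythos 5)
Your proposal starts along the same lines as the paper: invoke Theorem \ref{thm:local rigidity for lc groups} so that a small deformation $\rho$ is injective with $\rho(\Gamma)$ a uniform lattice, then bring in the Caprace--Monod decomposition, treat Lie factors by Selberg--Calabi--Weil, totally disconnected factors by an elementary argument, and the mixed pieces by superrigidity, with the $\HH^2$ hypothesis excluding the flexible surface-group case. That skeleton matches the paper. But the mechanism you place at the center --- a continuous $\Gamma$-equivariant barycenter map $f\colon X\to X$ at bounded distance from the identity, its boundary extension, and the claim that $f$ is ``ultimately forced to be'' the isometry $g$ realizing the conjugacy --- is a genuine gap, and you flag it yourself as ``the main obstacle, and the technical heart of the proof'' without resolving it. Passing from an equivariant quasi-isometry (equivalently, from its boundary map) to an honest isometry is a strong-rigidity, Mostow-type statement, not a formal step: it fails for $\HH^2$, it is unavailable for general \CAT{} spaces (abstract isomorphism of uniform lattices does not imply conjugacy, e.g.\ for tree lattices), and at this point your construction uses nothing about the smallness of $\rho$ beyond bounded displacement, which is far too weak. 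Neither Weil's proof nor the paper's ever performs such an upgrade; as written, the proposal assumes essentially what has to be proven.

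The paper's route avoids this entirely. After reducing via Lemma \ref{lem:local rigidity and finite index} and Lemma \ref{lem:existence of splitting} to an abstractly irreducible lattice --- which is also where your unresolved ``diagonality of the deformation'' issue is handled: a deformation of an irreducible lattice in a product does not respect the factors a priori, which is exactly why superrigidity rather than a factor-by-factor argument is required --- the proof runs a trichotomy on $\Is{X}^*=G_{\mathrm{c}}\times G_{\mathrm{td}}$. If $G^\circ_{\mathrm{c}}$ is trivial, an equivariant-map argument does work, but in a crucially stronger form than yours: smoothness of the totally disconnected action (open point-wise stabilizers of compact sets) lets one define $f$ piecewise by $r(\gamma)\gamma^{-1}$ so that $f$ is locally the \emph{identity}, hence an $s$-local isometry, hence a covering map, hence an isometry by simple connectedness; a barycenter-smoothed map has none of these properties when stabilizers are not open. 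If $G_{\mathrm{td}}$ is trivial one quotes Weil. In the mixed case one verifies, using topological local rigidity together with geometric Borel density for the deformed lattice, the hypotheses of Monod's superrigidity theorem; extends the deformed projection to each simple factor $S$ of $G^\circ_{\mathrm{c}}$ to a continuous endomorphism of $S$; shows it is an automorphism (dense image plus simplicity) and that it is \emph{inner} when the deformation is small (Lemma \ref{lem:small-is-inner}, a step with no counterpart in your sketch); and only then applies Theorem \ref{thm:local rigidity with totally disc factor} to deal with the totally disconnected factor. Replacing your barycenter/boundary mechanism by these ingredients turns your outline into the paper's proof.
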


As in the works of Selberg, Calabi and Weil  topological local rigidity plays an important role in the proof of local rigidity.  Theorem \ref{thm:local rigidity for lc groups} allows us to apply the Caprace--Monod theory of $\CAT$ isometry groups and their lattices \cite{CM1,CM2} reducing the question to an irreducible lattice in a product of a semisimple Lie group and a totally disconnected group. In the purely Lie group case we   rely on the above mentioned classical results, while the purely totally disconnected case is treated in a somewhat more general context in \S\ref{sec:totally disconnected groups}. Finally,  the case where both factors are non-trivial  relies on the superrigidity theorem  of Monod \cite{Mo}.

%The purely Lie group case is essentially the above mentioned theorem of Weil. The mixed case, where both a connected and a totally disconnected factors are involved, relies on the superrigidity theorem  of Monod \cite{Mo}.
%Local rigidity in the totally disconnected case is addressed in Section \ref{sec:totally disconnected groups} in a somewhat more general context.

%\begin{remark}
%\end{remark}

\subsection*{The Chabauty space and local rigidity}

Consider the space of closed subgroups $\Sub{G}$ of a topological group $G$ equipped with the Chabauty topology.
%\footnote{For the definition and a brief discussion of the Chabauty topology see \S\ref{sub:The Chabauty topology}.} 
This  suggests a different approach to local rigidity --- we say that a uniform lattice is \emph{Chabauty locally rigid} if it admits a Chabauty neighborhood consisting of isomorphic uniform lattices. For Lie groups this notion was first considered by Macbeath \cite{macbeath}. We prove Chabauty local rigidity for a rather broad family of  groups.

\begin{theorem}
\label{thm:Chabauty local rigidity for compacty presented groups}
Let $G$ be a compactly generated group without non-trivial compact normal subgroups.  Then the collection of uniform lattices is Chabauty open.   If moreover $G$ is compactly presented then uniform lattices are Chabauty locally rigid.
\end{theorem}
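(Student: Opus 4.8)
The plan is to derive both assertions from topological local rigidity (Theorem~\ref{thm:local rigidity for lc groups}), after first establishing that every closed subgroup Chabauty-close to a fixed uniform lattice $\Gamma$ is again a uniform lattice. It suffices to show that every sequence $H_n\to\Gamma$ in $\Sub{G}$ eventually consists of uniform lattices (and, in the second part, of copies of $\Gamma$); the same reasoning applies verbatim to nets, which covers the general case. Cocompactness of the $H_n$ is the soft half: if $C$ is compact with $\Gamma\,\mathrm{int}(C)=G$, then Chabauty convergence lets one replace $C$ by a slightly larger compact set that works simultaneously for all large $n$, so $H_n$ is cocompact. Granting in addition that the $H_n$ are discrete — the crux, treated next — each $H_n$ is a uniform lattice, which proves Chabauty openness.

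For discreteness I would argue by contradiction: suppose infinitely many $H_n$ are non-discrete. Applying van Dantzig's theorem inside each $H_n$ yields a nontrivial compact open subgroup of $H_n$, and combining this with the uniform separation of the discrete group $\Gamma$ and the convergence $H_n\to\Gamma$, one extracts nontrivial compact subgroups $K_n\le H_n$ shrinking to the identity, i.e.\ $K_n\subseteq V_n$ for a neighbourhood basis $V_n$ at $e$. Here the hypothesis enters decisively: since $G$ has no nontrivial compact normal subgroup, its proper cocompact action on the associated complex is \emph{faithful}, so the $K_n$ fix larger and larger balls while the cocompactness of the $H_n$-action transports these fixed balls across the space. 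I expect this to be the \textbf{main obstacle}: one must turn the shrinking $K_n$ into a contradiction with the absence of compact normal subgroups. The cleanest route is to show that the $H_n$-conjugates of $K_n$ generate a compact subgroup of $H_n$ that does \emph{not} shrink — absence of compact normal subgroups being exactly what forbids these conjugates from collapsing — so that $H_n$ eventually contains elements bounded away from both $e$ and $\Gamma$, contradicting $H_n\to\Gamma$. Carried out carefully, this argument should also produce a \emph{uniform} injectivity radius: a fixed neighbourhood $V_0$ of $e$ with $H_n\cap V_0=\{e\}$ for all large $n$, which is precisely what the second assertion needs.

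For Chabauty local rigidity, assume now that $G$ is compactly presented, so that $\Gamma$ is finitely presented, say $\Gamma=\langle s_1,\dots,s_k \mid w_1,\dots,w_m\rangle$. By the first part the $H_n$ are uniform lattices, uniformly discrete via the neighbourhood $V_0$ above. Using Chabauty convergence I would lift the generators to $h^{(n)}_i\in H_n$ with $h^{(n)}_i\to s_i$; then each relator value $w_j(h^{(n)}_1,\dots,h^{(n)}_k)\in H_n$ converges to $w_j(s_1,\dots,s_k)=e$, so it eventually lies in $V_0$ and uniform discreteness forces $w_j(h^{(n)})=e$. Thus $s_i\mapsto h^{(n)}_i$ defines a homomorphism $\phi_n\colon\Gamma\to H_n\le G$, and $\phi_n\to\iota$ in the representation space because convergence on the finite generating set yields pointwise convergence, where $\iota\colon\Gamma\hookrightarrow G$ is the inclusion. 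Theorem~\ref{thm:local rigidity for lc groups} then shows that for large $n$ the map $\phi_n$ is injective with uniform-lattice image, so $\Gamma\cong\phi_n(\Gamma)\le H_n$. Finally $\phi_n(\Gamma)$ has finite index in $H_n$ (both being uniform lattices), and since covolume varies continuously along Chabauty-convergent uniform lattices this index tends to $1$; hence $H_n=\phi_n(\Gamma)\cong\Gamma$ for all large $n$, exhibiting a Chabauty neighbourhood of $\Gamma$ made of isomorphic uniform lattices. The genuine difficulty is concentrated in the discreteness and uniform-injectivity-radius step of the second paragraph, where the no-compact-normal-subgroup hypothesis is essential; once topological local rigidity is in hand, the remaining steps are formal.
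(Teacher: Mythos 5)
There are genuine gaps in both halves of your argument for the first assertion. The cocompactness step is not ``soft,'' and your justification for it is incorrect: Chabauty proximity constrains a subgroup $H$ only inside compact subsets of $G$, so it cannot produce a single compact set $C'$ with $HC'=G$ --- covering far-away points of $G$ requires elements of $H$ close to far-away elements of $\Gamma$, over which the Chabauty topology gives no control whatsoever. Indeed, Proposition \ref{prop:condition for a uniform lattice to admit a Chabauty neighborhood of cocompact subgroups} shows that openness of cocompactness \emph{fails} whenever $G$ is not compactly generated (the net of finitely generated subgroups $\Gamma_F$ converges to $\Gamma$ yet contains no cocompact member), so any correct proof must use compact generation in an essential way; the paper does this in Proposition \ref{prop:acting cocompactly is open for finitely generated} by approximating the finitely many elements of $\Gamma$ carrying a ball $B_1$ into $B_3$ by elements of $H$ and propagating coboundedness along coarse chains. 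The discreteness step is in worse shape. Van Dantzig's theorem applies only to totally disconnected groups; a non-discrete closed subgroup $H_n$ of a general locally compact $G$ need not be totally disconnected and need not contain any non-trivial compact subgroup at all (it could contain a one-parameter subgroup), so your compact subgroups $K_n$ do not exist by that route. The paper instead \emph{manufactures} a compact subgroup by shrinking the Chabauty neighborhood: requiring $H \in \mathcal{O}_2(\bar{U}\setminus W)$ with $W=W^{-1}$, $W^2 \subset U$ forces $H \cap U = H\cap W$ to be a compact subgroup (Lemma \ref{lem:on small normal subgroups}). Moreover, your proposed contradiction --- that the $H_n$-conjugates of $K_n$ generate a compact subgroup which ``does not shrink'' --- is not an argument: there is no reason for the group generated by these conjugates to be compact, and the appeal to faithfulness of the action on ``the associated complex'' is a red herring (the action of $G$ on the Rips complex over $G$ is free on vertices regardless of compact normal subgroups). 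The mechanism you are missing, and which you yourself flag as the expected main obstacle, is the paper's: Lemma \ref{lem:on small normal subgroups}(2) produces a \emph{cocompact} closed subgroup $L \le H$ normalizing $V = H\cap U$, whence every $v \in V$ has relatively compact $G$-conjugacy class; the closed subgroup $J = \overline{\left< v^G \right>}$ is then normal in $G$ and compactly generated with a dense set of such elements, and the Ushakov--Kramarev structure theory yields a maximal compact subgroup of $J$, characteristic in $J$, hence normal in $G$, hence trivial, forcing $v=e$.

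Your third paragraph is essentially the paper's proof of the second assertion (lift generators, kill relators using finite presentation together with uniform discreteness, apply Theorem \ref{thm:local rigidity for lc groups}), except for the finish: the paper proves $r_H(\Gamma) = H$ by a covering-and-counting argument (Corollary \ref{cor:fundamental domain remains a fundamental domain after a deformation} plus a neighborhood in which $H$ meets each of finitely many translates $W\delta$ exactly once), whereas you force the index $[H_n : \phi_n(\Gamma)]$ to be $1$ via continuity of the covolume. That route can be made to work: covolume continuity on $\mathrm{ULat}(G)$ is Proposition \ref{prop:covolume is continous}, proved in the paper independently of the Chabauty theorem, and the needed convergence $\phi_n(\Gamma) \to \Gamma$ follows from Proposition \ref{prop:C is continuous at a uniform lattice}. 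But be aware that this is not an off-the-shelf fact --- only semicontinuity is classical --- and its proof rests on the uniform coboundedness of Remark \ref{rem:uniformly cobounded}, i.e.\ exactly on the chain argument you skipped in the first step; also, the uniform injectivity neighborhood $V_0$ that your relator argument requires is an output of your (broken) discreteness step. So as written, the second assertion inherits both gaps of the first.
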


See Theorem \ref{thm:Chaubuty local rigidity} for a sharper statement of this result. 
%We remark that since the map sending a deformation of a uniform lattice to its image regarded as a point of $\Sub{G}$ is continuous, as is shown in 
In view of Proposition \ref{prop:C is continuous at a uniform lattice} the notion of Chabauty local rigidity is stronger than topological local rigidity. The compact generation assumption is necessary --- see Proposition \ref{prop:condition for a uniform lattice to admit a Chabauty neighborhood of cocompact subgroups}.
Isometry groups of certain $\CAT$ spaces are a special case of Theorem \ref{thm:Chabauty local rigidity for compacty presented groups}.

\begin{cor}
	\label{cor:local rigidity in the Chabauty sense-intro}
	Let $X$ be a proper  geodesically complete $\CAT$ space with $\Is{X}$ acting cocompactly. Then uniform lattices in $\Is{X}$ are Chabauty locally rigid.
\end{cor}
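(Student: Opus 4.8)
The plan is to derive Corollary \ref{cor:local rigidity in the Chabauty sense-intro} from Theorem \ref{thm:Chabauty local rigidity for compacty presented groups} by verifying its hypotheses for the group $G = \Is{X}$. There are two properties to check: that $G$ is compactly presented, and that $G$ has no non-trivial compact normal subgroups. For the first, recall that $X$ is proper, geodesically complete, and $\CAT$, with $\Is{X}$ acting cocompactly; in this situation $X$ is itself simply connected, contractible and a geodesic space, and the action is by isometries, proper (since $X$ is proper and the action is continuous) and cocompact. A cocompact isometric action of a locally compact group on a simply connected geodesic metric space forces compact presentation of the group --- this is the standard metric analog of the fact used to build the Rips complex earlier in the paper, so I would invoke the preliminaries of \S\ref{sec:preliminaries} to conclude that $\Is{X}$ is compactly presented.

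For the second hypothesis, the issue is that $\Is{X}$ may well contain compact normal subgroups coming from factors that act trivially or that split off, so I do \emph{not} expect to apply Theorem \ref{thm:Chabauty local rigidity for compacty presented groups} to $\Is{X}$ directly. Instead I would pass to the quotient $G = \Is{X}/K$, where $K$ is the (unique maximal) compact normal subgroup of $\Is{X}$; this $K$ is exactly the kernel of the $G$-action, and it is compact by properness of the action on the proper space $X$. The quotient $\bar G = \Is{X}/K$ then acts properly, cocompactly and faithfully on $X$, is again compactly presented, and by construction has no non-trivial compact normal subgroup. A uniform lattice $\Gamma \le \Is{X}$ projects to a uniform lattice $\bar\Gamma \le \bar G$, with the projection $\Is{X}\to\bar G$ being a proper quotient map with compact kernel.

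The main work, and the step I expect to be the principal obstacle, is then transferring Chabauty local rigidity back and forth across this quotient map $q:\Is{X}\to\bar G$. Theorem \ref{thm:Chabauty local rigidity for compacty presented groups} gives a Chabauty neighborhood of $\bar\Gamma$ in $\Sub{\bar G}$ consisting of uniform lattices isomorphic to $\bar\Gamma$. I would argue that, because $K$ is compact, the induced map $q^{-1}\colon \Sub{\bar G}\to\Sub{\Is{X}}$ sending a closed subgroup to its full preimage is continuous for the Chabauty topologies and carries uniform lattices of $\bar G$ to uniform lattices of $\Is{X}$. The delicate point is that a general closed subgroup $H\le\Is{X}$ near $\Gamma$ need not contain $K$, so I cannot simply identify a Chabauty neighborhood of $\Gamma$ with one of $\bar\Gamma$; one must instead observe that subgroups of $\Is{X}$ sufficiently Chabauty-close to the uniform lattice $\Gamma$ are themselves uniform (by the openness half of Theorem \ref{thm:Chabauty local rigidity for compacty presented groups} applied appropriately, or directly since uniformity is a Chabauty-open condition near a cocompact subgroup), and that their images under $q$ land in the good neighborhood of $\bar\Gamma$. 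Combining the isomorphism $H/(H\cap K)\cong q(H)\cong\bar\Gamma$ with the compactness of $H\cap K$ yields that $H$ is a uniform lattice abstractly commensurable with, and in fact isomorphic to, $\Gamma$, giving the desired Chabauty local rigidity of $\Gamma$ in $\Is{X}$.
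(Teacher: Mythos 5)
Your first step (compact presentation of $\Is{X}$) is fine and agrees with the paper, but the second half of your proposal contains a genuine gap, and the detour it takes is built on a false premise. The kernel of the action of $\Is{X}$ on $X$ is trivial \emph{by definition} --- $\Is{X}$ is a group of isometries of $X$ and acts faithfully --- so there are no ``factors that act trivially'', and the subgroup $K$ that you identify with this kernel is the trivial group. Your quotient construction $\Is{X}/K$ is therefore vacuous: it returns you to $\Is{X}$ itself, with the hypothesis of Theorem \ref{thm:Chabauty local rigidity for compacty presented groups} on compact normal subgroups still unverified. That verification is exactly the missing content, and it is what the paper proves directly: if $N \nrm \Is{X}$ is a compact normal subgroup, its fixed-point set $X^N$ is non-empty (a compact group of isometries of a complete $\CAT$ space fixes the circumcenter of any of its orbits), closed, convex, and $\Is{X}$-invariant since $N$ is normal. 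Because $X$ is geodesically complete and $\Is{X}$ acts cocompactly, there is no proper non-empty closed convex $\Is{X}$-invariant subset $C$: the function $d(\cdot,C)$ is continuous and invariant, hence bounded by cocompactness, yet extending a geodesic from the projection point through some $x \notin C$ makes $d(\cdot,C)$ grow linearly by convexity. Hence $X^N = X$, so $N$ acts trivially and $N = \{e\}$ by faithfulness. With both hypotheses checked, Theorem \ref{thm:Chabauty local rigidity for compacty presented groups} applies to $G = \Is{X}$ directly, which is the paper's (one-line) proof.

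Even reading your proposal charitably --- taking $K$ to be a maximal compact normal subgroup rather than the (trivial) kernel --- the transfer argument across $q : \Is{X} \to \Is{X}/K$ is not complete: you would need to prove that such a maximal $K$ exists, that a closed subgroup $H$ Chabauty-close to $\Gamma$ satisfies $H \cap K = \{e\}$ (compactness of $H \cap K$ together with $H/(H\cap K) \cong \bar\Gamma$ does not yield $H \cong \Gamma$), and that the pushforward map $\Sub{\Is{X}} \to \Sub{\Is{X}/K}$ is continuous at $\Gamma$. None of these points is addressed beyond an appeal to plausibility. The structural lesson is that for geodesically complete $\CAT$ spaces the hypothesis should be verified for $\Is{X}$ itself, where it reduces to the fixed-point argument above; no passage to a quotient is needed or, indeed, available.
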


Whenever the $\CAT$ space $X$ has no Euclidean factors and the lattice $\Gamma$ projects non-discretely to the isometry group of every hyperbolic plane factor of $X$, Theorem \ref{thm:local rigidity of uniform in general cat0-intro} implies that the Chabauty neighborhood of $\Gamma$ given in Corollary \ref{cor:local rigidity in the Chabauty sense-intro} consists  of conjugates of $\Gamma$. This is discussed in Corollary \ref{cor:occ for CAT0 groups} below. In that situation $\Gamma$ is  \emph{OCC} or \emph{open conjugacy class subgroup} in the sense of \cite{glasner}. 

%Corollary \ref{cor:occ for CAT0 groups} is used in Section \ref{sec:wangs finiteness theorem} to give a very concise proof of Wang's finiteness Theorem \ref{thm:wang-finiteness-intro}. 

%The local topological rigidity Theorem \ref{thm:local rigidity for lc groups} implies the existence and continuity of several natural mappings associated to a given uniform lattice.

%Recall that the space of closed subgroups $\Sub{G}$ of the topological group $G$ is a compact space when equipped with the Chabauty topology. This topology takes into account only the way in which a given subgroup is embedded as a closed subspace of $G$. This notion suggests a different approach to local rigidity, namely studying deformations of a given uniform lattice in the Chabauty space.

%One such mapping is obtained by considering the image of a deformation of a uniform lattice $\Gamma \le G$  as a point in the space $\Sub{G}$  of closed subgroups of $G$ equipped with the Chabauty topology; see Proposition \ref{prop:C is continuous at a uniform lattice}. The study of deformations in terms of the Chabauty topology was initiated for Lie groups by Macbeath in \cite{macbeath}.

\subsection*{Finiteness of lattices of bounded covolume}

A well-known classical result that relies on local rigidity is the finiteness theorem of Wang. It says that a connected semisimple Lie group $G$ not locally isomorphic to $\textrm{SL}(2,\RR)$ or $\textrm{SL}(2,\CC)$ has only finitely many conjugacy classes of irreducible lattices  of co-volume $\le v $ for every $v>0$. We refer to  Wang's original proof \cite{wangtopics} and to \cite[Section 13]{HV} for a treatment of the case where $G$ has factors isomorphic to $\textrm{SL}(2,\RR)$ or $\textrm{SL}(2,\CC)$. 

The two main ingredients of Wang's proof are: 

\begin{enumerate}
	\item Local rigidity of all lattices in $G$.
	\item The Kazhdan--Margulis theorem \cite{KM}\footnote{See Definition \ref{def:KM property} and the paragraph following it for a discussion of this theorem. }.
\end{enumerate}

%Indeed, Item $(2)$ combined with the Mahler compactness criterion implies that the set of conjugacy classes of lattices of co-volume $\le v$ is compact with respect to a certain topology, while Item $(1)$ implies that it is discrete. We note that it is crucial that all lattices in $G$ are locally rigid, even for proving only the finiteness for uniform lattices. This is illustrated in the case $G=SL(2,\CC)$ where uniform lattices are locally rigid, while there are infinitely many conjugacy classes of uniform lattices of bounded covolume. The point is that a sequence of bounded co-volume uniform lattices converges to a non-uniform lattice which is not locally rigid.

Indeed, $(2)$ combined with the Mahler--Chabauty compactness criterion implies that the set of conjugacy classes of lattices of co-volume $\le v$ is compact with respect to the Chabauty topology, while $(1)$ implies that it is discrete. However, even if one restricts attention to uniform lattices only, it is crucial that $(1)$ holds for all lattices\footnote{Recall that when $G$ is a semisimple Lie group not locally isomorphic to $\textrm{SL}(2,\RR)$ or $\textrm{SL}(2,\CC)$ then also the non-uniform irreducible lattices are locally rigid.  For rank one groups this was proved by Garland and Raghunathan \cite{Gar-Rag}. (An alternative geometric proof for the rank one case is given in \cite{Bergeron-Gelander}.) For higher rank groups this is a consequence of Margulis' super-rigidity theorem \cite{Margulis}.}  in $G$. 
This is illustrated in the case of $\textrm{SL}(2,\CC)$ where uniform lattices are locally rigid while the finiteness statement fails. The issue is that a sequence of  uniform lattices of bounded co-volume may converge to a non-uniform lattice which is not locally rigid. 
%As local rigidity in our context is only established for uniform lattices we need to make this additional assumption.

We obtain a similar finiteness result for $\CAT$ groups. However, since local
rigidity in the general $\CAT$ context is established for uniform lattices
only, we need to make an additional assumption.

\begin{definition}
	\label{def:uniformly discrete}
	Let $G$ be a locally compact group. A family $\mathcal{F}$ of lattices in
	$G$ is \emph{uniformly discrete} if there is an identity neighborhood $U
	\subset G$ such that $\Gamma^g \cap U = \{e\}$ holds for every lattice $\Gamma
	\in \mathcal{F}$ and every $g\in G$. 
	%We will also say that $\mathcal{F}$ is $U$-uniformly discrete.
	%\footnote{In some places, e.g. in \cite{7s}, the authors insist on the stronger %assumption that all conjugates of every $\Gamma\in \mathcal{F}$ intersect %$U$ trivially.} 
\end{definition}

Lattices belonging to a uniformly discrete family\footnote{Uniform discreteness is not to be confused with a weaker notion of a \emph{joint discreteness}  given in Definition \ref{def:jointly discrete}. See also Footnote \ref{footnote:on jointly discrete} concerning this terminology. } are all uniform \cite[1.12]{Rag}, and their co-volumes are clearly bounded from below.

\begin{theorem}
	\label{thm:wang-finiteness-intro}
	Let $X$ be a proper geodesically complete $\CAT$ space without Euclidean
	factors and with $\Is{X}$ acting cocompactly. 
	Let $\mathcal{F}$  be a uniformly
	discrete family of lattices in $\Is{X}$ so that every $\Gamma \in \mathcal{F}$
	projects non-discretely to the isometry group of every hyperbolic plane factor.
	Then $\mathcal{F}$ admits only finitely many lattices up to conjugacy
	with $\text{co-vol} \le v$ for every fixed $v > 0$.
\end{theorem}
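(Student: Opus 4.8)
The plan is to run the classical Wang dichotomy --- compactness versus discreteness --- inside the Chabauty space $\Sub{\Is{X}}$, with uniform discreteness playing the role that the Kazhdan--Margulis theorem plays in the Lie case (it is unavailable for general $\CAT$ groups) and with Theorem \ref{thm:local rigidity of uniform in general cat0-intro} supplying the rigidity. Write $G = \Is{X}$ and, for fixed $v > 0$, let $\mathcal{F}_v \subset \mathcal{F}$ be the subfamily of lattices of co-volume at most $v$. Suppose towards a contradiction that $\mathcal{F}_v$ contains infinitely many pairwise non-conjugate lattices $\Gamma_n$. Since $\Sub{G}$ is Chabauty compact I may pass to a subsequence with $\Gamma_n \to \Lambda$ for some closed subgroup $\Lambda \le G$; the whole argument then reduces to showing that $\Lambda$ is a uniform lattice to which Corollary \ref{cor:occ for CAT0 groups} applies, which instantly yields a contradiction.

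The first step is to identify the limit as a uniform lattice of co-volume at most $v$. Here is where the hypothesis on $\mathcal{F}$ is used: uniform discreteness (Definition \ref{def:uniformly discrete}) is by construction a conjugation-invariant condition, witnessed by a single identity neighbourhood $U$ for all of $\mathcal{F}$, and if $e \ne \lambda \in \Lambda^g \cap U$ then Chabauty convergence produces $\gamma_n \in \Gamma_n$ with $\gamma_n^g \to \lambda$, contradicting $\Gamma_n^g \cap U = \{e\}$; hence $\Lambda$ is again uniformly discrete with the same $U$. Uniform discreteness furnishes a global lower bound on the injectivity radius, which rules out escape of mass and forces $\Lambda$ to have finite co-volume, at most $v$. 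Being uniformly discrete, $\Lambda$ is then a uniform lattice by \cite[1.12]{Rag}. This is precisely the point at which the $\mathrm{SL}_2(\CC)$ obstruction recalled in the introduction --- a bounded-co-volume sequence of uniform lattices degenerating to a non-uniform one --- is eliminated.

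It remains to check that $\Lambda$ satisfies the hypotheses of Corollary \ref{cor:occ for CAT0 groups}, namely that it projects non-discretely to $\Is{Y}$ for every de Rham factor $Y \cong \HH^2$ of $X$. This is the crux of the argument and I expect it to be the main obstacle. The difficulty is genuine: reducibility with respect to an $\HH^2$ factor is not visibly an open or closed condition in the Chabauty topology --- a lattice that splits off an $\HH^2$-lattice factor can be deformed inside the Teichm\"uller space of that factor while staying reducible, yet could a priori be approximated by irreducible lattices with dense projection. The plan is to rule out a ``bad'' limit $\Lambda$ with discrete projection to some $Y \cong \HH^2$: such a $\Lambda$ would contain, up to finite index, a subgroup $\Lambda_Y$ acting as a lattice on $Y$ and trivially on the complementary factor, and these elements would have to arise as limits of elements $\gamma_n \in \Gamma_n$ whose displacement in the complementary factor tends to zero; a counting and volume estimate, exploiting the injectivity radius bound and the co-volume bound, should then force the $Y$-projection of $\Gamma_n$ to be nearly discrete for large $n$, contradicting the standing hypothesis that each $\Gamma_n$ projects non-discretely to $\Is{Y}$. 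Here the Caprace--Monod structure theory, already invoked for Theorem \ref{thm:local rigidity of uniform in general cat0-intro}, is what makes sense of the projection to the $\HH^2$ factor and of reducibility.

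With $\Lambda$ shown to be a uniform lattice satisfying the hypotheses of Corollary \ref{cor:occ for CAT0 groups}, it is OCC: it admits a Chabauty neighbourhood consisting of its own $G$-conjugates. For large $n$ the lattice $\Gamma_n$ lies in this neighbourhood and is therefore conjugate to $\Lambda$, whence the $\Gamma_n$ are mutually conjugate --- contradicting the choice of a pairwise non-conjugate sequence. This contradiction shows that $\mathcal{F}_v$ meets only finitely many conjugacy classes, proving the theorem. Equivalently, one may phrase the endgame as a covering argument: the Chabauty closure $\overline{\mathcal{F}_v}$ is compact and, by the preceding steps, each of its points is OCC, so it is partitioned into open conjugation orbits and compactness leaves only finitely many of them.
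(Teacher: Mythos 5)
Your proposal follows the paper's proof step for step: conjugation--invariance of uniform discreteness, Mahler--Chabauty compactness \cite[1.20]{Rag} of the set of lattices of co-volume at most $v$ meeting a fixed identity neighborhood $U$ trivially, the observation that a Chabauty limit of the $\Gamma_n$ is again uniformly discrete and hence a uniform lattice by \cite[1.12]{Rag}, and the endgame via Corollary \ref{cor:occ for CAT0 groups} plus compactness of the closure. All of these steps are correct and are exactly what the paper does.

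The gap is the step you yourself call the crux: verifying that a limit lattice $\Lambda$ projects non-discretely to $\Is{Y}$ for every de Rham factor $Y \cong \HH^2$, without which Corollary \ref{cor:occ for CAT0 groups} cannot be invoked at $\Lambda$. Your proposed mechanism --- a counting and volume estimate that ``should force'' the $Y$-projection of $\Gamma_n$ to be nearly discrete --- is not carried out, and I doubt it can be: non-discreteness of $\mathrm{proj}_Y(\Gamma_n)$ is witnessed by elements whose displacement in the complementary factor is unbounded, and such elements are invisible both to Chabauty convergence and to any count near the identity based on injectivity-radius and co-volume bounds. (In fairness, the paper's own write-up applies Corollary \ref{cor:occ for CAT0 groups} to every lattice in the Chabauty closure of $\mathcal{F}$ without checking this hypothesis, so you have isolated a point the paper passes over in silence.) The step can be closed with rigidity rather than counting, using tools already in the paper. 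Suppose some $\Lambda$ in the closure had discrete projection to $\Is{Y}$, $Y \cong \HH^2$. By Lemma \ref{lem:existence of splitting} combined with Theorem 4.2 of \cite{CM2} (quoted in \S\ref{sub:irreducibility and splitting}), $\Lambda$ has a finite-index normal subgroup $\Lambda_1 \times R$ with $\Lambda_1$ a cocompact Fuchsian group acting on $Y$ and trivially on the complementary factor. Since $\Is{X}$ is compactly presented and has no non-trivial compact normal subgroups, part (2) of Theorem \ref{thm:Chaubuty local rigidity} shows that every lattice in a sufficiently small Chabauty neighborhood of $\Lambda$ equals $r(\Lambda)$ for a small deformation $r \in \mathcal{R}(\Lambda,\Is{X})$, and $r(\Lambda_1 \times R) \cong \Lambda_1 \times R$ is again a uniform lattice by Theorem \ref{thm:local rigidity for lc groups}. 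Monod's splitting theorem \cite{Mo} applied to this product yields a decomposition of $X$ into two subproducts of de Rham factors, with $r(\Lambda_1)$ acting geometrically on one and trivially on the other; fixing a single $\lambda_0 \in \Lambda_1$ with $\mathrm{proj}_Y(\lambda_0) \neq e$, continuity forces the factor carrying $r(\Lambda_1)$ to remain $Y$ for all $r$ small, since otherwise $\mathrm{proj}_Y(r(\lambda_0)) = e$. Hence $\mathrm{proj}_Y(r(\Lambda))$ is discrete for every small $r$ (discreteness passes between a subgroup and a finite-index overgroup), so every lattice Chabauty-close to $\Lambda$ violates the non-discreteness hypothesis --- contradicting $\Gamma_n \to \Lambda$ with $\Gamma_n \in \mathcal{F}$. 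With this lemma in place your argument, and the paper's, is complete.
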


The proof follows rather immediately from the open conjugacy class property, Corollary \ref{cor:occ for CAT0 groups}, 
and is perhaps more conceptual than Wang's original argument.

\subsection*{Finiteness of lattices containing a given lattice}
Prior to the finiteness theorem mentioned above, Wang proved a weaker result that applies in a more general situation. Namely, he showed that a lattice in a semisimple Lie group is contained in only finitely many lattices \cite{wang_finitely}.
This result  relies on the Kazhdan--Margulis theorem as well. To obtain a suitable generalization to locally compact groups we introduce the following notion.

\begin{definition}
	\label{def:KM property}
	Let $G$ be a locally compact group. A family $\mathcal{F}$ of lattices in $G$ has property $(KM)$ if there is an identity neighborhood
	$U \subset G$ such that for every lattice $\Gamma\in\mathcal{F}$ there is an element $g\in G$ satisfying $g\Gamma g^{-1}\cap U=\{ e \}$. We will say that  $G$ has property $(KM)$ if the family of all lattices in $G$ has property $(KM)$.
\end{definition}

Kazhdan and Margulis showed \cite{KM} that a semisimple Lie group
with no compact factors has property $(KM)$. We refer to \cite{WUD}
for a short proof of the Kazhdan--Margulis theorem and for additional examples
of groups with property $(KM)$.

\begin{thm}\label{thm:W1KM-intro}
	Let $G$ be a compactly generated locally compact group. Assume that every lattice in $G$ has a trivial centralizer. 
	%Let $\Gamma\le G$ be a finitely generated lattice in $G$. Then there are %only finitely many lattices in $G$ containing $\Gamma$.
	Let $\mathcal{F}$ be a family of lattices in $G$ with property  $(KM)$ and admitting some $\Gamma \in \mathcal{F}$ that is finitely generated and is a least element in $\mathcal{F}$ with respect to inclusion. Then $\mathcal{F}$ is finite.
\end{thm}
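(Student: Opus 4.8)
The plan is to reduce the statement to the elementary fact that a finitely generated group has only finitely many subgroups of any given finite index, using property $(KM)$ to bound the relevant indices and the trivial centralizer hypothesis to force the ambient normalizers to be discrete. Throughout, recall that $G$ is unimodular, being a compactly generated group that admits a lattice, so co-volume is conjugation invariant. First I would bound $[\Delta:\Gamma]$ uniformly over $\Delta\in\mathcal{F}$. Since $\Gamma$ is a least element we have $\Gamma\le\Delta$ for every $\Delta\in\mathcal{F}$, whence $[\Delta:\Gamma]=\operatorname{covol}(\Gamma)/\operatorname{covol}(\Delta)$. Property $(KM)$ provides a fixed identity neighbourhood $U$ and, for each $\Delta$, an element $g$ with $g\Delta g^{-1}\cap U=\{e\}$; choosing a symmetric $V$ with $VV^{-1}\subseteq U$, the map $V\to (g\Delta g^{-1})\backslash G$ is injective, so $\operatorname{covol}(\Delta)=\operatorname{covol}(g\Delta g^{-1})\ge\operatorname{vol}(V)=:c>0$. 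Hence $[\Delta:\Gamma]\le N:=\operatorname{covol}(\Gamma)/c$ for all $\Delta\in\mathcal{F}$.

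Next I would pass to normal cores. For each $\Delta$ set $\Lambda_\Delta:=\bigcap_{\delta\in\Delta}\delta\Gamma\delta^{-1}$, the kernel of the action of $\Delta$ on $\Delta/\Gamma$. Then $\Lambda_\Delta$ is normal in $\Delta$, is contained in $\Gamma$, and has index at most $N!$ in $\Delta$, hence index at most $N!$ in $\Gamma$. Because $\Gamma$ is finitely generated it has only finitely many subgroups of index $\le N!$, so the collection $\{\Lambda_\Delta:\Delta\in\mathcal{F}\}$ is finite. It therefore suffices to bound, for each fixed value $\Lambda$ of the normal core, the number of $\Delta\in\mathcal{F}$ with $\Lambda_\Delta=\Lambda$.

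Fix such a $\Lambda$. It is finitely generated and, being of finite index in the lattice $\Gamma$, is itself a lattice, so by hypothesis $\centralizer{G}{\Lambda}=\{e\}$. This is where the assumption enters decisively: the conjugation homomorphism $\mathrm{N}_G(\Lambda)\to\mathrm{Aut}(\Lambda)$ has kernel $\centralizer{G}{\Lambda}=\{e\}$, and if $g_n\to e$ in $\mathrm{N}_G(\Lambda)$ then for each $\lambda\in\Lambda$ one has $g_n\lambda g_n^{-1}\to\lambda$ with $g_n\lambda g_n^{-1}\in\Lambda$ discrete, so each of the finitely many generators of $\Lambda$ is eventually fixed by $g_n$; thus $g_n\in\centralizer{G}{\Lambda}=\{e\}$ for large $n$, proving $\mathrm{N}_G(\Lambda)$ discrete. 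A discrete subgroup containing the lattice $\Lambda$ has co-volume at most that of $\Lambda$, hence is a lattice, so $[\mathrm{N}_G(\Lambda):\Lambda]<\infty$. Every $\Delta$ with $\Lambda_\Delta=\Lambda$ normalizes $\Lambda$ and contains it, so $\Lambda\le\Delta\le\mathrm{N}_G(\Lambda)$; such $\Delta$ correspond to subgroups of the finite group $\mathrm{N}_G(\Lambda)/\Lambda$ and are therefore finite in number. Summing over the finitely many possible cores $\Lambda$ shows that $\mathcal{F}$ is finite.

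I expect the discreteness of $\mathrm{N}_G(\Lambda)$ to be the crux of the argument: it is exactly the step that needs the trivial centralizer hypothesis (without which the normalizer may be positive dimensional and the overgroups genuinely infinite in number), while the finite generation of $\Lambda$ is what upgrades the pointwise rigidity of the conjugation action into honest discreteness. The remaining ingredients — the co-volume lower bound extracted from $(KM)$ and the finiteness of bounded-index subgroups of a finitely generated group — are standard, so I would only spell them out briefly.
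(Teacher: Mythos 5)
Your proof is correct, but it follows a genuinely different route from the paper's. The paper first converts property $(KM)$ plus the least-element hypothesis into joint discreteness of $\mathcal{F}$ (Lemma \ref{lem:KM->UD}, via a fundamental-domain argument), and then proves Theorem \ref{thm:trivial centralizer implies finitely many superlattices} by contradiction using Mahler--Chabauty compactness: an infinite family of distinct lattices of fixed index $d$ over $\Gamma$ would admit a subnet converging in the Chabauty topology to a discrete overgroup $\Lambda \ge \Gamma$ with $[\Lambda:\Gamma]\ge d$, and the trivial-centralizer and finite-generation hypotheses force the Chabauty-approximating elements $\delta_{n_\alpha}\to\lambda$ to equal $\lambda$ eventually, so that $\Lambda\le\Gamma_{n_\alpha}$, which by an index count produces an infinite repetition. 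You bypass the Chabauty space entirely: $(KM)$ is used only to bound covolumes from below (hence indices over $\Gamma$ from above, via unimodularity and conjugation-invariance of covolume) --- so you never need Lemma \ref{lem:KM->UD} --- and the centralizer hypothesis is crystallized into the structural statement that $N_G(\Lambda)$ is discrete for a finitely generated lattice $\Lambda$, hence a finite extension of $\Lambda$; the members of $\mathcal{F}$ with normal core $\Lambda$ then sit between $\Lambda$ and $N_G(\Lambda)$ and are counted inside the finite group $N_G(\Lambda)/\Lambda$. The two arguments share their first half (index bound, normal cores, finiteness of bounded-index subgroups of a finitely generated group), but your second half is more elementary --- no compactness, no nets of subgroups --- and gives slightly more structure: each $\Delta\in\mathcal{F}$ is pinned between its core and that core's normalizer. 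What the paper's route buys is a standalone statement for arbitrary jointly discrete families (Theorem \ref{thm:trivial centralizer implies finitely many superlattices}), a hypothesis of independent interest that does not presuppose a least element. One small repair to your write-up: the discreteness argument for $N_G(\Lambda)$ is phrased with sequences $g_n\to e$, but $G$ need not be metrizable; either pass to nets, or argue directly that for each generator $\lambda_i$ of $\Lambda$ there is an identity neighborhood $V_i$ with $g\lambda_i g^{-1}=\lambda_i$ for all $g\in V_i\cap N_G(\Lambda)$ (using discreteness of $\Lambda$ and continuity of conjugation), so that $\bigcap_i V_i$ meets $N_G(\Lambda)$ exactly in $\mathrm{C}_G(\Lambda)=\{e\}$.
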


Examples of groups in which all lattices have trivial centralizers include second countable locally compact groups with a trivial amenable radical \cite[5.3]{caprace2017indicability} as well as the isometry groups of certain $\CAT$ spaces. See \S \ref{sub:cat0 lattices and their properties} for more details.   
%Theorem \ref{thm:trivial centralizer implies finitely many superlattices} below is another useful variant of Theorem \ref{thm:W1KM-intro}. 

%We also prove the following variant of Theorem \ref{thm:W1KM-intro} which holds also for groups not having property $(KM)$.
%
%\begin{thm}
%\label{thm:trivial centralizer implies finitely many superlattices-intro}
%Let $G$ be a separable compactly generated locally compact group. Assume that every  lattice in $G$ has a trivial centralizer. Let $\Gamma \le G$ be a finitely generated lattice and $U \subset G$ some identity neighborhood with $\Gamma \cap U = \{e\}$. 
%Then there are only finitely many lattices $\Gamma'$ with $\Gamma \le \Gamma' \le G$ and $\Gamma' \cap U = \{e\}$.
%\end{thm}

In particular, it follows that a family of lattices $\mathcal{F}$ containing a given  finitely generated lattice $\Gamma$ and satisfying $\Gamma' \cap U = \{e\}$ for some identity neighborhood $U$ and all $\Gamma' \in \mathcal{F}$ must be finite.

We would like to mention the work  \cite{bass1990uniform} that contains somewhat related finiteness results for uniform tree lattices. 
  Recently Burger and Mozes  have established the analogue of \cite{wang_finitely} for certain uniform lattices in products of trees by  carefully studying discrete groups  containing a given irreducible uniform lattice.

\begin{remark}
	\S \ref{sec:lattices containing a given lattice} dealing with these generalizations of \cite{wang_finitely} does not depend on local rigidity and is the only part of the current paper that extends without difficulty  to non-uniform lattices. 
\end{remark}

%We make use of the main argument of Theorem \ref{thm:W1KM-intro} in the proof of Theorem \ref{thm:wang-finiteness-intro}.

\subsection*{Invariant random subgroups}

Finally, we investigate several questions concerning Chabauty spaces in general. For instance,  the \emph{invariant random subgroup} corresponding to a   uniform lattice $\Gamma$ is shown to depend continuously on $\Gamma$ in the Chabauty topology; see Proposition \ref{prop:Ulat space and irs}. This relies on the Chabauty-continuity of the co-volume of uniform lattices   established in Proposition \ref{prop:covolume is continous}.

%\begin{prop}
%\label{prop:continuity of IRS - intro}
%Let $G$\ be a compactly generated  group and $\Gamma\le G$ a uniform lattice. The natural map to $\IRS{G}$ is well defined and continuous on a neighborhood of the inclusion in $\mathcal{R}(\Gamma,G)$. Moreover $\Gamma$ is locally rigid if and only if this map is locally constant.
%\end{prop}

%Given a homomorphism $r : \Gamma \to G$ of a discrete group $\Gamma$ into a topological group $G$ we may consider the closure of $r(\Gamma)$ as a closed subgroup of $G$. This can be viewed formally as a map from a certain representation space into the space of closed subgroups of $G$ equipped with the Chabauty topology.

%\medskip

\subsection*{Acknowledgments}

Pierre-Emmanuel Caprace has  kindly and generously provided us with  clever ideas and arguments  extending the generality and improving our results. His suggestions also helped to improve the presentation of this paper. We would like to thank him for this immensely valuable service.

 In particular Theorem \ref{thm:Chabauty local rigidity for compacty presented groups}, Proposition \ref{prop:condition for a uniform lattice to admit a Chabauty neighborhood of cocompact subgroups}, Corollaries \ref{cor:local topological rigidity for t.d.c.p. groups} and \ref{cor:local topological rigidity for product t.d.c.p. groups}, Remarks \ref{remark:uniform tree lattices} and \ref{remark:another proof of local rigidity}, Theorem \ref{thm:Chaubuty local rigidity}, Proposition \ref{prop:uniform lattice has neighborhood of uniform lattices}, Remark \ref{remark: on normalizers in CAT0} and Proposition \ref{prop:Ulat space and irs} all contain arguments suggested by Caprace.

We extend our gratitude to Nir Lazarovich for  helpful conversations in which some arguments of Proposition \ref{prop:acting cocompactly is open for finitely generated} were established.

%\subsection*{Acknowledgments}
%The first part of this paper has been inspired by the work of Weil \cite{We60}. 

%{\it Our research on local topological rigidity has benefited greatly from the ideas and excellent exposition of the new book \cite{cor_har} by Cornulier and de la Harpe.} 

%The second part of the paper dealing with local rigidity  relies crucially on the structure theory developed by Caprace and Monod \cite{CM1, CM2}. Of course, our finiteness results are inspired by and follow closely the corresponding works of Wang.

%\newpage
\setcounter{tocdepth}{1}
\tableofcontents
%\newpage

\section{Deformations of lattices}

\subsection{The representation space}
\label{sub:the representation space}

%\subsection{The representation space and local rigidity}
%\label{sub:the representation space and local rigidity}

We describe a topological space classifying the representations of a given discrete group into a target topological group. The notion of local rigidity for a lattice is then defined in terms of that space.

\begin{definition}
\label{def:representaion space}
Let $\Gamma$ be a discrete group and $G$ a topological group.  The \textbf{representation space} $\mathcal{R}(\Gamma,G)$ is the space of all group homomorphisms from $\Gamma$ to $G$ with the point-wise convergence topology.
\end{definition}

Note that for any point $r\in\mathcal{R}(\Gamma,G)$ the group $r(\Gamma)$ is a quotient of $\Gamma$. 

If $\Gamma$ has a finite generating set $\Sigma$ then $\mathcal{R}(\Gamma, G)$ is homeomorphic to a closed subspace of $G^\Sigma$ with the product topology. This subspace is determined by closed conditions arising from the relations among the generators in $\Sigma$.

Given  a discrete subgroup $\Gamma$ of $G$ the inclusion morphism $ \Gamma \hookrightarrow G$ can naturally be regarded as a point of $\mathcal{R}(\Gamma,G)$. We will denote this point by $r_0$ and frequently refer to neighborhoods of $r_0$ in $\mathcal{R}(\Gamma,G)$ as \emph{deformations} of $\Gamma$ in $G$.

Recall that a discrete subgroup $\Gamma \le G$ is a \emph{uniform lattice} in $G$ if there is a compact subset $K \subset G$ such that $G = \Gamma K$. Note that $\Gamma$ is finitely generated if and only if $G$ is compactly generated \cite[5.C.3]{cor_har}.

\begin{definition}
\label{def:local topological rigidity}
The lattice $\Gamma$ is \textbf{ topologically  locally rigid} if the inclusion morphism $r_0$ admits an open neighborhood $\mathcal{U}$ in $\mathcal{R}(\Gamma,G)$ such  that for every $r \in \mathcal{U}$ the subgroup $r(\Gamma) \le G$ is a uniform lattice and $r$ is injective.
\end{definition}

Less formally, a uniform lattice is  topologically  locally rigid if sufficiently small deformations are injective, discrete and co-compact. 

%One of our main results is that a uniform lattice in a compactly generated group is locally topologically rigid.

\begin{definition}
\label{def:local rigidity}
The lattice $\Gamma$ is \textbf{locally rigid} if  sufficiently small deformations of it are given by conjugation in $G$.
\end{definition}

\subsubsection*{Local rigidity inside a larger group}
There is a more general notion of local rigidity requiring small deformations of $\Gamma$  to arise from (not necessarily inner) automorphisms of $G$. The following is one particular example of this.

\begin{definition}
\label{def:be locally rigid in}
Assume that $G$ is a closed subgroup of $G^\dagger$. The lattice $\Gamma$ is \textbf{locally rigid in  $G^\dagger$} if sufficiently small deformations inside $\mathcal{R}(\Gamma,G)$  arise from conjugation by an element of $G^\dagger$.
\end{definition}

If $\Gamma$ is locally rigid in some larger group $G^\dagger$ then it is in particular  topologically locally rigid. Moreover, if $G$ is cocompact in $G^\dagger$ and all uniform lattices in $G^\dagger$ are known to be locally rigid then  uniform lattices in $G$ are locally rigid in $G^\dagger$ in the sense of Definition \ref{def:be locally rigid in}.

\begin{prop}
\label{pro:locally rigid in and dense projections}
Let $G_i$ be a closed and cocompact subgroup of the locally compact group $G_i^\dagger$ for $i=1,2$. Let $\Gamma$ be a uniform lattice in   $  G_1 \times G_2$   projecting densely to both factors. If $\Gamma$ is locally rigid in $ G^\dagger_1 \times G^\dagger_2$ then $\Gamma$ is locally rigid in $N_{G^\dagger_1 \times G^\dagger_2}(G_1 \times G_2)$.
\end{prop}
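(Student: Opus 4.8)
The plan is to reduce the statement to a purely group-theoretic claim about the conjugating element supplied by local rigidity in $G_1^\dagger\times G_2^\dagger$, and then to promote a one-sided containment (coming from the density of the projections) to a genuine normalization. Write $G=G_1\times G_2$ and $G^\dagger=G_1^\dagger\times G_2^\dagger$, and set $N=N_{G^\dagger}(G)$. First I would record that the normalizer splits as a product, $N=N_{G_1^\dagger}(G_1)\times N_{G_2^\dagger}(G_2)$, since $(a,b)$ normalizes $G_1\times G_2$ exactly when $aG_1a^{-1}=G_1$ and $bG_2b^{-1}=G_2$. Given a sufficiently small deformation $r\in\mathcal{R}(\Gamma,G)$, local rigidity in $G^\dagger$ (Definition \ref{def:be locally rigid in}) produces an element $g^\dagger=(g_1^\dagger,g_2^\dagger)\in G^\dagger$ with $r(\gamma)=g^\dagger\gamma(g^\dagger)^{-1}$ for every $\gamma\in\Gamma$. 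In view of the product decomposition of $N$, it suffices to prove that each $g_i^\dagger$ lies in $N_{G_i^\dagger}(G_i)$; then $r$ is realized by conjugation by the element $g^\dagger\in N$, which is exactly local rigidity in $N$.

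Next I would establish one containment from the hypothesis that $\Gamma$ projects densely to each factor. Since $r$ takes values in $G=G_1\times G_2$, for every $\gamma\in\Gamma$ the $i$-th coordinate of $g^\dagger\gamma(g^\dagger)^{-1}$ lies in $G_i$; letting $p_i$ denote the projection to $G_i^\dagger$, this reads $g_i^\dagger\,p_i(\Gamma)\,(g_i^\dagger)^{-1}\subseteq G_i$. Because conjugation by $g_i^\dagger$ is a homeomorphism of $G_i^\dagger$, because $\overline{p_i(\Gamma)}=G_i$ by hypothesis, and because $G_i$ is closed in $G_i^\dagger$, passing to closures yields
\[
g_i^\dagger G_i (g_i^\dagger)^{-1}=\overline{g_i^\dagger\,p_i(\Gamma)\,(g_i^\dagger)^{-1}}\subseteq G_i .
\]
Thus $g_i^\dagger$ compresses $G_i$ into itself.

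The heart of the matter --- and the step I expect to be the main obstacle --- is to promote this compression to equality, i.e. to show $H_i:=g_i^\dagger G_i(g_i^\dagger)^{-1}=G_i$; note that the density argument is genuinely one-sided, as nothing so far forces $(g_i^\dagger)^{-1}G_i g_i^\dagger\subseteq G_i$. Here I would use cocompactness decisively. Writing $G_i^\dagger=KG_i$ with $K$ compact, factor $g_i^\dagger=k\gamma_0$ with $k\in K$ and $\gamma_0\in G_i$; then $H_i=kG_ik^{-1}$ and $g_i^\dagger\in N_{G_i^\dagger}(G_i)$ if and only if $k\in N_{G_i^\dagger}(G_i)$, so one may assume the conjugator is $k\in K$. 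Being a conjugate of the cocompact $G_i$, the subgroup $H_i=kG_ik^{-1}$ is cocompact in $G_i^\dagger$, hence cocompact in $G_i$. I would then prove the lemma: a closed cocompact subgroup of $G_i^\dagger$ conjugate to $G_i$ and contained in $G_i$ must equal $G_i$. The mechanism I have in mind is a recurrence argument. The compression $kG_i\subseteq G_ik$ propagates to all powers as $k^{n}G_i k^{-n}\subseteq G_i$ for $n\ge 0$; if the powers of $k$ return arbitrarily close to the identity, say $k^{m_l}\to e$ with $m_l\to\infty$, then for a fixed $g\in G_i$ the elements $k^{m_l-1}g\,k^{-(m_l-1)}\in G_i$ converge to $k^{-1}g k$, so closedness of $G_i$ gives the reverse inclusion $(g_i^\dagger)^{-1}G_i g_i^\dagger\subseteq G_i$, whence $H_i=G_i$ and $g_i^\dagger\in N_{G_i^\dagger}(G_i)$.

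Finally I would assemble the pieces: equality in both factors gives $g^\dagger\in N$, so every sufficiently small deformation of $\Gamma$ is conjugation by an element of $N=N_{G^\dagger}(G)$, which is precisely local rigidity in $N$. The delicate point to make rigorous is the recurrence, equivalently the relative compactness of $\langle k\rangle$ after the reduction into $K$; this is where I expect the real work to lie, since membership in a compact set does not by itself make a cyclic group precompact. A useful companion, available because $G^\dagger$ and each $G_i$ contain the cocompact lattice $\Gamma$ and are therefore unimodular, is that conjugation preserves Haar measure, so $H_i$ has the same covolume in $G_i^\dagger$ as $G_i$; this is a strong constraint on a cocompact subgroup sitting inside $G_i$, and I would expect to play it against the recurrence to force $H_i=G_i$.
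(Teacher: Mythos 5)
Your first half is sound and coincides with the paper's own reduction: the splitting $N_{G^\dagger}(G)=N_{G_1^\dagger}(G_1)\times N_{G_2^\dagger}(G_2)$, and the use of density of the projections together with closedness of $G_i$ to obtain the one-sided compression $g_i^\dagger G_i (g_i^\dagger)^{-1}\subseteq G_i$, are both correct (the paper runs this for $G=G_1\times G_2$ at once, so it does not even need the splitting of the normalizer). The genuine gap is exactly where you flag it, and it is not merely ``delicate'': the recurrence $k^{m_l}\to e$ is false in general, and nothing in your reduction supplies it. Placing $g_i^\dagger=k\gamma_0$ with $k\in K$ constrains the single element $k$, not its powers: $k^n$ lies in $KG_i$ with the $G_i$-coordinate typically escaping, so $\langle k\rangle$ need not be relatively compact. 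The simplest illustration is $G_i^\dagger=\RR$, $G_i=\ZZ$, $K=[0,1]$, $k=1/2$: all your hypotheses after the reduction hold, yet no subsequence of $\{k^n\}$ returns to $0$, so your engine produces nothing (here the conclusion is of course trivial, which shows the mechanism is proving the wrong thing). The fallback via covolume also rests on a false premise: $\Gamma$ projects \emph{densely}, not discretely, to $G_i$, so $G_i$ contains no copy of $\Gamma$ as a lattice; moreover a closed cocompact subgroup of a unimodular group need not be unimodular nor carry an invariant measure on the quotient --- e.g.\ the upper-triangular subgroup $P\le \mathrm{SL}_2(\RR)$ is closed and cocompact but $\mathrm{SL}_2(\RR)/P$ admits no invariant measure, so ``equal covolume'' is not available as a constraint.

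What closes the gap in the paper is compactness in the Chabauty space rather than recurrence of a single element. Since $G$ is cocompact in $G^\dagger$, the map $x\mapsto xGx^{-1}$ factors through the compact space $G^\dagger/G$, so the $G^\dagger$-conjugacy class of $G$ is a compact subset of $\Sub{G^\dagger}$. Containment is a closed (semicontinuous) partial order on $\Sub{G^\dagger}$, and a nonempty compact partially ordered space admits minimal and maximal elements (the paper cites Ward); since conjugation acts transitively on the conjugacy class and preserves inclusion, \emph{every} element of the class is both minimal and maximal, and therefore the compression $g^{-1}Gg\subseteq G$ forces $g^{-1}Gg=G$, i.e.\ $g\in N_{G^\dagger}(G)$. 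If you try to salvage your set-up along these lines you are led to the same argument: writing $k^nG_ik^{-n}=k_nG_ik_n^{-1}$ with $k_n\in K$ shows the decreasing chain of conjugates stays in a Chabauty-compact family, but converting that into equality is precisely the order-theoretic step above, not a statement about $k^{m_l}\to e$.
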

\begin{proof}
To simplify notation let
$$ G = G_1 \times G_2, \quad G^\dagger = G^\dagger_1 \times G^\dagger_2 .$$
Consider a deformation $r \in \mathcal{R}(\Gamma,G)$ which is sufficiently close to the inclusion    so that $r(\gamma) = \gamma^g$ for some $g = (g_1, g_2) \in G_1^\dagger \times G_2^\dagger = G^\dagger$ and all $\gamma \in \Gamma$. We have that
$$ \overline{\mathrm{proj}_i(r(\Gamma))} = \overline{\mathrm{proj}_i(g^{-1} \Gamma g)} = g_i^{-1} \overline{\mathrm{proj}_i(\Gamma)} g_i = g_i^{-1} G_i g_i $$
On the other hand $r(\Gamma) \subset G$ and therefore $\overline{\mathrm{proj}_i(r(\Gamma))} \le G_i$. This means  that $g  \in G ^\dagger$ conjugates $G $ to a subgroup of itself. 

Consider the Chabauty space $\Sub{G^\dagger}$ with the natural containment partial order. This partial order is semicontinuous on $\Sub{G^\dagger}$ in the sense that given $H_1 \le H_2$  there are Chabauty open neighborhoods $H_1 \in \Omega_1$ and $H_2 \in \Omega_2$ so that
$$ H'_1 \le H_2 \quad \forall H'_1 \in \Omega_1 \quad \text{and} \quad H_1 \le H'_2 \quad \forall H'_2\in\Omega_2 $$
The $G^\dagger$-conjugacy class of $G$ is a compact subset of the Chabauty space $\Sub{G ^\dagger}$ and therefore it admits minimal and maximal elements with respect to containment \cite{ward1954partially}. In particular, every group in the $G^\dagger$-conjugacy class of $G$ is both minimal and maximal  so that  $g^{-1} G g = G$ and  $g \in N_{G^\dagger}(G)$ as required.
\end{proof}

%We have no occasion to use this generalization and local rigidity will always be understood in the strict sense of Definition \ref{def:local rigidity}.

%There is a certain connection between the two notions of Definitions \ref{def:topological %local rigidity} and \ref{def:local rigidity} for a uniform lattice and a %normal subgroup of finite index.

\subsubsection*{Local rigidity and finite index subgroups}

In certain situations, it is convenient to replace the group $\Gamma$ by a finite index subgroup having additional properties. The following lemma allows this procedure.

%between a lattice 
%There is a certain relation between local rigidity for a lattice 

\begin{lemma}
\label{lem:local rigidity and finite index}
Let $G$ be a topological group, $\Gamma \le G$ a finitely generated lattice and $\Delta \nrm \Gamma$  a normal subgroup of finite index. Assume that $\centralizer{G}{\Delta}$ is trivial. \(  \)If $\Delta$ is locally rigid then $\Gamma$ is locally rigid as well.
\end{lemma}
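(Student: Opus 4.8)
The plan is to transfer local rigidity from $\Delta$ up to $\Gamma$ by first extracting a conjugating element from the restricted deformation, and then using the normality of $\Delta$ in $\Gamma$ together with the triviality of the centralizer to show that this very same element conjugates all of $\Gamma$.

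First I would observe that, $\Delta$ being of finite index in the finitely generated lattice $\Gamma$, it is itself a finitely generated lattice, so the hypothesis that $\Delta$ is locally rigid is meaningful. The restriction map $\mathcal{R}(\Gamma,G) \to \mathcal{R}(\Delta,G)$, $r \mapsto r|_\Delta$, is continuous for the pointwise convergence topologies, since convergence on all of $\Gamma$ restricts to convergence on $\Delta$. Hence the preimage under restriction of the local rigidity neighborhood of $r_0|_\Delta$ furnished by the hypothesis on $\Delta$ is an open neighborhood $\mathcal{U}$ of $r_0$ in $\mathcal{R}(\Gamma,G)$. I would then fix an arbitrary $r \in \mathcal{U}$; by local rigidity of $\Delta$ there is some $g \in G$ with $r(\delta) = \delta^g$ for all $\delta \in \Delta$.

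Next I would show that this same $g$ works for all of $\Gamma$. Fix $\gamma \in \Gamma$. For every $\delta \in \Delta$, normality gives $\gamma\delta\gamma^{-1} \in \Delta$, so applying $r$ and using both the homomorphism property and $r|_\Delta = (\,\cdot\,)^g$ yields
$$ r(\gamma)\,\delta^g\,r(\gamma)^{-1} = r(\gamma\delta\gamma^{-1}) = (\gamma\delta\gamma^{-1})^g . $$
Writing $t = g\,r(\gamma)\,g^{-1}$ and simplifying, this rearranges to $t\delta t^{-1} = \gamma\delta\gamma^{-1}$ for all $\delta \in \Delta$, that is, the element $\gamma^{-1}t$ lies in $\centralizer{G}{\Delta}$. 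By the hypothesis $\centralizer{G}{\Delta} = \{e\}$ we conclude $t = \gamma$, i.e.\ $r(\gamma) = \gamma^g$. Since $\gamma$ was arbitrary, $r$ is conjugation by $g$; as $r \in \mathcal{U}$ was arbitrary, every deformation in $\mathcal{U}$ is given by conjugation, proving $\Gamma$ locally rigid.

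The conceptual heart of the argument, and the step requiring care, is this last one: a priori the conjugating element supplied by local rigidity of $\Delta$ is attached only to $\Delta$ and might be incompatible with the values $r(\gamma)$ for $\gamma \notin \Delta$. The triviality of $\centralizer{G}{\Delta}$ is exactly what rigidifies the situation, forcing the element of $G$ that implements a given conjugation action on $\Delta$ to be unique; and normality of $\Delta$ in $\Gamma$ is what permits comparing the conjugation action of $r(\gamma)$ with that of $\gamma$ on $\Delta$ in the first place. I expect no further obstacle, as the continuity of restriction and the descent to a finitely generated lattice are routine.
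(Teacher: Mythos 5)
Your proof is correct and follows essentially the same route as the paper's: restrict the deformation to $\Delta$, invoke local rigidity of $\Delta$ to obtain a single conjugating element $g$, then use normality of $\Delta$ in $\Gamma$ together with triviality of the centralizer to force $r(\gamma)=\gamma^g$ for every $\gamma\in\Gamma$. The only cosmetic difference is that the paper places the comparison element in $\centralizer{G}{r(\Delta)}=\centralizer{G}{\Delta}^g$ (and invokes a finite generating set of $\Delta$, which is not actually needed), whereas you conjugate back so as to land in $\centralizer{G}{\Delta}$ itself.
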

\begin{proof}
Let $r \in \mathcal{R}(\Gamma,G)$ be a sufficiently small deformation of $\Gamma$ so that  $r(\delta) = g^{-1}\delta g$ for some $g \in G$ and all $\delta \in \Delta$. 
For every pair of elements 
$\gamma \in \Gamma$ and $\delta \in \Delta$ we have
$$ r(\delta)^{r(\gamma)} = r(\delta ^ \gamma) = \delta^{\gamma g} = r(\delta)^{g^{-1} \gamma g}$$
The group $\Delta$ is finitely generated, being a finite index subgroup of $\Gamma$. Applying the above argument with $\delta$ ranging over a finite generating set for $\Delta$ gives
$$ g^{-1} \gamma g r(\gamma)^{-1} \in \centralizer{G}{r(\Delta)} = \centralizer{G}{\Delta}^g = \{e\}$$
which means that $\gamma^g = r(\gamma)$ for all $\gamma \in \Gamma$ as required.
\end{proof}

%Of course, if one does not insist that $\Gamma$ is a uniform lattice, this assumption can be replaced by saying that $\Delta$ is finitely generated.

\subsection{The Chabauty topology}
\label{sub:The Chabauty topology}

There is another related viewpoint on local rigidity, expressed in terms of the Chabauty space of closed subgroups.

\begin{definition}
        \label{def:the Chabauty topology}
        Given a locally compact group $G$ let $\Sub{G}$ denote the space  of closed subgroups of $G$ equipped with the \textbf{Chabauty topology} generated by the sub-basis consisting of
        \begin{itemize}
                \item $\mathcal{O}_1(U) = \{ \text{$H \le G$ closed } \: : \: H \cap U \neq \emptyset \} $ for every open subset $U \subset G$,
                \item $\mathcal{O}_2(K) = \{ \text{$H \le G$ closed } \: : \: H \cap K = \emptyset \} $ for every compact subset $K \subset G$.
        \end{itemize}
\end{definition}

The space $\Sub{G}$ is Chabauty compact. A reference to the Chabauty topology is \cite[VIII.$\S$5]{bourbaki2004integration}. One major difference between the representation space and the Chabauty topology is that the latter only takes into account the embedding of a  given subgroup as a closed subspace of $G$, a priori irrespective of the algebraic structure of that subgroup.

Let $\Gamma \le G$ be a uniform lattice. The following two notions are  variants of Definitions \ref{def:local topological rigidity} and \ref{def:local rigidity} respectively, given in terms of the Chabauty topology.

\begin{definition}
\label{def:Chabauty locally rigid}
$\Gamma$ is \textbf{Chabauty locally rigid} if the point $\Gamma \in \Sub{G}$ admits a Chabauty neighborhood consisting of uniform lattices isomorphic to $\Gamma$.
\end{definition}

\begin{definition}
\label{def:open conjugacy class}
$\Gamma $ has the  \textbf{open conjugacy class (OCC)} property  if the point $\Gamma \in \Sub{G}$ admits a Chabauty neighborhood consisting of conjugates of  $\Gamma$.
\end{definition}

The term OCC was recently coined in \cite{glasner}.

\subsubsection*{The image of a deformation}

Clearly, given any homomorphism $r : \Gamma \to G$ of some  group $\Gamma$ into a topological group $G$ the closure of $r(\Gamma)$ is a closed subgroup of $G$. 

\begin{definition}
        \label{def:the Chabauty map}
        Let $\Gamma$ be a discrete group and $G$ a locally compact group. The closure of the image of a representation is a well-defined map
        $$ \mathrm{C} : \mathcal{R}(\Gamma,G) \to \Sub{G}, \quad \mathrm{C}(r) = \overline{r(\Gamma)} .$$
\end{definition}

The map $\mathrm{C}$ relates the representation space of $\Gamma$ in $G$ to the Chabauty space of $G$. In fact, we show in \S\ref{sub:continuity of C} that $\mathrm{C}$ is continuous at the point $r_0$  corresponding to the inclusion $\Gamma \hookrightarrow G$ whenever $G$ is compactly generated and $\Gamma$ is uniform. In this sense Chabauty local rigidity is a stronger notion than  topological  local rigidity.

On the other hand, observe that if the map $\mathrm{C}$ is known to be open then  topological local rigidity implies Chabauty local rigidity, and likewise local rigidity implies the OCC property. This is the path taken in \S\ref{sub:opennes of C} towards Theorem \ref{thm:Chaubuty local rigidity}.

%Moreover we show that the covolume and the construction of an invariant random subgroup (see \S\ref{sub:invariant random subgroups} below) are both continuous in a neighborhood of a uniform lattice.

\section{Geometric preliminaries}
\label{sec:preliminaries}

{\it Our research on  topological local rigidity has benefited greatly from the ideas and excellent exposition of the new book \cite{cor_har} by Cornulier and de la Harpe.} 

\medskip

In this section we collect from \cite{cor_har,BrHa} a toolkit of geometric notions needed for the proof of Theorem \ref{thm:local rigidity for lc groups}. The central theme is the construction of a geometric action of  $G$ on the Rips complex,  which is a certain simplicial complex associated to a choice of a metric on $G$. The notion of compact presentation and its implication to the simple connectedness of the Rips complex is discussed. In addition we discuss  geometric actions and length spaces.

\subsection{Geometric actions}
\label{sub:geometric actions}

Let $G$ be a topological group acting on a pseudo-metric space $(X,d)$. We emphasize that the action need not be continuous.

\begin{definition}
        The action of $G$ on $X$ is:
        \label{def:properties of actions}
        \begin{enumerate}
                \item \textbf{cobounded} if there exists a bounded $F \subset X$ with $GF = X$.
                \item \textbf{locally bounded} if $\forall g \in G, x\in X$ there exists a neighborhood $g \in V$ with $Vx$ bounded.
                \item \textbf{metrically proper} if $\forall x\in X$ the subset $\{g \in G \, : \, d(gx,x) \le R\}$ is relatively compact for all $R > 0$.
                \item \textbf{geometric} if it is isometric, metrically proper, locally bounded and cobounded.
        \end{enumerate}
\end{definition}

For example, if $G$ is compactly generated and $d$ is a left invariant, proper and locally bounded\footnote{A pseudo-metric $d$ is \emph{locally bounded} if every point has a neighborhood of bounded diameter. A left invariant, proper and locally-bounded pseudo-metric on a group is called \emph{adapted} in \cite{cor_har}.} pseudo-metric then the action of $G$ on $(G,d)$ is geometric. 

\begin{prop}
        \label{pro:a cocompact subgroup is acting geometrically}
        Let $G$ be a compactly generated locally compact group and $H \le G$ be a closed cocompact subgroup. If $G$  acts on $X$ geometrically then so does $H$.
\end{prop}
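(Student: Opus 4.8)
The plan is to check that the restricted $H$-action on $X$ satisfies each clause of Definition \ref{def:properties of actions}. The isometry property is immediate, since it already holds for all of $G$. For local boundedness, given $h \in H$ and $x \in X$, I would choose by hypothesis a $G$-neighborhood $V$ of $h$ with $Vx$ bounded; then $V \cap H$ is an $H$-neighborhood of $h$ and $(V \cap H)x \subseteq Vx$ is bounded. For metric properness, fix $x \in X$ and $R > 0$ and let $S = \{g \in G : d(gx,x) \le R\}$, which is relatively compact in $G$ by hypothesis. Since $H$ is closed in $G$, the set $\{h \in H : d(hx,x) \le R\} = H \cap S$ has closure contained in $H \cap \overline{S}$, a compact subset of $H$; hence it is relatively compact in $H$ as well.

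The only substantial point is coboundedness. Fix a bounded set $F_0 \subseteq X$ with $GF_0 = X$, which exists because the $G$-action is cobounded. Since $H$ is closed and cocompact the quotient $H \backslash G$ is compact, so there is a compact set $K \subseteq G$ with $G = HK$. Put $F = KF_0$. Then
$$ HF = HKF_0 = GF_0 = X, $$
so it remains only to show that $F$ is bounded.

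To bound $F = KF_0$, fix a basepoint $x_0 \in F_0$ (the case $F_0 = \emptyset$ being trivial). Because the action need not be continuous, the boundedness of $Kx_0$ does not follow from compactness of $K$ alone; instead I would invoke local boundedness, covering $K$ by finitely many open sets $V_1, \dots, V_n$ with each $V_i x_0$ bounded, so that $K x_0 \subseteq \bigcup_i V_i x_0$ is bounded, say $d(kx_0, x_0) \le C$ for all $k \in K$. Then for $k \in K$ and $f \in F_0$ the isometry property gives
$$ d(kf, x_0) \le d(kf, kx_0) + d(kx_0, x_0) = d(f, x_0) + d(kx_0, x_0) \le \mathrm{diam}(F_0) + C, $$
which is finite and independent of $k$ and $f$. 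Hence $F$ is bounded, completing the verification that the $H$-action is geometric. The main obstacle is precisely this last step: since the action is merely isometric and locally bounded rather than continuous, controlling the orbit $Kx_0$ of the compact set $K$ requires the finite subcover argument rather than a naive continuity-and-compactness appeal.
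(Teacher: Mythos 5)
Your proof is correct and follows essentially the same route as the paper: coboundedness is the only substantive point, handled identically by writing $G = HK$ with $K$ compact, noting $HKF_0 = X$, and bounding $Kx_0$ via local boundedness plus a finite subcover of $K$. Your additional details (the explicit verification of properness via closedness of $H$, and the triangle-inequality step bounding $KF_0$ from $Kx_0$) merely spell out what the paper leaves implicit.
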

\begin{proof}
        The only property that needs to be checked is coboundedness. There is a compact subset $K \subset G$ with $G =H K$ and a bounded subset $F \subset X$ with $GF = X$. Therefore $HKF = X$ and it remains to show that $KF \subset X$ is bounded.
        
        Pick a point $x_0 \in X$. Then for any $k \in K$ there is a neighborhood $k \in V_k$ with $V_k x_0$ bounded. As $K$ is compact, this implies that $Kx_0$ is bounded, and so $KF$ is bounded as well.
        
\end{proof}

\begin{lemma}
        \label{lem:condition for abstract subgroup with geometric action to be discrete}
        Let $G$ be a locally compact group admitting a geometric action on $(X,d)$. Let $H \le G$ be a subgroup considered abstractly with the discrete topology. If the restricted action of $H$ on $(X,d)$ is geometric then $H$ is a uniform lattice in $G$.
\end{lemma}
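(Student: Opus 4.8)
The plan is to establish the two defining properties of a uniform lattice in turn: discreteness of $H$ as a subgroup of $G$, and cocompactness. First I would unwind what geometricity means for the restricted action once $H$ is given the discrete topology. Local boundedness becomes vacuous, since for each $h \in H$ the singleton $\{h\}$ is already a neighborhood and $\{h\}x = \{hx\}$ is bounded. Metric properness, by contrast, becomes a genuine finiteness statement: relatively compact subsets of a discrete group are finite, so for every $x \in X$ and $R > 0$ the displacement set $\{h \in H : d(hx,x) \le R\}$ is finite. The last piece of data is coboundedness, giving a bounded $F \subset X$ with $HF = X$. I fix a basepoint $x_0 \in X$ once and for all.

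For discreteness the key is to use local boundedness of the ambient $G$-action rather than any continuity of the action, which is deliberately not assumed. Applying local boundedness at $g = e$ and $x = x_0$ produces a neighborhood $e \in V \subseteq G$ with $Vx_0$ bounded; since $x_0 \in Vx_0$, there is $R > 0$ with $d(vx_0, x_0) \le R$ for all $v \in V$. Consequently $H \cap V$ is contained in the finite set $\{h \in H : d(hx_0,x_0) \le R\}$. A finite subset of the Hausdorff group $G$ containing $e$ can be separated from its other points, so I may shrink $V$ to a neighborhood $W$ of $e$ with $W \cap H = \{e\}$, which is exactly discreteness of $H$.

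For cocompactness I would transfer coboundedness from $X$ to the orbit $Hx_0$ and then pull it back into $G$ using properness. Writing $\rho := \sup_{f \in F} d(f, x_0) < \infty$, the relation $HF = X$ together with isometry gives $d(y, Hx_0) \le \rho$ for every $y \in X$. Given an arbitrary $g \in G$, the point $gx_0$ lies within $\rho$ of some $hx_0$ with $h \in H$, whence $d(h^{-1}gx_0, x_0) \le \rho$ and $h^{-1}g \in K := \{g' \in G : d(g'x_0, x_0) \le \rho\}$. Metric properness of the $G$-action makes $K$ relatively compact, so $g \in H\overline{K}$ shows $G = H\overline{K}$ with $\overline{K}$ compact. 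Together with discreteness this presents $H$ as a uniform lattice.

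I do not anticipate a deep obstacle; the subtlety to watch is the interaction between the discrete topology on $H$ and the fact that the $G$-action on $X$ need not be continuous. Discreteness must be extracted from local boundedness of $G$ combined with the finiteness encoded in properness of the $H$-action, and not from continuity of any orbit map, which is unavailable in this setting.
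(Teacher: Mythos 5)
Your proof is correct and follows essentially the same route as the paper's: discreteness is extracted from local boundedness of the $G$-action combined with the finiteness forced by metric properness of the discrete $H$-action, and cocompactness comes from pulling the cobounded $H$-orbit back into $G$ via metric properness of the $G$-action. The only difference is cosmetic — you argue discreteness directly via finiteness and Hausdorff separation, whereas the paper argues by contradiction with a sequence $h_n \to e$ (your version has the minor advantage of not needing first countability of $G$).
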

\begin{proof}
        If $H$ was not discrete in the topology induced from $G$, there would be a sequence $h_n \in H$ with $h_n\neq e, h_n \to e \in G$. Pick a point $x_0 \in X$. As the $G$-action is locally bounded, there is an open neighborhood $V \subset G$ containing $e$ such that $Vx_0$ is bounded. In particular the set $\{h_n x_0\}_{n\in \NN} \subset X$ is bounded in contrast with assumption that the action of $H$ is metrically proper.
        
        $H$ is acting coboundedly on $X$, and so there exists a bounded subset $F \subset X$ with $H F = X$. Pick a point $x_0 \in X$ and let $K = \overline{\{g \in G \: : \: gx_0 \in F\}}$. Because the action of $G$ is  metrically proper $K$ is compact. Note that $K H = G$.
\end{proof}

Recall the following coarse variant of connectedness  \cite[3.B]{cor_har}.   

\begin{definition}
\label{def:coarsely connected}
A pseudo-metric space $(X,d)$  is \textbf{coarsely connected} if $\exists c > 0$ such that $\forall x,y \in X$ there is a sequence $x = x_0, x_1, \ldots, x_n = y$ of points $x_i \in X$ with $d(x_i, x_{i+1}) < c$.
\end{definition}

The following two propositions are a first step towards the study of Chabauty neighborhoods of uniform lattices.

\begin{prop}
\label{prop:acting cocompactly is open for finitely generated}
Let $G$ be a locally compact group admitting a continuous geometric action on a coarsely connected pseudo-metric space $(X,d)$. Let $\Gamma$ be a uniform 
lattice in $G$. Then $\Gamma$ admits a Chabauty neighborhood consisting of subgroups whose action on $X$ is cobounded. 
\end{prop}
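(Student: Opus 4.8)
The plan is to show that the property ``acts coboundedly on $X$'' is an open condition in the Chabauty topology near a uniform lattice $\Gamma$. Since $\Gamma$ acts coboundedly (being a uniform lattice in $G$, which acts geometrically), there is a bounded set $F \subset X$ with $\Gamma F = X$. The key idea is that coboundedness is detected by finitely much information: I want to find a finite subset $S \subset \Gamma$ and a bounded region in $X$ such that any closed subgroup $H$ which is Chabauty-close to $\Gamma$ contains elements mimicking the behaviour of $S$, and this is enough to fill out $X$ coarsely. The coarse connectedness of $X$ is what converts ``coarsely cobounded'' into genuinely cobounded.

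Concretely, first I would fix a basepoint $x_0 \in X$ and choose a bounded set $F$ with $\Gamma F = X$; by coboundedness I may take $F$ of the form $B(x_0, r)$ for some radius $r$, enlarged using local boundedness and compactness so that $\{g \in G : g x_0 \in F\}$ has compact closure. Using the coarse connectedness constant $c$, I would select a finite symmetric subset $S = \{\gamma_1, \dots, \gamma_k\} \subset \Gamma$ together with a compact identity neighborhood, such that the $\Gamma$-translates of $B(x_0, r)$ by products of elements of $S$ already cover any fixed large ball, and such that each $\gamma_i$ moves $x_0$ within a controlled distance. The Chabauty sub-basic open sets $\mathcal{O}_1(U)$ let me demand that a nearby subgroup $H$ meets small neighborhoods $\gamma_i V$ of each $\gamma_i$; by the continuity and local boundedness of the $G$-action, the corresponding elements $h_i \in H$ satisfy $d(h_i x_0, \gamma_i x_0)$ small. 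Thus $H$ contains approximate copies of the generating pieces of $\Gamma$.

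The main step is then a propagation or ``chaining'' argument: given any target point $y \in X$, coboundedness of $\Gamma$ provides a $\Gamma$-word carrying $x_0$ into $B(y,r)$, and coarse connectedness guarantees this can be realized through steps of bounded size. Replacing each $\Gamma$-letter by the nearby $H$-element $h_i$, and controlling the accumulated error using that the $G$-action is isometric (so $d(h w x_0, \gamma w x_0) \le d(h x_0, \gamma x_0)$ propagates additively along the word), I conclude that $H x_0$ is $R$-dense in $X$ for some fixed $R$ independent of $y$. Hence $H$ acts coboundedly with bounded set $B(x_0, R)$, giving the desired Chabauty neighborhood of $\Gamma$.

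I expect the main obstacle to be the error control in the chaining argument: naively, replacing each of the up to $n$ letters of a $\Gamma$-word by an approximate $H$-element accumulates an error proportional to $n$, which is unbounded as $y$ ranges over $X$, so one cannot directly conclude $R$-density. The resolution I would pursue is to avoid long words altogether by exploiting coboundedness of $\Gamma$ more cleverly: rather than chaining, I would cover the relevant scale using a single $\Gamma$-translate plus one short correction, or reformulate so that near each point of $X$ the nearby subgroup $H$ has a translate of $F$ passing within a uniformly bounded distance, using that the finite data $S$ and the bounded $F$ together witness coboundedness at a fixed coarse scale. Making this uniformity precise — so that the covering radius $R$ does not depend on the point $y$ — is the crux, and I anticipate it is exactly where the hypothesis that the action is \emph{continuous} (not merely geometric) is essential.
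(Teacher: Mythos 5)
Your setup matches the paper's: a finite subset of $\Gamma$, a Chabauty neighborhood built from sub-basic sets $\mathcal{O}_1(\sigma U)$, and a chaining argument along a $c$-chain supplied by coarse connectedness. You also correctly identify the central difficulty: naively replacing each letter of a long $\Gamma$-word by a nearby element of $H$ accumulates an error linear in the word length. But precisely at this point your proposal has a genuine gap: the fix you suggest (avoid chaining; cover each point by ``a single $\Gamma$-translate plus one short correction'') cannot work, because a Chabauty neighborhood of $\Gamma$ only constrains $H$ near finitely many elements of $\Gamma$, namely through membership in finitely many sets $\mathcal{O}_1(\sigma U)$. For a point $y$ far from the basepoint, the element $\gamma \in \Gamma$ with $y \in \gamma F$ lies outside that finite set, and nothing forces $H$ to contain anything close to $\gamma$; so the one-translate strategy is exactly as uncontrolled as the naive approximation, and chaining is unavoidable.

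The paper's resolution keeps the chaining but makes the induction hypothesis \emph{exact} rather than approximate, so that no error accumulates. Set $B_j = B_x(jR)$ for $j=1,2,3$, where $\Gamma B_1 = X$ and $R > c$; let $\Sigma = \{\gamma \in \Gamma : \gamma B_1 \cap B_3 \neq \emptyset\}$, which is finite by properness and discreteness, and use continuity of the action \emph{only} to choose a symmetric identity neighborhood $U$ with $U B_1 \subset B_2$. If $H \in \bigcap_{\sigma \in \Sigma} \mathcal{O}_1(\sigma U)$, pick $h_\sigma \in H \cap \sigma U$; then $B_3 \subset \Sigma B_1 \subset \bigcup_{\sigma \in \Sigma} h_\sigma B_2$, i.e.\ the approximation error is absorbed once and for all in the enlargement of $B_1$ to $B_2$. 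Now induct along the $c$-chain $x = x_0, x_1, \ldots, x_n = y$ with the exact statement $x_i \in H_0 B_2$, where $H_0 = \langle h_\sigma \rangle_{\sigma \in \Sigma}$: if $x_{i-1} \in h B_2$ with $h \in H_0$, then $d(x_i, hx) \le c + 2R \le 3R$, so $x_i \in h B_3 \subset \bigcup_{\sigma} h h_\sigma B_2$, and hence $x_i \in h h_\sigma B_2$ with $h h_\sigma$ again an \emph{exact} element of $H_0$. This re-anchoring at every step through the group structure of $H_0$ --- not any further use of continuity, which is where you located the crux --- is what makes the covering radius $2R$ uniform in $y$; it is the idea missing from your proposal.
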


%\emph{ Some arguments  subsection were established in a conversation with Nir Lazarovich, to whom we extend our gratitude.} 

\begin{proof}
The lattice $\Gamma$ is acting on $X$ geometrically according to Proposition \ref{pro:a cocompact subgroup is acting geometrically}. In particular, 
 there is a point  $x \in X$ and a radius $R > 0$ so that $\Gamma  B_1 = X$ for $B_1 = B_{x}(R)$. We may assume without loss of generality that $X$ is $c$-coarsely connected for some $c >0$ and that $R > c $.
Denote $ B_2 = B_{x}(2R)$ and $ B_3 = B_{x}(3R)$. Let $\Sigma = \{\gamma \in \Gamma \: : \: \gamma B_1 \cap B_3 \neq \emptyset \} $ so that $B_3 \subset \Sigma B_1$. 
Since the action of $\Gamma$  on $X$ is metrically proper and the subgroup $\Gamma$ is discrete the set    $\Sigma$ is finite. 

The continuity of the action of $G$  on $X$ implies that  there is a symmetric identity neighborhood  $U \subset \Is{X}$ so that $U  B_1 \subset  B_2$. In particular 
$$ B_3 \subset \bigcup_{\sigma \in \Sigma} \sigma u_\sigma B_{2} $$
for every choice of elements $u_\sigma \in U$ with $\sigma \in \Sigma$.
We claim that the Chabauty neighborhood 
$$ \Omega = \bigcap_{\sigma \in \Sigma} \mathcal{O}_1(\sigma U) $$
of $\Gamma$ is as required.  Consider any closed subgroup  $H \in \Omega$. There are elements $h_\sigma \in H \cap \sigma U $ for every $ \sigma \in \Sigma$. We will show that in fact $H_0 B_2 = X$  so that already  the finitely generated subgroup  $ H_0 = \left<h_\sigma\right>_{\sigma \in \Sigma}$ of $H$ is   acting coboundedly on $X$.

Let $y \in X$ be any point. The $c$-coarse connectedness of $X$ allows us to find a sequence of points $x_0, x_1, \ldots, x_n$ for some $n \in \NN$, where $x_0 = x, x_n = y$ and $d(x_i,x_{i-1}) < c$ for every $ 1 \le i \le n$. Clearly $x_0 \in H_0 B_2$. Arguing by induction assume that $x_{i-1} \in h B_2$ for some $1 \le i \le n$ and $h \in H_0$. Therefore 
$$ d(x_i,h x) \le d(x_i,x_{i-1}) + d(x_{i-1}, hx) \le c + 2R \le 3R $$
In other words $x_i \in h B_3$. Note that $B_3 \subset \bigcup_{\sigma \in \Sigma} h_\sigma B_2$ so that $x_i \in h h_\sigma B_2$ for some $\sigma \in \Sigma$. In particular $x_i \in H_0 B_2$.  We conclude by mathematical induction.
\end{proof}

\begin{prop}
\label{prop:condition for a uniform lattice to admit a Chabauty neighborhood of cocompact subgroups}
Let $G$ be a locally compact group and $\Gamma$ a uniform lattice in $G$. Then $\Gamma$ admits a Chabauty neighborhood consisting of cocompact subgroups if and only if $G$ is compactly generated.
\end{prop}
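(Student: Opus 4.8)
The plan is to establish the two implications separately, leaning on the machinery already developed in this subsection for the forward direction and on an exhaustion-by-open-subgroups argument for the converse.

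For the ``if'' direction, suppose $G$ is compactly generated. Then $G$ admits a continuous geometric action by isometries on a coarsely connected space --- for instance on the Rips complex $\Rips$ associated to an adapted pseudo-metric, whose construction is recalled in this section (a Cayley--Abels graph would serve equally well). Proposition \ref{prop:acting cocompactly is open for finitely generated} then hands us a Chabauty neighborhood $\Omega$ of $\Gamma$ all of whose members act coboundedly on this space, so it remains to observe that a \emph{closed} subgroup acting coboundedly is automatically cocompact. This is the computation underlying Lemma \ref{lem:condition for abstract subgroup with geometric action to be discrete}: if $H \le G$ is closed with $HF = X$ for some bounded $F \subset B_x(R)$, then for each $g \in G$ we may write $g\cdot x = h \cdot f$ with $h \in H$ and $f \in F$, whence $d((h^{-1}g)\cdot x, x) \le R$; metric properness of the $G$-action makes $L = \{k \in G : d(k\cdot x,x)\le R\}$ relatively compact, and $G = H\overline{L}$ exhibits $H$ as cocompact. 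Consequently every $H \in \Omega$ is cocompact, as required.

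For the converse I argue by contraposition: assuming $G$ is \emph{not} compactly generated, I will show that every Chabauty neighborhood of $\Gamma$ contains a non-cocompact subgroup. Write $G$ as the directed union of its open compactly generated subgroups $\{O_\alpha\}$ and set $\Gamma_\alpha = \Gamma \cap O_\alpha$. Each $\Gamma_\alpha$ is discrete, hence a closed subgroup and a point of $\Sub{G}$, and the net $(\Gamma_\alpha)$ Chabauty-converges to $\Gamma$: any sub-basic constraint $\mathcal{O}_1(U)$ satisfied by $\Gamma$ is witnessed by some $\gamma \in \Gamma \cap U$, which lies in $O_\alpha$ eventually, while every constraint $\mathcal{O}_2(K)$ satisfied by $\Gamma$ passes to the subgroups $\Gamma_\alpha \subseteq \Gamma$ automatically. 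Finally, no $\Gamma_\alpha$ is cocompact: if $G = \Gamma_\alpha K$ for compact $K$, then $G = O_\alpha K$ as well, so the open, compactly generated subgroup $O_\alpha$ would be cocompact; generating $O_\alpha$ by a compact set $C$ and noting $\langle C \cup K\rangle \supseteq O_\alpha K = G$ would then force $G$ to be compactly generated, a contradiction. Thus every Chabauty neighborhood of $\Gamma$ meets the non-cocompact subgroups $\Gamma_\alpha$, completing the contrapositive.

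The forward direction is essentially a corollary of Proposition \ref{prop:acting cocompactly is open for finitely generated} once the cobounded-implies-cocompact step is in place, so I expect the main work to lie in the converse. There the delicate points are confirming that the exhausting net $(\Gamma_\alpha)$ genuinely converges to $\Gamma$ in the Chabauty topology --- which requires unwinding the sub-basis of Definition \ref{def:the Chabauty topology} --- together with the small but crucial algebraic lemma that an open, compactly generated, cocompact subgroup forces the ambient group to be compactly generated. Some care is also needed to verify that $G$ is indeed covered by its open compactly generated subgroups and that this family is directed.
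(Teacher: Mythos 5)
Your proof splits into two halves of very different quality, so let me address them separately.

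The forward direction has a genuine gap: you apply Proposition \ref{prop:acting cocompactly is open for finitely generated} to the action of $G$ on the Rips complex, asserting that this action is continuous. It is not, unless $G$ is discrete. In the intrinsic metric $\rho$ (equivalently, in the weak topology) the $0$-skeleton $G \subset \Rips$ is a uniformly separated discrete subset --- distinct vertices lie at $\rho$-distance bounded below by a positive constant --- so an orbit map $g \mapsto g \cdot v$ into the vertex set is continuous only if it is locally constant, which for the free left-multiplication action forces $G$ to be discrete. Continuity is not a decorative hypothesis in Proposition \ref{prop:acting cocompactly is open for finitely generated}: its proof uses it to produce an identity neighborhood $U$ with $U B_1 \subset B_2$. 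Your parenthetical fallback to a Cayley--Abels graph does not rescue the argument either, since such graphs exist only for \emph{totally disconnected} groups (Proposition \ref{prop:cayley-abels graph}); in that special case the action is indeed continuous, but not in general. The repair is exactly the paper's choice of space: take $X = (G,d)$ itself, where $d$ is a left-invariant, continuous, proper and coarsely connected pseudo-metric \cite[4.B.8]{cor_har}. Continuity of $d$ makes the left-multiplication action genuinely continuous, Proposition \ref{prop:acting cocompactly is open for finitely generated} then yields a Chabauty neighborhood of coboundedly acting subgroups, and properness of $d$ immediately upgrades a cobounded closed subgroup to a cocompact one. (Your cobounded-implies-cocompact computation via metric properness is correct as written; it is only the choice of $X$ that fails.)

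The converse, by contrast, is correct and takes a genuinely different route from the paper. You argue by contraposition, exhausting $G$ by its open compactly generated subgroups $O_\alpha$ and showing that the net $\Gamma_\alpha = \Gamma \cap O_\alpha$ Chabauty-converges to $\Gamma$ while consisting of non-cocompact subgroups. The paper argues directly: for finite $F \subset \Gamma$ the net $\Gamma_F = \gener{F}$ converges to $\Gamma$ for the same formal reasons as yours ($\mathcal{O}_2$-conditions pass automatically to subgroups, $\mathcal{O}_1$-conditions hold eventually), so some $\Gamma_F$ lies in the assumed neighborhood of cocompact subgroups; then $\Gamma_F$ is a finitely generated uniform lattice and $G$ is compactly generated. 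The paper's version is leaner in that it uses only subgroups of $\Gamma$ and needs no structure theory of $G$, whereas yours requires the (standard, but flagged and unproved by you) facts that the open compactly generated subgroups of a locally compact group form a directed family covering $G$. Your closing lemma --- that $G = O_\alpha K$ with $O_\alpha = \gener{C}$ compactly generated and $K$ compact forces $G = \gener{C \cup K}$ --- is correct and plays the role of the paper's citation that a group with a finitely generated uniform lattice is compactly generated.
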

Recall that any topological group admitting a uniform lattice is locally compact.
\begin{proof}
If  $G$ is compactly generated then it admits a left-invariant, continuous, proper and coarsely connected pseudo-metric    \cite[4.B.8]{cor_har}. Consider the action  of $G$ on itself equipped with such a  pseudo-metric $d$ by left multiplication. Proposition \ref{prop:acting cocompactly is open for finitely generated} implies that $\Gamma$ admits a Chabauty neighborhood consisting of cobounded subgroups. Since  $d$ is proper a cobounded subgroup is  cocompact.

Conversely, assume that $\Gamma$ admits a Chabauty neighborhood $\Omega$ consisting of cocompact subgroups. For every finite subset $F \subset \Gamma$ let $\Gamma_F$ denote the subgroup of $\Gamma$ generated by $F$. Then $(\Gamma_F)_F$ is a net of discrete subgroups of $G$ having $\Gamma$ as an accumulation point. Therefore $\Gamma_F$ belongs to $\Omega$ for some $F$. In other words $G$ admits a finitely generated uniform lattice, so that $G$ is compactly generated.
\end{proof}

\begin{remark}
\label{rem:uniformly cobounded}
The proofs of Propositions \ref{prop:acting cocompactly is open for finitely generated} and \ref{prop:condition for a uniform lattice to admit a Chabauty neighborhood of cocompact subgroups} establish that a uniform lattice $\Gamma$ in a compactly generated group $G$ admits a Chabauty neighborhood $\Omega$ and a compact subset $K \subset G$ so that every closed subgroup $H \in \Omega$ satisfies $HK = G$.
\end{remark}

\subsection{Compact presentation}
\label{sub:compact presentation}

We recall the notion of compact presentation. 

%\begin{definition}
%\label{def:bounded presentation}
%Let $G$ be a group, and $S \subset G$. $G$ is \textbf{boundedly presented} by $S$ if it has a presentation with $S$ as a generating set and relations of bounded length.
%\end{definition}

\begin{definition}
\label{def:compact presentation}
A  group $G$ is \textbf{compactly presented} if it admits a compact generating set $S$ and a presentation with relations of bounded length in $S$.
\end{definition}

A compactly presented locally compact group is boundedly presented by all its compact generating sets. See \cite[Section 8]{cor_har} for additional details.

\begin{remark}
Connected Lie groups, as well as reductive algebraic groups over local fields, are compactly presented. There are however algebraic groups over local fields which are not compactly presented, such as the group $\SL{2}{k} \ltimes k^2$ where $k$ is a local field  \cite[Section 8.A]{cor_har}.
\end{remark}

\subsection{The Rips complex}
\label{sub:the Rips complex}

Let $(X,d)$ be a pseudo-metric space and $c>0$. Consider the 2-dimensional simplicial complex $\Rips=\Rips_c^2(X,d)$ whose vertices are the points of $X$. Two distinct points $x,y \in X$ span an edge if and only if $d(x,y) \le c$. Three distinct points  $x,y,z \in X$ span a 2-simplex if and only if each pair spans an edge. 
%In other words, $\Rips$ is a flag complex.

Note that the $0$-skeleton of $\Rips_c^2(X,d)$ is naturally identified with $X$, and we frequently make this identification implicit.

\begin{definition}
The \textbf{Rips 2-complex} $\Rips_c^2(X,d)$ is the topological realization of the above simplicial complex. 
\end{definition}

$\Rips_c^2(X,d)$ is given the  \emph{weak topology}, namely each simplex is regarded with its natural Euclidean topology, and an arbitrary subset is open if and only if its intersection with each simplex is open.

\begin{definition}
\label{def:c-length} $(X,d)$ is \textbf{$\lambda$-length} if for all pairs $x,y \in X$ the distance   $d(x,y)$ equals $\inf \sum_{i=0}^{n-1}d(x_i,x_{i+1})$,  where the infimum is taken over all sequences $x = x_0,\ldots,x_n = y$ of points $x_i \in X$ with $d(x_i, x_{i+1}) < \lambda$.
\end{definition}

\begin{prop}
\label{prop:exists a pseudo-metric d for which Rips complex is simply connected}
Let $G$ be a  compactly presented locally compact group. 
Then $G$ admits a left-invariant, continuous and proper pseudo-metric $d$ so that  $\Rips_c^2(G,d)$ is connected and simply connected for all $c$ sufficiently large. 
Moreover we may take the pseudo-metric $d$ to be $\lambda$-length for some $\lambda > 0$.
\end{prop}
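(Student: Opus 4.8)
The plan is to first manufacture the pseudo-metric and then read off the topology of the Rips complex from a bounded presentation via an explicit filling argument.

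\medskip

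\emph{Constructing $d$.} Since $G$ is compactly presented it is in particular compactly generated, so by \cite[4.B.8]{cor_har} it carries a left-invariant, continuous, proper and coarsely connected pseudo-metric $d_0$; let $c_0$ be a coarse-connectedness constant for $d_0$. Fix $\lambda > c_0$ and pass to the length refinement
$$ d(x,y) = \inf\Big\{ \textstyle\sum_{i=0}^{n-1} d_0(x_i,x_{i+1}) \ :\ x=x_0,\dots,x_n=y,\ d_0(x_i,x_{i+1})<\lambda \Big\}. $$
I would check routinely that $d$ is a left-invariant $\lambda$-length pseudo-metric which is finite everywhere (this is exactly the $\lambda$-coarse connectedness of $d_0$). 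The triangle inequality for $d_0$ gives $d_0 \le d$, while on pairs with $d_0(x,y)<\lambda$ the one-step chain gives $d(x,y)\le d_0(x,y)$. The first inequality forces every $d$-ball into the corresponding $d_0$-ball, keeping $d$ proper; the second transfers continuity from $d_0$ to $d$.

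\medskip

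\emph{Presentation and connectedness.} As $d$ is $\lambda$-length, the set $S=\{g\in G : d(e,g)\le\lambda\}$ generates $G$ and is compact by properness, and $\Rips_c^2(G,d)$ is connected for every $c\ge\lambda$. Since a compactly presented group is boundedly presented by each of its compact generating sets, fix a presentation $\langle S \mid R\rangle$ in which every relator has length at most $N$. I claim $\Rips_c^2(G,d)$ is simply connected for every $c\ge N\lambda$. Given a loop, cellular approximation homotopes it into the $1$-skeleton, and the simplicial left action of $G$ lets me base the resulting edge-loop $e=g_0,\dots,g_n=e$ (with $d(g_i,g_{i+1})\le c$) at the identity. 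The argument proceeds by two moves. First, the $\lambda$-length property supplies for each edge a $\lambda$-chain $g_i=p_0,\dots,p_m=g_{i+1}$ of total $d$-length at most $c$; since every prefix satisfies $d(g_i,p_j)\le c$, the fan of triangles $(g_i,p_j,p_{j+1})$ lies in $\Rips_c^2(G,d)$ and yields a homotopy to an edge-loop all of whose edges have length $<\lambda$. Such a refined loop reads off a word $s_1\cdots s_m=e$ with every $s_j\in S$. Second, in the free group on $S$ this word is a product of conjugates of relators; inserting or deleting a pair $ss^{-1}$ is a backtrack homotopy, while inserting a relator $t_1\cdots t_p$ with $p\le N$ at a vertex $w$ produces the loop $w,wt_1,\dots,wt_1\cdots t_p=w$ whose vertices satisfy
$$ d(wq_i,wq_j)=d(e,t_{i+1}\cdots t_j)\le (j-i)\lambda\le N\lambda\le c, \qquad q_i:=t_1\cdots t_i, $$
so coning from $w$ fills the loop by $2$-simplices of $\Rips_c^2(G,d)$. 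Translating the algebraic reduction of $s_1\cdots s_m$ to the empty word into these moves produces the desired null-homotopy.

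\medskip

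\emph{The main obstacle.} The metric construction is routine; the crux is the scale bookkeeping in the filling step. The point of refining down to scale $\lambda$ is precisely to express each edge-loop in the \emph{fixed} generating set $S$, so that the bounded relator length $N$ makes every relator loop have vertices pairwise within $N\lambda\le c$ and hence conable inside $\Rips_c^2(G,d)$; without the $\lambda$-length hypothesis one cannot control the combinatorial length of relators needed to fill a loop built from the large generating set $\{g : d(e,g)\le c\}$. The only genuinely delicate point I anticipate is that, because the length infimum need not be attained, the subdividing $\lambda$-chains realize $d(g_i,g_{i+1})$ only up to $\varepsilon$, so their prefixes have $d$-length at most $c+\varepsilon$ rather than $c$; this is absorbed by a harmless enlargement of the threshold on $c$, after which everything is bookkeeping.
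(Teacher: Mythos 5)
Your route is genuinely different from the paper's: the paper disposes of this proposition by citation, invoking Proposition 4.B.8 and Lemma 4.B.7 of \cite{cor_har} for the pseudo-metric and Proposition 7.B.1 together with Milestone 8.A.7 of \cite{cor_har} for simple connectedness, whereas you re-prove those ingredients by hand. Most of your argument is sound. The metric construction works: $d_0 \le d$ everywhere, $d \le d_0$ at scale $<\lambda$, and these two inequalities give properness, the $\lambda$-length property, and (via left-invariance, which reduces continuity to continuity of $g \mapsto d(e,g)$ at the identity) continuity. The relator-filling step is also correct: a word in $S$ representing $e$ is, in the free group on $S$, a product of conjugates of relators of length $\le N$; backtracks are null-homotopic; and a relator loop has all vertices pairwise within $N\lambda \le c$, so it cones off inside $\Rips_c^2(G,d)$.

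The gap is in the refinement step, and the patch you propose does not repair it. The fan of triangles $(g_i,p_j,p_{j+1})$ lies in $\Rips_c^2(G,d)$ only if every prefix satisfies $d(g_i,p_j)\le c$, but, as you note, the chains only give $d(g_i,p_j)\le c+\varepsilon$. Enlarging the threshold on $c$ cannot absorb this, because the defect scales with $c$ itself: whatever $c$ you fix, a loop in $\Rips_c^2(G,d)$ may contain edges with $d(g_i,g_{i+1})$ equal, or arbitrarily close, to $c$, and for such an edge the one-sided fan uses edges of length up to $c+\varepsilon>c$, so the homotopy exits $\Rips_c^2(G,d)$. As written, your argument only shows that every loop in $\Rips_c^2(G,d)$ bounds in $\Rips_{c+\varepsilon}^2(G,d)$ — that is coarse simple connectedness — which is strictly weaker than what the proposition asserts and than what the paper later needs (a covering map onto the honestly simply connected $\Rips$ must be trivial). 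The repair is to cone from both endpoints rather than one: letting $j^*$ be the largest index whose prefix sum is at most $(c+\varepsilon)/2$, fan the initial segment of the chain from $g_i$, fan the terminal segment from $g_{i+1}$, and fill the remaining quadrilateral on $g_i, p_{j^*}, p_{j^*+1}, g_{i+1}$ with two triangles; every edge so created has length at most $\max\left\{c,\ (c+\varepsilon)/2+\lambda\right\}$, which is $\le c$ once $c \ge 2\lambda+\varepsilon$. With this replacement, and the threshold $c \ge \max\{N\lambda,\, 2\lambda+1\}$ say, your proof goes through.
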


\begin{proof}
According to Proposition 7.B.1 and Milestone 8.A.7 of \cite{cor_har}, and as $G$ is compactly presented, the Rips complex $\Rips_c^2(G,d)$ is connected and simply connected provided that the pseudo-metric $d$ is left-invariant, proper, locally bounded and large-scale geodesic\footnote{A left-invariant, proper, locally bounded and large-scale geodesic pseudo-metric on a group is called \emph{geodesically adapted} in \cite{cor_har}.}. The existence of such a pseudo-metric which is moreover continuous follows from  the implication $(i) \Rightarrow (vi)$ in Proposition 4.B.8 of \cite{cor_har}. The fact that $d$ can be taken to be $\lambda$-length follows from the details of the proof, combined with statement $(2)$ of Lemma 4.B.7 of \cite{cor_har}.
\end{proof}

\subsubsection*{The Rips complex as a metric space}

The Rips complex $\Rips = \Rips_c^2(G,d)$ can be viewed as a quotient space obtained by gluing together a family of zero, one and two-dimensio4plices. Assume that we declare edges to have unit length and $2$-simplices to be isometric to equilateral Euclidean triangles. This defines a quotient pseudo-metric $\rho = \rho_{G,d}$ on $\Rips$ that is compatible with the weak topology on $\Rips$, see \cite[\S I.5]{BrHa}.
Metric simplicial complexes are considered in  \cite[\S I.7]{BrHa}, where a metric $\rho$ as above is called the \emph{intrinsic pseudo-metric}. 

\begin{theorem}
\label{thm:intrinsic pseudo-metric is a complete length space}
Let $d$ be a pseudo-metric on $G$. If the Rips complex $\Rips = \Rips_c^2(G,d)$  is connected for some $c > 0$ then $\Rips$ endowed with the metric $\rho_{G,d}$ is a complete geodesic space.
%\footnote{In fact, every connected component of $(\Rips,\rho)$ is a complete geodesic space by Theorem I.7.19 of \cite{BrHa}. We will not make use of this stronger result.}.
\end{theorem}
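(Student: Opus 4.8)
The plan is to recognize $\Rips = \Rips_c^2(G,d)$, equipped with $\rho_{G,d}$, as an $M_0$-simplicial complex in the sense of Bridson--Haefliger and to invoke Bridson's theorem \cite[I.7.19]{BrHa}, which asserts that a connected $M_\kappa$-polyhedral complex with only finitely many isometry types of cells is a complete geodesic space. The entire content of the statement is then obtained by checking that our complex meets the hypotheses of that theorem.

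First I would verify that $\Rips$ is genuinely an $M_0$-simplicial complex and that $\rho_{G,d}$ is its intrinsic (quotient) metric. The cells are the geometric simplices prescribed in the construction: each vertex is a point, each edge is a Euclidean segment of length $1$, and each $2$-simplex is an equilateral Euclidean triangle of side $1$. The face identifications dictated by the simplicial structure are isometries between faces---all edges are isometric unit segments, and all triangles are copies of the same equilateral triangle---so the data fit the definition of an $M_0$-simplicial complex, and $\rho_{G,d}$ as defined above is exactly the associated intrinsic pseudo-metric obtained by gluing.

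Next I would check the finiteness hypothesis on shapes: $\mathrm{Shapes}(\Rips)$ consists of at most three isometry classes of cells---a point, a unit segment, and a unit equilateral triangle---and is therefore finite. Here it is worth noting that the passage from a \emph{pseudo}-metric $d$ on $G$ (under which distinct points may satisfy $d(x,y)=0$) to the complex causes no difficulty: the combinatorial rule uses only the inequalities $d(\cdot,\cdot)\le c$, distinct vertices remain distinct points of $X$, and every cell is a nondegenerate Euclidean simplex of unit size irrespective of the actual values of $d$. Thus no degenerate triangles arise, and the set of shapes genuinely stays finite.

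Finally, since $\Rips$ is connected by hypothesis and has finitely many shapes, Bridson's theorem yields that $(\Rips,\rho_{G,d})$ is a complete geodesic space; in particular $\rho_{G,d}$ is a genuine metric, so that distinct points have positive distance, which justifies the terminology in the statement. The point I expect to be the real obstacle---and the reason the theorem is not routine---is that the elementary route via the Hopf--Rinow theorem is unavailable: $\Rips$ need not be locally finite, since a vertex $g\in G$ is joined by an edge to every $g'$ with $d(g,g')\le c$, of which there may be uncountably many, so the complex is in general not locally compact. It is precisely this failure of local compactness that makes the finiteness of $\mathrm{Shapes}(\Rips)$, rather than local compactness, the essential hypothesis, and the heart of the matter is therefore encapsulated in Bridson's theorem rather than in any computation carried out here.
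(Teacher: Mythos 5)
Your proof is correct and is essentially the paper's own argument: the paper likewise observes that $\Rips$ has finitely many isometry types of simplices (indeed a unique one in each dimension) and then invokes Theorem I.7.19 of \cite{BrHa}. Your additional remarks---that the pseudo-metric on $G$ causes no degeneracy of cells, and that finiteness of shapes rather than local compactness is the operative hypothesis---are accurate elaborations of the same route.
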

\begin{proof}
The simplical complex $\Rips$ has finitely many isometry types of Euclidean simplices (in fact, a unique one in every dimension).  Assuming that $\Rips$ is connected Theorem I.7.19 of \cite{BrHa} applies.
\end{proof}

It is shown in Proposition 6.C.2 of \cite{cor_har} that for a pseudo-metric $d$ as in Proposition \ref{prop:exists a pseudo-metric d for which Rips complex is simply connected},   the inclusion of $(G,d)$ into $(\Rips,\rho)$ is a quasi-isometry. This implies that the action of $G$ on $(\Rips,\rho)$ is geometric.

\begin{prop}
\label{prop:every path between vertices in the Rips complex can be approximated by an edge path}
%There is a constant $\alpha > 1$ such that 
Every two vertices $g,h \in \Rips_{(0)} = G$ are connected in $\Rips$ by a rectifiable path $p$ of length $l(p) \le 2 \cdot \rho(g,h)  $ whose image lies in the $1$-skeleton of $\Rips$.
\end{prop}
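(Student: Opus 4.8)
The plan is to build the desired edge-path by replacing, triangle by triangle, a $\rho$-geodesic from $g$ to $h$. First I would invoke Theorem \ref{thm:intrinsic pseudo-metric is a complete length space}: since $\Rips$ is connected, $(\Rips,\rho)$ is a complete geodesic space, so there is a geodesic $\gamma$ from $g$ to $h$ with $l(\gamma) = \rho(g,h)$. In the interior of a $2$-simplex --- which is isometric to an open subset of the Euclidean plane --- a local geodesic is a straight Euclidean segment and cannot bend, so $\gamma$ can bend only where it meets the $1$-skeleton. I would therefore decompose $\gamma$ as a concatenation $\gamma = \sigma_1 \cdots \sigma_N$ of straight segments, each $\sigma_j$ contained in a single closed simplex $S_j$ and having both endpoints $p_{j-1}, p_j$ in the $1$-skeleton, with $p_0 = g$ and $p_N = h$ (which are even vertices). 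The finiteness of this decomposition is the one genuinely technical point: because the vertex set of $\Rips$ is all of $G$, the complex is far from locally finite, so I cannot argue combinatorially from local finiteness. Instead I would use that $\Rips$ has only finitely many isometry types of simplices (a single equilateral triangle, a single unit edge, a point) and appeal to the structure theory of $M_\kappa$-simplicial complexes with $\mathrm{Shapes}$ finite in \cite[\S I.7]{BrHa}, where the positive constant $\epsilon(\Rips)$ furnished by finiteness of shapes bounds the combinatorial length of a geodesic in terms of $l(\gamma)$. This is the step I expect to be the main obstacle, and the place where finiteness of shapes, rather than local finiteness, is essential.

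The heart of the matter is then an elementary planar lemma applied inside each $2$-simplex. Given a straight segment $\sigma_j$ with endpoints $a = p_{j-1}$ and $b = p_j$ lying on the edges of the equilateral triangle $S_j$, I would replace it by a path $\tau_j$ running along the edges (the $1$-skeleton) of $S_j$ with $l(\tau_j) \le 2\,|ab| = 2\,l(\sigma_j)$. If $a$ and $b$ lie on a common edge, take $\tau_j$ to be the subsegment of that edge, so $l(\tau_j) = |ab|$. Otherwise $a$ and $b$ lie on two distinct edges, which in a triangle necessarily share a vertex $v$ at which the angle is $\tfrac{\pi}{3}$; route $\tau_j$ through $v$, so that with $s = |av|$ and $t = |vb|$ the law of cosines gives $|ab|^2 = s^2 + t^2 - st$, whence the inequality $(s+t)^2 \le 4\,|ab|^2$ is equivalent to $3(s-t)^2 \ge 0$ and therefore $l(\tau_j) = s + t \le 2\,|ab|$. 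Segments $\sigma_j$ already lying in the $1$-skeleton (for instance along an edge or a shared face) are left unchanged, so that $l(\tau_j) = l(\sigma_j)$ there.

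Finally I would concatenate $p = \tau_1 \cdots \tau_N$. Because each $\tau_j$ has the same endpoints $p_{j-1}, p_j$ as $\sigma_j$ and these points lie in the $1$-skeleton, $p$ is a well-defined continuous path from $g$ to $h$ whose image lies in the $1$-skeleton, and summing the per-triangle estimate yields $l(p) = \sum_j l(\tau_j) \le 2 \sum_j l(\sigma_j) = 2\,l(\gamma) = 2\,\rho(g,h)$, as required.
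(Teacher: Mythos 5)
Your proposal is correct and follows essentially the same route as the paper's proof: take a $\rho$-geodesic from $g$ to $h$ and replace each portion crossing a $2$-simplex by a path along that simplex's boundary, the factor $2$ coming from the geometry of the unit equilateral triangle. The only difference is one of detail --- you spell out the law-of-cosines computation and justify the finiteness of the simplex-by-simplex decomposition via the finiteness-of-shapes theory of \cite[\S I.7]{BrHa}, two points the paper's proof leaves implicit.
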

\begin{proof}

Let $g, h \in \Rips_{(0)}$ be any pair of vertices. Since $\Rips$ is a geodesic space there is a path $p' : \left[0,1\right] \to \Rips$ with $p'(0) = g, p'(1) = h$ and length $l(p') = \rho(g,h) $. 

We may without loss of generality assume that whenever $p' (\left[a,b\right]) \subset T$ for a $2$-simplex $T \subset \Rips$ with $0 \le a < b \le 1$ and $p'(a), p'(b) \in \partial T$ then $p'_{| \left[a,b\right]}$ is a geodesic segment of $T$.  Let $p$ be the path obtained from $p'$ by replacing, for any instance of $a < b$ and $T$ as above, the geodesic segment $p'_{| \left[a,b\right]}$ with a shortest path connecting $p'(a)$ and $p'(b)$ along the  boundary $\partial T$. As $T$ is isometric to an equilateral Euclidean triangle, $l(p) \le 2 \cdot l(p')$.
\end{proof}

\subsection{Length spaces and covering maps}
\label{sub:covering maps}

\begin{definition}
        \label{def:s-local isomtery}
        A map $f : X \to Y$ of metric spaces is a \textbf{local isometry} if for every $x \in X$ there is $s = s(x)$ such that the restriction of $f$ to $B_x(s)$ is an isometry onto $B_{f(x)}(s)$. 
        
        The map $f$ is an \textbf{$s$-local isometry} if $s$ can be chosen independently of $x$.\footnote{Our definition is different from the usage by several authors,
see e.g. Definition 3.4.1 of \cite{Papa}. We require a somewhat stronger
local condition, while \cite{Papa} requires in addition that a local isometry
is surjective.}
\end{definition}

%So an $s$-local isometry is simply a uniform version of the notion of a %local isometry.

\begin{lemma}
        \label{lem:a local isometry is a covering}
        Let $f : X \to Y$ be an $s$-local isometry where  $Y$ is a connected metric space and $X\ne\emptyset$. Then $f$ is a covering map.
\end{lemma}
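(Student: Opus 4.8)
The plan is to exhibit, for each point $y \in Y$, an explicit evenly covered neighborhood, namely the open ball $V = B_y(s/2)$, and to show that its preimage decomposes as a disjoint union of radius-$s/2$ balls centred at the points of the fibre $f^{-1}(y)$, each mapped isometrically onto $V$. Before treating the covering condition I would first record that $f$ is continuous, being a local isometry, and that it is surjective. For surjectivity I note that $f(X) = \bigcup_{x \in X} B_{f(x)}(s)$ is open, since $f(B_x(s)) = B_{f(x)}(s)$ by hypothesis; and that it is closed, since if $f(x_n) \to y$ then $d(f(x_n), y) < s$ for large $n$, whence $y \in B_{f(x_n)}(s) = f(B_{x_n}(s)) \subseteq f(X)$. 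As $X \neq \emptyset$ and $Y$ is connected, this forces $f(X) = Y$.

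The core computation is to verify $f^{-1}(V) = \bigsqcup_{x \in f^{-1}(y)} B_x(s/2)$, with each piece mapped isometrically onto $V$. The key observation, used repeatedly, is that whenever $f(x) = y$ the isometry $f|_{B_x(s)} : B_x(s) \to B_y(s)$ carries $B_x(s/2)$ bijectively and isometrically onto $V = B_y(s/2)$, because $d(w, x) = d(f(w), y)$ for every $w \in B_x(s)$; in particular $B_x(s/2) = \{ w \in B_x(s) : f(w) \in V \}$. This immediately yields the isometry of each sheet onto $V$ and the inclusion $\bigcup_x B_x(s/2) \subseteq f^{-1}(V)$.

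For the reverse inclusion I would take $z \in f^{-1}(V)$, so that $d(f(z), y) < s/2 < s$, and apply the isometry $f|_{B_z(s)} : B_z(s) \to B_{f(z)}(s)$: since $y$ lies in $B_{f(z)}(s)$, there is a unique $x \in B_z(s)$ with $f(x) = y$, and it satisfies $d(z, x) = d(f(z), y) < s/2$, so $z \in B_x(s/2)$ with $x \in f^{-1}(y)$. Disjointness of the sheets follows from the uniformity of the radius: if $x, x' \in f^{-1}(y)$ and $B_x(s/2) \cap B_{x'}(s/2) \neq \emptyset$, then $d(x, x') < s$, so $x' \in B_x(s)$, and injectivity of $f|_{B_x(s)}$ together with $f(x) = f(x') = y$ forces $x = x'$. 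Since each $B_x(s/2)$ is open, this exhibits $V$ as evenly covered.

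The argument is essentially routine once the radius $s/2$ is fixed; the one point requiring care is the bookkeeping that lets the uniformity of $s$ do the work — specifically, that distinct points of a single fibre are at distance at least $s$ apart, which is exactly what guarantees that the radius-$s/2$ sheets are pairwise disjoint. A variable local radius $s(x)$ would break this step, so the hypothesis that $f$ is an $s$-local isometry, rather than merely a local isometry, is used essentially here.
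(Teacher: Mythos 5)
Your proof is correct and takes essentially the same approach as the paper's: surjectivity via the observation that $f(X)$ is clopen in the connected space $Y$, followed by exhibiting $V = B_y(s/2)$ as evenly covered by the pairwise disjoint sheets $B_x(s/2)$, $x \in f^{-1}(y)$, with disjointness forced by the uniform radius $s$. The only difference is that you explicitly verify the exhaustion $f^{-1}(V) = \bigcup_{x \in f^{-1}(y)} B_x(s/2)$, a step the paper leaves implicit.
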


\begin{proof}
%       We first show that $f$ is surjective. Consider $y_0 = f(x_0) \in Y$ for some $x_0 \in X$. Given any point $y \in Y$ there exists a rectifiable path $c : \left[0,1\right] \to Y$ with $c(0) = y_0, c(1) = y$ and $l(c) <  d_Y(y_0,y) + 1$. We may choose a sequence of points $y_0, y_1, \ldots, y_N$ along $c$ such that $d(y_i,y_{i+1}) < s/2$ and $y_N = y$. From the definition of an $s$-local isometry and by induction we see that there exist $x_i \in X$ with $f(x_i) = y_i$ for every $i = 0,\ldots, N$. In particular $y = y_N \in f(X)$ as required.
        
        Denote by $Y'=f(X)$ the image of $f$. Since $f$ is an $s$-local isometry, $Y'$ has the property that if $y\in Y'$ then $B_y(s)\subset Y'$. This condition implies that $Y'$ is both open and closed. Since $Y$ is connected we deduce that $Y'=Y$ and $f$ is surjective.
        
        To establish the covering property, given a point $y\in Y$ consider the open neighborhoods $U_x = B_x(s/2) \subset X$ for all points $x \in f^{-1}(y)$ in the preimage of $y$. As $f$ is an $s$-local isometry it restricts to a homeomorphism on each $U_x$. Moreover $U_{x_1} \cap U_{x_2} \neq \emptyset$ implies $0 < d_X(x_1,x_2) < s$ but $f(x_1)=f(x_2) = y$ which is absurd. So the open neighborhoods $U_x$ are pairwise disjoint as required.
\end{proof}

\subsubsection*{Pullback length metric}

Recall that a metric is a  \emph{length metric} if the distance between any two points is equal to the infimum of the lengths of all rectifiable paths connecting these two points. A  space equipped with a length metric is called a \emph{length space}.
Let $f : X \to Y$ be a local homeomorphism such that $Y$ is a length space and $X$ is Hausdorff. It is possible to pullback the length metric from $Y$ to obtain a metric on $X$ that is  \emph{induced by $f$} (see Definition I.3.24 of \cite{BrHa}).

\begin{prop}
\label{prop:properties of pullback length metric}
Let $f : X \to Y$ be a local homeomorphism where $Y$ is a length space and $X$ is Hausdorff. Then the metric $d_f$ on $X$ induced by $f$ is a length metric and $f$ becomes a local isometry. Moreover $d_f$ is the unique metric on $X$ satisfying these two properties.

In addition, assume that there is $s > 0$ such that every $x\in X$ has a neighborhood $x \in U$ with $f|_{U}$ being a homeomorphism onto its image and $B_{f(x)}(s) \subset f(U)$. Then $f$ is an $s'$-local isometry for some $s' > 0$.
\begin{proof}
The first part is Proposition I.3.25 of \cite{BrHa}. The statement concerning $s$-local isometries  follows from the proof.
\end{proof}
\end{prop}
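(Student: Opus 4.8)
The first assertion---that $d_f$ is a well-defined length metric making $f$ a local isometry, and that it is the unique metric with these two properties---is exactly Proposition I.3.25 of \cite{BrHa}, which I would simply invoke. So the plan is to concentrate on the quantitative refinement: upgrading the pointwise local isometry radius $s(x)$ to a \emph{uniform} constant $s'$ under the extra hypothesis that the $s$-ball $B_{f(x)}(s)$ lies in the image of a homeomorphic neighborhood of $x$. Throughout I would work with the metric $d_f$ on $X$ and the length metric $d_Y$ on $Y$, and begin by recording the one inequality that holds everywhere: since the $d_f$-length of any path $\gamma$ in $X$ equals the $d_Y$-length of $f\circ\gamma$, projecting paths shows that $f$ is $1$-Lipschitz, i.e. $d_Y(f(a),f(b)) \le d_f(a,b)$ for all $a,b \in X$.

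Fix $x \in X$, set $y = f(x)$, and let $U \ni x$ be the given neighborhood with $f|_U$ a homeomorphism onto its image and $B_y(s) \subset f(U)$. Let $\sigma$ be $(f|_U)^{-1}$ restricted to $B_y(s)$; this is a continuous section of $f$ over $B_y(s)$ with $\sigma(y)=x$. The first step is to show that $\sigma$ is an isometric embedding of $B_y(s')$ into $(X,d_f)$ for $s' = s/4$. Given $p,q \in B_y(s')$, I would choose for each $\epsilon>0$ a nearly geodesic path $c$ in $Y$ from $p$ to $q$ of length $< d_Y(p,q)+\epsilon$; the key estimate is that every point $w$ on $c$ satisfies $d_Y(y,w) \le d_Y(y,p) + \mathrm{length}(c) < 3s' + \epsilon$, which is $< s$ once $\epsilon$ is small, so $c$ stays inside $B_y(s)$ where $\sigma$ is defined. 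Then $\sigma\circ c$ is a path in $X$ from $\sigma(p)$ to $\sigma(q)$ whose $d_f$-length equals the $d_Y$-length of $f\circ\sigma\circ c = c$, giving $d_f(\sigma(p),\sigma(q)) \le d_Y(p,q)$; combined with the $1$-Lipschitz bound applied to $f\circ\sigma = \mathrm{id}$ this forces equality, so $\sigma$ is distance preserving on $B_y(s')$.

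It then remains to identify the image $\sigma(B_y(s'))$ with the $d_f$-ball $B_x(s')$, so that $f|_{B_x(s')}$ is precisely the inverse isometry $\sigma^{-1}$ onto $B_{f(x)}(s')$. One inclusion is immediate, since $d_f(x,\sigma(p)) = d_Y(y,p)$ gives $\sigma(B_y(s')) \subseteq B_x(s')$. For the reverse, I would take $x' \in B_x(s')$ and, using that $d_f$ is a length metric, pick a path $\gamma$ in $X$ from $x$ to $x'$ of $d_f$-length $< s'$, so that $f\circ\gamma$ stays in $B_y(s')$; I then argue $x' = \sigma(f(x'))$ by showing that $\gamma$ and $\sigma\circ f\circ\gamma$ coincide, both being lifts under the local homeomorphism $f$ of the single path $f\circ\gamma$ and agreeing at the initial point $x=\sigma(y)$, hence agreeing everywhere by uniqueness of lifts. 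This is the step I expect to be the main obstacle, since it is exactly where the hypotheses that $f$ is a genuine local homeomorphism and that $X$ is Hausdorff are used, and where one must rule out $x'$ being a \emph{different} sheet over $f(x')$ than $\sigma(f(x'))$. Evaluating the coincidence at the endpoint gives $x' = \sigma(f(x')) \in \sigma(B_y(s'))$, completing the identification and showing that $f$ is an $s'$-local isometry with $s' = s/4$.
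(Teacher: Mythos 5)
Your proposal is correct, and it takes the same route the paper intends: the paper simply cites Proposition I.3.25 of \cite{BrHa} and remarks that the uniform statement ``follows from the proof,'' and your argument is exactly that proof made explicit --- local sections $\sigma=(f|_U)^{-1}$ over balls, the $1$-Lipschitz bound from projecting paths, distance preservation of $\sigma$ on a quarter-radius ball, and the identification $\sigma(B_{f(x)}(s'))=B_x(s')$ via uniqueness of lifts (where Hausdorffness of $X$ enters), yielding the uniform constant $s'=s/4$. No gaps; this is a faithful, correctly quantified reconstruction of the argument the paper leaves to the reader.
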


Compare Lemma \ref{lem:a local isometry is a covering} and Propositions \ref{prop:properties of pullback length metric} with the more general Proposition 3.28 of \cite{BrHa}. 

\begin{prop}
        \label{prop:a local isometry that is a homeo is an isometry}
        Let $f : X \to Y$ be a homeomorphism of length spaces that is a local isometry. Then $f$ is an isometry.
\end{prop}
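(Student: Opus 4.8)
The plan is to exploit the two features in play: a local isometry preserves the lengths of paths, while in a length space the distance between two points is the infimum of the lengths of paths joining them. Since $f$ is in addition a homeomorphism, it sets up a length-preserving bijection between paths in $X$ and paths in $Y$, and this forces the two distance functions to coincide.

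First I would establish the key lemma that $f$ preserves the length of every continuous path $p \colon [0,1] \to X$, lengths being allowed to take values in $[0,\infty]$. Given such a $p$, its image is compact, so covering it by balls $B_x(s(x))$ on which $f$ restricts to an isometry and passing to a finite subcover yields, via a Lebesgue-number argument together with the uniform continuity of $p$, a partition $0 = t_0 < \cdots < t_n = 1$ for which each piece $p([t_{i-1}, t_i])$ lies inside a single ball on which $f$ is an isometry. On each such ball the distance between any two points is preserved, so for every finite subdivision of $[t_{i-1}, t_i]$ the associated sum of consecutive distances is unchanged by $f$; taking suprema over subdivisions gives $L(f \circ p|_{[t_{i-1},t_i]}) = L(p|_{[t_{i-1},t_i]})$. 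Summing over $i$ and invoking additivity of length over subdivisions yields $L(f\circ p) = L(p)$.

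Next I would use that $f$ is a homeomorphism. Fix $x_1, x_2 \in X$. The assignment $p \mapsto f \circ p$ is a bijection from the set of paths in $X$ joining $x_1$ to $x_2$ onto the set of paths in $Y$ joining $f(x_1)$ to $f(x_2)$, with inverse $q \mapsto f^{-1}\circ q$, which is well defined and continuous since $f^{-1}$ is continuous. By the lemma this bijection preserves length, so the two collections of path-lengths coincide as subsets of $[0,\infty]$. Because $X$ and $Y$ are length spaces, $d_X(x_1,x_2)$ and $d_Y(f(x_1),f(x_2))$ are precisely the infima of these two collections, and are therefore equal. As $x_1, x_2$ were arbitrary, $f$ is an isometry.

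The only genuine work should be in the length-preservation lemma, specifically the passage from the pointwise local-isometry hypothesis, in which the radius $s(x)$ may depend on $x$, to a single partition adapted to the entire path. The compactness of $p([0,1])$ combined with the uniform continuity of $p$ resolves this, so no uniform radius $s$ is required. Everything beyond the lemma is a formal comparison of infima once the path bijection is in hand.
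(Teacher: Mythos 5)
Your proof is correct, but note that the paper itself gives no argument here: its entire proof of Proposition \ref{prop:a local isometry that is a homeo is an isometry} is the citation ``See Corollary 3.4.6 of \cite{Papa}.'' What you have written is essentially the standard argument that this citation conceals, so the value of your version is that it is self-contained. Both of your steps are sound: (i) the length-preservation lemma, where the Lebesgue-number/uniform-continuity argument correctly bridges the gap that the radius $s(x)$ in Definition \ref{def:s-local isomtery} depends on $x$, and where length preservation on each piece holds because the supremum of subdivision sums only involves distances between points lying in a single ball on which $f$ is distance-preserving; and (ii) the passage to infima, where the key point --- that the inverse bijection $q \mapsto f^{-1}\circ q$ also preserves length without assuming $f^{-1}$ is a local isometry --- is handled correctly, since $L(q) = L\bigl(f \circ (f^{-1}\circ q)\bigr) = L(f^{-1}\circ q)$ by the lemma applied to $f$ alone. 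Two further observations. First, your argument uses only that $f$ restricted to each $B_x(s(x))$ is an isometry onto its image, whereas Definition \ref{def:s-local isomtery} demands an isometry onto the ball $B_{f(x)}(s)$; you have therefore proved a slightly more general statement, which is harmless and arguably preferable. Second, this extra generality matters for reusability: in the paper the proposition is applied to the pullback metric situation of Proposition \ref{prop:properties of pullback length metric}, and your weaker hypothesis is exactly what is available there, so your proof slots in without any adjustment.
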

\begin{proof}
        See Corollary 3.4.6 of \cite{Papa}.
\end{proof}

\section{Totally disconnected  groups}
\label{sec:totally disconnected groups}

We establish local rigidity for the isometry groups of connected and simply-connected proper length spaces assuming the action is smooth. 

\begin{definition}
	\label{def:smooth action}
	An action  of a totally disconnected topological group $G$ on a connected topological space $X$ is \textbf{smooth} if compact subsets of $X$ have open point-wise stabilizers in $G$.
\end{definition}

Any continuous action  of locally compact group on a connected locally finite graph is smooth. In addition,  the actions of  totally disconnected isometry groups on certain $\CAT$ spaces   are smooth   \cite[Theorem 6.1]{CM1}.
\begin{theorem}
\label{cor:local rigidity for td}
Let $X$ be a connected and simply connected proper length space and $G \le \Is{X}$ a closed subgroup acting cocompactly and smoothly. Then uniform lattices of $G$ are locally rigid in $\Is{X}$.
\end{theorem}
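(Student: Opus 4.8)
The plan is to realise every sufficiently small deformation $r\in\mathcal R(\Gamma,G)$ by an honest isometry of $X$, produced as a $\Gamma$-equivariant self-map of $X$ that the covering-space machinery of \S\ref{sub:covering maps} then forces to be a global isometry. Everything hinges on one consequence of smoothness (Definition \ref{def:smooth action}): closeness of $r(\gamma)$ to $\gamma$ is not merely metric but \emph{exact}. Indeed, for a compact set $D\subset X$ the pointwise stabiliser $G_{(D)}=\{g\in G : g|_D=\mathrm{id}\}$ is open, so if $r(\gamma)$ is close enough to $\gamma$ then $\gamma^{-1}r(\gamma)\in G_{(D)}$, and hence $r(\gamma)$ and $\gamma$ \emph{agree on all of} $D$.

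First I would record the geometric setup. Since $G$ is closed and cocompact in $\Is{X}$ and $X$ is a proper length space, the lattice $\Gamma$ acts on $X$ geometrically by Proposition \ref{pro:a cocompact subgroup is acting geometrically}; in particular $\Gamma$ is discrete, acts with finite stabilisers, and $X/\Gamma$ is compact. Fix a basepoint $x_0$ and $R>0$ with $D=\overline{B_{x_0}(R)}$ satisfying $\Gamma D=X$. The closed cover $\{\gamma D\}_{\gamma\in\Gamma}$ is then locally finite by properness, and the set $\Sigma=\{\delta\in\Gamma : \delta D\cap D\neq\emptyset\}$ is finite. Using the openness of $G_{(D)}$, the subset $\mathcal U=\{r : \delta^{-1}r(\delta)\in G_{(D)}\text{ for all }\delta\in\Sigma\}$ is an open neighbourhood of the inclusion $r_0$ in $\mathcal R(\Gamma,G)$, being cut out by finitely many pointwise conditions; for $r\in\mathcal U$ the elements $r(\delta)$ and $\delta$ agree on $D$ for every $\delta\in\Sigma$.

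Next I would construct the candidate isometry. For $r\in\mathcal U$ define $f_r\colon X\to X$ piecewise by $f_r|_{\gamma D}=r(\gamma)\gamma^{-1}$. If $p\in\gamma_1 D\cap\gamma_2 D$ then $\delta=\gamma_2^{-1}\gamma_1\in\Sigma$, and because $\delta^{-1}r(\delta)$ fixes $D$ pointwise a direct computation gives $r(\gamma_1)\gamma_1^{-1}p=r(\gamma_2)\gamma_2^{-1}p$; in fact the two ambient isometries $r(\gamma_1)\gamma_1^{-1}$ and $r(\gamma_2)\gamma_2^{-1}$ coincide on each piece they have in common. Hence $f_r$ is well defined, and by the gluing lemma for locally finite closed covers it is continuous; it is moreover $\Gamma$-equivariant in the sense $f_r(\gamma x)=r(\gamma)f_r(x)$. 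Since on a neighbourhood of every point $f_r$ coincides with a single element of $\Is{X}$, it is a local isometry, and equivariance together with the compactness of $D$ upgrades this to an $s$-local isometry for a uniform $s>0$. Now Lemma \ref{lem:a local isometry is a covering} shows $f_r$ is a covering map of the connected space $X$; as $X$ is simply connected this covering is trivial, so $f_r$ is a homeomorphism, and Proposition \ref{prop:a local isometry that is a homeo is an isometry} then identifies it as an isometry $g:=f_r\in\Is{X}$. Equivariance now reads $g\gamma=r(\gamma)g$, i.e.\ $r(\gamma)=g\gamma g^{-1}$ for all $\gamma\in\Gamma$, so $r$ is conjugation by an element of $\Is{X}$ --- precisely local rigidity in $\Is{X}$ in the sense of Definition \ref{def:be locally rigid in}.

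I expect the main obstacle to be the gluing step: verifying that the piecewise isometries not only agree on overlaps but in fact coincide on a \emph{full} neighbourhood of each boundary point, so that $f_r$ is genuinely a local isometry, and then extracting a uniform radius $s$ from compactness of $D$ and the $\Gamma$-equivariance of the whole configuration. This is exactly the place where smoothness is indispensable: it converts the metric nearness of $r(\delta)$ to $\delta$ into literal pointwise agreement on $D$, whereas a merely metric estimate would leave the pieces only approximately matched and $f_r$ would fail to be a local isometry.
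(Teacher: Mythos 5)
Your proposal is correct and follows essentially the same route as the paper's own proof (which establishes the product version, Theorem \ref{thm:local rigidity with totally disc factor}, and specializes): the same piecewise map $f_r|_{\gamma D}=r(\gamma)\gamma^{-1}$, smoothness converting metric closeness into exact agreement on a compact fundamental set, and the same covering-map / simple-connectedness / length-space conclusion. The step you flag as the main obstacle --- making $f_r$ an $s$-local isometry with uniform $s$ --- is resolved in the paper by imposing the stabilizer condition on the larger finite set $\Delta_s=\{\gamma\in\Gamma : \gamma K\cap N_s(K)\neq\emptyset\}$, which forces $f$ to be the identity on the open neighborhood $N_s(K)$ at once; your variant (conditions only on $\Sigma$, then local finiteness of the closed cover $\{\gamma D\}$ and compactness of $D$ to extract a uniform radius) also works and is routine to complete.
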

The conclusion means that a small deformation of a uniform lattice $\Gamma \le G$ is a conjugation by an element of $\Is{X}$.
This theorem, or rather its variant Theorem \ref{thm:local rigidity with totally disc factor}, is used in \S\ref{sec:cat0 spaces} as an ingredient in the proof of local rigidity for $\CAT$ groups. 

The following is yet another interesting application.

\begin{cor}
\label{cor:local topological rigidity for t.d.c.p. groups}
Let $G$ be a compactly presented totally disconnected locally compact group without non-trivial compact normal subgroups. Then   there is a locally compact group $G^\dagger$ admitting $G$ as a closed  cocompact subgroup such that  uniform lattices in $ G^\dagger$ are locally rigid.
\end{cor}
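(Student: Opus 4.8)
The plan is to deduce the corollary from Theorem \ref{cor:local rigidity for td} by realizing $G$ as a closed cocompact subgroup of the isometry group of a suitable complex and then taking $G^\dagger := \Is{X}$ to be that isometry group itself, so that "locally rigid in $\Is{X}$" becomes genuine local rigidity. Since $G$ is totally disconnected and locally compact, van Dantzig's theorem provides a compact open subgroup $K \le G$. The coset space $G/K$ is then discrete, and I equip it with a proper left-invariant metric $\bar d$ (the quotient of an adapted metric on $G$, equivalently the graph metric of a Cayley--Abels graph). Because $K$ is compact the projection $G \to G/K$ is a quasi-isometry, so $(G/K,\bar d)$ is, like the compactly presented group $G$, coarsely connected and coarsely simply connected; hence $X := \Rips_c^2(G/K,\bar d)$ is connected and simply connected once $c$ is large (the analogue for $G/K$ of Proposition \ref{prop:exists a pseudo-metric d for which Rips complex is simply connected}, valid because compact presentation is a quasi-isometry invariant). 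I fix such a $c$, to be enlarged below, and endow $X$ with its intrinsic piecewise-Euclidean metric $\rho$.

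First I would record the geometric properties of $X$. By Theorem \ref{thm:intrinsic pseudo-metric is a complete length space} the space $(X,\rho)$ is a complete geodesic space. Since $\bar d$ is proper and $G/K$ is discrete, every vertex has only finitely many neighbours, so $X$ is locally finite; being a complete, locally compact length space it is proper by Hopf--Rinow. Thus $X$ is a connected, simply connected, proper length space, exactly as required. The left action of $G$ on $G/K$ extends to a simplicial isometric action on $(X,\rho)$ which is cobounded (indeed transitive on vertices) and metrically proper. Its kernel is the normal core $\bigcap_{g\in G} gKg^{-1}$ of $K$, a compact normal subgroup of $G$, hence trivial by hypothesis; so $G \hookrightarrow \Is{X}$ is injective. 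As this map is continuous, injective and metrically proper and $X$ is proper, it is a closed embedding, and coboundedness of $G$ together with properness of the $\Is{X}$-action shows that $G$ is cocompact in $\Is{X}$.

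The heart of the matter is to verify that $G^\dagger := \Is{X}$ \emph{itself} acts smoothly, so that Theorem \ref{cor:local rigidity for td} may be applied with $\Is{X}$ in the role of the cocompact subgroup. Here I would exploit the metric rigidity of the Rips complex for large $c$: using that $G$ acts transitively on vertices and with finitely many orbits on edges, one sees that for $c$ large every edge of $X$ lies in at least three $2$-simplices and every vertex in far more than six, so the cone angle strictly exceeds $2\pi$ all along the $1$-skeleton. Consequently the $1$-skeleton is precisely the non-manifold (singular) locus of $(X,\rho)$, and the vertices, having degree far above two, are exactly its branch points. Any isometry of $(X,\rho)$ must preserve this intrinsic stratification, hence permute the vertices and edges and therefore be a simplicial automorphism. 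Thus $\Is{X} = \mathrm{Aut}(X)$ embeds continuously and injectively into the symmetric group of the $1$-separated discrete vertex set; in particular $\Is{X}$ is totally disconnected and the pointwise stabiliser of any compact set, which meets only finitely many vertices, is open. Hence the action of $\Is{X}$ on $X$ is smooth and cocompact.

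With these verifications in hand, Theorem \ref{cor:local rigidity for td} applied to the closed cocompact smooth subgroup $\Is{X} \le \Is{X}$ shows that uniform lattices of $\Is{X}$ are locally rigid in $\Is{X}$, which for the ambient group $G^\dagger = \Is{X}$ is genuine local rigidity; together with the closed cocompact embedding $G \le G^\dagger$ this is exactly the corollary. I expect the main obstacle to be the smoothness of the full isometry group in the previous paragraph: the passage from metric isometries to simplicial automorphisms must be argued with care, ensuring (by enlarging $c$, and using cocompactness to obtain uniform bounds) that no flat vertices or edges survive and that the combinatorial type of $X$ is genuinely reconstructible from $\rho$. The remaining steps are comparatively routine adaptations of the Rips-complex machinery developed in \S\ref{sec:preliminaries}.
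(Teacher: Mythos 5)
Your proposal is correct and follows essentially the same route as the paper's own proof: realize the vertex set of a Cayley--Abels graph as $G/K$, form the Rips complex $X=\Rips_c^2(G/K,\bar d)$, which for $c$ large is connected, simply connected, proper and locally finite (using compact presentation), set $G^\dagger:=\Is{X}$, note that the kernel of $G\to\Is{X}$ is a compact normal subgroup hence trivial and that the image is closed and cocompact, and then apply Theorem \ref{cor:local rigidity for td} with the full isometry group itself in the role of the smooth cocompact subgroup. The only point where you go beyond the paper is the verification that the $\Is{X}$-action is smooth, which the paper dismisses as ``clear from the definition of a Cayley--Abels graph''; your instinct that this is the delicate step is sound and your branching argument is essentially the needed justification --- indeed taking $c$ merely large enough for simple connectedness does not suffice (for $G=\ZZ^2$ with the triangular-lattice Cayley graph and $c=1$ the complex is a flat Euclidean plane, simply connected but with isometry group neither totally disconnected nor smooth) --- with the small correction that the right local criterion along edges is ``at least three half-disks glued along the edge'' (a non-manifold point), not ``cone angle exceeding $2\pi$'', which at a vertex is compatible with being a manifold point.
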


In particular, all uniform lattices in $G$ as above are   topologically locally rigid. Note that this conclusion  follows immediately from our main Theorem \ref{thm:local rigidity for lc groups}.  However  the discussion and the independent proof   given in \S\ref{sub:compactly presented totoally disconnected groups} and \S \ref{sub:uniform lattices in totally discon} is considerably simpler and yet provides an insight into the general case. 

%Finally, in the special case where $G$ is a direct product of two groups as above and the uniform lattice $\Gamma$ has dense projections to the factors Corollary \ref{cor:local topological rigidity for t.d.c.p. groups} can be improved and it can be shown that $\Gamma$ is locally rigid in $\mathrm{Aut}(G)$.

Corollary \ref{cor:local topological rigidity for t.d.c.p. groups} can be improved in the special case where $G$ as above is a direct product of two groups and the uniform lattice $\Gamma$ has dense projections to the factors.

\begin{cor}
\label{cor:local topological rigidity for product t.d.c.p. groups}
Let $G = G_1 \times G_2$  be a compactly presented totally disconnected locally compact group without non-trivial compact normal subgroups. Let $\Gamma$ be a uniform lattice in $G$ projecting densely to both factors. Then $\Gamma$ is locally rigid in $\mathrm{Aut}(G)$.
\end{cor}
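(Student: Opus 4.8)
The plan is to reduce the statement to the single-factor result of Corollary \ref{cor:local topological rigidity for t.d.c.p. groups}, pass to a product ambient group, and then exploit the dense projections of $\Gamma$ through Proposition \ref{pro:locally rigid in and dense projections}.

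First I would check that each factor $G_i$ satisfies the hypotheses of Corollary \ref{cor:local topological rigidity for t.d.c.p. groups}. Total disconnectedness and local compactness are inherited by direct factors. Compact presentation passes to $G_i$ since $G_i$ is a retract of $G = G_1 \times G_2$ via the projection and the inclusion, and a retract of a compactly presented group is compactly presented. Finally $G_i$ has no non-trivial compact normal subgroup, as a compact subgroup $K \nrm G_i$ would yield a non-trivial compact normal subgroup $K \times \{e\} \nrm G$. Applying Corollary \ref{cor:local topological rigidity for t.d.c.p. groups} to each factor produces a locally compact group $G_i^\dagger$ containing $G_i$ as a closed cocompact subgroup and in which all uniform lattices are locally rigid. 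Setting $G^\dagger = G_1^\dagger \times G_2^\dagger$, the product $G = G_1 \times G_2$ is then a closed cocompact subgroup of $G^\dagger$.

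The crucial step is to establish that all uniform lattices in $G^\dagger = G_1^\dagger \times G_2^\dagger$ are locally rigid, and this is where I expect the main difficulty. Unwinding the proof of Corollary \ref{cor:local topological rigidity for t.d.c.p. groups}, each $G_i^\dagger$ arises as $\Is{X_i}$ for a connected, simply connected, proper length space $X_i$ on which it acts cocompactly and smoothly. I would equip $X = X_1 \times X_2$ with a product length metric, so that $X$ is again connected, simply connected, proper and a length space, and the product group $G^\dagger = \Is{X_1} \times \Is{X_2}$ acts on it cocompactly and smoothly (the pointwise stabiliser of a compact box $K_1 \times K_2$ is the product of the factor stabilisers, hence open). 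Theorem \ref{cor:local rigidity for td} then gives local rigidity of uniform lattices of $G^\dagger$ inside $\Is{X}$. The delicate point is to promote local rigidity in $\Is{X} = \Is{X_1 \times X_2}$ to genuine local rigidity in the product subgroup $G^\dagger = \Is{X_1} \times \Is{X_2}$: one must argue that an isometry of $X_1 \times X_2$ conjugating a uniform lattice of $G^\dagger$ to a nearby subgroup of $G^\dagger$ respects the product decomposition, which is where a uniqueness-of-splitting (de Rham type) argument for the spaces at hand is needed.

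Granting this, the conclusion is immediate. Since $G$ is cocompact in $G^\dagger$ and all uniform lattices of $G^\dagger$ are locally rigid, the remark following Definition \ref{def:be locally rigid in} shows that $\Gamma$ is locally rigid in $G^\dagger = G_1^\dagger \times G_2^\dagger$. As $\Gamma$ projects densely to both factors, Proposition \ref{pro:locally rigid in and dense projections} upgrades this to local rigidity in $N_{G^\dagger}(G)$. Finally, every element $g \in N_{G^\dagger}(G)$ normalises $G$, so conjugation by $g$ restricts to a topological automorphism of $G$; hence every sufficiently small deformation $r$ of $\Gamma$ in $G$ satisfies $r(\gamma) = g^{-1}\gamma g$ for such a $g$ and therefore arises from an element of $\mathrm{Aut}(G)$. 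This is precisely the assertion that $\Gamma$ is locally rigid in $\mathrm{Aut}(G)$.
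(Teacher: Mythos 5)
Your outline reproduces the paper's proof step for step. The preliminary reductions are correct: compact presentation passes to the factors (your retract argument; the paper simply phrases the hypothesis as a product of two compactly presented groups), and a compact $K \nrm G_i$ would give the compact normal subgroup $K \times \{e\} \nrm G$, so each $G_i$ satisfies the hypotheses of Corollary \ref{cor:local topological rigidity for t.d.c.p. groups}. Exactly as in the paper, you then form the product $X = X_1 \times X_2$ of the two Rips complexes with the product metric, note that $G^\dagger = \Is{X_1} \times \Is{X_2}$ acts cocompactly and smoothly on the connected, simply connected, proper length space $X$, invoke Theorem \ref{cor:local rigidity for td}, and finish with Proposition \ref{pro:locally rigid in and dense projections} together with the observation that conjugation by an element of $N_{G^\dagger}(G)$ induces an automorphism of $G$. (Your detour through ``all uniform lattices of $G^\dagger$ are locally rigid'' plus the remark after Definition \ref{def:be locally rigid in}, rather than applying the theorem directly to $\Gamma$, is a cosmetic difference.)

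The problem is the step you label delicate and then grant: that the conjugating isometry of $X_1 \times X_2$ respects the product decomposition, i.e.\ lies in $\Is{X_1} \times \Is{X_2}$ and not merely in $\Is{X_1 \times X_2}$. This is the \emph{only} point at which the two-factor case differs from the single-factor corollary, so a proof that stops at ``granting this'' is missing its actual content; you correctly diagnosed the type of result needed but supplied neither a proof nor a reference. The paper closes this gap by citing the de Rham decomposition theorem of Foertsch--Lytchak \cite{foertsch2008rham}: every isometry of $X$ preserves the canonical maximal product decomposition up to a permutation of isometric irreducible factors, whence $G^\dagger = \Is{X_1} \times \Is{X_2}$ is a closed finite-index, hence open, subgroup of $\Is{X}$. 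Note that even this is not quite the end: finite index alone does not place a given conjugator inside $G^\dagger$. What does is the extra information from the proof of Theorem \ref{thm:local rigidity with totally disc factor}: the conjugator $f$ constructed there is the identity on a nonempty open subset of $X$ and converges to $\mathrm{id}_X$ in $\Is{X}$ as the deformation converges to the inclusion; either property --- triviality of the induced factor permutation, respectively membership in the open subgroup $G^\dagger$ for small deformations --- puts $f$ in $G^\dagger$, after which your endgame goes through verbatim.
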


\subsection{Isometry groups with smooth action}
\label{sub:isometries groups acting smoothly}

\begin{theorem}
        \label{thm:local rigidity with totally disc factor}
        Let $ X $ be a connected and simply connected proper length space. Assume that $X$ splits as a direct product $X = X_1 \times X_2 $ of metric spaces and let $G_i \le \Is{X_i}$ be closed subgroups acting cocompactly. Assume that  $G_1$  acts smoothly. 
        
        Let $\Gamma \le G_1 \times G_2$ be a uniform lattice. Then any sufficiently small deformation of $\Gamma$ that is trivial on $G_2$ is  a conjugation by an element of $\Is{X_1}$.
\end{theorem}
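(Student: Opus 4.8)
The plan is to produce a single isometry of $X$ that fixes the $X_2$-coordinate and conjugates the original $\Gamma$-action into the deformed one, using the simple connectedness of $X$ through the covering-space machinery of \S\ref{sub:covering maps}. Write the deformation as $r(\gamma)=(r_1(\gamma),\gamma_2)$ where $\gamma=(\gamma_1,\gamma_2)$ and $r_1\colon\Gamma\to G_1$ is close to $\mathrm{proj}_{G_1}$; since the $G_2$-coordinate is unchanged we have $r(\gamma)\gamma^{-1}=(r_1(\gamma)\gamma_1^{-1},e)\in\Is{X_1}\times\{e\}$. By Proposition \ref{pro:a cocompact subgroup is acting geometrically} the lattice $\Gamma$ acts geometrically on $X$, so we may fix a compact $D$ with $\Gamma D=X$ and a finite set $\Sigma=\{\gamma\in\Gamma:\gamma D'\cap D'\neq\emptyset\}$ for a suitable compact neighbourhood $D'$ of $D$.

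The crucial point is a local agreement coming from smoothness. Because $G_1$ acts smoothly (Definition \ref{def:smooth action}), the pointwise stabiliser in $G_1$ of any compact subset of $X_1$ is an open neighbourhood of the identity. Hence, given any compact $C\subset X$, for $r$ sufficiently close to $r_0$ each element $r_1(\sigma)\sigma_1^{-1}$ with $\sigma\in\Sigma$ lies in the pointwise stabiliser of $\mathrm{proj}_{X_1}(C)$, so that $r(\sigma)$ and $\sigma$ \emph{agree exactly} on $C$. This is where smoothness is indispensable: it upgrades the approximate closeness of $r$ to $r_0$ into genuine equality of isometries on large compact sets, which is what will make the map below an honest local isometry rather than a merely approximate one.

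Next I would build the intertwining map. Define $\Phi\colon X\to X$ by $\Phi(\gamma x)=r(\gamma)x$ for $x\in D$, i.e. $\Phi|_{\gamma D}=r(\gamma)\gamma^{-1}$. Taking $C$ large enough to contain a neighbourhood of $\Sigma^{-1}D$ and $r$ close enough as above, one checks that whenever $\gamma x=\gamma'x'$ with $x,x'\in D$ the element $\gamma'^{-1}\gamma$ lies in $\Sigma$ and $r(\gamma'^{-1}\gamma)$ agrees with it on the overlap, so $\Phi$ is well defined and continuous; by construction $\Phi(r_0(\gamma)y)=r(\gamma)\Phi(y)$ for all $\gamma\in\Gamma,\,y\in X$. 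The same computation shows that around any point $p$ all the local expressions $r(\gamma')\gamma'^{-1}$ coming from translates meeting a fixed small ball $B_p(s)$ coincide on $B_p(s)$: indeed $(r(\gamma)\gamma^{-1})^{-1}r(\gamma')\gamma'^{-1}=\gamma(r(\sigma)\sigma^{-1})\gamma^{-1}$ with $\sigma=\gamma^{-1}\gamma'\in\Sigma$, which fixes $\gamma'(C)\supseteq B_p(s)$ pointwise. Thus $\Phi$ coincides near $p$ with the single isometry $r(\gamma)\gamma^{-1}$ of $X$, and cocompactness lets $s$ be chosen uniformly, so $\Phi$ is an $s$-local isometry. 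Proposition \ref{prop:properties of pullback length metric} is available should one prefer to obtain the local isometry via the pullback metric.

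Finally I would conclude by simple connectedness. By Lemma \ref{lem:a local isometry is a covering}, $\Phi$ is a covering map onto the connected space $X$; since $X$ is simply connected and the source is connected, $\Phi$ is a homeomorphism, and then Proposition \ref{prop:a local isometry that is a homeo is an isometry} makes $\Phi$ an isometry of $X$. The equivariance $\Phi r_0(\gamma)\Phi^{-1}=r(\gamma)$ exhibits the deformation as conjugation by $\Phi$. It remains to see $\Phi\in\Is{X_1}$: locally $\Phi$ equals $r(\gamma)\gamma^{-1}\in\Is{X_1}\times\{e\}$, so $\Phi$ fixes every $X_2$-coordinate and descends to an isometry of $X_1$, giving $\Phi=(\Phi_1,\mathrm{id}_{X_2})\in\Is{X_1}$ as required. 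I expect the main obstacle to be the verification that the glued map $\Phi$ is a genuine uniform local isometry --- controlling the overlaps so that the piecewise-defined isometries fit together on small balls --- which is precisely the step that forces the use of smoothness via exact agreement on compacta.
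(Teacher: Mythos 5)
Your proposal is correct and follows essentially the same route as the paper's own proof: a piecewise-defined intertwining map equal to $r(\gamma)\gamma^{-1}$ on translates of a compact fundamental set, with smoothness of the $G_1$-action supplying open pointwise stabilizers so that the pieces agree exactly on overlaps, then Lemma \ref{lem:a local isometry is a covering}, simple connectedness, and Proposition \ref{prop:a local isometry that is a homeo is an isometry} to conclude the map is a global isometry conjugating $r_0$ to $r$ inside $\Is{X_1}\times\{e\}$. The only cosmetic difference is that you impose a single agreement condition on one large compact set $C$, where the paper splits this into two open conditions (one for well-definedness on $K$, one forcing $r(\gamma)\gamma^{-1}$ to fix $\gamma K$ so that $f$ is the identity on $N_s(K)$); both implementations are equivalent.
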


This somewhat cumbersome statement involving a product decomposition is needed in the proof of local rigidity for $\CAT$ groups below.

Note that Theorem \ref{cor:local rigidity for td} is obtained as an immediate consequence of Theorem \ref{thm:local rigidity with totally disc factor} in the special case where $X \cong X_1 $, the factor $X_2$ is a reduced to a point, and a subgroup of $\Is{X} \cong \Is{X_1}$ is acting smoothly and cocompactly.

%\begin{definition}
%Let $G$ be a topological group, $H\le G$ a closed subgroup and $\Gamma\le H$ a uniform lattice in $H$. We say that $\Gamma$ is $(G,H)$-locally rigid if there is neighbourhood $\Omega\subset R(\Gamma,H)$ of the inclusion $r_0:\Gamma\to H$, such that any $r\in \Omega$ is of the form $i_g|_\Gamma$ for some $g\in G$, where $i_g(\gamma)=g\gamma g^{-1},~\gamma\in \Gamma$.
%\end{definition}

\begin{proof}[Proof of Theorem \ref{thm:local rigidity with totally disc factor}]
Let $\mathcal{R}_1(\Gamma,G)$ denote the subspace of the representation space $\mathcal{R}(\Gamma,G)$ consisting of these $r$ with $\mathrm{Proj}_2(r(\gamma)) = \mathrm{Proj}_2(\gamma)$ for all $\gamma \in \Gamma$. Let  $r_0 \in \mathcal{R}_1(\Gamma,G)$  correspond to the inclusion morphism $r_0 : \Gamma \hookrightarrow G$ and  $r \in \mathcal{R}_1(\Gamma, G)$ denote a point that should be understood as being close to $r_0$, the precise degree of closeness will be specified below. 

According to Proposition 5.B.10 of \cite{cor_har} and as $X$ is proper, $G$ is acting geometrically. In particular we may apply Proposition \ref{pro:a cocompact subgroup is acting geometrically} and find a compact subset $K \subset X$ such that $X = \Gamma K$. Denote $K_1 = \mathrm{Proj}_1(K)$ and let $O_K \le G_1$ be the point-wise stabilizer of $K_1$. Since $K_1$ is compact, smoothness implies that $O_K$ is an open subgroup of $G_1$. 

Fix some arbitrary $s > 0$ and let $N_s(K)$ denote the $s$-neighborhood of $K$, so that $N_s(K)$ is relatively compact and open. Denote
$$ \Delta_s = \{\gamma \in \Gamma \: : \: \gamma K \cap N_s(K) \neq \emptyset \}, $$
with $\Delta_s$ being finite as $\Gamma$ is discrete and its action is metrically proper.

We now construct a $\Gamma$-equivariant covering map $ f : X \to X$, where $\Gamma$ acts on the domain of $f$ via $r_0$ and on the range of $f$ via $r$, namely $$f(\gamma x) = r(\gamma)f(x), \quad \forall \gamma \in \Gamma, x \in X.$$
We propose to define $f : X \to X$ piece-wise by
$$ f_{|\gamma K} = r(\gamma) \gamma^{-1} \quad \forall \gamma \in \Gamma $$
and care needs to be taken for this to be well-defined. Note that $f$  depends on $r$ implicitly.
In any case, the above formula is equivariant in the sense that given $x \in \gamma K$ and $\gamma' \in \Gamma$ we have
$$ f_{|\gamma' \gamma K} (\gamma' x)= (r(\gamma'\gamma)\gamma^{-1}\gamma'^{-1})(\gamma' x) = r(\gamma') (r(\gamma)  \gamma^{-1}) x = r(\gamma') f_{|\gamma K}(x).$$

An ambiguity in the definition of $f$ arises whenever $\gamma K \cap \gamma' K \neq \emptyset$ for a pair of elements $\gamma, \gamma' \in \Gamma$. By the equivariance property we may assume $\gamma'=1$, that is $\gamma K \cap K \neq \emptyset $ and so  $\gamma \in \Delta_s$.
To resolve the ambiguity we require that
$$ f_{|K \cap \gamma K} = r(\gamma) \gamma^{-1}_{|K \cap \gamma K} = \mathrm{id}_{|K \cap \gamma K}. $$
This will certainly be the case given that $r(\gamma) \gamma^{-1}$ fixes the set $K$ point-wise. Note that 
$ \mathrm{Proj}_2(r(\gamma) \gamma^{-1}) = e_2 \in \Is{X_2}$ 
and so it suffices to require 
$$ \mathrm{Proj}_1(r(\gamma) \gamma^{-1}) \in O_K \Leftrightarrow \mathrm{Proj}_1 (r(\gamma)) \in O_K \mathrm{Proj}_1(\gamma). $$
This means that if we let $\Omega \subset R_1(\Gamma,G)$ be the open condition  given by
$$ \Omega = \{ r\in R_1(\Gamma, G) \: : \: \mathrm{Proj}_1 (r(\gamma)) \in O_K \mathrm{Proj}_1(\gamma),~  \forall \gamma \in \Delta_s \}, $$
the function $f$ is well-defined as long as  $r \in \Omega$.

We next ensure that $f$ is  a local isometry. We define an open neighborhood $\Omega_{s}$ of $r_0$ in $R_1(\Gamma,G)$ so that for every $f \in \Omega_{s}$ the function $f$ is an $s$-local isometry. Let
$$\Omega_s = \{ r\in R_1(\Gamma, G) \: : \: \mathrm{Proj}_1 (r(\gamma)) \in \mathrm{Proj}_1(\gamma) O_K \; \forall \gamma \in \Delta_s \} $$
and it is easy to see that for every $r \in \Omega_s$ and $\gamma \in \Delta_s$ the map $r(\gamma) \gamma^{-1}$ fixes point-wise the set $\gamma K$. In particular $f_{|N_s(K)} = \mathrm{id}$ for all such $r \in \Omega_s$.

Given a point $x \in X$ we may by equivariance assume that $x \in K$ and so $B_s(x) \subset N_s(K)$. In particular $f_{|B_s(x)} = \mathrm{id}$ and $f$ is certainly a $s$-local isometry at $x$. 

We have shown that $f : X \to X$ is a $\Gamma$-equivariant $s$-local isometry. Therefore $f$ is a covering map by Lemma \ref{lem:a local isometry is a covering}.  The connectedness and simple connectedness  of $X$ implies that $f$ is an homeomorphism. A local isometry which is a homeomorphism between length spaces is in fact an isometry, see Proposition \ref{prop:a local isometry that is a homeo is an isometry}.

To conclude we may regard $f \in  \Is{X}$ and the equivariance formula implies that $f \Gamma f^{-1} = r(\Gamma)$  as long as $r \in \Omega \cap \Omega_s$ for some $s > 0$. 
Observe that $\mathrm{Proj}_2(f) = e_2 \in \Is{X_2}$ and so in fact $f \in \Is{X_1}$.
        
\end{proof}

\subsection{The Cayley-Abels graph}
\label{sub:compactly presented totoally disconnected groups}

In order to  apply Theorem \ref{cor:local rigidity for td} towards Corollary \ref{cor:local topological rigidity for t.d.c.p. groups} we need to construct  a suitable space $X$ associated with a totally disconnected compactly presented group $G$. With this aim in mind we recall the notion of a Cayley--Abels graph; see e.g. \cite{moller,wesolek} or  \cite[2.E.10]{cor_har}.

\begin{prop}
\label{prop:cayley-abels graph}
Let $G$ be  a compactly generated  totally disconnected locally compact group. There is a locally finite connected graph $X$ on which $G$ acts transitively, continuously and with compact open stabilizers. 
\end{prop}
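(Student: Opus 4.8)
The plan is to realize $X$ as a Cayley--Abels graph built from a compact open subgroup. The starting point is van Dantzig's theorem (see \cite{cor_har}), which guarantees that a totally disconnected locally compact group admits a compact open subgroup $U$. I would take the vertex set of $X$ to be the discrete coset space $G/U$, on which $G$ acts by left translation; this action is automatically transitive, and the stabilizer of the coset $gU$ is the conjugate $gUg^{-1}$, which is again compact and open. Continuity of the action on the discrete space $G/U$ is then immediate from the openness of these stabilizers. Thus transitivity, continuity, and the compact-open stabilizer condition will all be essentially built into the choice of vertex set, and the real work is to equip $G/U$ with a $G$-invariant edge set making it connected and locally finite.

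To this end I would fix a symmetric compact generating set $K$ of $G$ and enlarge it so that $U \subset K$ and $K = K^{-1}$, and then set $S = UKU$. The key point is that $S$ is at once (i) symmetric, since $U$ is a subgroup and $K$ is symmetric; (ii) bi-$U$-invariant, i.e. $USU = S$, so that the relation $gU \sim hU \iff g^{-1}h \in S$ descends to a well-defined, symmetric, left-$G$-invariant adjacency relation on $G/U$; (iii) open, being a union of left translates of the open set $U$; (iv) compact, as the continuous image of $U \times K \times U$; and (v) a generating set, since it contains $K$. I would define the edges of $X$ by this adjacency relation, declaring $gU$ and $hU$ adjacent precisely when they are distinct and $g^{-1}h \in S$.

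It then remains to verify connectedness and local finiteness. For local finiteness, since $S$ is compact and open and $U$ is open, $S$ is a finite union of left cosets of $U$; hence the base vertex $U$ has finitely many neighbors, and by transitivity every vertex has the same finite degree. For connectedness, any $g \in G$ may be written as a product $s_1 \cdots s_m$ of elements of $S$ because $S$ generates $G$; the sequence $U, s_1 U, s_1 s_2 U, \ldots, s_1 \cdots s_m U = gU$, after discarding repetitions caused by factors lying in $U$, is then a path from the base vertex to $gU$.

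The main obstacle --- really the only delicate point --- is choosing $S$ so that the competing requirements hold simultaneously: the adjacency relation must descend to $G/U$ (forcing $S$ to be bi-$U$-invariant), local finiteness demands that $S$ be a finite union of $U$-cosets (supplied by compactness together with openness), and connectedness demands that $S$ generate $G$. The formula $S = UKU$ reconciles all of these at once, and it is precisely here that compact generation of $G$ is indispensable: without a compact generating set one cannot arrange both the finite degree and the connectedness of $X$.
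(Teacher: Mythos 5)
Your proof is correct and takes essentially the same route as the paper: van Dantzig's theorem supplies a compact open subgroup $U$, and the graph is the Cayley--Abels graph on the coset space $G/U$. The paper simply delegates the construction of the edge set to Proposition 2.E.9 of \cite{cor_har}, which is precisely the bi-$U$-invariant generating set $S = UKU$ construction that you carry out in full detail.
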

\begin{proof}
By the  van-Dantzig theorem there is a compact open subgroup $U \subset G$. We refer to Proposition 2.E.9 of \cite{cor_har} for the construction of the required graph $X$. In particular, the vertices of $X$ are taken to be the cosets $G/U$. 
\end{proof}

Any graph satisfying the conditions of Proposition \ref{prop:cayley-abels graph} is called a \emph{Cayley--Abels} graph\footnote{If $G$ is a countable discrete group then a Cayley-Abels graph is simply a Cayley graph with respect to some generating set.}
%In general, it is a Schreier graph on $G/U$ associated to a suitable generating set.
 for $G$. It is clear that the action of $G$ on a corresponding Cayley--Abels graph  is geometric. In particular all such graphs for $G$ are quasi-isometric to $G$ and therefore to each other, as follows from Theorem 4.C.5 of \cite{cor_har} (this fact is also proved in \cite{moller}).

\begin{prop}
\label{prop:rips complex for t.d. is simply connected}
Let $G$ be a compactly presented totally disconnected locally compact group and let $X$ be a Cayley--Abels graph for $G$. Then the Rips complex $\Rips_c^2(X^{(0)})$ constructed with respect to the graph metric on the vertices of $X$ is simply-connected for all $c$ sufficiently large.
\end{prop}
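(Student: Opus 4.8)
The plan is to deduce the statement from the simple connectedness of the Rips complex of $G$ itself, namely Proposition \ref{prop:exists a pseudo-metric d for which Rips complex is simply connected}, by transporting it along a quasi-isometry. Let $d$ be the adapted pseudo-metric on $G$ furnished by that proposition, so that $\Rips_c^2(G,d)$ is connected and simply connected for all sufficiently large $c$. Fix a base vertex $x_0 \in X^{(0)}$. As observed after Proposition \ref{prop:cayley-abels graph}, the action of $G$ on the Cayley--Abels graph $X$ is geometric, and consequently $(X^{(0)}, d_{\mathrm{gr}})$ with the graph metric $d_{\mathrm{gr}}$ is quasi-isometric to $(G,d)$; an explicit quasi-isometry is the orbit map $\phi \colon G \to X^{(0)}$, $g \mapsto g x_0$, which is surjective because $G$ acts transitively on $X^{(0)}$ and has bounded fibres since the vertex stabilizer $U$ is compact.

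The principle I would invoke is that simple connectedness of the Rips $2$-complex, for large values of the scale parameter, depends only on the quasi-isometry type of the underlying pseudo-metric space; this is precisely the \emph{coarse simple connectedness} studied in \cite[Chapter 6]{cor_har}, which is a quasi-isometry invariant. Concretely, a quasi-isometry $\phi$ together with a fixed coarse inverse $\psi$ sets up, for each scale $c'$, a comparison between edge-loops in the $1$-skeleton of $\Rips_{c'}^2(X^{(0)})$ and edge-loops in $\Rips_{c}^2(G,d)$ for a suitable $c = c(c')$, and likewise between their fillings by $2$-simplices.

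The argument then runs as follows. Given a large scale $c'$ and an edge-loop $\gamma$ in $\Rips_{c'}^2(X^{(0)})$, pushing its vertices forward under $\psi$ produces an edge-loop in $\Rips_{c}^2(G,d)$ for an appropriate $c$, which bounds a disc by Proposition \ref{prop:exists a pseudo-metric d for which Rips complex is simply connected}. Pushing that filling back under $\phi$ fills $\gamma$ inside $\Rips_{c''}^2(X^{(0)})$ for some larger $c''$, and the uniformity of the quasi-isometry constants ensures that a single scale suffices for all loops simultaneously. Choosing $c$ large enough that every such adjustment stays within the range where Proposition \ref{prop:exists a pseudo-metric d for which Rips complex is simply connected} applies yields simple connectedness of $\Rips_{c'}^2(X^{(0)})$ for all sufficiently large $c'$.

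I expect the main obstacle to be the bookkeeping of the scale parameters in this transfer: one must check that an elementary homotopy across a single triangle on the $G$-side maps to a bounded-diameter, hence fillable, configuration on the $X$-side, and that the total enlargement of the scale remains uniform and finite rather than accumulating along the loop. Once this quasi-isometry invariance of coarse simple connectedness is secured --- either cited directly from \cite[Chapter 6]{cor_har} or verified by the triangle-by-triangle comparison sketched above --- the proposition is immediate.
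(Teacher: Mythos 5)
Your strategy coincides with the paper's own proof: the paper likewise transports coarse simple connectedness along the quasi-isometry between $G$ and its Cayley--Abels graph, citing \cite[8.A.3]{cor_har} (compact presentation is equivalent to coarse simple connectedness of $G$ with respect to a geodesically adapted pseudo-metric, which is what Proposition \ref{prop:exists a pseudo-metric d for which Rips complex is simply connected} encodes) and then \cite[6.C.6]{cor_har} to conclude that $\Rips_c^2(X^{(0)})$ is simply connected for all large $c$.

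However, your hand-made verification elides the one step that actually requires an argument, and as written it would not close. The push--pull procedure (push a loop $\gamma$ through $\psi$, fill it in $\Rips_c^2(G,d)$, pull the filling back through $\phi$, and glue in the annulus between $\gamma$ and $\phi\psi\gamma$) only shows that every loop in $\Rips_{c'}^2(X^{(0)})$ becomes null-homotopic in $\Rips_{c''}^2(X^{(0)})$ for some strictly larger $c''$ governed by the quasi-isometry constants. That is the statement that $X^{(0)}$ is \emph{coarsely} simply connected; it is not, by itself, simple connectedness of any single complex $\Rips_{c}^2(X^{(0)})$, and for general pseudo-metric spaces the two are not equivalent (nor is ``Rips complex simply connected at all large scales'' a quasi-isometry invariant in general). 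The bridge between them requires the space to be large-scale geodesic. Here this does hold --- the graph metric on $X^{(0)}$ is $1$-large-scale geodesic since $X$ is connected --- and it is what lets one homotope an arbitrary loop of $\Rips_{c''}^2(X^{(0)})$, within that complex, down to a unit-step loop: each edge $\{x,y\}$ is replaced by a geodesic chain $x = x_0, x_1, \dots, x_m = y$, the triangles $\{x, x_i, x_{i+1}\}$ all being simplices of $\Rips_{c''}^2$ because $d(x,x_i) = i \le d(x,y) \le c''$. Only after this reduction does your coarse filling statement apply and yield genuine simple connectedness at a fixed large scale. This bridge is precisely the content of \cite[6.C.6]{cor_har}, which is the result to cite from Chapter 6; quasi-isometry invariance of coarse simple connectedness alone does not make the proposition ``immediate''.
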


Note that the Rips complex $\Rips_c^2(X^{(0)})$ constructed on the vertices of $X$ is a locally finite $2$-dimensional simplicial complex.

\begin{proof}[Proof of Proposition \ref{prop:rips complex for t.d. is simply connected}]
In terms of the machinery developed in \cite{cor_har} the proof is transparent. Namely, $G$ is large-scale equivalent (i.e. quasi-isometric) to $X$. The fact that $G$ is compactly presented is equivalent to $G$ being coarsely simply connected\footnote{The notion of \emph{coarse simple connectedness} is discussed in  \cite[6.A]{cor_har}.} with respect to some geodesically adapted metric; \cite[8.A.3]{cor_har}. Hence $X$ is coarsely simply connected, which implies that $\Rips_c^2(X)$ is simply connected for all $c$ sufficiently large by  \cite[6.C.6]{cor_har}. 

Alternatively, a hands on proof can be obtained by recalling that the vertices of $X$ correspond to $G/U$ for some compact open subgroup $U$ and verifying that the proof of  \cite[7.B.1]{cor_har} goes through in this case as well.
\end{proof}

\subsection{Local rigidity of uniform lattices in  totally disconnected groups}
\label{sub:uniform lattices in totally discon}

\begin{proof}[Proof of Corollary \ref{cor:local topological rigidity for t.d.c.p. groups}]
Let $G$ be a compactly presented totally disconnected locally compact group without non-trivial compact normal subgroups. Let $X$ be a Cayley--Abels graph for $G$ and $\Rips = \Rips_c^2(X)$ the corresponding Rips complex for $c$ sufficiently large so that $\Rips$ is connected and simply connected, as in Proposition \ref{prop:rips complex for t.d. is simply connected}. Denote $G^\dagger = \Is{\Rips}$ and let $\alpha : G \to G^\dagger$ be the natural  homomorphism \cite[5.B]{cor_har}. The group $G^\dagger$ is locally compact, totally disconnected and compactly presented, and the homomorphism $\alpha$ is   continuous, has compact kernel, and its image  is closed and cocompact in $G^\dagger$. Our assumptions imply that $\alpha$ is injective so that  $G$ can be identified with the subgroup $\alpha(G)$ of $G^\dagger$.

It is clear from the definition of a Cayley--Abels graph that the action of $G^\dagger$ on $\Rips$ is smooth. Moreover $\Rips$ is a length space, as in Theorem \ref{thm:intrinsic pseudo-metric is a complete length space}. Being a locally finite simplicial complex with a single isometry type of simplices in each dimension, $\Rips$ is proper. The group $G^\dagger$ is acting transitively on the vertices of $\Rips$ and hence cocompactly on $\Rips$.
Having verified all the required conditions, local rigidity of uniform lattices in $G^\dagger$ follows immediately from Theorem \ref{cor:local rigidity for td}.
\end{proof}

% a somewhat different and possibly simpler proof of Corollary \ref{cor:local topological rigidity for t.d.c.p. groups}.

\begin{remark}
\label{remark:uniform tree lattices}
 Uniform tree lattices are locally rigid ---
 let $T$ be a locally finite tree and $G$  a cocompact group of automorphisms of $T$. Every uniform lattice $\Gamma$ in $G$ admits a  free  Schottky subgroup $\Delta$ of finite index \cite{lubotzky1991lattices}. A Schottky subgroup is defined by an open condition and it is clear that $\Delta$ is  topologically locally rigid. Since the centralizer of $\Gamma$ in $G$ is trivial, our Lemma \ref{lem:local rigidity and finite index} implies  that $\Gamma$ is  topologically locally rigid as well.
%
%Uniform tree lattices are locally rigid. Indeed, 
%let $T$ be a locally finite tree and $G$  a cocompact group of automorphisms of $T$. Every uniform lattice $\Gamma$ in $G$ admits a  free  Schottky subgroup $\Delta$ of finite index \cite{lubotzky1991lattices}. A Schottky subgroup is defined by  open conditions and it is clear that $\Delta$ is locally topologically rigid. Since $\Gamma$ has a trivial centralizer in $G$ our Lemma \ref{lem:local rigidity and finite index} implies that $\Gamma$ is locally topologically rigid as well.
\end{remark}
\begin{remark}
\label{remark:another proof of local rigidity}
The previous remark leads to a  different and possibly simpler proof of Corollary \ref{cor:local topological rigidity for t.d.c.p. groups}.
Let $G$ be a compactly generated totally disconnected locally compact group. There is a locally compact group  $\widetilde{G}$ admitting a quotient map onto $G$ with discrete kernel, such that $\widetilde{G}$ is acting by automorphisms on the universal covering tree  of some Cayley--Abels graph for $G$. Uniform lattices in $\tilde{G}$ are  topologically locally rigid by Remark \ref{remark:uniform tree lattices}.  Topological local rigidity for uniform lattices in $G$ follows relying on the methods of  \S \ref{sec:locally compact case}.
\end{remark}

%\begin{remark}
% % 
% TODO - complete
%\end{remark}

\begin{proof}[Proof of Corollary \ref{cor:local topological rigidity for product t.d.c.p. groups}]
Let $G = G_1 \times G_2$ be a product of two compactly presented totally disconnected locally compact groups without non-trivial compact normal subgroups and $\Gamma$ a uniform lattice in $G$ projecting densely to both factors. Let $X$ be the product of the two Rips complexes $\Rips_1$ and $\Rips_2$ associated  to the two factors  by Proposition \ref{prop:rips complex for t.d. is simply connected} and as in the proof of Corollary \ref{cor:local topological rigidity for t.d.c.p. groups}. We consider $X$ with the product $L^2$ metric. The de Rham decomposition theorem of Foertsch--Lytchak \cite{foertsch2008rham} implies that $G^\dagger = \Is{\Rips_1} \times \Is{\Rips_2}$  is an open finite index subgroup of $\Is{X}$. In particular the uniform lattice $\Gamma$ is locally rigid in $G^\dagger$ by Theorem  \ref{cor:local rigidity for td}. We conclude relying on Proposition \ref{pro:locally rigid in and dense projections}.
  \end{proof}

%The isometry group $\Is{\Rips}$ of the Rips complex $\Rips$ appearing in the above proof is compactly presented and totally disconnected. There is an obvious continuous map $\alpha : G \to \Is{\Rips}$ so that $\alpha(G)$ is closed and cocompact in $\Is{\Rips}$ and so that $\ker \alpha $ is compact. See e.g. .

\section{Compactly presented groups}
\label{sec:compactly presented}

In this section we prove Theorem \ref{thm:local rigidity for lc groups} with the additional assumption of compact presentation. Let $G$ be a compactly presented group and $\Gamma \le G$ a uniform lattice.  We  show that a small deformation $r(\Gamma) $ of $\Gamma$ is a uniform lattice isomorphic to it.

\subsubsection*{Overview}

The strategy of the proof  is as follows. Consider the Rips complex $\Rips$ associated to $G$.  Then $\Rips$ is connected and simply connected  and $G$ is acting on $\Rips$ geometrically.
The main ingredient of the proof is the construction of  a connected space $Y$ and a covering map $ f : Y \to \Rips$ depending on the deformation $r$.  Such a map $f$ must of course be a homeomorphism.

We consider  certain open subsets of  $\Rips$ called \emph{bubbles}. The space $Y$ is constructed  by gluing together bubbles indexed by elements $\gamma \in \Gamma$. The map $f $ is  naturally associated to this gluing and is a local homeomorphism. This allows  us to define a pull-back length metric on $Y$. We then promote $f$ to an $s$-local isometry using a certain ``Lebesgue number" argument with respect to the cover of $Y$ by bubbles.

There is an obvious action of $\Gamma$ on $Y$ by homeomorphisms, coming from the indexing of the bubbles. The map $f$ becomes $\Gamma$-equivariant with respect to this action, with $\Gamma$ acting on $\Rips$ via $r$. Finally, we show that $\Gamma$ is  acting on $Y$ geometrically with respect to the induced length metric and deduce from this Theorem \ref{thm:local rigidity for lc groups}.

\subsubsection*{Standing notations}

Recall that $\Gamma$ is a uniform lattice in the compactly presented group $G$.  Let $r_0 \in \mathcal{R}(\Gamma,G)$ correspond to the  inclusion and $r \in \mathcal{R}(\Gamma,G)$ be a point close to $r_0$.

Let $\Rips = \Rips_c^2(G,d)$ be the associated Rips complex where $d$ is some left invariant, continuous and proper  pseudo-metric on $G$, see Proposition \ref{prop:exists a pseudo-metric d for which Rips complex is simply connected}. We assume that $d$ is $\lambda$-length and that $c > \lambda$ is sufficiently large so that $\Rips$ is connected and simply-connected. Moreover $\rho$ is the intrinsic metric on $\Rips$, and $\bar{\rho}$ will denote the induced length metric on the space $Y$.

We will  use the notation $U = \mathcal{B}(e,D)$, where $\mathcal{B}$ denotes a bubble as in Definition \ref{def:bubbles}. Here $ D > 0$ is a fixed radius. An additional radius $L$ (the ``Lebesuge number") is introduced in Lemma \ref{lem:lebesgue number} and it is assumed that $ L > c > \lambda$ and $ D > D(L)$. See Lemma \ref{lem:lebesgue number} for the meaning of the parameter $D(L)$.

Finally, $\Sigma$ will denote a certain generating set for $\Gamma$ depending on the choice of the bubble $U$.

\subsection{Bubbles}

We consider certain open subsets of $\Rips$ that share many of the properties of metric balls.

\begin{definition}
        \label{def:spans}
        Let $\Rips$ be a simplicial complex and $A \subset \Rips_{(0)}$ a subset of its $0$-skeleton. The \textbf{span $\mathcal{S}(A) \subset \Rips$} of $A$ is the union of those  simplices in $\Rips$ all of whose vertices belong to $A$.
\end{definition}

\begin{definition}
        \label{def:bubbles}
        Let $(\Rips,\rho) = \Rips_c^2(G,d)$ be the Rips complex of $(G,d)$ with its associated  metric $\rho$. Let $0 < \varepsilon < \sqrt{3}/4$ be fixed.
        The \textbf{bubble} at a vertex $g \in G = \Rips_{(0)}$ of radius $D > 0$ is
        $$ \mathcal{B}(g,D,\varepsilon) = \mathcal{N}_\varepsilon\left( \mathcal{S}\left(B\right)\right) = \{ x \in \Rips \: : \: \rho(x,\mathcal{S}(B)) < \varepsilon \}$$
        where $$B = B_{(G,d)}\left(g,D\right).$$
\end{definition}

The constant $ 0 < \varepsilon < \sqrt{3}/4$ will be kept fixed in what follows and its exact value will not affect the arguments in any way. For this reason we will sometimes suppress $\varepsilon$ from the  notation for $\mathcal{B}$.

We collect basic properties of bubbles.

\begin{prop}
        \label{prop:properties of a bubble}
        Let $\mathcal{B} = \mathcal{B}(x,D,\varepsilon)$ be a bubble.
        \begin{enumerate}
                \item \label{item:B is open} $\mathcal{B}$ is open in $\Rips$.
                \item \label{item:B is connected} If $d$ is $\lambda$-length, $\Rips = \Rips_c^2(G,d)$ and $D > c > \lambda$ then $\mathcal{B}$ is connected.
                \item \label{item:action on bubbles} Given $g\in G$ we have $g \mathcal{B} = \mathcal{B}(gx,D,\varepsilon)$.
                \item \label{item:a middle point in contained in the bubble} $x \in \mathcal{B} = \mathcal{B}(x,D,\varepsilon)$.
                
        \end{enumerate}
        Let $\mathcal{B}' = \mathcal{B}(x',D',\varepsilon)$ be an additional bubble.
        \begin{enumerate}[resume]
                \item \label{item:inclusion of smaller bubbles} If $d(x,x') + D' < D$ then $\mathcal{B}' \subset \mathcal{B}$.
        \end{enumerate}   
        Let $\mathcal{B}_i = \mathcal{B}(x_i,D_i,\varepsilon)$ be a collection of bubbles for $i\in I$ with $\abs{I}<\infty$.
        \begin{enumerate}[resume]
                \item \label{item:intersection of bubbles}  If $\cap_{i\in I}\mathcal{B}_i \neq \emptyset$ then this intersection contains a vertex of $\Rips$.
                
        \end{enumerate}
\end{prop}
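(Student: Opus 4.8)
The plan is to pass from the intersection point to a single nearby vertex by analysing its carrier simplex, after first reducing the statement to a question about the balls $B_i=B_{(G,d)}(x_i,D_i)$ themselves. The reduction I would record is: for $0<\varepsilon<\sqrt{3}/2$ a vertex $v\in \Rips_{(0)}=G$ lies in a bubble $\mathcal{B}(g,D,\varepsilon)$ if and only if $v$ lies in the ball $B=B_{(G,d)}(g,D)$. Indeed, if $v\in B$ then $v$ is a $0$-simplex of $\mathcal{S}(B)$ and $\rho(v,\mathcal{S}(B))=0<\varepsilon$; conversely, if $v\notin B$ then every simplex of $\mathcal{S}(B)$ has all of its vertices distinct from $v$, and since distinct vertices are at $\rho$-distance $\ge 1$ while an equilateral face not containing $v$ is at $\rho$-distance $\ge \sqrt{3}/2$ from $v$, we get $\rho(v,\mathcal{S}(B))\ge \sqrt{3}/2>\varepsilon$. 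Thus it suffices to exhibit a common vertex of the balls, i.e. a vertex lying in $\bigcap_{i\in I} B_i$.

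Next I would fix a point $y\in\bigcap_{i\in I}\mathcal{B}_i$ and let $\sigma$ be its carrier, the unique open simplex of $\Rips$ containing $y$, with vertices $v_0,\dots,v_k$ (here $k\le 2$) and strictly positive barycentric coordinates $t_0,\dots,t_k$. For each $i$ put $A_i=B_i\cap\{v_0,\dots,v_k\}$. The first claim is that the nearest point of $\mathcal{S}(B_i)$ to $y$ may be taken inside the closed carrier $\bar{\sigma}$, hence inside the face $\mathcal{S}(A_i)$: a geodesic from the interior point $y$ to a point of $\mathcal{S}(B_i)$ lying outside $\bar{\sigma}$ must exit through a proper face $f\subset\partial\sigma$, and whenever the simplex on the far side of $f$ belongs to $\mathcal{S}(B_i)$ its vertices — in particular those of $f$ — lie in $B_i$, so that $f\subseteq \mathcal{S}(B_i)$ and the exit point is an equally near point of $\mathcal{S}(B_i)$ inside $\bar{\sigma}$. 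Since $\varepsilon<\sqrt{3}/2$ this forces $A_i\neq\emptyset$ and $\rho\big(y,\mathcal{S}(A_i)\big)<\varepsilon$ for every $i$.

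It then remains to combine an elementary Euclidean computation in the equilateral model simplex with a Helly-type combinatorial argument. Using that the $\rho$-distance from $y$ to the edge opposite $v_c$ equals $\tfrac{\sqrt{3}}{2}\,t_c$ and that the distance to the vertex $v_c$ equals $\sqrt{t_a^2+t_b^2+t_at_b}$ (with $\{a,b\}=\{0,\dots,k\}\setminus\{c\}$), the hypothesis $\varepsilon<\sqrt{3}/4$ excludes the extremal configuration in which a vertex and its opposite edge are simultaneously within $\varepsilon$ of $y$ — the borderline of this configuration occurs exactly at distance $\sqrt{3}/4$ — and likewise excludes two vertices being simultaneously that close. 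Hence any two of the faces $\mathcal{S}(A_i)$ that come $\varepsilon$-close to $y$ must share a vertex. I expect the genuine obstacle to be upgrading this pairwise non-disjointness of the sets $A_i$ to a single vertex $v^{\ast}$ common to all of them at once, since for three mutually adjacent faces pairwise intersection alone is insufficient; this is the point at which the finiteness of $I$ and the fact that each $A_i$ is carved out of $\{v_0,\dots,v_k\}$ by a metric ball must be brought to bear. Once such a $v^{\ast}$ is produced, it lies in $\bigcap_{i} B_i\subseteq\bigcap_{i}\mathcal{S}(B_i)\subseteq\bigcap_{i}\mathcal{B}_i$, and being a vertex it is exactly the point the statement asks for.
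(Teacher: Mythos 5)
Your opening reduction (a vertex $v$ lies in $\mathcal{B}(g,D,\varepsilon)$ if and only if $v\in B_{(G,d)}(g,D)$, because a vertex outside $B$ is at $\rho$-distance at least $\sqrt{3}/2$ from $\mathcal{S}(B)$) is correct, and so is your instinct about where the difficulty sits --- but the obstacle you flagged is not a gap that a cleverer Helly-type argument could close. It is unfillable at the stated constant, because for $\sqrt{3}/6<\varepsilon<\sqrt{3}/4$ the proposition is in fact false: the ``three edges'' pattern $A_1=\{v_0,v_1\}$, $A_2=\{v_1,v_2\}$, $A_3=\{v_0,v_2\}$ with $B_1\cap B_2\cap B_3=\emptyset$ is realizable by genuine metric balls. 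Take $G=(\RR^2,+)$ with the Euclidean metric (left-invariant, continuous, proper, and $\lambda$-length for every $\lambda>0$), let $c=1$, and let $v_0,v_1,v_2$ span a unit equilateral triangle, hence a $2$-simplex $\sigma$ of $\Rips$. For large $h$ let $x_1$ be the point at distance $h$ from the midpoint of $[v_0,v_1]$ on the side away from $v_2$, put $D_1=\sqrt{1/4+h^2}+h^{-1}$, and let $(x_2,D_2)$, $(x_3,D_3)$ be obtained by rotating this data by $\pm 120^{\circ}$ about the centroid $z$ of the triangle. Each $B_i$ then contains exactly two of the three vertices, in the cyclic pattern above, and $B_1\cap B_2\cap B_3=\emptyset$: this intersection is convex and invariant under the order-three rotation about $z$, so were it non-empty it would contain $z$ itself, whereas $d(x_1,z)=h+\sqrt{3}/6>D_1$ for large $h$. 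Yet the barycenter of $\sigma$ is at $\rho$-distance at most $\sqrt{3}/6$ (the inradius) from each of the three closed edges of $\sigma$, hence lies in $\mathcal{B}_1\cap\mathcal{B}_2\cap\mathcal{B}_3$ as soon as $\varepsilon>\sqrt{3}/6$; by your vertex criterion that intersection contains no vertex of $\Rips$.

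In other words, what you have located is an error in the paper, not a defect of your own strategy. The paper's proof subdivides every $2$-simplex of $\Rips$ into four smaller triangles and asserts that the operation $Q\mapsto\mathcal{N}'(Q)$, sending a subcomplex $Q$ of $\Rips$ to the union of the open cells of the subdivision whose closures meet $Q$, commutes with intersections; this fails on exactly your configuration, since the closure of the middle triangle of the subdivided $\sigma$ meets all three edges of $\sigma$ (at their midpoints) although those edges have empty total intersection. The statement and both proofs are repaired by requiring $\varepsilon<\sqrt{3}/6$, which is harmless for the rest of the paper (nothing is used about $\varepsilon$ beyond its being a fixed positive constant). With that constant your scheme does terminate: each $A_i$ is a non-empty subset of the at most three carrier vertices, and a family of such sets with empty total intersection must either contain two disjoint members --- impossible, since disjoint faces of $\sigma$ are at $\rho$-distance at least $\sqrt{3}/2$, so both cannot be $\varepsilon$-close to $y$ when $2\varepsilon<\sqrt{3}/2$ --- or must contain all three distinct edges, which is impossible for $\varepsilon\le\sqrt{3}/6$ because by Viviani's theorem the three distances from $y$ to the edges of $\sigma$ sum to exactly $\sqrt{3}/2$. (The paper's route is likewise repaired by using the barycentric rather than the medial subdivision, for which the intersection identity does hold.) Two smaller defects in your write-up would still need fixing even after the constant is corrected: your reduction of $\rho(y,\mathcal{S}(B_i))<\varepsilon$ to $\rho(y,\mathcal{S}(A_i))<\varepsilon$ is only argued when the simplex entered upon leaving $\bar{\sigma}$ already lies in $\mathcal{S}(B_i)$, and the implication with the same $\varepsilon$ is actually false --- unfolding two triangles across a common edge shows a point of $\sigma$ near $[v_0,v_1]$ can be closer to an edge $[v_0,u]$ of $\mathcal{S}(B_i)$ in the adjacent triangle than to $\mathcal{S}(A_i)=\{v_0\}$ by a factor approaching $\sqrt{3}/2$, so this step must absorb a multiplicative loss; and your planar distance formulas inside $\bar{\sigma}$ are used as lower bounds for $\rho$, which requires justification (for instance via functions that vanish on the relevant face, equal a definite constant off its star, and are $1$-Lipschitz on every closed simplex) since $\rho$-paths may leave $\bar{\sigma}$.
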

\begin{proof}
        %$\mathcal{B}$ is by definition the union of $\rho$-balls of radius $\varepsilon$ at the points of a certain  subcomplex of $\Rips$, and is therefore open. This shows 
        Item (\ref{item:B is open}) is obvious.
        
        To establish (\ref{item:B is connected}), consider a bubble at the vertex $x = g \in G$. Theorem \ref{thm:intrinsic pseudo-metric is a complete length space} implies that $\rho-$balls  are path connected in $\Rips$, and so it suffices to show that $\mathcal{S}(B)$ is path connected. Any point of $\mathcal{S}(B)$ can in turn be connected by a segment to one of the vertices in $B$.
It remains to show that any vertex $h \in B$ can be connected to $g$ within $\mathcal{S}(B)$. As $d(g,h) < D$ and $d$ is $\lambda$-length there is a sequence of points $x_0 = g, x_1, \ldots, x_n = h$ with $d(x_i,x_{i+1}) < \lambda < c$ and $d(g,x_i) < D$. The corresponding path in $\Rips$ connects $g$ to $h$ and is contained in $\mathcal{S}(B)$ as required.

        Statement (\ref{item:action on bubbles}) follows immediately from Definition \ref{def:bubbles} and the left $G$-invariance of the pseudo-metric $d$ and metric $\rho$, and statements (\ref{item:a middle point in contained in the bubble}) and (\ref{item:inclusion of smaller bubbles}) are clear.

        To observe (\ref{item:intersection of bubbles}) we make an auxiliary construction. Let $\Rips'$ be the simplicial complex obtained by subdividing every $2$-cell of $\Rips$ in the obvious way into four  $2$-cells isometric to an equilateral triangle of side length $1/2$. Every edge of $\Rips$ is subdivided into two new edges in $\Rips'$ of length $1/2$. Clearly the underlying metric spaces of $\Rips$ and $\Rips'$ are isometric. Given a subcomplex $ Q \subset \Rips$ let $\mathcal{N}'(Q)  \subset \Rips'$ denote the union of the interiors of those cells of $\Rips'$ whose closure has non-empty intersection with $Q$. Since $\varepsilon < \sqrt{3} / 4$ we see that
        $$ \mathcal{B}_i = \mathcal{N}_\varepsilon(\mathcal{S}(B_i)) \subset \mathcal{N}'(\mathcal{S}(B_i)) $$
        where $B_i = B_{(G,d)}(x_i,D_i)$. However, note that because of the definition of $\Rips'$ 
         $$\mathcal{N}'(Q_1 \cap Q_2) = \mathcal{N}'(Q_1) \cap \mathcal{N}'(Q_2)$$
     for every pair of subcomplexes $Q_1, Q_2 \subset \Rips$. Moreover $\mathcal{S}(B_i) \cap \mathcal{S}(B_j) = \mathcal{S}(B_i \cap B_j)$. Statement (\ref{item:intersection of bubbles}) follows.
%
%It remains to show property (\ref{item:intersection of bubbles}). Let $T \subset \Rips$ be a simplex with vertex set $T_{(0)}$ and $\mathcal{B} \subset \Rips$ a bubble. We claim that $\mathcal{B} \cap T \neq \emptyset$ if and only if $\mathcal{B} \cap T_{(0)} \neq \emptyset $.
%
%

%
%        To observe (\ref{item:intersection of bubbles}) note that if $\cap_{i\in I} \mathcal{B}_i \neq \emptyset$  there certainly exists a simplex $T \subset \Rips$ such that $\cap_{i\in I} \mathcal{B}_i \cap T \neq \emptyset$. Let $T_{(0)}$ be the vertex set of $T$. We claim that  in fact  $\cap_{i\in I}\mathcal{B}_i \cap T_{(0)} \neq \emptyset$. Assume if possible that $\mathcal{B}_i \cap T_{(0)} = \emptyset$ holds for some $i$. Then there are points  $t \in T$ and $s \in B_{(G,D)}(x_i,D_i)$ with $\rho_{G,d}(t,s) < \varepsilon$ where $\rho_{G,d}$ is the intrinsic pseudo-metric on the Rips complex defined in Section \ref{sub:the Rips complex}. Moreover,  by Proposition I.7.24 of \cite{BrHa} there is a path $p$ in $\Rips$ connecting $t$ and $s$ so that the intersection of 
%        
\end{proof}

Bubbles provide us with a system of nicely behaved open subsets of $\Rips$. However, the ``center points" of bubbles belong by definition to the $0$-skeleton $G$ of $\Rips$. This will not pose any serious difficulty since any point of $\Rips$ is at most a unit distance from $G$.

\begin{prop}
        \label{prop:bubbles eventually contain balls}
        Let $(\Rips,\rho) = \Rips_c^2(G,d)$ be a Rips complex associated to $G$. Any $\rho$-bounded subset is contained in the bubble $\mathcal{B}(e,D)$ for all $D$ sufficiently large.
\end{prop}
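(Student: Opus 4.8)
The plan is to reduce the statement to a comparison between the intrinsic metric $\rho$ and the pseudo-metric $d$ on the $0$-skeleton $G$ of $\Rips$, and then to exploit the monotonicity of bubbles in their radius. Since $\Rips$ is a connected (indeed geodesic) metric space by Theorem \ref{thm:intrinsic pseudo-metric is a complete length space}, any $\rho$-bounded subset $S$ is contained in a ball $B_\rho(e,R) = \{y \in \Rips \: : \: \rho(e,y) < R\}$ for some $R > 0$. By Proposition \ref{prop:properties of a bubble}(\ref{item:inclusion of smaller bubbles}), taking $x = x' = e$, the bubbles $\mathcal{B}(e,D)$ increase with $D$; hence it suffices to produce a single radius $D$ with $B_\rho(e,R) \subset \mathcal{B}(e,D)$, the statement for all larger radii being automatic.

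First I would fix a point $x \in B_\rho(e,R)$ and let $T$ be a simplex of $\Rips$ containing $x$, with vertices $v_0, \ldots, v_k \in G$ where $k \le 2$. Since $T$ is isometric to a face of an equilateral Euclidean triangle of side $1$, every path inside $T$ from $x$ to a vertex has Euclidean length at most $1$, so $\rho(x,v_i) \le 1$ and therefore $\rho(e,v_i) < R + 1$ for each $i$. The key step is to convert this $\rho$-bound into a $d$-bound on the vertices. Here I would invoke Proposition \ref{prop:every path between vertices in the Rips complex can be approximated by an edge path}: the vertices $e$ and $v_i$ are joined by a path in the $1$-skeleton of length at most $2\rho(e,v_i)$. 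Since in the $1$-skeleton with unit edge lengths the combinatorial distance between two vertices is bounded by the length of any connecting path, one extracts an edge path $e = u_0, u_1, \ldots, u_m = v_i$ with $m \le 2\rho(e,v_i) < 2(R+1)$. Each consecutive pair spans an edge of $\Rips$, so $d(u_j,u_{j+1}) \le c$, and the triangle inequality for $d$ gives
$$ d(e,v_i) \le \sum_{j=0}^{m-1} d(u_j,u_{j+1}) \le m\,c < 2(R+1)\,c. $$

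Consequently, choosing any $D > 2(R+1)c$ forces every vertex $v_i$ to lie in $B = B_{(G,d)}(e,D)$, so the whole simplex $T$ lies in the span $\mathcal{S}(B)$, and in particular $x \in \mathcal{S}(B) \subset \mathcal{B}(e,D)$. As $x \in B_\rho(e,R)$ was arbitrary this yields $B_\rho(e,R) \subset \mathcal{B}(e,D)$, and the monotonicity noted above completes the argument. I expect the only delicate point to be the passage from $\rho$, which may take shortcuts through $2$-simplices, to the pseudo-metric $d$ on the vertex set; this is exactly what the edge-path approximation of Proposition \ref{prop:every path between vertices in the Rips complex can be approximated by an edge path} is designed to control, and the loss of a factor of $2$ there is harmless since only boundedness is needed.
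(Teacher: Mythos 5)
Your proof is correct and follows essentially the same route as the paper's: both pass from a $\rho$-ball to the vertices of the simplices meeting the bounded set, convert the $\rho$-bound on those vertices into a $d$-bound, and conclude via the span and the monotonicity of bubbles in their radius. The only difference is cosmetic: where the paper asserts the vertex bound $V \subset B_d(e,cN)$ by appealing to the quasi-isometry of $(G,d)$ and $(\Rips,\rho)$, you spell it out using Proposition \ref{prop:every path between vertices in the Rips complex can be approximated by an edge path}, at the harmless cost of a factor of $2$.
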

\begin{proof}
        Let $F \subset \Rips$ be a $\rho$-bounded subset. Let  $V$ be the union of all vertices belonging to simplices of $\Rips$ which intersect $F$ non-trivially. Then $V$ is bounded as well and in particular $V \subset B_\rho(e,N) \subset \Rips$ for some $N >0$ sufficiently large. Regarded as a subset of $G$, this implies $V \subset B_d(e,cN)$. We obtain
        $ F \subset \mathcal{S}(V) \subset \mathcal{B}(e,D) $
        for all $D > cN$.
\end{proof}

Essentially, Proposition \ref{prop:bubbles eventually contain balls} follows immediately from the fact that $(G,d)$ and $(\Rips,\rho)$  are quasi-isometric.

\subsection{The nerve of a deformation.}

\label{sub: the nerve}

\begin{definition}
\label{def:nerve}
Let $\Delta \subset \Gamma$ be any subset, $U \subset \Rips$ an open set and $r \in \mathcal{R}(\Gamma,G)$. The \textbf{nerve} $N(\Delta, U, r)$ consists of all subsets $ A \subset \Delta$ such that  $\bigcap_{\sigma \in A} r(\sigma) U $
is non-empty.
\end{definition}

For example, the condition $\{\sigma_1, \sigma_2\} \in N(\Delta, U, r)$ implies that $r(\sigma_1) u_1 = r(\sigma_2) u_2$ with $u_1, u_2$ ranging over certain open subsets of $U$.

%Observe that as $\delta$ is left $G$-invariant,  $\gamma U = B_\delta(\gamma x_0 D)$ holds for all $\gamma \in \Gamma$. Because $(R,\delta)$ is geodesic, two balls $B_\delta(x,D_1)$ and $B_\delta(x,D_2)$ intersect if and only if $\delta(x,y) < D_1 + D_2$. This implies that $N(\Sigma, U, r)$ can be reconstructed from the knowledge of $D>0$ and the numbers $\delta(r(\sigma_1),r(\sigma_2))$ for all pairs $\{\sigma_1,\sigma_2\} \subset \Sigma$.

\begin{prop}
\label{prop:close representations have same nerve}
Let $\Delta \subset \Gamma$ be any finite subset and $U = \mathcal{B}(g, D) \subset \Rips$ a bubble at a vertex $g \in G = \Rips_{(0)}$. Then, denoting $U' = \mathcal{B}(g, D - \alpha)$ the equality
$$N(\Delta, U, r_0) = N(\Delta, U', r_0) = N(\Delta, U', r)$$ 
holds for every $r \in \mathcal{R}$ sufficiently close to $r_0$ and $\alpha > 0$ sufficiently small.
\end{prop}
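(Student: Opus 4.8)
The plan is to translate every assertion about the nerve into a statement about finite intersections of metric balls in $(G,d)$, and then to control those intersections using the finiteness of $\Delta$, the strictness of the defining inequalities, and the continuity of $d$. First I would set up the dictionary. By part (\ref{item:action on bubbles}) of Proposition \ref{prop:properties of a bubble} we have $\tilde r(\sigma)\,\mathcal{B}(g,\rho)=\mathcal{B}(\tilde r(\sigma)g,\rho)$ for every $\sigma\in\Gamma$ and every $\tilde r\in\mathcal{R}$, so a finite set $A\subseteq\Delta$ lies in $N(\Delta,\mathcal{B}(g,\rho),\tilde r)$ exactly when $\bigcap_{\sigma\in A}\mathcal{B}(\tilde r(\sigma)g,\rho)\neq\emptyset$. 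Since $A$ is finite, part (\ref{item:intersection of bubbles}) of Proposition \ref{prop:properties of a bubble} lets me detect this on the $0$-skeleton: the intersection is nonempty iff it contains a vertex $v\in G=\Rips_{(0)}$. I would then record the elementary fact that a vertex $v$ lies in $\mathcal{B}(h,D)$ iff $d(h,v)<D$. The ``if'' direction holds because $v$ is then a $0$-simplex of $\mathcal{S}(B_{(G,d)}(h,D))$, whence $\rho(v,\mathcal{S}(B))=0<\varepsilon$; the ``only if'' direction follows from the inclusion $\mathcal{B}(h,D)\subset\mathcal{N}'(\mathcal{S}(B_{(G,d)}(h,D)))$ established (using $\varepsilon<\sqrt3/4$) in the proof of part (\ref{item:intersection of bubbles}), together with the observation that the only cell of the subdivision $\Rips'$ whose interior contains the vertex $v$ is $\{v\}$ itself, whose closure meets $\mathcal{S}(B)$ precisely when $v\in B$. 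Combining the three observations gives the key reformulation
$$A\in N(\Delta,\mathcal{B}(g,\rho),\tilde r)\iff \bigcap_{\sigma\in A}B_{(G,d)}(\tilde r(\sigma)g,\rho)\neq\emptyset.$$

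With this dictionary the first equality $N(\Delta,U,r_0)=N(\Delta,U',r_0)$ is a comparison of ball intersections at radii $D$ and $D-\alpha$ with the fixed centers $\sigma g$. The inclusion $N(\Delta,U',r_0)\subseteq N(\Delta,U,r_0)$ is immediate from $B_{(G,d)}(\cdot,D-\alpha)\subseteq B_{(G,d)}(\cdot,D)$. For the reverse inclusion I would, for each $A\in N(\Delta,U,r_0)$, fix a witness $x_A\in\bigcap_{\sigma\in A}B_{(G,d)}(\sigma g,D)$ and set $m=\max\{d(\sigma g,x_A): A\in N(\Delta,U,r_0),\ \sigma\in A\}$. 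This is a maximum over finitely many strict inequalities (only finitely many $A\subseteq\Delta$, each finite), so $m<D$; note every singleton lies in the nerve since $d(\sigma g,\sigma g)=0$, so $m$ is well defined. Then any $\alpha<(D-m)/2$ already yields $x_A\in\bigcap_{\sigma\in A}B_{(G,d)}(\sigma g,D-\alpha)$, giving $N(\Delta,U,r_0)\subseteq N(\Delta,U',r_0)$.

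For the second equality I would fix such an $\alpha$ and then choose the neighborhood of $r_0$ depending on $\alpha$. Since $r\mapsto r(\sigma)$ converges pointwise and $d$ is left-invariant and continuous, $d(r(\sigma)g,\sigma g)=d(\sigma^{-1}r(\sigma)g,g)\to 0$ as $r\to r_0$, so there is a neighborhood $\mathcal{U}$ of $r_0$ with $d(r(\sigma)g,\sigma g)\le\alpha$ for all $\sigma\in\Delta$ and all $r\in\mathcal{U}$. For $r\in\mathcal{U}$ I prove $N(\Delta,U',r_0)=N(\Delta,U',r)$ by the triangle inequality on the reformulated conditions. If $A\in N(\Delta,U',r_0)=N(\Delta,U,r_0)$ with witness $x_A$, then $d(r(\sigma)g,x_A)\le\alpha+m<D-\alpha$ (as $\alpha<(D-m)/2$), so $A\in N(\Delta,U',r)$. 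Conversely, if $A\in N(\Delta,U',r)$ with witness $y$, then $d(\sigma g,y)\le\alpha+d(r(\sigma)g,y)<\alpha+(D-\alpha)=D$, so $A\in N(\Delta,U,r_0)=N(\Delta,U',r_0)$. This last step is where Equality 1 is indispensable: emptiness of an intersection of open balls is not an open condition (tangent balls can be pried apart by an arbitrarily small perturbation), so to absorb the center displacement in this direction I must upgrade ``empty at radius $D-\alpha$'' to ``empty at radius $D$'', which is exactly the slack furnished by $N(\Delta,U',r_0)=N(\Delta,U,r_0)$.

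I expect the genuine content to be concentrated in the reformulation, and within it the converse of the vertex criterion (a vertex in a bubble must lie in the underlying $d$-ball), which is precisely what pins the combinatorics of the nerve to the metric on $G$ and makes the $\varepsilon<\sqrt3/4$ hypothesis pay off. Once that dictionary is in place, the remaining work is routine book-keeping with finitely many strict inequalities. I would also flag the quantifier order: $\alpha$ is chosen first (small enough for Equality 1 and for $2\alpha<D-m$), and only then is the neighborhood of $r_0$ chosen in terms of $\alpha$; the coupling $\eta=\alpha$ between the allowed center displacement and the radius defect is forced by the argument above and cannot be relaxed to a neighborhood independent of $\alpha$.
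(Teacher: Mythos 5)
Your proof is correct and takes essentially the same route as the paper's: the same dictionary reducing the nerve condition to non-empty intersections of metric balls in $(G,d)$ via Items (\ref{item:action on bubbles}) and (\ref{item:intersection of bubbles}) of Proposition \ref{prop:properties of a bubble}, followed by the same slack-plus-perturbation argument exploiting finiteness of $\Delta$, strictness of the inequalities, and continuity of $d$. The only cosmetic difference is that for the inclusion $N(\Delta,U',r)\subseteq N(\Delta,U,r_0)$ the paper invokes the bubble inclusion of Item (\ref{item:inclusion of smaller bubbles}) whereas you run the equivalent triangle-inequality argument directly on the balls, and your explicit verification of the vertex criterion (a vertex lies in a bubble iff it lies in the underlying $d$-ball) spells out a point the paper leaves implicit.
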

\begin{proof}
By definition $A = \{\delta_1, \ldots, \delta_m\} \in N(\Delta, U, r)$ if and only if the bubbles $r(\delta_1)U, \ldots, r(\delta_m)U$ intersect non-trivially. By Items (\ref{item:action on bubbles}) and (\ref{item:intersection of bubbles}) of Proposition \ref{prop:properties of a bubble} this happens if and only if there exists $g_A \in G$ that satisfies
$$ 
 d(r(\delta) g, g_A) < D \quad \forall \delta \in A. 
$$
Since the pseudo-metric $d$ is continuous and $\Delta$ is finite, for $r$ sufficiently close to $r_0$ and $\alpha$ sufficiently small the conditions
$$N(\Delta, U, r_0) = N(\Delta, U', r_0) \subset N(\Delta, U', r)$$ 
are satisfied. Note that $N(\Delta, U, r_0) \supset N(\Delta, U', r_0) $ follows trivially from the definitions.

To obtain the opposite inclusion, first choose $\alpha > 0$ sufficiently small as required above. It is then clear from Item (\ref{item:inclusion of smaller bubbles}) of Proposition \ref{prop:properties of a bubble} that $r(\delta)U' \subset \delta U$ for all $\delta \in \Delta$ and $r$ sufficiently close to $r_0$. In particular $N(\Delta, U, r_0) \supset N(\Delta, U', r)$ and equality holds throughout.
\end{proof}

We remark that for our purposes it will only be relevant to consider elements of the nerve of size at most three.

\begin{remark}
In general, let $G$ be a group acting by homeomorphisms on a connected path-connected space $X$ and admitting an open path-connected subset $U \subset G$ with $GU = X$. Then one can derive a  presentation for $G$ from knowledge of the nerve $N(G,U)$. For details see \cite{macbeath} or \cite[I.8.10]{BrHa}.
\end{remark}

%As a corollary, we record the case $\Delta = \Sigma^2$ what will be needed below.
%\begin{cor}
%\label{cor:close representations have same nerve on Sigma squared}
%$N(\Sigma^2, U, r) = N(\Sigma^2, U, r_0)$ holds for $r \in \mathcal{R}$ sufficiently close to $r_0$.
%\end{cor}

\subsection{The space $Y$} 
\label{sub: the space Y}

Consider the action of $\Gamma$ on the Rips complex $\Rips$. By Proposition \ref{pro:a cocompact subgroup is acting geometrically} and the remark following Theorem \ref{thm:intrinsic pseudo-metric is a complete length space} this action is geometric, and in particular cobounded. By Proposition \ref{prop:bubbles eventually contain balls} there is some $D > 0$ such that $\Gamma U = \Rips$ where $U$ is the bubble
$$ U = \mathcal{B}(e,D) $$
and $e \in G$ is regarded as a vertex of $\Rips$. The bubble $U$ is open and connected according to Proposition \ref{prop:properties of a bubble}. In addition, denote
$$ \Sigma = \{\gamma \in \Gamma \: : \: \gamma U \cap U \neq \emptyset \} $$
so that $\Sigma$ is a finite symmetric generating set for $\Gamma$, containing the identity element.

\begin{remark}
Using our notation for nerves, $\Sigma$ can be  defined alternatively as consisting of those $\sigma \in \Gamma$ such that $\{1,\sigma\} \in N(\Gamma, U, r_0)$.
\end{remark}

\subsubsection*{Construction of $Y$ as a quotient space}

Consider the space $\Gamma \times U$ with the product topology, $\Gamma$ being discrete. We introduce a certain equivalence relation $\mathcal{E}_r$ on $\Gamma \times U$. 

\begin{definition}
\label{def:the equivalence relation Er}
$\mathcal{E}_r$ is the equivalence relation generated by
$$ (\gamma, u) \sim _{\mathcal{E}_r} (\gamma', u') $$
if $\gamma' = \gamma \sigma$ for some $\sigma \in \Sigma$ and $r(\gamma) u = r(\gamma') u'$.
\end{definition}

We emphasize that the element $\sigma$ in the above definition must belong to $\Sigma$.

\begin{definition}
\label{def:the space Y and map f}
Let $Y$ be the quotient space $(\Gamma \times U)/ \mathcal{E}_r $. 
Let $\left[\gamma,u\right] \in Y$ denote the $\mathcal{E}_r$-class of the point $(\gamma,u)$. For a subset $V \subset U$ let $\left[\gamma,V \right] = \{\left[\gamma, v\right]:v\in V \}$.
\end{definition}

The map 
$$f' : \Gamma \times U \to \Rips, \quad (\gamma, u) \mapsto r(\gamma) u $$ 
factors through a well-defined map $$f : Y \to \Rips.$$ 
Note that the space $Y$, as well as the map $f$, implicitly depend on the choice of $r \in \mathcal{\Rips}$, that has not yet been specified but should be thought of as close to $r_0$. 

The following  propositions establish several useful properties of the space $Y$ and the map $f : Y \to \Rips$.

\begin{prop}
\label{prop: Y is connceted}
The space $Y$ is connected, assuming that $r \in \mathcal{R}$ is sufficiently close to $r_0$.
\end{prop}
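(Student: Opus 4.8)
The plan is to exhibit $Y$ as a union of connected ``sheets'' indexed by $\Gamma$ and to reduce the connectivity of $Y$ to the connectivity of the graph recording which sheets overlap. For each $\gamma \in \Gamma$ write $\left[\gamma, U\right] \subset Y$ for the image of $\{\gamma\} \times U$ under the quotient map $\Gamma \times U \to Y$. Since $U$ is connected by Proposition \ref{prop:properties of a bubble}, each sheet $\left[\gamma, U\right]$ is a continuous image of a connected set and hence connected, and clearly $Y = \bigcup_{\gamma \in \Gamma} \left[\gamma, U\right]$. I would then form the nerve graph $\mathcal{G}$ whose vertex set is $\Gamma$, with an edge joining $\gamma$ and $\gamma'$ whenever $\left[\gamma, U\right] \cap \left[\gamma', U\right] \neq \emptyset$, and invoke the elementary fact that a union of connected sets whose nerve graph is connected is itself connected (two overlapping sheets lie in one connected component of $Y$, and connectivity of $\mathcal{G}$ propagates this to all sheets). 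Thus the task becomes showing that $\mathcal{G}$ is connected.

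The key step is to identify when two sheets meet. By Definition \ref{def:the equivalence relation Er}, the sheets $\left[\gamma, U\right]$ and $\left[\gamma\sigma, U\right]$ with $\sigma \in \Sigma$ are glued along a common point of $Y$ as soon as $r(\gamma) U \cap r(\gamma\sigma) U \neq \emptyset$. Here I would use that $r$ is a homomorphism, so $r(\gamma\sigma) = r(\gamma) r(\sigma)$ and
$$ r(\gamma) U \cap r(\gamma\sigma) U = r(\gamma)\bigl(U \cap r(\sigma) U\bigr); $$
this intersection is non-empty precisely when $U \cap r(\sigma) U \neq \emptyset$, a condition depending only on $\sigma$ and not on $\gamma$. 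This observation is what lets the infinite index set $\Gamma$ be handled uniformly, and I expect it to be the crux of the argument.

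Finally I would verify the finitely many conditions $U \cap r(\sigma) U \neq \emptyset$, $\sigma \in \Sigma$. For $r_0$ these hold by the very definition of $\Sigma$, i.e. $\{1, \sigma\} \in N(\Sigma, U, r_0)$, since $1 \in \Sigma$ and $U \cap \sigma U \neq \emptyset$. Applying Proposition \ref{prop:close representations have same nerve} to the finite set $\Delta = \Sigma$ gives $N(\Sigma, U, r_0) = N(\Sigma, U', r)$ for all $r$ sufficiently close to $r_0$, where $U' \subset U$ is a slightly smaller concentric bubble; hence $\{1, \sigma\} \in N(\Sigma, U', r)$ and $U \cap r(\sigma) U \supseteq U' \cap r(\sigma) U' \neq \emptyset$. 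For such $r$ the graph $\mathcal{G}$ therefore contains every edge $\{\gamma, \gamma\sigma\}$ with $\sigma \in \Sigma$, so it contains the Cayley graph of $\Gamma$ with respect to $\Sigma$, which is connected because $\Sigma$ generates $\Gamma$. Hence $\mathcal{G}$ is connected and $Y$ is connected. The main obstacle, the a priori need to control infinitely many bubble intersections, is dissolved by the homomorphism identity of the previous paragraph, which collapses everything to the finitely many conditions indexed by $\sigma \in \Sigma$.
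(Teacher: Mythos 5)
Your proof is correct and takes essentially the same route as the paper's: connected sheets $\left[\gamma,U\right]$, connectivity of the Cayley graph of $\Gamma$ with respect to $\Sigma$, and stability of the nerve under small deformations via Proposition \ref{prop:close representations have same nerve} applied with $\Delta = \Sigma$ and $U$. The paper compresses this into three sentences; your write-up just makes explicit the details it leaves implicit, namely the homogeneity identity $r(\gamma)U \cap r(\gamma\sigma)U = r(\gamma)\bigl(U \cap r(\sigma)U\bigr)$ reducing everything to finitely many conditions, and the passage through the smaller bubble $U'$.
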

\begin{proof}
Since $U$ is path-connected so is $\left[\gamma, U\right]$ for every $\gamma \in \Gamma$; see Proposition \ref{prop:properties of a bubble}.(\ref{item:B is connected}). For $ r = r_0$ the proposition  follows from the definition of $\Sigma$ and the fact that $\Gamma = \gener{\Sigma}$. The same argument shows that $Y$ is connected as long as $r$ is sufficiently close to $r_0$ so that Proposition \ref{prop:close representations have same nerve} applies with the data $\Delta=\Sigma$ and  $U$.
\end{proof}

\begin{prop}
\label{prop:basic properties of Y and f}
The $\mathcal{E}_r$-quotient map is a local homeomorphism onto $Y$, and in particular $Y$ is Hausdorff.  The map $f$ is a local homeomorphism, namely $f$ restricts to a homeomorphism on $\left[\gamma,U\right], \forall\gamma\in \Gamma$.
\end{prop}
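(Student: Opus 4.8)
The plan is to exploit the defining map $f' : \Gamma \times U \to \Rips$, $(\gamma,u)\mapsto r(\gamma)u$, together with its factorization $f$, to control the quotient $q : \Gamma \times U \to Y$ one sheet at a time. The guiding observation is that on each sheet $\{\gamma\}\times U$ the map $f'$ is a homeomorphism, while the gluing encoded by $\mathcal{E}_r$ is implemented by explicit partial homeomorphisms between open subsets of the sheets; both facts are purely topological and do not require any discreteness of $r(\Gamma)$, which at this stage is not yet available.

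First I would note that $f$ is well defined, since the generating relation of $\mathcal{E}_r$ only identifies points sharing the same $f'$-value, so that $f\circ q = f'$. As $r(\gamma)\in G$ acts on $\Rips$ as a simplicial homeomorphism, $f'|_{\{\gamma\}\times U}$ is the homeomorphism $u\mapsto r(\gamma)u$ onto the open set $r(\gamma)U$. Combined with $f\circ q = f'$ this immediately forces $q$ to be injective on each sheet: if $q(\gamma,u)=q(\gamma,v)$ then $r(\gamma)u = r(\gamma)v$, hence $u=v$. Next I would prove that $q$ is an \emph{open} map. The one-step relation generating $\mathcal{E}_r$ carries $(\gamma,u)$ to $(\gamma\sigma,\, r(\sigma^{-1})u)$ for $\sigma\in\Sigma$, which is precisely the graph of the partial homeomorphism $u\mapsto r(\sigma^{-1})u$ taking the open set $U\cap r(\sigma)U$ onto $U\cap r(\sigma^{-1})U$. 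Hence the one-step saturation of an open set is open, and iterating shows the full $\mathcal{E}_r$-saturation of any open set is open; thus $q$ is open. A continuous open bijection from the open sheet $\{\gamma\}\times U$ onto its image is then a homeomorphism onto the open set $[\gamma,U]$, so $q$ is a local homeomorphism. Factoring gives $f|_{[\gamma,U]} = (u\mapsto r(\gamma)u)\circ (q|_{\{\gamma\}\times U})^{-1}$, a homeomorphism onto the open set $r(\gamma)U$, which is the asserted property of $f$.

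It remains to establish that $Y$ is Hausdorff, and this is the step I expect to be the main obstacle. Two points with distinct $f$-images are separated at once by pulling back disjoint neighborhoods from the metric space $\Rips$, so the delicate case is a pair $[\gamma,u]\neq[\gamma',u']$ with a common image $p=r(\gamma)u=r(\gamma')u'$. The key combinatorial lemma I would record here is that $(\gamma,u)\sim_{\mathcal{E}_r}(\gamma',u')$ holds if and only if $\gamma,\gamma'$ lie in the same connected component of the graph with vertex set $\{\alpha\in\Gamma : r(\alpha^{-1})p\in U\}$ and edges given by $\Sigma$; indeed every chain realizing the equivalence keeps the $f'$-value equal to $p$, so it is exactly a $\Sigma$-path through elements $\alpha$ with $r(\alpha^{-1})p\in U$.

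Using this, I would separate the two points by small sheet-neighborhoods $[\gamma,V]$ and $[\gamma',V']$ and argue by contradiction, ruling out recurring identifications $(\gamma,w_n)\sim_{\mathcal{E}_r}(\gamma',w_n')$ with $w_n\to u$, $w_n'\to u'$ and $p_n := r(\gamma)w_n\to p$. The hard part is to bound the length of the connecting $\Sigma$-paths and to confine their intermediate vertices to one fixed finite subset of $\Gamma$, so that a limiting path survives over $p$ itself and yields $\gamma\sim_p\gamma'$, contradicting $[\gamma,u]\neq[\gamma',u']$. This is exactly where I would invoke the properness of the $\Gamma$-action on $\Rips$ at $r_0$ (which makes $\{\alpha : r_0(\alpha^{-1})p\in U\}$ finite) together with Proposition \ref{prop:close representations have same nerve}, in order to transfer the combinatorics of the inclusion $r_0$ --- for which the development satisfies $Y\cong\Rips$ and is therefore manifestly Hausdorff --- to all deformations $r$ sufficiently close to $r_0$.
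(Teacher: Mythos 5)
Your handling of the local homeomorphism claims is correct and is essentially the paper's argument with the details filled in: injectivity of the quotient map on each sheet follows because the generating relation preserves $f'$-values, openness of the quotient map follows because one-step saturations are implemented by partial homeomorphisms with open domains, and the statement about $f$ follows by factoring through the sheets. The paper states this part more tersely (it does not spell out the openness of saturations), but the route is the same.

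The genuine gap is in the Hausdorffness step, which you rightly identify as the main difficulty (the paper disposes of it in a single sentence, implicitly relying on the structure you are trying to make explicit). Your combinatorial description of $\mathcal{E}_r$ is correct, but the limiting argument you propose would fail: the conditions defining a connecting $\Sigma$-path, namely $r(\alpha^{-1})p_n \in U$ for the intermediate vertices $\alpha$, are \emph{open} conditions, so even after confining all paths to a fixed finite vertex set and a bounded length, and passing to a subnet along which the combinatorial path is constant, the limit only yields $r(\alpha^{-1})p \in \overline{U}$. The intermediate points may land on $\partial U$, in which case no path ``survives over $p$''; this is exactly the line-with-two-origins phenomenon you must exclude, and it cannot be excluded by a limit. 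The correct completion avoids limits altogether and uses two facts. First, as in Proposition \ref{prop:on Er-closures} (whose proof uses only Proposition \ref{prop:close representations have same nerve} and Definition \ref{def:the equivalence relation Er}, so there is no circularity in invoking it here), for $r$ sufficiently close to $r_0$ any $\mathcal{E}_r$-identification between the sheets $\{\gamma\}\times U$ and $\{\gamma'\}\times U$ forces $\gamma^{-1}\gamma' \in \Sigma$. Second --- and this is the missing observation --- the gluing between $\Sigma$-adjacent sheets is \emph{maximal}: if $\gamma^{-1}\gamma' \in \Sigma$ and $r(\gamma)u = r(\gamma')u'$ with $u,u' \in U$, then $(\gamma,u) \sim_{\mathcal{E}_r} (\gamma',u')$ by a single application of the defining relation, with no chain needed. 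Consequently, if $[\gamma,u] \neq [\gamma',u']$ have the same $f$-image, their sheets cannot be $\Sigma$-adjacent, and then by the first fact $[\gamma,U]$ and $[\gamma',U]$ are already disjoint open neighborhoods; points with distinct $f$-images are separated by pulling back disjoint neighborhoods from $\Rips$, as you say. Note that this argument (like yours) establishes Hausdorffness only for $r$ sufficiently close to $r_0$, which is the standing assumption of the section and is all that the subsequent results use.
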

\begin{proof}
Let $\pi : \Gamma \times U \to Y$ denote the quotient map with respect to $\mathcal{E}_r$ and denote $f' = f \circ \pi$. Note that 
$$ u_1 \neq u_2 \Rightarrow \left[\gamma, u_1 \right] \neq \left[\gamma,u_2 \right]\quad \forall \gamma \in \Gamma \; \forall u_1, u_2 \in U $$
which implies that the restriction of $\pi$ to each $\{\gamma\} \times U$ is a homeomorphism and $\pi$ is a local homeomorphism. Similarly, Proposition \ref{prop:properties of a bubble}.(\ref{item:B is open}) shows that $f'$ is a local homeomorphism.

The facts that $\Rips$ is Hausdorff and $\pi$ a local homeomorphism imply that $Y$ is Hausdorff. Since $\pi$ and $f'$ are both local homeomorphisms and $\pi$ is surjective, it follows that $f$ is a local homeomorphism as well.
\end{proof}

\begin{prop}
\label{prop:on Er-closures}
The $\mathcal{E}_r$-closure of $\{e\} \times U$ is contained in $\Sigma \times U$, as long as $r \in \mathcal{R}$ is sufficiently close to $r_0$.
\end{prop}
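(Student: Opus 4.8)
The plan is to exploit that the map $f'\colon \Gamma\times U\to\Rips$, $(\gamma,u)\mapsto r(\gamma)u$, is \emph{constant on each $\mathcal{E}_r$-class}. This is immediate from Definition~\ref{def:the equivalence relation Er}, since a generating identification $(\gamma,u)\sim_{\mathcal{E}_r}(\gamma',u')$ is by definition subject to $r(\gamma)u=r(\gamma')u'$. Consequently, if $(\gamma,u)$ lies in the $\mathcal{E}_r$-closure of $\{e\}\times U$, say $(\gamma,u)\sim_{\mathcal{E}_r}(e,u_0)$ with $u_0\in U$, then $r(\gamma)u=u_0\in U$, and since also $u\in U$ we obtain $u_0\in U\cap r(\gamma)U\neq\emptyset$. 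It therefore suffices to prove that the $\gamma$-coordinate of every element of the closure lies in $\Sigma$; the containment $u\in U$ is automatic.

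Next I would unwind the closure as a union of $\Sigma$-chains. Any $(\gamma,u)$ equivalent to $(e,u_0)$ is joined to it by a finite chain $(e,u_0)=(\gamma_0,u_0),(\gamma_1,u_1),\dots,(\gamma_n,u_n)=(\gamma,u)$ in which $\gamma_{i}=\gamma_{i-1}\sigma_i$ for some $\sigma_i\in\Sigma$ and $r(\gamma_{i-1})u_{i-1}=r(\gamma_i)u_i$; by the previous paragraph this common value equals $u_0$ throughout, so $u_0\in U\cap r(\gamma_i)U$ for every $i$. I would then argue by induction on $i$ that $\gamma_i\in\Sigma$. The base case $\gamma_0=e\in\Sigma$ is clear, and for the inductive step the hypothesis $\gamma_{i-1}\in\Sigma$ forces the next element into the \emph{finite} set $\Delta:=\Sigma\cdot\Sigma$, since $\gamma_i=\gamma_{i-1}\sigma_i$ has both factors in $\Sigma$. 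This is the crucial bookkeeping point: the induction keeps every partial product inside one fixed finite set, so that only finitely much data about $r$ needs to be controlled, even though chains may be arbitrarily long.

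It then remains to show that for $\gamma_i\in\Delta$ the condition $u_0\in U\cap r(\gamma_i)U$ forces $\gamma_i\in\Sigma$, i.e. $U\cap\gamma_i U\neq\emptyset$. By Items (\ref{item:intersection of bubbles}) and (\ref{item:action on bubbles}) of Proposition~\ref{prop:properties of a bubble} this is precisely the statement that the pair $\{e,\gamma_i\}$ belongs to the nerve $N(\Delta,U,r_0)$. Here Proposition~\ref{prop:close representations have same nerve} enters: applied to the finite set $\Delta$ and the bubble $U$, it yields $N(\Delta,U,r_0)=N(\Delta,U',r)$ for the slightly smaller bubble $U'=\mathcal{B}(e,D-\alpha)$ and all $r$ sufficiently close to $r_0$. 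Thus an intersection between the $r$-bubbles (of radius $D-\alpha$) of two elements of $\Delta$ records exactly the same combinatorics as the corresponding $r_0$-bubbles; in particular no $\gamma_i\in\Delta\setminus\Sigma$ can acquire such an intersection, which closes the induction and yields $\gamma\in\Sigma$.

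The step I expect to be the main obstacle is the passage between the bubble $U$ used to build $\mathcal{E}_r$ and the shrunk bubble $U'$ for which Proposition~\ref{prop:close representations have same nerve} controls the nerve at $r$: a priori the common point $u_0$ might lie in the shell $U\setminus U'$, where a small deformation could create a spurious intersection. I would absorb this by building the margin into the construction of $Y$ in \S\ref{sub: the space Y}, applying Proposition~\ref{prop:close representations have same nerve} to the pair consisting of $U$ and a slightly larger bubble (equivalently, choosing the radius $D$ with room to spare) and invoking properness of $\Rips$ — so that bounded bubbles have compact closures — to conclude that the finitely many disjointness relations $U\cap\gamma^*U=\emptyset$, $\gamma^*\in\Delta\setminus\Sigma$, are stable under all sufficiently small deformations. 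Combining the induction with this stability shows that the $\mathcal{E}_r$-closure of $\{e\}\times U$ is contained in $\Sigma\times U$.
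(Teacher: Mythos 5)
Your first three paragraphs are essentially the paper's own argument: decompose an $\mathcal{E}_r$-equivalence into a $\Sigma$-chain, observe that the value $r(\gamma_i)u_i=u_0$ propagates along it, and run an induction in which each partial product lies in $\Sigma$, so that the next one lies in the fixed finite set $\Delta=\Sigma^2$ and one must convert the twisted intersection $u_0\in U\cap r(\gamma_i)U$ into the untwisted condition $U\cap\gamma_i U\neq\emptyset$. The gap is in the step you yourself flag, and your proposed repair does not close it. You claim that properness of $\Rips$ (bubbles have compact closures) makes the finitely many disjointness relations $U\cap\gamma^*U=\emptyset$, $\gamma^*\in\Delta\setminus\Sigma$, stable under all sufficiently small deformations. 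Disjointness of \emph{open} sets is not an open condition: two bubbles can be disjoint while their closures touch --- by the intersection criterion in the proof of Proposition \ref{prop:close representations have same nerve}, this happens exactly when $\inf_{h\in G}\max\{d(e,h),d(\gamma^*,h)\}=D$ --- and in that case arbitrarily small deformations $r$ do create an intersection $U\cap r(\gamma^*)U\neq\emptyset$. Compactness yields stability only when the closures themselves are disjoint, which is not given. For the same reason your ``equivalent'' reformulation --- applying Proposition \ref{prop:close representations have same nerve} to a slightly larger bubble so that $U$ itself plays the role of the shrunk bubble $U'$ --- need not be available: in the touching case, enlarging the radius by \emph{any} $\alpha_0>0$ strictly enlarges the nerve at $r_0$, so $\alpha_0$ is never ``sufficiently small'' for the larger bubble. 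Shrinking is stable (a strict inequality $<D$ persists under a small decrease of $D$) while enlarging is not; this asymmetry is the entire reason that proposition is stated with $U'\subset U$.

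The paper closes the gap the other way around: it assumes $D$ was chosen a priori with enough margin that the shrunk bubble $U'=\mathcal{B}(e,D-\alpha)$ still satisfies $\Gamma U'=\Rips$, and then replaces $U$ by $U'$ in the construction of $Y$ (``without loss of generality''), so that the construction bubble itself satisfies $N(\Sigma^2,U,r)=N(\Sigma^2,U,r_0)$ and the induction applies verbatim to intersections of $r$-translates of that bubble. (Implicit here is that the replacement does not change $\Sigma$: the equality $N(\Sigma^2,U,r_0)=N(\Sigma^2,U',r_0)$ forces $\{\gamma:\gamma U'\cap U'\neq\emptyset\}=\Sigma$.) Your route could be salvaged without shrinking by choosing $D$ generically --- e.g.\ outside the countable set of radii at which some pair of bubbles is tangent, which makes the enlargement argument legitimate --- but some argument of this kind is indispensable; the properness/compactness claim alone is false.
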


%The proof of Proposition \ref{prop:on Er-closures} relies on the classical theorem of Macbeath about presentations of transformation groups. 

%\begin{theorem}[Macbeath]
%\label{thm:macbeath}
%Let $G$ be a group acting by homeomorphisms on a non-empty topological space $X$. Let $ U \subset X$ be open such that $GU = X$. Define
%\begin{itemize}
%       \item $\Sigma = \{\gamma \in \Gamma \: : \: \gamma U \cap U \neq \emptyset \}$,
%       \item $Q = \{ stu ^{-1} \: : \: s,t,u \in \Sigma, st = u, U \cap sU \cap st U \neq \emptyset \}$.
%\end{itemize}
%If $X$ is simply-connected and $X,U$ are path-connected then $\gener{\Sigma|Q}$ is a presentation for $G$.
%\end{theorem}
%In particular, as was observed above, $\Sigma$ generates $G$.
%\begin{proof}[On the proof]
%The original paper is \cite{macbeath}. See also Example 7.A.4 of \cite{cor_har} for a discussion of the proof and additional references.
%\end{proof}
%
%Recall that by the definition of the representation space $\mathcal{R}$, every relation that holds in $\Gamma$ is true in $r(\Gamma)$ as well. In particular $r(\Gamma)$ is required to satisfy the relations $Q$ appearing in Theorem \ref{thm:macbeath}.

\begin{proof}

Assume that $D > 0$ is a priori chosen so that $U' = \mathcal{B}(x_0, D - \alpha)$ satisfies $\Rips = \Gamma U'$, for $\alpha > 0$ sufficiently small. Using Proposition \ref{prop:close representations have same nerve}, up to replacing  $U$ by $U'$ if necessary and without loss of generality, we may assume that  $N(\Sigma^2, U, r) = N(\Sigma^2, U, r_0)$ for all $r$ sufficiently close to $r_0$. Denote this nerve by $N$. In particular, an element $\gamma \in \Sigma^2$ satisfies $\{1,\gamma\} \in N$ if and only if $\gamma \in \Sigma$.

Consider an $\mathcal{E}_r$-equivalent pair of points $(e,u), (\gamma, u') \in \Gamma \times U$. We claim that in fact $\gamma \in \Sigma$.  By Definition \ref{def:the equivalence relation Er} we may write $\gamma = \sigma_1 \cdots \sigma_n$ where $\sigma_i \in \Sigma$ (possibly with repetitions), and there are points $u_1, \ldots, u_n \in U$ with $u_n = u'$ and
$$
 u = r(\sigma_1) \cdots r(\sigma_m) u_m, \quad ~\text{for}~ 1 \le m \le n.
$$

We use induction to show that $\sigma_1 \cdots \sigma_m \in \Sigma$, for $1 \le m \le n$. The base case $m = 1$ is clear. Next assume that $\sigma = \sigma_1 \cdots \sigma_{m-1} \in \Sigma$. Since
$$ u \in U \cap r(\sigma_1 \cdots \sigma_{m-1}) r(\sigma_m) U = U \cap r(\sigma)r(\sigma_m) U$$
this intersection is non-empty. Namely $\{1,\sigma\sigma_m\} \in N$ and by the above $\sigma\sigma_m \in \Sigma$. 

The last induction step $m = n$ shows that $\gamma \in \Sigma$ as required.
\end{proof}

\subsubsection*{The action of $\Gamma$ on $Y$}

$\Gamma$ has a natural action on $\Gamma \times U$, which descends to a $\Gamma$-action on $Y$ by homeomorphisms. Note that  $f$ is $\Gamma$-equivariant with respect to this action on $Y$ and the action via $r$ on $X$, namely
$$ f( \gamma \left[\gamma', u\right] ) = r(\gamma) r (\gamma') u = r(\gamma \gamma') u, \quad \forall u \in U, \gamma, \gamma' \in \Gamma. $$

%We postpone defining a metric on $Y$ until after $f$ is shown to be a covering map.

%\begin{prop}
%\label{prop:Y is Hausdorff}
%$Y$ is Hausdorff, as long as $r\in\mathcal{R}$ is sufficiently close to $r_0$.
%\end{prop}
%\begin{proof}
%Let $y_1, y_2 \in Y$ be a pair of distinct points. 
%%There are two cases to consider.  First, assume that there are $\sigma \in \Sigma$ and $\gamma \in \Gamma$ such 
%Since $[\gamma,U]$ is open, by Proposition \ref{prop:on Er-closures}, it is enough to consider the case
%that $y_1 \in \left[\gamma,U\right]$ and $y_2 \in \left[\sigma\gamma,U\right]$. Note that this includes the possibility $\sigma = e$.  Say that $y_i = \left[\gamma_i, u_i\right]$ with $u_i \in U$ for $i=1,2$. Now, as $y_1 \neq y_2$ we must have that $r(\gamma_1) u_1 \neq r(\gamma_2) u_2$ in $\Rips$. We conclude this case using the fact that $\Rips$ is Hausdorff.
%
%%In the second case there are $\gamma_1, \gamma_2 \in \Gamma$ with $y_i \in \left[\gamma_i, U \right]$ and $\left[\gamma_1,U\right] \cap \left[\gamma_2,U\right] = \emptyset$, and there is nothing to show.
%%
%%Exactly one of these two cases must hold according to the last proposition, at least as long as the deformation $r$ is sufficiently close to $r_0$.
%%
%\end{proof}
%

\subsection{$f$ is a covering map}

We have constructed a local homeomorphism $f : Y \to \Rips$, depending on $r\in \mathcal{R}$. 
The Rips complex $\Rips$ is a length space by Theorem \ref{thm:intrinsic pseudo-metric is a complete length space} and $Y$ is Hausdorff by Proposition \ref{prop:basic properties of Y and f}. Therefore we may use $f$ to induce a length metric on $Y$.

\medskip

%\begin{definition}
%\label{def:pullback metric on $Y$}
\emph{Let $\bar{\rho}$ be the pullback length metric on $Y$ induced by $f : Y \to (\Rips,\rho)$.}
%\end{definition}

\medskip

This makes $f$ a local isometry according to Proposition \ref{prop:properties of pullback length metric}. However, we would like to apply the additional clause of that proposition to deduce that $f$ is actually an $s$-local isometry for some $s>0$. Towards this aim we make the following key observation, which is somewhat reminiscent of the existence of a Lebesgue number\footnote{Unlike the classical Lebesgue number, no compactness argument is used in Lemma \ref{lem:lebesgue number}.}.

\begin{lemma}
\label{lem:lebesgue number}
Consider  $\Rips=\Rips_c^2(G,d)$ and $r \in \mathcal R$ sufficiently close to $r_0$. For every $L > 0$ a radius $D = D(L) > 0$ may be chosen sufficiently large, so that for any vertex $x \in f(Y) \cap G \subset \Rips_{(0)}$ the inclusions
$$\mathcal{B}(x,L) \subset f(\left[\gamma, U\right]) \subset f(Y) \subset \Rips$$ 
hold for some $\gamma \in \Gamma$.
\end{lemma}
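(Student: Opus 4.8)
The plan is to reduce everything to two geometric facts already available: the \emph{coboundedness} of the $\Gamma$-action on $\Rips$ (via $r_0$) with a \emph{uniform} radius, and the bubble-inclusion property of Proposition \ref{prop:properties of a bubble}.(\ref{item:inclusion of smaller bubbles}), which gives $\mathcal{B}(x',D') \subset \mathcal{B}(x,D)$ whenever $d(x,x') + D' < D$. Since $\Gamma$ acts geometrically on $\Rips$ (Proposition \ref{pro:a cocompact subgroup is acting geometrically} together with the remark following Theorem \ref{thm:intrinsic pseudo-metric is a complete length space}), in particular coboundedly, Proposition \ref{prop:bubbles eventually contain balls} provides a constant $D_0 > 0$, independent of $L$, with $\bigcup_{\delta \in \Gamma}\mathcal{B}(\delta, D_0) = \Rips$; equivalently, every vertex of $\Rips$ lies within $d$-distance $D_0$ of some $\delta \in \Gamma$. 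I would then simply set $D(L) = D_0 + L + 1$.

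Given such a $D \ge D(L)$ and a vertex $x \in f(Y) \cap G$, the first step is to unwind the definition of $f$. Writing $x = f([\gamma_0, u]) = r(\gamma_0)u$ and using that $r(\gamma_0) \in G$ acts on $\Rips$ simplicially (left multiplication preserves the left-invariant $d$, hence the complex), the preimage $u = r(\gamma_0)^{-1}x$ is again a vertex of $U$, so $b := u$ satisfies $d(e, b) < D$. Applying coboundedness to the vertex $b$ yields $\delta \in \Gamma$ with $d(b, \delta) < D_0$; note $d(e, \delta) < D + D_0$, so $\delta$ ranges over the \emph{finite} set $\Delta' = \{\delta \in \Gamma : d(e, \delta) < D + D_0\}$. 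I would then take $\gamma = \gamma_0 \delta$ and compute, using the homomorphism property and the left-invariance of $d$,
$$ d(x, r(\gamma)) = d(r(\gamma_0)b,\, r(\gamma_0)r(\delta)) = d(b, r(\delta)). $$

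To finish, it remains to bound $d(b, r(\delta))$, and this is where closeness of $r$ to $r_0$ enters: since $\Delta'$ is finite and $d$ is continuous, for $r$ sufficiently close to $r_0$ we have $d(\delta, r(\delta)) < 1$ for every $\delta \in \Delta'$, whence $d(b, r(\delta)) \le d(b, \delta) + d(\delta, r(\delta)) < D_0 + 1 \le D - L$. The bubble-inclusion property then gives $\mathcal{B}(x, L) \subset \mathcal{B}(r(\gamma), D) = r(\gamma)U = f([\gamma, U])$, using Proposition \ref{prop:properties of a bubble}.(\ref{item:action on bubbles}) and the equivariance of $f$; the remaining inclusions $f([\gamma, U]) \subset f(Y) \subset \Rips$ are immediate.

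The main obstacle I anticipate is \emph{bookkeeping of the quantifiers} rather than any deep geometry: the coboundedness radius $D_0$ must be fixed once and for all before $L$, yet the finite set $\Delta'$ on which $r$ must approximate $r_0$ depends on $D = D(L)$, hence on $L$. One must therefore order the choices as: first $L$, then $D(L)$, then $\Delta'$, and finally the required degree of closeness of $r$, consistent with the standing notations, and verify that the approximation is needed \emph{only} on the finite set $\Delta'$, so that pointwise closeness of $r$ to $r_0$ suffices. A minor point to check along the way is that the vertices contained in a bubble $\mathcal{B}(g, D)$ are exactly $\{v \in G : d(g, v) < D\}$, which follows from $\varepsilon < \sqrt{3}/4$ being smaller than the distance $\sqrt{3}/2$ of an exterior vertex to the nearest spanned edge.
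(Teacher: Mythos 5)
Your proof is correct and follows essentially the same route as the paper's: the paper likewise picks $D$ so that $\Gamma \mathcal{B}(e, D-L-\eta) = \Rips$ (your $D_0 + L + 1$), uses coboundedness of the $r_0$-action to find a lattice element $d$-close to the given vertex, passes to $r$ via pointwise closeness on a finite set (the paper reduces to $x \in U$ by equivariance and lands in $\Sigma$, where you use $\Delta'$ and $\gamma = \gamma_0\delta$), and concludes with the bubble-inclusion property of Proposition \ref{prop:properties of a bubble}. The quantifier ordering you flag, and the implicit fact that vertices of a bubble $\mathcal{B}(g,D)$ are exactly the points of $B_{(G,d)}(g,D)$, are handled the same way (the latter tacitly) in the paper's own argument.
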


Here $D$ is taken to be radius of the bubble $ U = \mathcal{B}(e,D)$  used in Definition \ref{def:the space Y and map f} to construct the space $Y$. In particular,  the choice of the generating set $\Sigma$ depends on $D$. It is understood that in our discussion we first specify $L$  and then $U$ and $\Sigma$ are chosen accordingly.

\begin{proof}
Let $L > 0$ be arbitrary and choose $D = D(L)$ sufficiently large so that 
$$ U' = \mathcal{B}\left(e,D - L - \eta \right) $$
satisfies $ \Rips = \Gamma U' $ for some $\eta > 0$.
Say  $x \in r(\gamma) U = f(\left[\gamma,U \right])\subset f(Y)$ with $\gamma \in \Gamma$. As $f$ is $\Gamma$-equivariant, we may assume by applying $\gamma^{-1}$  that $x\in U$ and it then suffices to show that $\mathcal{B}(x,L) \subset r(\sigma) U$ for some $\sigma \in \Sigma$.

By the assumption on $U'$ we know that $ x \in \gamma U'$ for some $\gamma \in \Gamma$. So $x \in U \cap \gamma U' \subset U \cap \gamma U$ which implies $\gamma = \sigma \in \Sigma$.
%In case $x \in U'$, the required inclusion follows immediately from Item (\ref{item:inclusion of smaller bubbles}) of Proposition \ref{prop:properties of a bubble}.
%In case $x \notin U'$, we have by the assumption on $U'$ that $ x \in \gamma U'$ for some $\gamma \in \Gamma$. So $x \in U \cap \gamma U' \subset U \cap \gamma U$ which implies $\gamma = \sigma \in \Sigma$. 
Applying Proposition \ref{prop:properties of a bubble} we obtain
$$\mathcal{B}(x,L) \subset \sigma \mathcal{B}(e,D-\eta) = \mathcal{B}(r_0(\sigma),D-\eta). $$
If $r$ is sufficiently close to $r_0$ so that $d(\sigma, r(\sigma)) < \eta$ for all $\sigma \in \Sigma$ the above conclusion  can be modified to read $$
\mathcal{B}(x,L) \subset \mathcal{B}(r(\sigma),D) =  r(\sigma) U = f(\left[\sigma,U \right]) \subset f(Y)$$
as required.
\end{proof}

%Recall that $Y$ is connected by Proposition \ref{prop: Y is connceted} and $\Rips$ is connected and simply connected. 

%Moreover $f$ is $\Gamma$-equivariant, as explained in the last paragraph of \S\ref{sub: the space Y}.

\begin{cor}
\label{cor:f is a covering map}
        Assume that $\Rips = \Rips_c^2(G,d)$ and $U = \mathcal{B}(e,D)$ for $c,D > 0$ sufficiently large, and that $r$ is sufficiently close to $r_0$ in $\mathcal{R}(\Gamma,G)$. Then $f$ is a 
        %$\Gamma$-equivariant 
       homeomorphism.
\end{cor}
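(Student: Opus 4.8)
The plan is to equip $Y$ with the pullback length metric $\bar\rho$ induced by the local homeomorphism $f : Y \to (\Rips,\rho)$, which by Proposition \ref{prop:properties of pullback length metric} makes $f$ a local isometry, and then to upgrade $f$ to an $s$-local isometry using the additional clause of that same proposition. Once $f$ is an $s$-local isometry, Lemma \ref{lem:a local isometry is a covering} shows that $f$ is a covering map, since $\Rips$ is connected and $Y$ is nonempty. Finally, $\Rips$ is simply connected by Proposition \ref{prop:exists a pseudo-metric d for which Rips complex is simply connected} (for $c$ large) and $Y$ is connected by Proposition \ref{prop: Y is connceted} (for $r$ close to $r_0$); a covering map from a connected space onto a simply connected, locally path-connected space is a homeomorphism, which is the desired conclusion.

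The heart of the argument is verifying the hypothesis of the additional clause of Proposition \ref{prop:properties of pullback length metric}: I must produce one fixed $s > 0$ so that every $y \in Y$ has a neighborhood $W$ on which $f$ restricts to a homeomorphism onto its image, with $B_{f(y)}(s) \subset f(W)$. Because $f$ is $\Gamma$-equivariant and the $\Gamma$-action on $(Y,\bar\rho)$ is isometric (as $\bar\rho$ is pulled back through the equivariant $f$), this property is $\Gamma$-invariant, so I may assume $y = [e, u_0]$ and hence $p := f(y) = u_0 \in U = \mathcal{B}(e,D)$. Let $x \in G = \Rips_{(0)}$ be a vertex nearest to $p$; since $p$ lies in $U$, such an $x$ may be taken among the vertices spanning $U$, so $x \in U \cap f(Y)$ while $\rho(p,x)$ is bounded by a universal constant $\delta_0$ (the circumradius of a unit simplex plus $\varepsilon$). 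Now fix $s > 0$ and, using Proposition \ref{prop:bubbles eventually contain balls} together with the left-invariance of $\rho$ and of the bubble construction (Proposition \ref{prop:properties of a bubble}.(\ref{item:action on bubbles})), choose $L$ so large that $B_z(s + \delta_0) \subset \mathcal{B}(z,L)$ for every vertex $z$; finally take $D = D(L)$ as provided by Lemma \ref{lem:lebesgue number}.

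With these choices, apply Lemma \ref{lem:lebesgue number} at the vertex $x \in U \cap f(Y)$. Inspecting its proof, when $x \in U$ the element it returns already lies in $\Sigma$: there is $\sigma \in \Sigma$ with
$$ B_p(s) \;\subset\; \mathcal{B}(x,L) \;\subset\; r(\sigma)U \;=\; f(\left[\sigma, U\right]), $$
the first inclusion holding because $\rho(p,x) \le \delta_0$. In particular $p \in r(\sigma)U$, so $r(\sigma)^{-1}p \in U$, and since $\sigma \in \Sigma$ and $r(e)u_0 = u_0 = r(\sigma)\bigl(r(\sigma)^{-1}p\bigr)$, the defining relation of $\mathcal{E}_r$ in Definition \ref{def:the equivalence relation Er} gives $y = [e,u_0] = [\sigma, r(\sigma)^{-1}p] \in \left[\sigma, U\right]$. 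As $f$ restricts to a homeomorphism on $\left[\sigma,U\right]$ by Proposition \ref{prop:basic properties of Y and f}, the point $y$ is the unique preimage of $p$ inside $\left[\sigma,U\right]$, and $W := \bigl(f|_{\left[\sigma,U\right]}\bigr)^{-1}\!\bigl(B_p(s)\bigr)$ is an open neighborhood of $y$ carried homeomorphically by $f$ onto $B_p(s)$. This is exactly the neighborhood required by the clause.

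The main obstacle is precisely this last point: the neighborhood $W$ fed into Proposition \ref{prop:properties of pullback length metric} must contain $y$ itself, whereas the Lebesgue-type Lemma \ref{lem:lebesgue number} only guarantees some bubble $f(\left[\gamma,U\right])$ containing a ball around $p$, a priori for an element $\gamma$ unrelated to a chosen representative of $y$. The device that resolves this is to reduce by equivariance to $y = [e,u_0]$ and to place the reference vertex $x$ inside $U$, which forces the lemma to return an element $\sigma$ of the generating set $\Sigma$; the membership $y \in \left[\sigma,U\right]$ then follows at once from a single generating relation of $\mathcal{E}_r$. With the $s$-local isometry in hand, the passage through Lemma \ref{lem:a local isometry is a covering} and the simple connectivity of $\Rips$ is routine covering-space theory.
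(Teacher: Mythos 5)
Your proposal is correct and follows essentially the same route as the paper's proof: pull back the length metric, upgrade $f$ to an $s$-local isometry via Lemma \ref{lem:lebesgue number} and the additional clause of Proposition \ref{prop:properties of pullback length metric}, then apply Lemma \ref{lem:a local isometry is a covering} and conclude from the connectedness of $Y$ and simple connectedness of $\Rips$. The only difference is that you make explicit (via the equivariant reduction to $y=[e,u_0]$, placing the reference vertex inside $U$, and the single $\mathcal{E}_r$-relation showing $y\in\left[\sigma,U\right]$) the verification that the paper compresses into the one sentence asserting that Lemma \ref{lem:lebesgue number} supplies the hypothesis of Proposition \ref{prop:properties of pullback length metric}.
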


\begin{proof}

Fix a "Lebesgue constant" $L >  c $ and choose $D > 0$ as provided by Lemma \ref{lem:lebesgue number}. This in turn determines the bubble $U = \mathcal{B}(e,D)$ and the generating set $\Sigma \subset \Gamma$.

It has already been established that $f : Y \to \Rips$ is a 
%$\Gamma$-equivariant
local isometry. Lemma \ref{lem:lebesgue number} provides the additional assumption needed in Proposition \ref{prop:properties of pullback length metric} to deduce that $f$ is an $s$-local isometry for some $s > 0$. Lemma \ref{lem:a local isometry is a covering}  shows that $f$ is a covering map.

With $Y$ being connected by Proposition \ref{prop: Y is connceted} and $\Rips$ being connected and simply-connected the fact that $f$ is a homeomorphism follows\footnote{It is for this argument that we crucially rely on $G$ being compactly presented.}. We assume throughout that $r$ is sufficiently close to $r_0$ as required.
\end{proof}

\subsection{A geometric action of $\Gamma$ on $Y$}
\label{sub:metric on Y and the action is geometric}

%The following proposition will be used repeatedly in what follows, in particular in Proposition \ref{pro:Gamma is acting geometric on Y} to show that this  action of $\Gamma$ on $Y$ is geometric.

We have established  that $f:Y \to \Rips$ is a $\Gamma$-equivariant homeomorphism. According to Proposition \ref{prop:a local isometry that is a homeo is an isometry} the map $f$ is in fact an isometry. In other words, $(Y,\bar{\rho})$ is simply an isometric copy of the Rips complex with an action of $\Gamma$ via the representation $r$.

%Note that in particular the restriction of $f$ to neighbourhoods of the form $\left[\gamma, U\right]$ is an isometry.

\begin{prop}
\label{pro:Gamma is acting geometric on Y}
 $\Gamma$ is acting on $(Y,\bar{\rho})$  geometrically.
\end{prop}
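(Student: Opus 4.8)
The plan is to verify directly the four conditions of Definition \ref{def:properties of actions}, taking advantage of the fact, just established, that $f:(Y,\bar\rho)\to(\Rips,\rho)$ is a $\Gamma$-equivariant isometry, where $\Gamma$ acts on $\Rips$ through $r$. Three of the four conditions will be immediate. Since each $r(\gamma)$ lies in $G\le\Is{\Rips}$ and $f$ is an equivariant isometry, we get $\bar\rho(\gamma y_1,\gamma y_2)=\rho(r(\gamma)f(y_1),r(\gamma)f(y_2))=\bar\rho(y_1,y_2)$, so the action is isometric. Local boundedness is automatic because $\Gamma$ carries the discrete topology: the singleton $\{\gamma\}$ is a neighbourhood of $\gamma$ and $\{\gamma\}y$ is a single point. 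For coboundedness, note that $\Gamma\cdot[e,U]=\bigcup_{\gamma\in\Gamma}[\gamma,U]=Y$ by the very construction of $Y$, while $[e,U]$ is $\bar\rho$-bounded because $f$ maps it isometrically onto the bubble $U=\mathcal{B}(e,D)$, which has bounded $\rho$-diameter.

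The substance lies in metric properness, and I would establish it by bounding the word length of $\gamma$ with respect to the finite generating set $\Sigma$ in terms of $\bar\rho$. Fix $y_0=[e,e]\in[e,U]$, the class of the centre vertex of $U$, and let $R>0$; suppose $\bar\rho(\gamma y_0,y_0)\le R$. I would choose a path in $Y$ from $y_0$ to $\gamma y_0=[\gamma,e]$ of length at most $R+1$ and subdivide it into $m=O(R)$ consecutive points $y_0=w_0,w_1,\dots,w_m=\gamma y_0$ with $\bar\rho(w_j,w_{j+1})<s$, where $s>0$ is the radius for which $f$ is an $s$-local isometry by Corollary \ref{cor:f is a covering map}.

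The key geometric input is Lemma \ref{lem:lebesgue number}: once the Lebesgue constant $L$ (and hence $D$ and $\Sigma$) has been chosen large enough, every point of $Y$ lies in a bubble $[\delta,U]$ together with its whole $\bar\rho$-ball of radius $s$. Picking such a bubble $[\delta_j,U]\ni w_j$ for each $j$, with $\delta_0=e$ and $\delta_m=\gamma$ coming from the centres $w_0,w_m$, the inequality $\bar\rho(w_j,w_{j+1})<s$ forces $w_{j+1}\in[\delta_j,U]$, so $[\delta_j,U]\cap[\delta_{j+1},U]\neq\emptyset$. By Proposition \ref{prop:on Er-closures} together with the $\Gamma$-equivariance of $\mathcal{E}_r$, overlapping bubbles satisfy $\delta_j^{-1}\delta_{j+1}\in\Sigma$. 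Writing $\gamma=(\delta_0^{-1}\delta_1)\cdots(\delta_{m-1}^{-1}\delta_m)$ then exhibits $\gamma$ as a product of at most $m$ elements of $\Sigma$, so its $\Sigma$-word length is $O(R)$. As $\Sigma$ is finite, only finitely many elements of $\Gamma$ have bounded word length, whence $\{\gamma\in\Gamma:\bar\rho(\gamma y_0,y_0)\le R\}$ is finite; since $\Gamma$ is discrete this is the required relative compactness, and properness at an arbitrary point follows by equivariance and coboundedness.

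The main obstacle is precisely this properness step, and it is worth isolating why the argument must be intrinsic to $Y$. A naive transfer through $f$ would demand that $\{\gamma:r(\gamma)\in C\}$ be finite for a compact $C\subset G$, which is equivalent to the discreteness of $r(\Gamma)$ that we are ultimately trying to prove, and is therefore circular. The point of the plan above is that the combinatorial overlap pattern of the bubbles is rigidly governed by the \emph{fixed} finite set $\Sigma$ via Proposition \ref{prop:on Er-closures}, while the Lebesgue number of Lemma \ref{lem:lebesgue number} guarantees that a path of length $R$ cannot traverse more than $O(R)$ bubbles; together these convert a metric bound into a word-length bound with no prior knowledge of discreteness. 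I expect the only delicate points to be the bookkeeping that upgrades the vertex statement of Lemma \ref{lem:lebesgue number} to the ``$s$-ball inside a single bubble'' property at the interpolating points $w_j$, and the verification that the base point and endpoint bubbles may indeed be taken to be $[e,U]$ and $[\gamma,U]$.
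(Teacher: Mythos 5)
Your proposal is correct and takes essentially the same route as the paper: the isometric, locally bounded and cobounded conditions are handled identically, and metric properness comes from the same bubble-chain mechanism --- Lemma \ref{lem:lebesgue number} places each chain point together with a definite ball around it inside some $\left[\delta_j,U\right]$, Proposition \ref{prop:on Er-closures} (plus $\Gamma$-equivariance of $\mathcal{E}_r$) forces $\delta_j^{-1}\delta_{j+1}\in\Sigma$, and the $\Sigma$-word length of $\gamma$ is thereby bounded by a linear function of $\bar{\rho}\left(\left[e,e\right],\left[\gamma,e\right]\right)$, with the same $\pm 2$ endpoint correction you anticipate. The only divergence is that the paper removes your flagged delicate point (the vertex-only statement of Lemma \ref{lem:lebesgue number} versus arbitrary interpolating points) by invoking Proposition \ref{prop:every path between vertices in the Rips complex can be approximated by an edge path} to replace the path by an edge path of length at most $2\cdot\bar{\rho}(y_e,y_\gamma)$ in the $1$-skeleton, so that consecutive chain points are genuine vertices at $d$-distance less than $c<L$ and the lemma applies verbatim.
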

\begin{proof}
The action is clearly by isometries. Moreover, as $Y = \Gamma \left[e,U\right]$ it is cobounded. As $\Gamma$ is discrete the action is locally bounded. 

It remains to verify that the action of $\Gamma$ is metrically proper. Let $d_\Sigma$ be the word metric on $\Gamma$. It suffices to show that $d_\Sigma(\gamma_n,e) \to \infty$ implies $\bar{\rho}\left(\left[\gamma_n,e\right],\left[e,e\right]\right) \to \infty$, with $e$ being regarded both as an element of $\Gamma$ and as a vertex in the $0$-skeleton of $U \subset \Rips$.
Let $\gamma \in \Gamma$ be an arbitrary element and consider the two points $$y_e = \left[e,e\right], \quad y_\gamma = \left[\gamma,e\right]$$ of $Y$. According to Proposition \ref{prop:every path between vertices in the Rips complex can be approximated by an edge path} there is a path $p : \left[0,1\right] \to Y$ with 
$$
 p(0) = y_e, p(1) = y_\gamma, ~\text{ of length}~l(p) \le 2 \cdot \bar{\rho}(y_e,y_\gamma) 
$$ 
so that the image of $p$ lies in the $1$-skeleton $Y_{(1)}$. 

We claim that 
$$l(p) \ge d_\Sigma(\gamma,e) - 2$$ 
recalling that the length $l(p)$ equals the number of edges along $p$. To prove this claim let $v_0, v_1, \ldots, v_{l(p)}$ denote the vertices of $Y_{(0)}$ that appear along the path $p$. For each $0 \le m \le l(p)$ choose $\delta_m \in \Gamma$ as provided by Lemma \ref{lem:lebesgue number} so that $\mathcal{B}(v_m, L) \subset \left[\delta_m,U\right]$. Since $d(v_{m-1},v_m) < c$ and $L$ was chosen so that $L > c$, this implies $v_{m} \in \mathcal{B}(v_{m-1}, L)$ for all $1 \le m \le l(p)$. In particular, by Proposition \ref{prop:on Er-closures} and assuming $r$ is sufficiently close to $r_0$ we see that $\delta_{m} \in \delta_{m-1} \Sigma$ for $1 \le m \le  l(p)$. But the same reasoning shows $\delta_0 \in \Sigma$ and $ \gamma  \in \delta_m \Sigma$ and the required estimate  on $l(p)$ follows.

Putting together this claim  and the lower bound on $\bar{\rho}(y_e, y_\gamma)$ in terms of $l(p)$ we conclude that the action of  $\Gamma$ on $(Y,\bar{\rho})$ is indeed metrically proper.

\end{proof}

\begin{prop}
\label{prop:Gamma is acting faithfully on the Rips complex}
$\Gamma$ is acting on $(Y,\bar{\rho})$ faithfully.
\end{prop}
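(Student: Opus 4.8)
The plan is to show that if $\gamma \in \Gamma$ acts trivially on $Y$ then $\gamma = e$. Since faithfulness can be tested on a single well-chosen point, I would evaluate the action at $[e,e] \in Y$, where the second $e$ denotes the vertex $e \in U$, which lies in $U = \mathcal{B}(e,D)$ by Proposition \ref{prop:properties of a bubble}.(\ref{item:a middle point in contained in the bubble}). By the definition of the natural $\Gamma$-action, $\gamma \cdot [e,e] = [\gamma, e]$, so the hypothesis that $\gamma$ acts trivially yields the single identity $[\gamma, e] = [e,e]$ in $Y$.

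From this one identity I would extract two pieces of information. First, pushing it through the well-defined map $f : Y \to \Rips$ gives $r(\gamma) = f([\gamma,e]) = f([e,e]) = e$, using that $f([\gamma,e]) = r(\gamma)\cdot e = r(\gamma)$ and that $G$ acts on the vertex set $\Rips_{(0)} = G$ by left multiplication, which is faithful; hence $\gamma \in \ker r$. Second, the identity $[\gamma,e]=[e,e]$ means exactly that $(\gamma,e)$ is $\mathcal{E}_r$-equivalent to $(e,e) \in \{e\}\times U$, so by Proposition \ref{prop:on Er-closures} the whole $\mathcal{E}_r$-class of $(e,e)$ is contained in $\Sigma \times U$, and therefore $\gamma \in \Sigma$.

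It remains to combine $\gamma \in \Sigma$ with $r(\gamma)=e$. Here I would invoke the closeness of $r$ to $r_0$: since $\Sigma$ is finite and $r_0(\sigma)=\sigma\neq e$ for every $\sigma \in \Sigma\setminus\{e\}$, the Hausdorff property of $G$ lets us shrink the neighborhood of $r_0$ in $\mathcal{R}(\Gamma,G)$ so that $r(\sigma)\neq e$ simultaneously for all $\sigma\in\Sigma\setminus\{e\}$. As $\gamma\in\Sigma$ and $r(\gamma)=e$, this forces $\gamma=e$, which is the desired faithfulness.

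The step I expect to be the genuine content is the confinement of the $\mathcal{E}_r$-class of $\{e\}\times U$ to $\Sigma\times U$, namely Proposition \ref{prop:on Er-closures}; it is precisely what prevents the kernel of the action from escaping the finite set $\Sigma$, after which discreteness of $\Gamma$ (via the closeness of $r$ to $r_0$) finishes the job. I would also emphasize the conceptual payoff: through the $\Gamma$-equivariant homeomorphism $f$, the action of $\Gamma$ on $Y$ is conjugate to the action of $r(\Gamma)$ on $\Rips$, so faithfulness here is equivalent to injectivity of the deformation $r$. Thus this proposition supplies exactly the injectivity needed, which together with Proposition \ref{pro:Gamma is acting geometric on Y} and Lemma \ref{lem:condition for abstract subgroup with geometric action to be discrete} lets one conclude that $r(\Gamma)$ is a uniform lattice isomorphic to $\Gamma$.
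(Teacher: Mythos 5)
Your proof is correct and takes essentially the same route as the paper's: both reduce faithfulness to the triviality of the stabilizer of $\left[e,e\right]$, use Proposition \ref{prop:on Er-closures} to force $\gamma \in \Sigma$, observe that fixing $\left[e,e\right]$ forces $r(\gamma)=e$ (you via applying $f$, the paper via the equivalence relation directly), and conclude because a deformation $r$ sufficiently close to $r_0$ sends no element of the finite set $\Sigma\setminus\{e\}$ to the identity. The only difference is presentational: the paper argues in two cases ($\gamma\notin\Sigma$ and $\gamma\in\Sigma$) and leaves the final shrinking of the neighborhood of $r_0$ implicit, while you make it explicit.
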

\begin{proof}
Given an element $\gamma \in \Gamma$ and a point $\left[\gamma', x\right] \in Y$ for some $\gamma' \in \Gamma$ and $x \in \mathcal{B}$, recall that
$ \gamma\left[\gamma',x \right] = \left[\gamma \gamma', x \right]$. Therefore an element $\gamma$ which is not in  $\Sigma$ certainly acts non-trivially by Proposition \ref{prop:on Er-closures}. Consider the remaining case of an element $\gamma = \sigma \in \Sigma$. Then
$$ \sigma\left[e, e \right] = \left[e, e\right] \; \Leftrightarrow \; r(\sigma) = e  $$
and so $\sigma$ is acting non-trivially on $Y$ as long as $r(\sigma) \neq e \in G$.
\end{proof}

As is commonly the case throughout this section, we implicitly assume in both propositions that $r : \Gamma \to G$ is sufficiently close to $r_0$.

\subsection{Proof assuming compact presentation}
\label{sub:proof assuming comapct presentation}

Putting together the results of this section we complete the proof of Theorem \ref{thm:local rigidity for lc groups} in the compactly presented case, showing that a small deformation $r$ of $\Gamma$ is injective, discrete and uniform.

%However to obtain the stronger conclusion $r(\Gamma) \cong \Gamma$ we will rely on:
%
%\begin{theorem}[Macbeath]
%\label{thm:macbeath}
%Let $G$ be a group acting by homeomorphisms on a non-empty topological space $X$. Let $ U \subset X$ be open such that $GU = X$. Define
%\begin{itemize}
%       \item $\Sigma = \{\gamma \in \Gamma \: : \: \gamma U \cap U \neq \emptyset \}$,
%       \item $Q = \{ stu ^{-1} \: : \: s,t,u \in \Sigma, st = u, U \cap sU \cap st U \neq \emptyset \}$.
%\end{itemize}
%If $X$ is simply-connected and $X,U$ are path-connected then $\gener{\Sigma|Q}$ is a presentation for $G$.
%\end{theorem}
%\begin{proof}[On the proof of Theorem \ref{thm:macbeath}]
%The original paper is \cite{macbeath}. See also Example 7.A.4 of \cite{cor_har} for a discussion of the proof and additional references.
%\end{proof}

\begin{proof}[Proof of Theorem \ref{thm:local rigidity for lc groups} assuming $G$ is compactly presented]

Recall the notations introduced in the beginning of this section. In particular, fix $L > c$ arbitrary and let $D = D(L)$ be as required for Lemma \ref{lem:lebesgue number} to hold.  Consider the bubble $$U = \mathcal{B}(e,D) \subset \Rips$$ 
so that $\Rips = \Gamma U$. Let $\Sigma \subset \Gamma$ be the finite generating set corresponding to $U$. 

%In particular, the condition $N(\Sigma^2, U, r) = N(\Sigma^2,U,r_0)$ that is required for Proposition \ref{prop:on Er-closures} to hold is satisfied for all $r$ sufficiently close to $r_0$.

We are now in a situation to construct the space $Y$ and the map $f : Y \to \Rips$ depending on $r$, as in \S\ref{sub: the space Y}. Corollary \ref{cor:f is a covering map}  and Proposition \ref{prop:a local isometry that is a homeo is an isometry} imply that $f$ is a $\Gamma$-equivariant isometry with $\Gamma$ acting on $\Rips$ via $r$. It was established in Propositions \ref{pro:Gamma is acting geometric on Y} and \ref{prop:Gamma is acting faithfully on the Rips complex} that the $\Gamma$-action on $Y$ equipped with the induced length  metric is geometric and faithful.

We conclude that $r:\Gamma\to r(\Gamma)$ is an isomorphism. Therefore the subgroup $r(\Gamma) \le G$ considered abstractly with the discrete topology is acting geometrically on $\Rips$. In particular $r(\Gamma)$ is in fact a uniform lattice in $G$ according to Lemma \ref{lem:condition for abstract subgroup with geometric action to be discrete}. 

%We conclude that $r(\Gamma) \le G$, considered abstractly with the discrete topology, is acting geometrically on $\Rips$. Therefore $r(\Gamma)$ is in fact a uniform lattice in $G$ according to Lemma \ref{lem:condition for abstract subgroup with geometric action to be discrete}. 
%Finally, the action of $\Gamma$ on $Y$ is faithful by Proposition . This shows that $\Gamma$ is acting faithfully on $\Rips$ and in particular $r:\Gamma\to r(\Gamma)$ is an isomorphism.

%Finally, observe that the two groups $\Gamma = r_0(\Gamma)$ and $r(\Gamma)$ regarded as subgroups of the isometry group $\Is{\Rips}$ of the Rips complex are conjugate by the element $f \in \Is{\Rips}$. In particular $r:\Gamma\to r(\Gamma)$ is an isomorphism.
\end{proof}

We extract from the above proof two corollaries that will become useful in the study of Chabauty local rigidity in \S\ref{sec:the Chabauty space of closed subgroups}. As before, let $\Gamma$ be a uniform lattice in the compactly presented group $G$.

\begin{cor}
\label{cor:fundamental domain remains a fundamental domain after a deformation}
There is a compact subset $K \subset G$ so that $r(\Gamma) K = G$ for every deformation $r$ sufficiently close to the inclusion mapping.
\end{cor}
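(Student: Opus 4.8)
The plan is to read off a single, $r$-independent cobounded set directly from the $\Gamma$-equivariant homeomorphism $f : Y \to \Rips$ produced in the proof of the theorem. Recall that $f$ is equivariant for the $\Gamma$-action on $Y$ and the action on $\Rips$ \emph{via} $r$, that $Y = \Gamma[e,U]$, and that $f([\gamma',u]) = r(\gamma')u$; in particular $f([e,U]) = U$ because $r(e) = e$. Applying $f$ to the covering $Y = \Gamma[e,U]$ and using equivariance therefore gives
$$ \Rips = f(Y) = \bigcup_{\gamma \in \Gamma} r(\gamma)\, f([e,U]) = r(\Gamma)\, U . $$
The point I want to stress is that the bubble $U = \mathcal{B}(e,D)$ is fixed once the Lebesgue constant $L$ is chosen, so the displayed identity holds, with one and the same $U$, for \emph{every} $r$ in the fixed neighborhood of $r_0$ supplied by Corollary \ref{cor:f is a covering map}. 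Uniformity in $r$ thus comes for free.

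It remains to pass from coboundedness of the $r(\Gamma)$-action on $\Rips$ to coboundedness of $r(\Gamma)$ inside $G$, which I would do by restricting to the $0$-skeleton $\Rips_{(0)} = G$. Given a vertex $g \in G$, choose $\gamma \in \Gamma$ with $g \in r(\gamma)U = \mathcal{B}(r(\gamma),D)$; since $r(\gamma) \in G$ acts on $\Rips$ by permuting vertices, the point $r(\gamma)^{-1}g$ is again a vertex, and it lies in $U = \mathcal{N}_\varepsilon(\mathcal{S}(B_{(G,d)}(e,D)))$. Now I claim that a vertex lying in this bubble already lies in the underlying $d$-ball $B_{(G,d)}(e,D)$: any vertex not contained in the span $\mathcal{S}(B_{(G,d)}(e,D))$ is at $\rho$-distance at least $\sqrt{3}/2$ from it (the extremal configuration being the apex of a unit equilateral triangle whose opposite edge lies in the span), and $\sqrt{3}/2 > \sqrt{3}/4 > \varepsilon$. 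Hence $r(\gamma)^{-1}g \in B_{(G,d)}(e,D)$.

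Setting $K = \{h \in G : d(e,h) \le D\}$, the previous step shows $g \in r(\gamma)K$, so $G \subseteq r(\Gamma)K \subseteq G$ and therefore $G = r(\Gamma)K$. The set $K$ is a closed $d$-ball, hence compact because $d$ is proper, and it depends only on $D$, not on $r$; this is exactly the asserted uniform compact set. There is no serious obstacle here: the whole statement is essentially a bookkeeping consequence of the equivariant isometry already in hand, and the only step requiring a moment's care is the elementary metric estimate of the previous paragraph, which translates containment in a bubble (measured in the intrinsic metric $\rho$) into membership in a $d$-ball on the vertex set.
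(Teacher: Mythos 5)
Your proof is correct and takes essentially the same route as the paper's: the paper likewise deduces $r(\Gamma)U = \Rips$ from the surjectivity of the $\Gamma$-equivariant isometry $f : Y \to \Rips$, and then takes $K = \overline{B}_{(G,d)}(e,D)$ --- ``or equivalently the closure in $G$ of the vertex set of the bubble $U$'' --- which is compact by properness of $d$. The only difference is one of exposition: you spell out the step the paper leaves implicit, namely restricting $r(\Gamma)U = \Rips$ to the $0$-skeleton and using the $\sqrt{3}/2$ estimate (valid since $\varepsilon < \sqrt{3}/4$) to see that every vertex lying in the bubble already lies in the underlying $d$-ball.
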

\begin{proof}
The fact that the $\Gamma$-equivariant map $f : Y\to X$ is an isometry implies in particular that $r(\Gamma)U = \Rips$. We may take $K$ to be the  closed ball $\overline{B}_{(G,d)}(e,D)$, or equivalently the closure in $G$ of the vertex set of the bubble $U = \mathcal{B}(e,D)$. Note that $K$ is compact since the pseudo-metric $d$ is proper.
\end{proof}

\begin{cor}
\label{cor:avoidance of compact set}
 Let $Q \subset G$ be a compact subset so that $\Gamma \cap Q = \emptyset$. Then $r(\Gamma) \cap Q = \emptyset$ provided that the deformation $r$ is sufficiently small.
\end{cor}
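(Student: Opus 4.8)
The plan is to exploit the uniform metric properness of the deformed action that was extracted in the proof of Proposition \ref{pro:Gamma is acting geometric on Y}. The crucial point is that, although closeness of $r$ to $r_0$ in $\mathcal{R}(\Gamma,G)$ controls $r$ only on finitely many elements at a time, the set of $\gamma \in \Gamma$ that a small deformation can conceivably push into $Q$ is confined to a fixed finite subset of $\Gamma$ that is independent of $r$. Once this is established, the finitely many candidates are handled by point-wise convergence.

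First I would record that the pseudo-metric $d$ is proper, so the compact set $Q$ is $d$-bounded, say $Q \subset B_{(G,d)}(e,R)$. Since the inclusion of $(G,d)$ into $(\Rips,\rho)$ is a quasi-isometry, as noted after Theorem \ref{thm:intrinsic pseudo-metric is a complete length space}, every vertex $q \in Q \subset G = \Rips_{(0)}$ satisfies $\rho(q,e) \le R'$ for a constant $R'$ depending only on $R$ and the quasi-isometry constants. Next, recall that for $r$ sufficiently close to $r_0$ the map $f : Y \to \Rips$ is a $\Gamma$-equivariant isometry, whence $\rho(r(\gamma),e) = \bar{\rho}(\left[\gamma,e\right],\left[e,e\right])$ for every $\gamma \in \Gamma$. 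The length estimate established inside the proof of Proposition \ref{pro:Gamma is acting geometric on Y} gives $d_\Sigma(\gamma,e) \le 2\,\bar{\rho}(\left[\gamma,e\right],\left[e,e\right]) + 2$ with constants not depending on $r$.

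Combining these, whenever $r(\gamma) \in Q$ I obtain $d_\Sigma(\gamma,e) \le 2R' + 2 =: N$, so $\gamma$ lies in the finite set $F = \{\gamma \in \Gamma : d_\Sigma(\gamma,e) \le N\}$. This $F$ depends only on $Q$, on the fixed generating set $\Sigma$, and on the fixed geometric data, and not on the particular small deformation $r$. Finally, since $F$ is finite, $Q$ is closed, and $r_0(\gamma) = \gamma \notin Q$ for every $\gamma$ (because $\Gamma \cap Q = \emptyset$), the set $\{r \in \mathcal{R}(\Gamma,G) : r(\gamma) \notin Q \text{ for all } \gamma \in F\}$ is an open neighborhood of $r_0$, being a finite intersection of preimages of the open set $G \setminus Q$ under the continuous evaluation maps $r \mapsto r(\gamma)$. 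Intersecting this with the neighborhood of $r_0$ on which the isometry $f$ and the above estimate are valid, I would conclude: for $r$ in this neighborhood, an element $r(\gamma)$ can lie in $Q$ only if $\gamma \in F$, yet for $\gamma \in F$ we have arranged $r(\gamma) \notin Q$; hence $r(\Gamma) \cap Q = \emptyset$.

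The main obstacle is exactly the passage from finite to infinite data: point-wise convergence gives no direct control on $r(\gamma)$ for $\gamma$ outside a prescribed finite set, and a priori elements deep inside $\Gamma$ could be deformed into $Q$. The resolution is the uniform properness estimate, which is where the genuine content lies and is the reason the detailed length bookkeeping of Proposition \ref{pro:Gamma is acting geometric on Y} is invoked rather than a soft compactness argument.
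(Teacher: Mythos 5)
Your proof is correct, but it reaches the crucial finiteness reduction by a genuinely different mechanism than the paper. The paper's proof re-runs the construction with a $Q$-dependent bubble: it chooses the radius $D$ large enough that, in addition to the requirements of the topological local rigidity argument, $Q \subset \mathcal{B}(e,D) = U$ (via Proposition \ref{prop:bubbles eventually contain balls}); then $r(\gamma) \in Q$ forces $U \cap r(\gamma)U \neq \emptyset$, and Proposition \ref{prop:on Er-closures} confines $\gamma$ to the finite generating set $\Sigma$, so the finitely many open conditions $r(\sigma) \notin Q$ for $\sigma \in \Sigma$ finish the proof. You instead keep the bubble and $\Sigma$ fixed, independently of $Q$, and extract finiteness from the uniform properness estimate $d_\Sigma(\gamma,e) \le 2\,\bar{\rho}([\gamma,e],[e,e]) + 2$ established in the proof of Proposition \ref{pro:Gamma is acting geometric on Y}, combined with the fact that $f$ is a $\Gamma$-equivariant isometry; this traps the candidate elements in a word-metric ball $F$, after which the same finite-plus-pointwise-openness step concludes. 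Both arguments share the skeleton (a finite candidate set, then openness of the evaluation conditions) and rest on the same underlying machinery ($Y$, $f$, the $\mathcal{E}_r$-closure statement), but the trade-offs differ: the paper's reduction is shorter and needs only Proposition \ref{prop:on Er-closures} together with bubble containment, at the price of re-choosing the entire setup ($D$, $U$, $\Sigma$, and the admissible neighborhood of $r_0$) for each $Q$; your version invokes the heavier Proposition \ref{pro:Gamma is acting geometric on Y} (itself proved via the Lebesgue-number lemma and $\mathcal{E}_r$-closures), but has the structural advantage that the geometric setup and the neighborhood of $r_0$ on which the machinery is valid are fixed once and for all, with only the finite set $F$ and the final open condition depending on $Q$. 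Both are complete proofs of the corollary.
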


\begin{proof}
We proceed as in the above proof of  topological local rigidity for the compactly presented case. Choose the  radius $D > 0$ to be sufficiently large for the purposes of that proof and so that moreover $Q \subset \mathcal{B} = \mathcal{B}(e,D)$ where $\mathcal{B}$ denotes a bubble at $e$ of radius $D$, see Proposition \ref{prop:bubbles eventually contain balls}. 
     
Note that $r(\gamma) \in Q$ implies in particular  $\mathcal{B} \cap r(\gamma)\mathcal{B} \neq \emptyset$.   This last condition implies that $\gamma\in \Sigma$ for some finite subset $\Sigma$ of $\Gamma$ and for every sufficiently small deformation $r$, see Proposition \ref{prop:on Er-closures}. The requirement that $r(\sigma) \notin Q$ for every $\sigma \in \Sigma$ is an open neighborhood of the inclusion in $\mathcal{R}(\Gamma,G)$.
%
%
%Let $r$ be a sufficiently small deformation. Note that $r(\gamma) \in Q$ implies in particular that $\mathcal{B} \cap r(\gamma)\mathcal{B} \neq \emptyset$. According to Proposition \ref{prop:on Er-closures}  this is only possible if $\gamma \in \Sigma$ where $\Sigma \subset \Gamma$ is a finite set. The condition in $\mathcal{R}(\Gamma,G)$ requiring that $r(\sigma) \notin Q$ for all $\sigma \in \Sigma$ is open.
\end{proof}

\section{From compactly generated to compactly presented groups}
\label{sec:locally compact case}

We now discuss  Theorem \ref{thm:local rigidity for lc groups} in the  general case where $\Gamma \le G$ is a uniform lattice and $G$ is compactly generated.
The main idea is to reduce the question back to the compactly presented situation, using the following result. 

\begin{prop}
\label{prop:on existence of simply connected covers}
Let $G$ be a  compactly generated locally compact group. Then there is a compactly presented locally compact group $\tilde{G}$ admitting a discrete normal subgroup $N \lhd \tilde{G}$ so that $\tilde{G}/N \cong G$ as topological groups.
\end{prop}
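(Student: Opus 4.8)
The plan is to realize $\tilde G$ as the group of lifts of the $G$-action on the universal cover of a Rips complex, mimicking the passage from a connected Lie group to its simply connected cover. First I would fix a left-invariant, continuous, proper, $\lambda$-length pseudo-metric $d$ on $G$ (available for merely compactly generated groups by \cite[4.B.8]{cor_har}) and form $\Rips = \Rips_c^2(G,d)$ for $c$ large. Since $G$ is only compactly generated, $\Rips$ is connected but need not be simply connected; by Theorem \ref{thm:intrinsic pseudo-metric is a complete length space} it is a complete geodesic space, and being locally isometric to a locally compact complex it is proper. The action of $G$ on $\Rips$ is geometric and continuous.

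Next I would pass to the universal cover $p\colon\widetilde{\Rips}\to\Rips$, equipped with the pullback length metric, so that $p$ becomes a local isometry by Proposition \ref{prop:properties of pullback length metric}. Then $\widetilde{\Rips}$ is a simply connected, complete, locally compact geodesic space, hence proper by Hopf--Rinow, and the deck transformation group $N=\pi_1(\Rips)$ acts on it freely and properly discontinuously. Because $\widetilde{\Rips}$ is simply connected, each isometry $g$ of $\Rips$ lifts to an isometry of $\widetilde{\Rips}$ via the lifting criterion, and I define $\tilde G$ to be the group of all isometries of $\widetilde{\Rips}$ that descend through $p$ to an element of $G$. Sending a lift to the isometry of $\Rips$ it covers yields a surjective homomorphism $q\colon\tilde G\to G$ with kernel exactly $N$, giving a short exact sequence $1\to N\to\tilde G\to G\to 1$.

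The core of the argument is to topologize $\tilde G$ correctly. I would endow $\tilde G$ with the topology of pointwise convergence for its action on $\widetilde{\Rips}$ (equivalently the compact-open topology, the action being isometric). Properness of this action---inherited from the metric properness of the $G$-action on $\Rips$ through the covering---makes $\tilde G$ locally compact, while the free proper discontinuity of $N$ shows that $N$ is discrete. One then checks that $q$ is continuous and open: an evenly-covered identity neighborhood in $G$ admits a canonical local section into $\tilde G$, which simultaneously exhibits $q$ as a local homeomorphism and identifies $\tilde G/N\cong G$ as topological groups. Finally, the $\tilde G$-action on $\widetilde{\Rips}$ is geometric: it is isometric and locally bounded, it is cobounded because a bounded fundamental domain for the $G$-action on $\Rips$ lifts to one for $\tilde G$, and it is metrically proper by the very choice of topology. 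As $\widetilde{\Rips}$ is simply connected, hence coarsely simply connected, a group acting geometrically on it is compactly presented by the criterion of \cite[8.A.3]{cor_har}---the same mechanism used in Proposition \ref{prop:rips complex for t.d. is simply connected}---so $\tilde G$ is compactly presented, completing the construction.

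I expect the main obstacle to be the verification in the third paragraph that the pointwise-convergence topology turns $\tilde G$ into a \emph{locally compact} group in which $N$ is discrete and $q$ is open, that is, transporting metric properness and the local structure of $G$ across the covering map in a manner compatible with the group operations. Once this topological bookkeeping is settled, the geometric-action and compact-presentation conclusions follow routinely from the machinery already developed.
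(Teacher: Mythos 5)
Your geometric picture --- the group of lifts of the $G$-action to the universal cover of a Rips complex --- is the right intuition for this result (the paper itself offers no argument, only a citation to Propositions 8.A.13 and 8.A.15 of \cite{cor_har}), and the algebraic part of your construction (the exact sequence $1 \to N \to \tilde G \to G \to 1$) is fine. But there is a fatal gap, and it sits exactly where you flagged it: the topology. The Rips complex $\Rips = \Rips_c^2(G,d)$ only remembers $G$ up to quasi-isometry, not as a topological space. Its vertex set is all of $G$, and in the intrinsic metric $\rho$ distinct vertices are uniformly separated (by a constant on the order of $1$, since edges have length $1$ and triangles are equilateral). Three consequences. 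First, when $G$ is non-discrete, $\Rips$ has uncountably many edges at every vertex, so it is not locally finite, not locally compact, and not proper; your properness claim for $\Rips$ and the Hopf--Rinow step for $\widetilde{\Rips}$ both fail, as does the Arzel\`a--Ascoli route to local compactness of $\tilde G$. Second, the $G$-action on $(\Rips,\rho)$ is \emph{not} continuous: the orbit map $g \mapsto g x_0$ of a vertex $x_0$ sends $G$ bijectively onto a uniformly discrete set, which is precisely why the paper's definition of geometric actions emphasizes that the action need not be continuous. Third, and fatally: with the pointwise-convergence topology on the group of lifts, consider the neighborhood of the identity consisting of lifts that move a lift of the vertex $x_0$ by less than the separation constant; any such lift projects to an element of $G$ fixing the vertex $x_0$, hence to $e$, so this whole neighborhood is contained in $N$. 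Thus $N$ is \emph{open} in your topology, the quotient $\tilde G/N$ carries the discrete topology, and $\tilde G/N \cong G$ as topological groups holds only when $G$ is discrete. Relatedly, there is no ``evenly-covered identity neighborhood in $G$'': the covering map has domain and range the complexes, and $G$ does not embed topologically into $\Rips$.

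The topology on $\tilde G$ cannot be extracted from the action on $\widetilde{\Rips}$ at all; it must be transported from $G$ by a local section $s : U \to \tilde G$ of $q$ over an identity neighborhood $U \subseteq G$, satisfying the local-homomorphism condition $s(u)s(v) = s(uv)$ whenever $u, v, uv$ all lie in a smaller neighborhood, and one must check that declaring $s(U)$ open integrates to a group topology in which $N$ is discrete and $q$ is open. Producing such a section and verifying this compatibility is the actual content of the proposition; it is exactly what the cited construction of Cornulier--de la Harpe accomplishes, where $\tilde G$ is presented as $\left< S \mid \text{relations of length} \le 3 \right>$ over a compact symmetric generating neighborhood $S$ of $G$, and the triangle relations are precisely what make the tautological copy of $S$ inside $\tilde G$ a coherent local chart, so that $q$ becomes a local homeomorphism with discrete kernel and $\tilde G$ is compactly presented by construction. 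Your group of lifts is naturally isomorphic to that group, so the geometric picture is salvageable, but the ``topological bookkeeping'' you deferred to the end is not routine --- it is the theorem.
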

\begin{proof}
See Proposition 8.A.13 and Corollary 8.A.15 of \cite{cor_har}
\end{proof}

Consider the compactly presented group $\tilde{G}$ given by Proposition \ref{prop:on existence of simply connected covers} and let 
$$ p : \tilde{G} \to G, \quad \ker p = N \lhd \tilde{G}$$
be the associated homomorphism and its kernel. For a subgroup $H \le G$ denote $\tilde{H} = p^{-1}(H) \le \tilde{G}$. 

In this situation we have two representation spaces $\mathcal{R} = \mathcal{R}(\Gamma, G)$ and $\tilde{\mathcal{R}} = \mathcal{R}(\tilde{\Gamma}, \tilde{G})$ with the points $r_0 \in \mathcal{R}$ and $\tilde{r}_0 \in \tilde{\mathcal{R}}$ corresponding to the inclusions $\Gamma \le G, \tilde{\Gamma} \le \tilde{G}$ respectively.

\begin{lemma}
        \label{lemma:pulling and pushing uniform lattices}
        If $\Gamma \le G$ is a uniform lattice so is $\tilde{\Gamma} \le \tilde{G}$. Conversely if $\tilde{\Gamma} \le \tilde{G}$ is a uniform lattice containing $N$, then $\Gamma = p(\tilde{\Gamma}) \le G$ is a uniform lattice.
\end{lemma}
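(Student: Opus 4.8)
The two structural facts I would rely on are that $p$ is open (being a quotient homomorphism of topological groups) and that $p$ is a local homeomorphism (its kernel $N$ being discrete). I would also note at the outset that the hypothesis $N \le \tilde{\Gamma}$ appearing in the second implication guarantees $p^{-1}(p(\tilde{\Gamma})) = \tilde{\Gamma}$, so that the two uses of the tilde notation are consistent.

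For the forward direction I would verify discreteness and cocompactness of $\tilde{\Gamma} = p^{-1}(\Gamma)$ separately. For discreteness, the plan is to combine an identity neighborhood $V$ with $V \cap \Gamma = \{e\}$ and an identity neighborhood $\tilde{W}$ with $\tilde{W} \cap N = \{e\}$ (available since $N$ is discrete): any element of $\tilde{\Gamma} \cap \tilde{W} \cap p^{-1}(V)$ maps into $\Gamma \cap V = \{e\}$, hence lies in $N \cap \tilde{W} = \{e\}$. For cocompactness I would start from $G = \Gamma K$ with $K$ compact, lift $K$ to a compact $\tilde{K}$ with $p(\tilde{K}) = K$, and then check directly that $\tilde{G} = \tilde{\Gamma}\tilde{K}$ by lifting a factorization $p(\tilde{g}) = \gamma k$: choosing $\tilde{k} \in \tilde{K}$ over $k$ gives $p(\tilde{g}\tilde{k}^{-1}) = \gamma \in \Gamma$, so $\tilde{g} \in \tilde{\Gamma}\tilde{K}$.

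For the converse I would take $\Gamma = p(\tilde{\Gamma})$ and handle cocompactness immediately by pushing $\tilde{G} = \tilde{\Gamma}\tilde{K}$ forward through $p$ to get $G = \Gamma\, p(\tilde{K})$ with $p(\tilde{K})$ compact. Discreteness is where the local-homeomorphism property enters: I would pick an identity neighborhood $\tilde{V}$ on which $p$ restricts to a homeomorphism onto $V = p(\tilde{V})$ and with $\tilde{V} \cap \tilde{\Gamma} = \{e\}$, and then argue that any $\gamma \in V \cap \Gamma$ has its unique $\tilde{V}$-preimage $\tilde{v}$ forced into $p^{-1}(\Gamma) = \tilde{\Gamma}$, hence into $\tilde{V} \cap \tilde{\Gamma} = \{e\}$, so $\gamma = e$.

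The step I expect to be the only genuine obstacle is the lifting of the compact set $K$ in the forward direction. I would prove this from openness of $p$: fixing a compact identity neighborhood $C \subset \tilde{G}$, the image $p(\mathrm{int}\,C)$ is open, so finitely many translates $g_i\, p(\mathrm{int}\,C)$ cover $K$; lifting the $g_i$ to $\tilde{g}_i$ and intersecting $\bigcup_i \tilde{g}_i C$ with the closed set $p^{-1}(K)$ yields a compact preimage surjecting onto $K$. With this lemma in hand the remaining verifications reduce to routine neighborhood computations.
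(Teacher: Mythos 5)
Your proposal is correct and follows essentially the same route as the paper's proof: the same neighborhood argument for discreteness of $\tilde{\Gamma}$ (combining a neighborhood avoiding $\Gamma$ with one avoiding $N$), the same lifting of the compact set $K$ for cocompactness of $\tilde{\Gamma}$, the same pushforward $G = \Gamma\, p(\tilde{K})$ for the converse, and the same use of the hypothesis $N \le \tilde{\Gamma}$ (giving $p^{-1}(\Gamma) = \tilde{\Gamma}$) together with the local-homeomorphism property of $p$ for discreteness of $\Gamma$. The only difference is that the paper cites Lemma 2.C.9 of Cornulier--de la Harpe for the existence of a compact $\tilde{K} \subset \tilde{G}$ with $p(\tilde{K}) = K$, whereas you prove this lifting statement directly from openness of $p$ and local compactness of $\tilde{G}$, which is a valid, self-contained substitute.
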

\begin{proof}
        Let $\Gamma \le G$ be a uniform lattice with $G = \Gamma K$ for some compact set $K \subset G$. There is an open set $U \subset G$ such that $U \cap \Gamma = \{e\}$ and so $p^{-1}(U) \cap \tilde{\Gamma} = N$. Let $V \subset \tilde{G}$ be an open set such that $V \cap N = \{\tilde{e}\} \subset \tilde{G}$. Then $\tilde{\Gamma} \cap (p^{-1}(U)\cap V) = \{\tilde{e}\}$ and hence $\tilde{\Gamma} \le \tilde{G}$ is discrete. Using the fact that $\tilde{G}$ is locally compact and by Lemma 2.C.9 of \cite{cor_har} there exists a compact subset $\tilde{K} \subset \tilde{G}$ with $p(\tilde{K}) = K$. It follows that $\tilde{\Gamma} \tilde{K} = \tilde{G}$ and $\tilde{\Gamma}$ is a uniform lattice.
        
        Conversely let $N \le \tilde{\Gamma} \le \tilde{G}$ be a uniform lattice with $\tilde{G} = \tilde{\Gamma} \tilde{K}$ for $\tilde{K} \subset \tilde{G}$ compact. The discreteness of $N = \ker p$ implies that $p$ is a local homeomorphism (see e.g. \cite{pontry}, p. 81). Take $U \subset \tilde{G}$ open and sufficiently small so that $\tilde{\Gamma} \cap U =\{\tilde{e}\}$ and $p(U) \cong U$. Since $N\le \ti\Gamma$ we also have $p(U) \cap \Gamma = \{e\}$, hence $\Gamma$ is discrete. Since $G = \Gamma p(\tilde{K})$, $\Gamma$ is a uniform lattice. 
        \end{proof}

\begin{definition}
\label{def: the space R_N and map Rp}The space $\tilde{\mathcal{R}}$ admits a closed subspace $\tilde{\mathcal{R}}_N$ given by
$$ \tilde{\mathcal{R}}_N = \{\tilde{r} : \tilde{\Gamma} \to \tilde{G} \: : \: \tilde{r}(n) = \tilde{r}_0(n) = n,~\forall n \in N\}. $$
Associated to the quotient map $p$ there is a well-defined map
$$ p_* : \tilde{\mathcal{R}}_N \to \mathcal{R}, \quad p_* r (\gamma) = p\circ r(\tilde{\gamma}) \quad \forall \gamma \in \Gamma $$
where $\tilde{\gamma} \in \tilde{\Gamma}$ in any element satisfying $p(\tilde{\gamma}) = \gamma$.
\end{definition}

Note that clearly $\tilde{r}_0 \in \tilde{\mathcal{R}}_N$. The map $p_*$ is not injective in general.

\begin{prop}
\label{prop:the map Rp is continuous and open}
The map $p_* : \tilde{\mathcal{R}}_N \to \mathcal{R}$ is a local homeomorphism.
%\marginpar{should the proof of this proposition be included or just stated without proof? it is very straightforward}
\end{prop}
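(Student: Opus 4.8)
The plan is to reduce the statement to the fact that $p$ is a covering map and then to analyse $p_*$ coordinate-wise on a finite generating set. Since $G$ is compactly generated, $\Gamma$ is finitely generated; fix generators $\Gamma = \langle \gamma_1, \dots, \gamma_k \rangle$ and lift them to elements $\tilde\gamma_i \in \tilde\Gamma$ with $p(\tilde\gamma_i) = \gamma_i$. As $\langle \tilde\gamma_1, \dots, \tilde\gamma_k \rangle$ surjects onto $\Gamma$, we have $\tilde\Gamma = \langle \tilde\gamma_1, \dots, \tilde\gamma_k \rangle \cdot N$, so every $\tilde r \in \tilde{\mathcal R}_N$ is determined by the values $\tilde r(\tilde\gamma_i)$ together with the constraint $\tilde r|_N = \mathrm{id}$. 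Because the values on $N$ are constant throughout $\tilde{\mathcal R}_N$, convergence on the $\tilde\gamma_i$ forces convergence on all of $\tilde\Gamma$, so the point-wise topology on $\tilde{\mathcal R}_N$ is controlled by the coordinate map $\tilde r \mapsto (\tilde r(\tilde\gamma_1), \dots, \tilde r(\tilde\gamma_k))$, which is thus a topological embedding $\tilde{\mathcal R}_N \hookrightarrow \tilde G^{k}$; likewise $\mathcal R \hookrightarrow G^{k}$ via $r \mapsto (r(\gamma_i))$. Under these identifications $p_*$ is the restriction to $\tilde{\mathcal R}_N$ of the coordinate-wise map $p^{\times k} : \tilde G^{k} \to G^{k}$. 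Finally, since $N = \ker p$ is discrete, $p$ is a local homeomorphism (as used already in Lemma \ref{lemma:pulling and pushing uniform lattices}), and hence so is $p^{\times k}$.

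Granting this set-up, I would show that $p_*$ is continuous, locally injective and open, which together give that it is a local homeomorphism. Continuity is immediate from the coordinate description. For local injectivity, suppose $\tilde r_1, \tilde r_2 \in \tilde{\mathcal R}_N$ lie in a small common neighbourhood and satisfy $p_* \tilde r_1 = p_* \tilde r_2$; then $\tilde r_1(\tilde\gamma_i)\, \tilde r_2(\tilde\gamma_i)^{-1} \in N$ for each $i$, and being close to $e$ this element is trivial by discreteness of $N$, whence $\tilde r_1 = \tilde r_2$. It remains to prove openness: given a reference point $\tilde r_* \in \tilde{\mathcal R}_N$ with image $r_* = p_* \tilde r_*$, every $r \in \mathcal R$ sufficiently close to $r_*$ should admit a lift $\tilde r \in \tilde{\mathcal R}_N$ close to $\tilde r_*$ with $p_* \tilde r = r$. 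The natural candidate is to declare $\tilde r(\tilde\gamma_i)$ to be the unique $p$-preimage of $r(\gamma_i)$ lying near $\tilde r_*(\tilde\gamma_i)$, which exists precisely because $p$ is a local homeomorphism.

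The main obstacle is to show that these lifted generator values extend to an honest homomorphism $\tilde\Gamma \to \tilde G$ that moreover fixes $N$ point-wise, uniformly for $r$ in a single neighbourhood of $r_*$. Evaluating any defining relation of $\tilde\Gamma$ on the lifted generators yields an element of $N$ (it projects under $p$ to a relation of $\Gamma$, which the homomorphism $r$ respects) that equals $e$ at the base point $r_*$; by discreteness of $N$ it therefore vanishes on a neighbourhood of $r_*$. The genuine difficulty is that there are infinitely many relations to control at once --- equivalently, $N$ need not be finitely generated, since $\Gamma$ is in general only finitely generated and not finitely presented --- so no finite application of point-wise continuity produces a common neighbourhood. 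I would resolve this geometrically, in the spirit of the compactly presented case of Section \ref{sec:compactly presented}: instead of checking relations individually, lift the $\Gamma$-action on the Rips complex $\Rips$ induced by $r$ to the universal cover, on which $\tilde G$ acts with $N$ as its group of deck transformations. Covering-space theory then produces the lift for all relations simultaneously and forces it to fix $N$, while a Lebesgue-number argument as in Lemma \ref{lem:lebesgue number} supplies the uniform neighbourhood of $r_*$ on which the construction is valid and varies continuously with $r$. This furnishes a continuous local section of $p_*$ through $\tilde r_*$, which together with local injectivity establishes that $p_*$ is a local homeomorphism.
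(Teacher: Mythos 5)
Your coordinate reduction, the continuity argument, and the local injectivity argument are all correct, and they are essentially the paper's own proof: the paper likewise identifies $\tilde{\mathcal{R}}_N$ and $\mathcal{R}$ with subsets of finite powers of $\tilde{G}$ and $G$, and kills the difference of two nearby lifts using a finite generating set $\tilde{\Sigma}$ of $\tilde{\Gamma}$ and a symmetric identity neighborhood $\tilde{U}$ with $\tilde{U}^2 \cap N = \{e\}$. The two treatments part ways at openness: the paper disposes of it in a single sentence, asserting that it follows, like continuity, from the formula $(p_*r)(\gamma) = p(r(\tilde{\gamma}))$, whereas you single it out as the genuine difficulty. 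Your instinct there is sound --- the restriction of the coordinatewise open map $p^{\times k}$ to the subspace $\tilde{\mathcal{R}}_N$ has no formal reason to be open onto $\mathcal{R}$ --- and your diagnosis is nearly right, with one refinement: since $\tilde{\Gamma}$ is a uniform lattice in the compactly presented group $\tilde{G}$, it is finitely presented, so finitely many open conditions already guarantee that the lifted generator values extend to a homomorphism $\tilde{\Gamma} \to \tilde{G}$ covering $r$; what cannot be reduced to finitely many conditions is that this extension restrict to the identity on $N$, and that is exactly where non-finite-presentability of $\Gamma$ enters ($N$ is finitely generated as a normal subgroup of $\tilde{\Gamma}$ precisely when $\Gamma$ is finitely presented).

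The gap is that your resolution of this difficulty is a plan rather than a proof, and the plan rests on an assumption that is never established and is not automatic: that $\tilde{G}$ acts on the universal cover of $\Rips_c^2(G,d)$ with $N$ as its group of deck transformations. Proposition \ref{prop:on existence of simply connected covers} supplies only an abstract pair $(\tilde{G},N)$, with no stated relation between $N$ and $\pi_1$ of any Rips complex, and for such an abstract pair the conclusion can genuinely fail, so no soft argument can close the gap. Concretely, let $Y$ be the $3$-regular tree $T_3$ with a self-loop attached at each vertex, $G = \mathrm{Aut}(Y)$, let $\tilde{G}$ be the group of all lifts of elements of $G$ to the universal cover of $Y$ (a tree on which $\tilde{G}$ acts geometrically, so $\tilde{G}$ is compactly presented), and $N \cong \pi_1(Y)$ the deck group. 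An argument with axes of deck transformations shows $\centralizer{\tilde{G}}{N} = \{e\}$; since any $\tilde{s} \in \tilde{\mathcal{R}}_N$ satisfies $\tilde{\gamma} n \tilde{\gamma}^{-1} = \tilde{s}(\tilde{\gamma} n \tilde{\gamma}^{-1}) = \tilde{s}(\tilde{\gamma})\, n\, \tilde{s}(\tilde{\gamma})^{-1}$ for all $n \in N$, every $\tilde{s}(\tilde{\gamma})^{-1}\tilde{\gamma}$ centralizes $N$, forcing $\tilde{\mathcal{R}}_N = \{\tilde{r}_0\}$ no matter which lattice $\Gamma \le G$ is taken. Yet for $\Gamma$ a uniform tree lattice, viewed in $G$ via a splitting $\mathrm{Aut}(T_3) \to \mathrm{Aut}(Y)$, the point $r_0$ is not isolated in $\mathcal{R}(\Gamma,G)$: conjugating by the loop-flip at a vertex $v$ far from a base point gives homomorphisms distinct from $r_0$ converging to $r_0$. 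Hence $p_*$ is not open at $\tilde{r}_0$ for this pair $(\tilde{G},N)$. So to complete your argument you would first have to prove that the particular $\tilde{G}$ in use does carry the covering-space structure you invoke (in effect, reworking Proposition \ref{prop:on existence of simply connected covers} so that $N$ is identified $G$-equivariantly with $\pi_1$ of the Rips complex), and then actually carry out the lifting and Lebesgue-number steps over a non-simply-connected base, essentially redoing the constructions of Section \ref{sec:compactly presented}; none of this is in the proposal, so the openness claim --- and with it the proposition --- remains unproved.
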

\begin{proof}
Recall that $\mathcal{R}(\Gamma,G)$ and $\tilde{\mathcal{R}}_N$ may  be identified with closed subsets of the two spaces $G^\Gamma$ and $\tilde{G}^{\tilde{\Gamma}}$, respectively, equipped with the product topology. Let $\gamma \in \Gamma$ be any element and $\tilde{\gamma} \in \tilde{\Gamma}$ be such that $p(\tilde{\gamma}) = \gamma$. The equation $(p_* r) (\gamma) = p(r(\tilde{\gamma}))$ implies that $p_*$ is  both continuous and open.

Let $\tilde{\Sigma}$ be a finite generating set for $\tilde{\Gamma}$ and  $\tilde{U} \subset \tilde{G}$  a symmetric open identity neighborhood so that $\tilde{U}^2 \cap N = \{e\}$. For every  point $\tilde{r} \in \tilde{\mathcal{R}}_N$ consider the open neighborhood  
 $\tilde{r} \in \tilde{\Omega} \subset \tilde{\mathcal{R}}_N$   determined by the condition that $\tilde{s}(\tilde{\sigma}) \in \tilde{r}(\tilde{\sigma})\tilde{U}$ for every $\tilde{\sigma} \in \tilde{\Sigma}$ and every $\tilde{s} \in \tilde{\Omega}$. This clearly implies that $p_*$ is injective on $\tilde{\Omega}$. We conclude that $p_*$ is a local homeomorphism.
\end{proof}

\begin{proof}[Proof of Theorem \ref{thm:local rigidity for lc groups} for compactly generated groups]
Let $G$ be a compactly generated group and $\Gamma \le G$ a uniform lattice.
% Using Proposition \ref{prop:on existence of simply connected covers} we obtain a compactly presented locally compact group $\tilde{G}$ where $p : \tilde{G} \to G$ has discrete kernel. Lemma \ref{lemma:pulling and pushing uniform lattices} (see below) implies that $\tilde{\Gamma} = p^{-1}(\Gamma)$ is a uniform lattice in $\tilde{G}$.
Let $\ti G,p,N$ be as in Proposition \ref{prop:on existence of simply connected covers}.
Lemma \ref{lemma:pulling and pushing uniform lattices} implies that $\tilde{\Gamma} = p^{-1}(\Gamma)$ is a uniform lattice in $\tilde{G}$.

Let $\mathcal{R}$ and $\tilde{\mathcal{R}}$ be the associated representation spaces, as in the above discussion. Applying Theorem \ref{thm:local rigidity for lc groups} for compactly presented groups (see \S\ref{sub:proof assuming comapct presentation}) we deduce the existence of an open neighborhood $\tilde{r}_0 \in \tilde{\Omega} \subset \tilde{\mathcal{R}}$ that satisfies the conclusion of the theorem for the uniform lattice $\tilde{\Gamma} \le \tilde{G}$. Next consider the subset 
$$ \Omega = p_* (\tilde{\Omega}\cap \tilde{\mathcal{R}}_N) \subset \mathcal{R} $$
and it follows from Proposition \ref{prop:the map Rp is continuous and open} that $\Omega$ is an open neighborhood of $r_0$ in $\mathcal{R}$.

We claim that Theorem \ref{thm:local rigidity for lc groups} holds for $\Gamma \le G$ with respect to the neighborhood $\Omega \subset \mathcal{R}$. To see this, consider a representation $r \in \Omega$ and let $\tilde{r} \in \tilde{\Omega} \cap \tilde{\mathcal{R}}_N$ be such that $p_*\tilde{r} = r$. Then $\tilde{r}$ is injective and $\tilde{r}(\tilde{\Gamma})$ is a uniform lattice in $\tilde{G}$ containing $N$ and satisfying $\tilde{\Gamma} \cong \tilde{r}(\tilde{\Gamma})$. From Lemma \ref{lemma:pulling and pushing uniform lattices} it follows that $r(\Gamma) = p(\tilde{r}(\tilde{\Gamma}))$ is a uniform lattice in $G$. The injectivity of $\tilde{r}$ and the fact that $\tilde{r}(n) = n$ for all elements $n \in N \le \tilde{\Gamma}$ implies that $r$ is injective as well.
\end{proof}

%\begin{remark}
%The approach of first proving local topological rigidity for simply-connected groups was also a feature of 
%\cite{We60} in the Lie group case. 
%%Weil then employed a similar method to reduce the question from a general Lie group to its universal cover.
%\end{remark}

%Finally, the following is the proof that the map $\mathcal{R}(p)$ is continuous and open.

\section{Chabauty local rigidity}
\label{sec:the Chabauty space of closed subgroups}

We study uniform lattices regarded as points in the Chabauty space, establishing Chabauty local rigidity and the open conjugacy class property.
%in terms of the Chabauty space of closed subgroups of a locally compact  group. This space is defined in \S\ref{sub:the chabauty topology and the map C} below.
Recall the \emph{image closure}  map associated to a uniform lattice $\Gamma$ in a topological group $G$, namely  
$$\mathrm{C} : \mathcal{R}(\Gamma,G) \to \Sub{G}, \quad \mathcal{R}(\Gamma,G) \ni r \mapsto \overline{r(\Gamma)} \in \Sub{G}$$
 Our current goal is the following result.

%In  this section we prove Theorem \ref{thm:local rigidity in the Chabauty sense-intro} of the introduction, namely that a $\CAT$ group is Chabauty locally rigid. The following is a more precise statement of this fact, relying on the image of a deformation map 

\begin{theorem}
	\label{thm:Chaubuty local rigidity}
	Let $G$ be a topological group without non-trivial compact normal subgroups. Let $\Gamma$ be a uniform lattice in $G$ and  $r_0 \in \mathcal{R}(\Gamma,G)$  the inclusion mapping. 
	\begin{enumerate}
\item If $G$ is compactly generated then $\Gamma$ has a Chabauty neighborhood consisting of uniform lattices.
\item If $G$ is compactly presented then $\mathrm{C}$ is a local homeomorphism at $r_0$ and $\Gamma$ has a Chabauty neighborhood consisting of isomorphic uniform lattices.
\end{enumerate}
\end{theorem}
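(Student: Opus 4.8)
The plan is to derive both parts from a single structural fact: that the image-closure map $\mathrm{C}$ is, near $r_0$, a local homeomorphism onto a Chabauty neighborhood of $\Gamma$, and then to feed this into topological local rigidity (Theorem \ref{thm:local rigidity for lc groups}). Throughout I fix a left-invariant, continuous, proper $\lambda$-length pseudo-metric $d$ on $G$ and the associated Rips complex $\Rips = \Rips_c^2(G,d)$, on which $G$ acts geometrically and which is simply connected once $G$ is compactly presented. I also fix a bubble $U = \mathcal{B}(e,D)$ with $\Gamma U = \Rips$ and the finite symmetric generating set $\Sigma = \{\gamma \in \Gamma : \gamma U \cap U \neq \emptyset\}$.

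By Proposition \ref{prop:condition for a uniform lattice to admit a Chabauty neighborhood of cocompact subgroups}, and more precisely Remark \ref{rem:uniformly cobounded}, the lattice $\Gamma$ has a Chabauty neighborhood $\Omega$ whose members $H$ are all cocompact, with $HK = G$ for one fixed compact $K$. The first and main step is to shrink $\Omega$ so that every $H \in \Omega$ is discrete, and in fact \emph{uniformly} so: there is a single identity neighborhood $W$ with $H \cap W = \{e\}$ for all $H \in \Omega$. This is exactly where the hypothesis that $G$ has no non-trivial compact normal subgroup is indispensable. If it failed, there would be cocompact $H_n \to \Gamma$ together with non-trivial elements $g_n \in H_n$, $g_n \to e$; I expect these to organise into non-trivial relatively compact subgroups (for instance by intersecting $H_n$ with a compact open subgroup in the totally disconnected direction, via van Dantzig, so that $H_n \cap V$ is a compact subgroup), which shrink towards the identity and, after passing to a suitable Chabauty limit, produce a non-trivial compact normal subgroup of $G$ --- a contradiction. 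The model to keep in mind is $G = \RR \times C$ with $C$ compact, where the non-discrete cocompact subgroups $\ZZ \times C_n$ (with $C_n \downarrow \{e\}$) converge to the lattice $\ZZ \times \{e\}$. This uniform discreteness is the technical heart of the argument and the step I expect to be the main obstacle; granting it, the members of $\Omega$ are genuine uniform lattices, which already establishes part (1).

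For part (2), assume $G$ compactly presented, so $\Rips$ is simply connected, and recall that $\mathrm{C}$ is continuous at $r_0$ by Proposition \ref{prop:C is continuous at a uniform lattice}. It remains to show $\mathrm{C}$ is open and locally injective at $r_0$. Given $H \in \Omega$, uniform discreteness together with the Chabauty conditions $\mathcal{O}_1(\sigma U)$ and $\mathcal{O}_2(Q)$ --- taking $Q$ to be a compact shell on which $\Gamma$, hence also the nearby $H$, has no points away from small neighborhoods of $\Sigma$ --- singles out for each $\sigma \in \Sigma$ a \emph{unique} element $h_\sigma \in H \cap \sigma U$ close to $\sigma$, and these generate $H$ because $HU = \Rips$. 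For $H$ close to $\Gamma$ the overlap pattern of the translates $\{h_\sigma U\}$ coincides with that of $\{\sigma U\}$ (continuity of $d$, exactly as in Proposition \ref{prop:close representations have same nerve}), so $\Gamma$ and $H$ have the same nerve relative to $U$. Since $\Rips$ is simply connected and $U$ open and connected, Macbeath's presentation-from-the-nerve (recalled after Proposition \ref{prop:close representations have same nerve}; see \cite{macbeath} and \cite[I.8.10]{BrHa}) shows this common nerve is a complete presentation, whence $\sigma \mapsto h_\sigma$ extends to an isomorphism $r : \Gamma \to H$ that is as close to $r_0$ as desired. Thus $\mathrm{C}(r) = \overline{r(\Gamma)} = H$, so $\mathrm{C}(\mathcal{U}) \supseteq \Omega$ for any prescribed neighborhood $\mathcal{U}$ of $r_0$; and uniform discreteness leaves $h_\sigma$ as the only possible value $r(\sigma)$, giving local injectivity. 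Hence $\mathrm{C}$ is a local homeomorphism at $r_0$.

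Finally I combine this with topological local rigidity. By Theorem \ref{thm:local rigidity for lc groups} there is a neighborhood $\mathcal{U} \ni r_0$ on which every $r$ is injective with $r(\Gamma)$ a uniform lattice, so $\mathrm{C}(\mathcal{U})$ consists of uniform lattices isomorphic to $\Gamma$. Openness of $\mathrm{C}$ forces $\mathrm{C}(\mathcal{U})$ to contain a Chabauty neighborhood of $\Gamma$, which therefore consists of isomorphic uniform lattices; this is part (2), while part (1) is the discreteness-plus-coboundedness statement already in hand. The decisive difficulty is the uniform discreteness lemma of the second paragraph: once the local combinatorics of a nearby $H$ is forced to match that of $\Gamma$, the simple connectivity of $\Rips$ does everything else through Macbeath's theorem.
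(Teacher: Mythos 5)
Your treatment of part (2) is a legitimate variant of the paper's argument rather than a reproduction of it: the paper fixes a finite presentation of $\Gamma$, sends each generator $\sigma$ to the unique nearby element $h_\sigma \in H$, observes that the images of the relators lie in $U$ and hence die because $H \cap U = \{e\}$, and then proves surjectivity of the resulting homomorphism $r_H$ via Corollary \ref{cor:fundamental domain remains a fundamental domain after a deformation} together with a Chabauty condition forcing $H \cap K \subset r_H(\Gamma)$. Your route through matching nerves and Macbeath's presentation theorem needs the same ingredients (uniqueness of $h_\sigma$, a compact-shell condition to pin down $\Sigma_H$, and $h_\sigma h_{\sigma'} = h_{\sigma\sigma'}$ via uniform discreteness) and is workable modulo those details; it is close in spirit to the remark the paper itself makes after Proposition \ref{prop:close representations have same nerve}.

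The genuine gap is exactly where you place it: the claim that every closed cocompact $H$ Chabauty-close to $\Gamma$ is discrete, uniformly in $H$. You do not prove this, and the heuristic you offer fails on two counts. First, the appeal to van Dantzig presupposes a compact open subgroup, which exists only when $G$ is totally disconnected; the theorem concerns arbitrary compactly generated groups with no non-trivial compact normal subgroups, and in general there is no ``totally disconnected direction'' to intersect with. Second, the limiting scheme is structurally wrong: if the putative compact subgroups $H_n \cap V$ shrink towards the identity, their Chabauty limit is the \emph{trivial} group, so no non-trivial compact normal subgroup of $G$ is produced and no contradiction with the hypothesis results. The hypothesis has to be exploited for a \emph{fixed} $H$ in the neighborhood, and this is what the paper does in Lemma \ref{lem:on small normal subgroups} and Proposition \ref{prop:uniform lattice has neighborhood of uniform lattices}: a Chabauty condition of the form $\mathcal{O}_2\left(\bar{U} \setminus W\right)$ with $W^2 \subset U$ makes $V = H \cap U$ a compact subgroup, and conjugating $U$ by the finitely many open sets defining the neighborhood produces a closed cocompact $L \le H$ normalizing $V$. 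Then each $v \in V$ has relatively compact conjugacy class $v^G$ (because $v^L$ is relatively compact and $L$ is cocompact), and the closed normal subgroup $J = \overline{\left<v^G\right>}$ is compactly generated with a dense set of such elements; by the results of Ushakov cited in the paper, $J$ has a unique maximal compact subgroup, characteristic in $J$, hence normal in $G$, hence trivial, which forces $J$ to be abelian with no non-trivial compact subgroups and therefore $v = e$. This structure-theoretic argument is where the no-compact-normal-subgroups hypothesis actually does its work; nothing in your sketch replaces it, and as you concede, you have granted the technical heart of the theorem rather than proved it.
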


Theorem \ref{thm:Chabauty local rigidity for compacty presented groups} of the introduction is essentially a restatement of    Theorem \ref{thm:Chaubuty local rigidity}.

%In the terminology of \cite{glasner}, Corollary \ref{cor:occ for CAT0 groups} shows that a $\CAT$ group  as above is an OCC (open conjugacy class) subgroup.

%The first two Subsections \ref{sub:continuity of C} and \ref{sub:opennes of C} are dedicated to these results.

% is a compact space when equipped with the Chabauty topology. 
%
%This notion suggests a different approach to local rigidity, namely studying deformations of a given uniform lattice in the Chabauty space.

%We study  the space of closed subgroups of the locally compact group $G$ and its relation to the representation space of a given uniform lattice $\Gamma$. One such connection arises by considering the closure of the image $r(\Gamma)$ in $G$ where $r \in \mathcal{R}(\Gamma,G)$ is some representation.

%\subsection{The Chabauty topology and images of deformations}
%\label{sub:the chabauty topology and the map C}

\subsection{The map $\mathrm{C}$ is continuous  for compactly generated groups}
\label{sub:continuity of C}

%We now state the main result of the current section.

%\begin{thm}
%\label{thm:C is a local homeomorphism}
%Let $G$ be compactly generated, $\Gamma \le G$ a uniform lattice and $r_0 \in \mathcal{R}(\Gamma,G)$ correspond to the inclusion. Assume that $G$ has no small approximately-$\Gamma$-invariant subgroups. Then $\mathrm{C}$ is a local homeomorphism at $r_0$.
%\end{thm}

%Theorem \ref{thm:C is a local homeomorphism} will follow immediately from  the two Propositions \ref{prop:C is continuous at a uniform lattice} and \ref{prop:a condition for C to be open}. In the statements below   $G, \Gamma$ and $r_0$ are  as in Theorem \ref{thm:C is a local homeomorphism}. 

\begin{prop}
        \label{prop:C is continuous at a uniform lattice}
        Let $\Gamma$ be a uniform lattice in the compactly generated group $G$ and $r_0$ the inclusion mapping. Then $\mathrm{C}$ is continuous at $r_0$.
\end{prop}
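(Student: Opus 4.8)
The plan is to verify continuity of $\mathrm{C}$ at $r_0$ directly against the Chabauty sub-basis of Definition \ref{def:the Chabauty topology}. Since $\Gamma$ is a uniform lattice it is closed and discrete, so $\mathrm{C}(r_0) = \overline{\Gamma} = \Gamma$; and by Theorem \ref{thm:local rigidity for lc groups} there is a neighborhood of $r_0$ on which every $r(\Gamma)$ is again a uniform lattice, in particular closed, so that $\mathrm{C}(r) = r(\Gamma)$ there. It therefore suffices to produce, for each sub-basic set $\mathcal{O}_1(U)$ or $\mathcal{O}_2(K)$ containing $\Gamma$, a neighborhood of $r_0$ whose $\mathrm{C}$-image lies inside it; intersecting finitely many such neighborhoods then handles an arbitrary basic neighborhood of $\Gamma$.

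The condition $\mathcal{O}_1(U)$ is immediate: if $\Gamma \cap U \neq \emptyset$ I choose $\gamma \in \Gamma \cap U$, and since $U$ is open and evaluation at $\gamma$ is continuous in the point-wise topology, the set $\{r : r(\gamma) \in U\}$ is a neighborhood of $r_0$ on which $r(\gamma) \in r(\Gamma) \subset \mathrm{C}(r)$ witnesses $\mathrm{C}(r) \in \mathcal{O}_1(U)$.

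The real work is the condition $\mathcal{O}_2(K)$: given a compact $K$ with $\Gamma \cap K = \emptyset$, I must find a neighborhood of $r_0$ on which $r(\Gamma) \cap K = \emptyset$. I would reduce this to the compactly presented case via the cover $p : \tilde{G} \to G$ of Proposition \ref{prop:on existence of simply connected covers}, with discrete kernel $N \lhd \tilde{G}$ and $\tilde{\Gamma} = p^{-1}(\Gamma)$. Using Lemma 2.C.9 of \cite{cor_har} I pick a compact $\tilde{K} \subset \tilde{G}$ with $p(\tilde{K}) = K$; then $\tilde{\Gamma} \cap \tilde{K} = \emptyset$, because $p(\tilde{\Gamma} \cap \tilde{K}) \subset \Gamma \cap K = \emptyset$. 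Corollary \ref{cor:avoidance of compact set}, applied to the uniform lattice $\tilde{\Gamma}$ in the compactly presented group $\tilde{G}$, then provides a neighborhood $\tilde{\Omega}$ of $\tilde{r}_0$ in $\tilde{\mathcal{R}}$ on which $\tilde{r}(\tilde{\Gamma}) \cap \tilde{K} = \emptyset$. Setting $\tilde{\mathcal{V}} = \tilde{\Omega} \cap \tilde{\mathcal{R}}_N$ and $\mathcal{V} = p_*(\tilde{\mathcal{V}})$, where $p_*$ is the open local homeomorphism of Definition \ref{def: the space R_N and map Rp} and Proposition \ref{prop:the map Rp is continuous and open}, makes $\mathcal{V}$ an open neighborhood of $r_0$.

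The key point, and the step I expect to be the main obstacle since $p^{-1}(K) = N\tilde{K}$ is non-compact, is a folding argument that sends the fibre back into $\tilde{K}$. Given $r \in \mathcal{V}$ I choose a lift $\tilde{r} \in \tilde{\mathcal{V}}$ with $p_* \tilde{r} = r$, so that $r(\Gamma) = p(\tilde{r}(\tilde{\Gamma}))$. If $r(\gamma) \in K$ for some $\gamma$, lift $\gamma$ to $\tilde{\gamma} \in \tilde{\Gamma}$; then $\tilde{r}(\tilde{\gamma}) \in p^{-1}(K) = N\tilde{K}$, say $\tilde{r}(\tilde{\gamma}) = n\tilde{k}$ with $n \in N$ and $\tilde{k} \in \tilde{K}$. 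Because $\tilde{r} \in \tilde{\mathcal{R}}_N$ restricts to the identity on $N$ and $N \le \tilde{\Gamma}$, I obtain $\tilde{k} = \tilde{r}(n^{-1})\tilde{r}(\tilde{\gamma}) = \tilde{r}(n^{-1}\tilde{\gamma}) \in \tilde{r}(\tilde{\Gamma}) \cap \tilde{K}$, contradicting the choice of $\tilde{\Omega}$. Hence $r(\Gamma) \cap K = \emptyset$ for all $r \in \mathcal{V}$, which together with the closedness of $r(\Gamma)$ noted above gives $\mathrm{C}(r) \in \mathcal{O}_2(K)$ and completes the proof.
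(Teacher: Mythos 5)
Your proof is correct and takes essentially the same approach as the paper: continuity is checked against the Chabauty sub-basis, with $\mathcal{O}_1(U)$ handled directly and $\mathcal{O}_2(K)$ handled by passing to the compactly presented cover of Proposition \ref{prop:on existence of simply connected covers} via the local homeomorphism $p_*$ of Proposition \ref{prop:the map Rp is continuous and open} and invoking Corollary \ref{cor:avoidance of compact set}. The only distinction is one of detail, not of route: where the paper reduces to $\tilde{G}$ abstractly through a commutative square relating $\tilde{\mathrm{C}}$ and $\mathrm{C}$, you verify the same reduction concretely by lifting $K$ to $\tilde{K}$ and folding elements of $p^{-1}(K) = N\tilde{K}$ back into $\tilde{K}$ using that deformations in $\tilde{\mathcal{R}}_N$ fix $N$ pointwise.
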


In proving the above proposition we may assume without loss of generality that $G$ is compactly presented. To see this, let $\tilde{G}$ be a compactly presented group admitting a quotient map $p : \tilde{G} \to G$ with discrete kernel $N = \ker p$, as in Proposition \ref{prop:on existence of simply connected covers}. Consider the commutative square
\[
\begin{tikzcd}
\tilde{\mathcal{R}}_N \arrow{r}{\tilde{\mathrm{C}}} \arrow[swap]{d}{p_*} & \Subb{\tilde{G}}{N} \arrow{d}{p_*} \\
\mathcal{R}(\Gamma,G)  \arrow{r}{\mathrm{C}} & \Sub{G}
\end{tikzcd}
\]
The representation space $\tilde{\mathcal{R}}_N$ and the map $p_*$ along the left vertical arrow are described in Definition \ref{def: the space R_N and map Rp}. Here $\Subb{\tilde{G}}{N}$ is the space of closed subgroups of $\tilde{G}$ containing $N$ and the map $p_*$ from that space to $\Sub{G}$ is the projection modulo $N$. The map $p_* : \tilde{\mathcal{R}}_N \to \mathcal{R}$ is a local homeomorphism  by Proposition \ref{prop:the map Rp is continuous and open}.
%If $G$ has no small $\Gamma$-invariant subgroups the same is true for the pair $\tilde{G}$ and $\tilde{\Gamma} = p^{-1}(\Gamma)$. 
Now observe that the continuity of $\mathrm{C}$ follows from that of the map $\tilde{\mathrm{C}}$.

%According to Theorem \ref{thm:local rigidity for lc groups} we may fix an open neighborhood $r_0 \in \Omega \subset \mathcal{R}(\Gamma,G)$ with respect to which $\Gamma$ is locally topologically rigid.

\begin{proof}[Proof of Proposition \ref{prop:C is continuous at a uniform lattice}]
        Recall the sub-basis sets generating the Chabauty topology given in Definition \ref{def:the Chabauty topology}. It suffices to verify that the preimage under $\mathrm{C}$ of every such sub-basis set  containing $\Gamma$ contains a neighbourhood of $r_0$. 
        This is straightforward for sets of the form $\mathrm{C}^{-1}\left(\mathcal{O}_1(U)\right)$ with  $U$ being open in $G$. For sets of the form $\mathrm{C}^{-1}\left(\mathcal{O}_2(K)\right)$ with  $K$ being a compact subset of $ G$ we rely on Corollary \ref{cor:avoidance of compact set}.

\end{proof}

\subsection{A Chabauty neighborhood consisting of discrete subgroups} 

As a preliminary step towards proving Theorem \ref{thm:Chaubuty local rigidity} we study  Chabauty neighborhoods of uniform lattices in topological groups without non-trivial compact normal subgroups. The main result is Proposition \ref{prop:uniform lattice has neighborhood of uniform lattices} below.

%The proof of Theorem \ref{thm:Chaubuty local rigidity} given below relies on several auxiliary results.

%\begin{definition}
%\label{def:no small Gamma subgroups}
%Let $H $ be a closed subgroup of $G$. $H$ \textbf{approximately has small normal subgroups} if for every identity neighborhood $U \subset G$ and every Chabauty neighborhood $H \in \Omega \subset \Sub{G}$ there are closed subgroups $L \le G, L \in \Omega$ and $K \subset U$ so that $L \le \mathrm{N}_G(K)$.
%\end{definition}

\begin{lemma}
\label{lem:on small normal subgroups}
Let $G$ be a locally compact group and  $\Gamma \le G $ a discrete subgroup.   Let $U$ be  a symmetric relatively compact identity neighborhood in $G$ with $\Gamma \cap U = \{e\}$. Then  every Chabauty neighborhood $\Omega$ of $\Gamma$ has a Chabauty sub-neighborhood $\Omega_U$ with $\Gamma \in \Omega_U \subset \Omega$ such that  every closed subgroup $H \in \Omega_U$ satisfies
\begin{enumerate}
	\item $H \cap U$ is a subgroup of $G$, \label{item:intersection is a subgroup} and
	\item There is a closed subgroup $L \le H$ so that $L \in \Omega$ and $L$ normalizes $H \cap U$.\label{item:small normal subgroup}

\end{enumerate}

%
%, $\Gamma \le G$ a uniform lattice and $ U \subset G$ an identity neighborhood. 
%The point $\Gamma \in \Sub{G}$ admits a Chabauty open neighborhood $\Omega \subset \Sub{G}$ so that for every closed subgroup $H \in \Omega$ of $G$ we have $H \cap U \nrm H$.
%Let $r_0 \in \mathcal{R}(\Gamma,G)$ be the inclusion mapping. If $\Gamma$ approximately has no small normal subgroup then the map $\mathrm{C}$ is open at $r_0$.
\end{lemma}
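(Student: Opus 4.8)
The plan is to build $\Omega_U$ out of two kinds of Chabauty conditions: a \emph{lower} part inherited from $\Omega$, which keeps a subgroup $H$ close to $\Gamma$ near the nontrivial elements of $\Gamma$, together with an \emph{upper} $\mathcal{O}_2$-condition confining $H\cap U$ to a small buffer neighbourhood of $e$. First I would fix a basic sub-neighbourhood $\Omega'\subseteq\Omega$ of $\Gamma$ of the form $\Omega'=\bigcap_{i=1}^n\mathcal{O}_1(W_i)\cap\bigcap_{j=1}^m\mathcal{O}_2(K_j)$, where each open $W_i$ is relatively compact and meets $\Gamma$, and each compact $K_j$ is disjoint from $\Gamma$. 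Using local compactness together with the continuity of conjugation, I would choose a symmetric, relatively compact, open identity neighbourhood $V$ with $\overline{V}\subseteq U$, with $\overline{V}^2\subseteq U$, and --- applying the tube lemma to the maps $(h,v)\mapsto hvh^{-1}$ and $(h,v)\mapsto h^{-1}vh$ on the compact set $\bigl(\bigcup_i\overline{W_i}\bigr)\times\{e\}$ --- also $h\overline{V}h^{-1}\subseteq U$ and $h^{-1}\overline{V}h\subseteq U$ for every $h\in\bigcup_i\overline{W_i}$. Since $\Gamma$ is discrete and $U$ relatively compact, $\Gamma\cap\overline{U}$ is finite, and I would arrange that $\overline{U}\cap\Gamma=\{e\}\subseteq V$ (for a discrete $\Gamma$ one may always take the relatively compact neighbourhood in this form, which is how the lemma is applied); then the compact set $\overline{U}\setminus V$ is disjoint from $\Gamma$, and I set $\Omega_U=\Omega'\cap\mathcal{O}_2(\overline{U}\setminus V)$, a Chabauty neighbourhood of $\Gamma$ contained in $\Omega$.

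For item (1), note that for $H\in\Omega_U$ the condition $H\cap(\overline{U}\setminus V)=\emptyset$ forces $H\cap U\subseteq V$, so that $M:=H\cap U=H\cap V$. Since $V$ is symmetric and $\overline{V}^2\subseteq U$, the set $M$ is closed under inverses and under products (any product of two elements of $M$ lies in $\overline{V}^2\subseteq U$ and in $H$), hence is a subgroup; moreover $\overline{M}\subseteq\overline{V}\subseteq U$ and $\overline{M}\subseteq H$ give $\overline{M}=M$, so $M$ is in fact a compact subgroup contained in $V$.

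For item (2), I would set $L:=H\cap N_G(M)$, which is a closed subgroup of $H$ (the normalizer of a compact subgroup is closed) that normalizes $M$ by construction. It remains to verify $L\in\Omega$, for which it suffices that $L\in\Omega'$. The conditions $\mathcal{O}_2(K_j)$ pass from $H$ to $L\subseteq H$ automatically. For the conditions $\mathcal{O}_1(W_i)$ I would show $H\cap W_i\subseteq L$: given $h\in H\cap W_i\subseteq\overline{W_i}$, the choice of $V$ yields $hMh^{-1}\subseteq h\overline{V}h^{-1}\subseteq U$ and likewise $h^{-1}Mh\subseteq U$; since also $hMh^{-1}\le H$ and $h^{-1}Mh\le H$, both are contained in $H\cap U=M$, whence $hMh^{-1}=M$ and $h\in N_G(M)$. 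Thus $L\cap W_i=H\cap W_i\neq\emptyset$, so $L\in\Omega'\subseteq\Omega$, completing the argument.

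I expect the main obstacle to be item (2): the subtle point is that the elements of $H$ witnessing the $\mathcal{O}_1$-conditions --- those lying far out, near $\Gamma\setminus\{e\}$ --- must be shown to normalize the small piece $M$, so that $L=H\cap N_G(M)$ remains Chabauty-close to $\Gamma$. This is exactly what the uniform tube-lemma control $h\overline{V}h^{-1}\subseteq U$ over the compact sets $\overline{W_i}$ provides, and choosing a single $V$ small enough to secure this simultaneously with $\overline{V}^2\subseteq U$ is the crux. A secondary technical point is the confinement $H\cap U\subseteq V$ in item (1), which requires $\overline{U}\setminus V$ to be disjoint from $\Gamma$ --- hence the reduction to $\overline{U}\cap\Gamma=\{e\}$.
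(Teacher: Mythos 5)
Your proof is correct and follows the same skeleton as the paper's: pass to a basic Chabauty sub-neighbourhood, choose a small symmetric identity neighbourhood $V$ with $\overline{V}^{2}\subset U$ and with conjugation by the compact closures of the $\mathcal{O}_1$-witness sets kept inside $U$, cap with the condition $\mathcal{O}_2(\overline{U}\setminus V)$, and observe that then $H\cap U=H\cap V$ is a compact subgroup. The genuine difference is in item (2): the paper takes $L$ to be the closed subgroup generated by one witness $h_i\in H\cap V_i$ per $\mathcal{O}_1$-set and verifies only the inclusion $(H\cap U)^{h_i}\subset H\cap U$, whereas you take $L=H\cap N_G(H\cap U)$ and show that $L$ still meets every $W_i$, using conjugation control in \emph{both} directions ($h\overline{V}h^{-1}\subset U$ and $h^{-1}\overline{V}h\subset U$) to upgrade the two inclusions $hMh^{-1}\subset M$ and $h^{-1}Mh\subset M$ to the equality $hMh^{-1}=M$. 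This is more than cosmetic: a one-sided inclusion $hMh^{-1}\subset M$ does not by itself imply that $h$ normalizes $M$, and the paper's $V=\bigcap_i\bigcap_{v\in V_i}U^{v}$ only controls conjugation in one direction (the $V_i$ being not necessarily symmetric), so your two-sided argument closes a small gap in the paper's own verification. Likewise, your reduction to $\Gamma\cap\overline{U}=\{e\}$ addresses a point the paper passes over silently: without it $\Gamma\notin\mathcal{O}_2(\overline{U}\setminus V)$ whenever $\Gamma$ meets $\partial U$, so the proposed $\Omega_U$ is not a neighbourhood of $\Gamma$ at all; in fact the statement is false for such $U$ (take $G=\RR$, $\Gamma=\ZZ$, $U=(-1,1)$: every Chabauty neighbourhood of $\ZZ$ contains $t\ZZ$ with $t<1$ close to $1$, and $t\ZZ\cap U=\{0,\pm t\}$ is not a subgroup). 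Strictly speaking your reduction proves the lemma for a shrunken $U$ rather than the given one, but that is unavoidable given the counterexample, and since the lemma is only ever invoked with a $U$ one is free to shrink, your version is exactly what the applications require.
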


\begin{proof}
%The condition of admitting no small $\Gamma$-invariant subgroups is necessity for $\mathrm{C}$ to be open at $r_0$. For if $U_n$ is a basis of  neighborhoods at the identity\marginpar{what if $G$ is not first countable?} of $G$ and $K_n \subset U_n$ is a collection of compact subgroups with $\Gamma \le \mathrm{N}_G(K_n)$ then $\Gamma K_n \in \Sub{G}$ is a sequence of closed subgroups converging to $\Gamma$ in the Chabauty topology. We have $\Gamma \lneq \Gamma K_n$ for all $n$ sufficiently large.

%Let $U \subset G$ be a symmetric identity neighborhood without non-trivial closed subgroups normalized by $\Gamma$. 
We may assume, up to passing to a smaller neighborhood $\Omega$ if needed, that there are open relatively compact  subsets $V_1, \ldots, V_n \subset G$ and a compact subset $K \subset G$ so that
$$ \Omega = \bigcap_{i=1}^n \mathcal{O}_1(V_i) \cap \mathcal{O}_2(K) $$
in terms of the notations of Definition \ref{def:the Chabauty topology}. Let
%
%Fix a symmetric relatively compact identity neighbourhood $V$. Since $G$ is assumed to be compactly presented $\Gamma$ is finitely presented  and let $\Sigma$ be a finite symmetric generating set for $\Gamma$  \cite[1.D.4]{cor_har}. Denote 
$$ V = \bigcap_{i = 1}^n \bigcap _{v \in V_i} U^{v} $$
so that $V$ is an identity neighbourhood in $G$ by \cite[Theorem 2.4]{montgomery}. In addition let $W$ be an open symmetric identity neighbourhood satisfying $W ^2 \subset U \cap V$.
%$$UWU \subset V, \quad W ^2 \subset U.$$

Consider the Chabauty neighborhood $\Omega_U$, given by
$$ \Omega_U = \Omega  \cap \mathcal{O}_2(\bar{U}\setminus{W}).$$
Let $H \in \Omega_U$ be any closed subgroup of $G$. Observe that the facts
$ H \cap W = H \cap U $,  $W^2 \subset U$ and $W = W^{-1}$ taken together imply that the intersection $H \cap U$ is a subgroup, establishing (\ref{item:intersection is a subgroup}).
Choose elements $h_i \in H \cap V_i$ for every $i = 1,\ldots, n$. 
It remains to show that the closed subgroup $ L = \overline{\left<h_1,\ldots,h_n\right>} \le H$ satisfies  (\ref{item:small normal subgroup}), that is $L$ normalizes $H \cap U$. 
Note that
$$ (H\cap U)^{h_i} = (H\cap V)^{h_i} = H \cap V^{h_i} \subset H \cap U $$
and so $H \cap U$ is indeed normalized by $\left<h_1,\ldots,h_n\right>$ and hence by $L$, as required.
%\vspace{1cm}
%
%
%We claim that if $H \in \Sub{G}$ is any subgroup so that $H \cap \sigma V \neq \emptyset$ for all $\sigma \in \Sigma$ and $H \cap U \neq \{e\}$ then in fact $H \cap (U \setminus W') \neq \emptyset$. Given $H$ as above assume the contrary and denote 
%$$\Delta = \left< H \cap W \right> \le H$$
%As $(W')^2 \subset U$ we must have that $\Delta \subset W'$. 
%Every $\sigma \in \Sigma$ can be written as $\sigma = h_\sigma v_\sigma$ with $h_\sigma \in H$ and $v_\sigma \in V$. Then $ \Delta^\sigma = \Delta^{h_\sigma v_\sigma} $. By the definition of $W$ we have that 
%$$\Delta^{h_\sigma} = \Delta^{\sigma v_\sigma^{-1}} \subset H \cap U = H \cap W' = \Delta $$ 
%This means that $\Delta$
%
%Since $\Delta$ is non-trivial and $W' \subset U$ this case will be ruled out if we show that $\Gamma$ normalizes $\Delta$. 
%
\end{proof}

%\begin{remark}
%Let $G$ be a locally compact group without small subgroups. An argument similar to Lemma \ref{lem:on small normal subgroups}.(\ref{item:intersection is a subgroup}) establishes the well-known fact that discreteness is a Chabauty open property in $\Sub{G}$.
%\end{remark}

%\begin{prop}
%Let $F$ be a compactly generated locally compact group with a dense subset of elements having relatively compact conjugacy classes. Then $F$ admits a maximal compact normal subgroup $K$. Moreover $F / K$ is abelian without non-trivial compact subgroups.
%\end{prop}
%\begin{proof}
%
%\end{proof}

\begin{prop}
\label{prop:uniform lattice has neighborhood of uniform lattices}
Let $G$ be a compactly generated group without non-trivial compact normal subgroup. Let  $\Gamma$ be a uniform lattice in $G$ satisfying $\Gamma \cap U = \{e\}$ for some relatively compact symmetric open identity neighborhood $U$ in $G$. Then $\Gamma$ admits a Chabauty neighborhood  consisting of uniform lattices $\Delta$ with $\Delta \cap U = \{e\}$.
\end{prop}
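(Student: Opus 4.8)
The plan is to exhibit the required Chabauty neighborhood explicitly and then reduce the whole statement to the single assertion that a compact subgroup of $G$ which is normalized by a cocompact subgroup must be trivial. First I would invoke compact generation: by Proposition \ref{prop:condition for a uniform lattice to admit a Chabauty neighborhood of cocompact subgroups} together with Remark \ref{rem:uniformly cobounded}, the lattice $\Gamma$ has a Chabauty neighborhood $\Omega$ all of whose members are cocompact, and in fact there is a single compact set $K$ with $HK=G$ for every $H\in\Omega$. Since $U$ is a relatively compact symmetric identity neighborhood with $\Gamma\cap U=\{e\}$, I then feed $\Omega$ and $U$ into Lemma \ref{lem:on small normal subgroups} to obtain a sub-neighborhood $\Omega_U$ with $\Gamma\in\Omega_U\subset\Omega$ such that every closed $H\in\Omega_U$ satisfies: $H\cap U$ is a subgroup of $G$, and there is a closed cocompact subgroup $L\le H$ (namely some $L\in\Omega$) normalizing $H\cap U$. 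I claim $\Omega_U$ is the desired neighborhood.

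Fix $H\in\Omega_U$ and set $C=\overline{H\cap U}$. As $H$ is closed and $U$ is relatively compact, $C$ is a compact subgroup contained in $\overline U$, and $L$ normalizes $C$. It remains to prove $C=\{e\}$: granting this, $H\cap U=\{e\}$, so $H$ is discrete, and being also cocompact it is a uniform lattice with $H\cap U=\{e\}$, exactly as required. Thus everything reduces to the following statement: \emph{a compact subgroup $C$ normalized by a cocompact subgroup $L$ of $G$ is trivial, provided $G$ is compactly generated with no non-trivial compact normal subgroup.}

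To prove this I would pass to the normal closure $M=\overline{\langle gCg^{-1}:g\in G\rangle}$, a closed normal subgroup of $G$. Writing $G=K'L$ with $K'$ compact (obtained from $HK=G$ by inversion) and using that $L$ normalizes $C$, every conjugate collapses to the form $gCg^{-1}=k'Ck'^{-1}$ with $k'\in K'$; hence all conjugates of $C$ lie in the single compact set $Q=\overline{\{k'ck'^{-1}:k'\in K',\,c\in C\}}$, and $M$ is topologically generated by the compact set $S=\bigcup_{k'\in K'}k'Ck'^{-1}\subset Q$. The decisive point — and the step I expect to be the main obstacle — is to deduce from this that $M$ is in fact compact. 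This is precisely where compact generation of $G$ must enter, and I would arrange it either through the structural fact that a compactly generated locally compact group possesses a unique maximal compact normal subgroup (into which the compact normal $M$ then lands), or by a direct argument bounding $M$ via the proper cocompact action of $G$ on its Rips complex. Once $M$ is known to be compact, the hypothesis that $G$ has no non-trivial compact normal subgroup forces $M=\{e\}$, whence $C\subseteq M=\{e\}$.

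Finally I would assemble the pieces: every $H\in\Omega_U$ is cocompact by construction and discrete by the previous paragraph, hence a uniform lattice, and satisfies $H\cap U=\{e\}$; therefore $\Omega_U$ is a Chabauty neighborhood of $\Gamma$ of the required kind. The genuine difficulty is concentrated in the compactness of $M$: the naive displacement estimate in the Rips complex bounds each generator in $S$ but does not by itself bound products of generators, so the geometric action alone is insufficient and one must feed in the structure theory that forces $M$ to be compact. Everything else — the passage to cocompact members via compact generation, the extraction of the normalized subgroup $C$ via Lemma \ref{lem:on small normal subgroups}, and the deduction ``discrete $+$ cocompact $=$ uniform lattice'' — is routine.
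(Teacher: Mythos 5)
Your reduction is sound and, up to the decisive step, runs parallel to the paper's argument: Proposition \ref{prop:condition for a uniform lattice to admit a Chabauty neighborhood of cocompact subgroups} and Remark \ref{rem:uniformly cobounded} give the neighborhood $\Omega$ of cocompact subgroups with a uniform compact $K$, Lemma \ref{lem:on small normal subgroups} gives $\Omega_U$, and your observation that $G=K'L$ together with $L$ normalizing $C$ traps every conjugate $gCg^{-1}=k'Ck'^{-1}$ inside one compact set is exactly the analogue of the paper's remark that the conjugacy class of each $v\in H\cap U$ is relatively compact. The gap is that you never prove the one statement on which everything hinges. Route (a) fails twice over. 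First, the ``structural fact'' is false: a compactly generated locally compact group need not possess a maximal compact normal subgroup --- for instance a finitely generated (even finitely presented, by Abels' construction) group whose center is the Pr\"ufer group $\ZZ[1/p]/\ZZ$ contains a strictly increasing chain of finite central, hence compact normal, subgroups isomorphic to $\ZZ/p^n\ZZ$, so no maximal one exists. Second, even granting such a subgroup $W$, to place $M$ inside $W$ you must already know that $M$ is a \emph{compact} normal subgroup, which is precisely what you set out to prove; the argument is circular. Route (b) you yourself concede is insufficient. Moreover, ``$M$ is compact'' is the wrong intermediate target: a closed normal subgroup topologically generated by a compact conjugation-invariant set need not be compact (take $G=M=\RR$ and $S=[-1,1]$); in your situation $M$ is compact only a posteriori, because it turns out to be trivial.

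What is missing is the structure theory of compactly generated locally compact groups whose elements (on a dense subgroup) have relatively compact conjugacy classes, which is exactly what the paper invokes, namely \cite[Lemma 7]{kramarev1971topological} and \cite{ushakov1963topological}. Your set-up is perfectly positioned for it: every element of $S$, hence of the dense subgroup $\langle S\rangle\le M$, has relatively compact $G$-conjugacy class (contained in $Q$), and $M$ is topologically generated by the compact set $S$. The cited results then give a unique largest compact subgroup $K_M\nrm M$ such that $M/K_M$ is abelian without non-trivial compact subgroups; $K_M$ is characteristic in $M$, and $M\nrm G$, so $K_M\nrm G$, whence $K_M=\{e\}$ by the hypothesis on $G$. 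Then $C\le M$ is a compact subgroup of a group having no non-trivial compact subgroups, so $C=\{e\}$ (and only now does $M=\{e\}$ follow). The paper runs this same argument element-by-element, with $J=\overline{\langle v^G\rangle}$ for $v\in H\cap U$ playing the role of your $M$; either variant works, but without this input your proof does not close.
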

\begin{proof}
The uniform lattice $\Gamma$ admits a Chabauty neighborhood $\Omega$ consisting of cocompact subgroups according to Proposition \ref{prop:condition for a uniform lattice to admit a Chabauty neighborhood of cocompact subgroups}. Let $U$ be a relatively compact symmetric open identity neighborhood in $G$ so that $\Gamma \cap U = \{e\}$. Relying on Lemma \ref{lem:on small normal subgroups} we find a smaller Chabauty neighborhood $\Omega_U$ of $\Gamma$ with $\Omega_U \subset \Omega$ and satisfying Conditions $(1)$ and $(2)$ of Lemma \ref{lem:on small normal subgroups}.  
%We claim that every closed subgroup $H$ belonging to $ \Omega_U$ is a uniform lattice. 

Let $H$ be any closed subgroup in $\Omega_U$. Since $\Omega \subset  \Omega_U$ it follows that $H$ is cocompact in $G$. It remains to show that the intersection $V = H \cap U$ is the trivial subgroup. Note that $V$ is a compact subgroup of $G$. Moreover there is a closed subgroup $  L \le H$ so that $L$ normalizes $V$ and so that $L$ is cocompact in $G$.

Let $v \in V$ be an arbitrary element. Since $V$ is normalized by $L$ the set $v^L = \{ v^l \: : \: l \in L \} $ is relatively compact. Additionally since $L$ is cocompact in $G$ it follows that the entire conjugacy class $v^G = \{ v^g \: : \: g \in G\}$ of $v$ is relatively compact.
Let $J $ be the closed subgroup of $G$ generated by  $v^G$. In particular $J$ in normal in $G$ and it admits a dense subset consisting of elements with a relatively compact conjugacy class in $G$. Observe that $J$ is compactly generated since the compact set  $\overline{v^G}$ generates a dense subgroup of $J$. 

It follows from \cite[Lemma 7]{kramarev1971topological} and \cite{ushakov1963topological} that $J$ admits a unique largest compact subgroup $K \nrm J$ and that $J / K$ is abelian without non-trivial compact subgroups. This implies that $K$ is characteristic in $J$ and therefore $K \nrm G$. By the assumption $K$ is trivial and $J$ is abelian without non-trivial compact subgroups.
Recall that $ v \in V \cap J$ and therefore $v$ generates a compact subgroup of $J$. However by the above the element $v$ must be trivial. We conclude that $V = H \cap U = \{e\}$ as required.
\end{proof}

\subsection{Proof of Theorem \ref{thm:Chaubuty local rigidity}} 
\label{sub:opennes of C}

Let $G$ be a compactly generated group without non-trivial compact normal subgroups and $\Gamma \le G$ a uniform lattice.   It follows from Proposition \ref{prop:uniform lattice has neighborhood of uniform lattices} that $\Gamma$ admits a Chabauty neighborhood $\Omega$ consisting of uniform lattices $\Delta$ satisfying $\Delta \cap U = \{e\}$ for some relatively compact symmetric open identity neighborhood $U$ in $G$. This completes the first part of Theorem  \ref{thm:Chaubuty local rigidity}.

Assume  that $G$ is moreover compactly presented. 
Recall that the continuity of $\mathrm{C}$ at $r_0$ was given in Proposition \ref{prop:C is continuous at a uniform lattice} above. It remains  to verify that $\mathrm{C}$ is open at $r_0$. In other words,  we would like to show that every $H \in \Omega $  is actually equal to the image of some small deformation of $\Gamma$.  It would then follows from Theorem \ref{thm:local rigidity for lc groups} that $\Gamma$ admits a Chabauty neighborhood consisting of \emph{isomorphic} uniform lattices.

\subsubsection*{The map $\mathrm{C}$ is open at the point $r_0$}

We  rely on the fact that $\Gamma$ is finitely presented \cite[1.D.4]{cor_har}.  Let $\Sigma$ be a finite generating set for $\Gamma$ and let $V$ be an identity neighborhood in $G$. Up do passing to a smaller Chabauty neighborhood $\Omega$, we may choose elements $h_\sigma \in H \cap V \sigma  $  for every $\sigma \in \Sigma$. We propose to construct a map $r_H : \Gamma \to H $ by defining it on generators\footnote{The notation $r_H$ is a slight abuse of notation, since the map $r_H$ depends not only on the group $H$ but also on the choices of the elements $h_\sigma$.}, declaring $r_H(\sigma) = h_\sigma$ for all $\sigma \in \Sigma$.
In order that $r_H$ extends to a well-defined homomorphism, it must send all the relations of $\Gamma$ to the identity. Since $\Gamma$ is finitely presented, and in view of the fact that $H\cap U=\{e\}$, the map $r_H$ is indeed  well-defined provided that the neighborhood $V$ is sufficiently small. Therefore $r_H(\Gamma) \le H$ and it remains to show that  $r_H(\Gamma)$ is  equal to $H$, possibly up to passing once more to a smaller Chabauty neighborhood.

%We may now proceed as in the proof of Theorem \ref{thm:wang-finiteness-intro} (see Section \ref{sec:wangs finiteness theorem}). Namely, 

There is a  symmetric compact subset $K \subset G$ so that $r_H(\Gamma) K = G$ for every closed subgroup $H$ in $\Omega$ provided that the neighborhood $V$ is sufficiently small,  see Corollary \ref{cor:fundamental domain remains a fundamental domain after a deformation}. In particular every element $h \in H$ can be written as $h = r_H(\gamma) k_h$ for some elements $\gamma \in \Gamma$ and $k_h \in K \cap H$. The equality   $r_H(\Gamma) = H$  will follow as soon as we show that $H \cap K \subset r_H(\Gamma)$ whenever $r_H$ is a sufficiently small deformation.

Let $W$ be a symmetric open identity neighborhood satisfying $W^2 \subset U$. Let $\Delta$ be the finite collection of all elements $\delta \in \Gamma$ such that $K \cap W \delta \neq \emptyset$. Consider the compact subset $ Q = K  \setminus W \Delta$. 
Every closed subgroup $H$ of $G$ belonging to the Chabauty neighborhood $\Omega_1$
$$\Omega_1 = \Omega \cap \bigcap_{\delta \in \Delta} \mathcal{O}_1(W\delta ) \cap \mathcal{O}_2(Q) \subset \Sub{G} $$
intersects each translate $W\delta$ exactly once. Such a  subgroup $H$ therefore satisfies
$$H \cap K  \subset H \cap W\Delta \subset r_H(\Gamma)$$
and in particular  $r_H(\Gamma) = H$ by the above discussion. This completes the proof of the fact that $\mathrm{C}$ is open at $r_0$ as well as of Theorem \ref{thm:Chaubuty local rigidity}.
\qed

%The argument used above to construct the map $r_H : \Gamma \to H$ is similar to an argument of Wang \cite{wangtopics}.

%\marginpar{maybe add examples of groups without small $\Gamma$-invariant subgroups where $\Gamma$ is a lattice}

%\subsection{Chabauty convergence of uniformly discrete sequences}
%
%We provide a lemma that relates convergence in the Chabauty and representation space sense for uniformly discrete sequences. It is used in the proof of Proposition \ref{prop:a condition for C to be open} and will be required in Section \ref{sec:wangs finiteness theorem} below.
%
%\begin{lemma}
%        \label{lem:there is a homormorphism to a close subgroup assuming finite presentation}
%        Let $\Gamma \le G$ be a finitely presented subgroup and $\Gamma_n \in \Sub{G}$ a sequence of subgroups converging to $\Gamma$ in the Chabauty topology and such that $\Gamma_n \cap U = \{e\}$ for some identity neighborhood $U \subset G$. Then there are representations $r_n \in \mathcal{R}(\Gamma,G)$ converging to the inclusion with $r_n(\Gamma) \subset \Gamma_n$ for all $n$ sufficiently large. Moreover 
%        $$ \textrm{co-vol}(G/\Gamma) \le \liminf_n \textrm{co-vol}(G/\Gamma_n) $$
%\end{lemma}

  \section{Local rigidity of $\CAT$ groups}
\label{sec:cat0 spaces}

We turn to the question of local rigidity for uniform lattices in the isometry groups of $\CAT$ spaces. Our goal is to prove Theorem \ref{thm:local rigidity of uniform in general cat0-intro} of the introduction.

%\begin{theorem*}
%\label{thm:local rigidity of uniform in general cat0}
%Let $X$ be a proper geodesically complete $\CAT$ space without Euclidean factors with $\Is{X}$ acting cocompactly. Let $\Gamma$ be a uniform lattice in $\Is{X}$. Assume that for every factor $X^\mathbb{H}$ of $X$ isometric to the hyperbolic plane the projection of $\Gamma$ to $\Is{X^\mathbb{H}}$ is non-discrete. Then $\Gamma$ is locally rigid.
%\end{theorem*}
%

In this section we rely to a large extent on the work of Caprace and Monod concerning the structure theory and discrete subgroups of  isometry groups of $\CAT$ spaces \cite{CM1,CM2}. For additional general information concerning these spaces we refer the reader to the books \cite{BrHa} and \cite{Papa}.

\subsection{$\CAT$ groups}
\label{sub:cat0 lattices and their properties}

We now collect several relevant facts about $\CAT$ spaces and their lattices to be used below. These are recalled for the reader's convenience.

Let $X$ be a proper $\CAT$ space with a cocompact isometry group $\Is{X}$. Then $\Is{X}$ is locally compact, second countable and is acting on $X$ geometrically. Moreover as $X$ is geodesic and simply-connected $\Is{X}$ is compactly presented; see 5.B.5, 5.B.10, 6.A.10 and 8.A.8 of \cite{cor_har}.

\begin{definition}
        \label{def:a CAT0 lattice}
        A \emph{$\CAT$ lattice} is a lattice in $\Is{X}$. A uniform $\CAT$ lattice is called a \emph{$\CAT$ group}.
\end{definition}

The mere fact that the space $X$ admits a $\CAT$ group already has some significant implications. Assume that $X$ is proper and \emph{geodesically complete}, that is every geodesic segment can be extended to a bi-infinite geodesic line, and that $G \le \Is{X}$ is a closed subgroup acting cocompactly  and admitting a uniform lattice $\Gamma \le G$. Assume moreover that $X$ has no Euclidean factors. Then:

\begin{itemize}
        \item The boundary $\partial X$ has finite geometric dimension (Theorem C, \cite{Kl}).
%       \item $X$ is boundary-minimal, i.e. there is no closed convex $Y \subsetneq X$ with $\partial Y = \partial X$, (Proposition 1.5, \cite{CM1}).
%       \item $G$ is acting minimally (Lemma 3.13, \cite{CM1}).
        \item $\Gamma$ is acting minimally and without a fixed point in $\partial X$ (geometric Borel density --- Theorem 3.14, \cite{CM2}). In particular, $G$ acts minimally and admits no fixed point at infinity as well.
                \item The centralizer of a  lattice in $\Is{X}$ is trivial \cite[2.7]{CM2}.
                        
\end{itemize}

%These facts will be needed several times in what follows.

%\label{sub:standing assumptions}
%We will refer to the above as the \emph{standing assumptions} on $G$ and $X$.

%\subsection{Deformation spaces of products}
%\marginpar{where in the paper to put this subsection?}
%Let $G$   be a locally compact group that decomposes as a product $G = G_1 \times G_2$. We will use the notation $\mathrm{Proj}_i$ for the projection $G \to G_i$ for $i=1,2$.
%
%Now let  $\Gamma \le G$ a uniform lattice. We find it useful to consider a subspace of the representation space $R(\Gamma,G)$ consisting of those deformations that are trivial on one of the factors. 
%
%\begin{definition}
%\label{def:the space Ri}
%
%\end{definition}

%The above considerations apply equally well to any cocompact subgroup of $\Is{X}$. We deduce
%
%\begin{cor}
%\label{cor:outward local rigidity for cocompact subgroup of td}
%Let $X$ be a $\CAT$ space with $\mathrm{Is}(X)$ totally disconnected and satisfying the standing assumptions. Let $ H \le \mathrm{Is}(X)$ be a closed cocompact subgroup and assume that no open subgroup of $H$ fixes a point at infinity. Then uniform lattices in $H$ are locally rigid in $\Is{X}$.
%\end{cor}
%
%
%
%\begin{proof}[Proof of Corollary \ref{cor:outward local rigidity for cocompact subgroup of td}]
%Observe that a uniform lattice in $H$ is a uniform lattice in $\Is{X}$ as well. Moreover Theorem 6.1 and Corollary 6.7 of \cite{CM1} that ensure smoothness and the equivariant cell decomposition hold for $H$ as well.
%\end{proof}

\subsection{Irreducibility and splitting}
\label{sub:irreducibility and splitting}

We recall two notions of irreducibility developed in Section 4 of \cite{CM2}. In particular, we show that a $\CAT$-group virtually decomposes into a product of abstractly irreducible groups.

\begin{definition}
        \label{def:abstractly irreducible}
        A discrete group is \textbf{abstractly irreducible} if no finite index  subgroup splits nontrivially as a direct product.
\end{definition}

\begin{definition}
        \label{def:irreducible lattice}
        A lattice $\Gamma \le G_1 \times \cdots \times G_n$ in a product of locally compact groups is \textbf{irreducible} if its projection to every proper subproduct of the $G_i$'s is dense and each $G_i$ is non-discrete.
\end{definition}

In this terminology, an abstractly irreducible CAT(0) lattice becomes an irreducible lattice in the product of the closures of its projections (Remark 4.1, \cite{CM2}).

In the case of a uniform lattice $\Gamma \le G$ such that $G$ is a cocompact subgroup of $\Is{X}$ and $X$ a proper $\CAT$ space, these two notions of irreducibility turn out to be related. Namely, $\Gamma$ is abstractly irreducible if and only if for every finite index subgroup $\Gamma' \le \Gamma$ and a decomposition $X = X_1 \times X_2$, the projections of $\Gamma'$ to both $\Is{X_i}$ are non-discrete (see Theorem 4.2 of \cite{CM2}).

\begin{remark}
\label{rem:having non-discrete deformations is preserved under small deformations}
As abstract irreducibility is an algebraic condition, 
it follows from Theorem \ref{thm:local rigidity for lc groups} that in the setting of the previous paragraph the property of having non-discrete projections is stable under small deformations.
\end{remark}

\begin{lemma}
        \label{lem:existence of splitting}
        Let $X$ be a proper $\CAT$ space without Euclidean factors with $\Is{X}$ acting cocompactly and $\Gamma \le G$ a uniform lattice.
        Then there is a finite index  normal subgroup $\Gamma' \nrm \Gamma$ and product decompositions
        $$ \Gamma' = \Gamma_1 \times \cdots \times \Gamma_k,  \quad X = X_1 \times \cdots \times X_k, \quad  $$ 
        such that the $\Gamma_i \le \Is{X_i}$ are abstractly irreducible uniform lattices.
\end{lemma}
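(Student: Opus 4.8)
The plan is to first separate the two different meanings of ``decomposition'' at play: the \emph{geometric} decomposition of the space $X$ and the \emph{algebraic} decomposition of the lattice $\Gamma$. I would begin with the canonical de~Rham decomposition of a proper $\CAT$ space with cocompact isometry group (\cite{CM1}, building on the classical de~Rham theorem), writing
$$ X = \RR^{d} \times Y_1 \times \cdots \times Y_n, $$
where each $Y_j$ is geometrically irreducible. Since $X$ has no Euclidean factor we have $d = 0$, so $X = Y_1 \times \cdots \times Y_n$. By uniqueness of this decomposition up to permutation of isometric factors, $\Is{X}$ acts on the finite set of factors, and the kernel of this action --- the subgroup preserving each $Y_j$ setwise, which by de~Rham uniqueness is exactly $\Is{Y_1} \times \cdots \times \Is{Y_n}$ --- is of finite index in $\Is{X}$. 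I would then set $\Gamma_0 = \Gamma \cap \prod_j \Is{Y_j}$ and replace it by its normal core in $\Gamma$, obtaining a finite-index \emph{normal} subgroup $\Gamma' \nrm \Gamma$ that is a uniform lattice in $\prod_j \Is{Y_j}$.

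The heart of the argument is to group the geometric factors $Y_j$ into blocks according to how $\Gamma'$ is entangled across them. I would record the basic dichotomy for a lattice $\Lambda \le A \times B$: the projection of $\Lambda$ to $A$ is discrete if and only if $\Lambda \cap B$ is a lattice in $B$, and when the projections to \emph{both} factors are discrete, $\Lambda$ contains $(\Lambda\cap A)\times(\Lambda\cap B)$ as a finite-index subgroup (this is the lattice theory underlying \cite[\S4]{CM2}). I would define a graph on $\{1,\ldots,n\}$ by joining $i$ and $j$ whenever the projection of $\Gamma'$ to $\Is{Y_i}\times\Is{Y_j}$ is non-discrete, and take its connected components $B_1,\ldots,B_k$ as the blocks, setting $X_i = \prod_{j\in B_i} Y_j$. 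The crucial point, which I would extract from the irreducibility theory of \cite[\S4]{CM2}, is that the absence of edges across a cut forces the projection of $\Gamma'$ to the corresponding subproduct to be discrete; thus along the partition into blocks the lattice virtually splits.

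Passing to finite index once more, I would set $\Gamma'' = \prod_{i=1}^{k}(\Gamma'\cap \Is{X_i})$, an honest direct product that is of finite index in $\Gamma'$ by the dichotomy above, and hence finite index in $\Gamma$. Since the block decomposition of $X$ is canonical, conjugation by elements of $\Gamma$ permutes the $X_i$ and therefore preserves $\Gamma''$, so $\Gamma''$ is again normal in $\Gamma$; this is the subgroup I would rename $\Gamma'$, with $\Gamma_i := \Gamma' \cap \Is{X_i}$. To see each $\Gamma_i$ is abstractly irreducible I would invoke the characterization of Theorem~4.2 of \cite{CM2}: for any partition of a block $B_i$ into $S \sqcup S^c$, connectivity of $B_i$ provides a crossing edge $a\in S$, $b\in S^c$ with non-discrete projection of $\Gamma_i$ to $\Is{Y_a}\times\Is{Y_b}$; since non-discreteness of a projection passes to finite-index subgroups (the image of a finite-index subgroup is finite index in the image, and a finite-index subgroup of a non-discrete group is non-discrete), the projections of every finite-index subgroup of $\Gamma_i$ to both $\prod_{S}\Is{Y_j}$ and $\prod_{S^c}\Is{Y_j}$ are non-discrete, which is precisely abstract irreducibility.

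The main obstacle I anticipate is the between-blocks step: showing that pairwise non-entanglement across a cut genuinely yields discreteness of the joint projection, and hence an honest (not merely virtual) product decomposition. This is exactly where the geometric realization of abstract splittings --- that a direct-product decomposition of a $\CAT$ lattice is induced by a splitting of the underlying space --- must be imported from \cite[\S4]{CM2}; the connected-components formulation is what lets me bypass a direct proof of transitivity of the entanglement relation. A secondary, purely bookkeeping difficulty is coordinating the several finite-index passages so that the final subgroup is simultaneously normal in $\Gamma$ and an honest product, which is handled by repeatedly taking normal cores and exploiting the canonicity of the de~Rham factors.
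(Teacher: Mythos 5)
Your preliminary reductions (de Rham decomposition, intersecting with the factor-preserving subgroup, taking normal cores) are sound, and your normality argument at the end mirrors the paper's. But the entanglement graph at the heart of your construction is the wrong invariant, and the step deducing abstract irreducibility of the blocks is false. The fatal inference is the passage from a crossing edge to non-discreteness of the projections to both \emph{sides} of a cut: non-discreteness of the projection of $\Gamma'$ to a pair $\Is{Y_a}\times\Is{Y_b}$ with $a\in S$, $b\in S^c$ says nothing about the projection of $\Gamma'$ to $\prod_{j\in S}\Is{Y_j}$, because the pair projection can be non-discrete purely on account of entanglement internal to one side being smeared across the pair (the index sets $\{a,b\}$ and $S$ are incomparable, so neither projection factors through the other). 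Concretely, take $X=\HH^2\times\HH^2\times\HH^2$, let $\Lambda_1$ be an irreducible uniform lattice in $\PGL{2}{\RR}\times\PGL{2}{\RR}$ (e.g.\ derived from a quaternion division algebra over a real quadratic field), let $\Lambda_2\le\PGL{2}{\RR}$ be a cocompact Fuchsian group, and set $\Gamma'=\Lambda_1\times\Lambda_2$. The projection of $\Gamma'$ to $\Is{Y_1}\times\Is{Y_3}$ is $\mathrm{proj}_1(\Lambda_1)\times\Lambda_2$, which is non-discrete since $\mathrm{proj}_1(\Lambda_1)$ is dense in $\PGL{2}{\RR}$; likewise for the pair $\{2,3\}$. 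So your graph contains the edges $\{1,3\}$ and $\{2,3\}$, hence is connected, and your construction outputs $k=1$ and $\Gamma_1=\Gamma'$, which you then declare abstractly irreducible --- yet $\Gamma'=\Lambda_1\times\Lambda_2$ splits as a nontrivial direct product. Indeed, for the cut $S=\{1,2\}$, $S^c=\{3\}$ the crossing edge $\{1,3\}$ is present while $\mathrm{proj}_S(\Gamma')=\Lambda_1$ is discrete: exactly the implication your argument needs fails. The same example shows that no graph built from pairwise projections can detect the correct blocks; the partition you want is governed by which \emph{subsets} $S$ have the property that $\Gamma'\cap\prod_{j\in S}\Is{Y_j}$ is a lattice in $\prod_{j\in S}\Is{Y_j}$, and this is genuinely subset-level, not pairwise, information.

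The paper's proof is designed precisely to avoid identifying the blocks a priori: it runs the induction on the algebraic side. If $\Gamma$ is abstractly reducible, some finite-index subgroup splits as a nontrivial direct product; every finite-index subgroup is a uniform lattice and hence acts minimally with no fixed point at infinity (geometric Borel density), so Monod's splitting theorem converts each algebraic splitting into a compatible splitting of $X$; since $X$ has a canonical maximal splitting into finitely many irreducible factors, the number of factors in any such decomposition is bounded and the iteration terminates, yielding a finite-index subgroup that is a product of abstractly irreducible uniform lattices in the isometry groups of the corresponding factors of $X$. Normality is then obtained, just as you do, by intersecting the finitely many $\Gamma$-conjugates, each of which splits compatibly because isometries permute the de Rham factors. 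If you want to salvage a geometric-first argument, you would need to define blocks as minimal nonempty subsets $S$ with $\Gamma'\cap\prod_{j\in S}\Is{Y_j}$ a lattice in $\prod_{j\in S}\Is{Y_j}$ and prove that these minimal subsets partition the index set and induce a virtual splitting --- a nontrivial piece of lattice theory that your graph formulation was meant to shortcut but cannot.
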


We provide a proof for this lemma as we could not find one in the literature.

\begin{proof}
        Note that every finite index subgroup $\Gamma_0$ of $\Gamma$ is a uniform lattice and so acts minimally and without a fixed point at infinity (Theorem 3.14, \cite{CM2}). By the splitting theorem (see Theorem 9 and in particular Corollary 10 in \cite{Mo})  every direct product decomposition of such a $\Gamma_0$ into finitely many factors gives rise to an associated product decomposition of $X$ with the same number of factors.
        
        However, there is an a priori canonical maximal isometric splitting of the space $X$ into a product of finitely many irreducible spaces (see Theorem 1.9 of \cite{CM1}). Therefore the process of passing to a further finite index subgroup of $\Gamma$ and writing an abstractly reducible group as a direct product must terminate after finitely many iterations. This gives a finite index subgroup $\Gamma' \le \Gamma$ that decomposes as the direct product of $k$ abstractly irreducible factors $\Gamma_i$. The associated decomposition of $X$ is obtained as above, and it is clear that every $\Gamma_i$ is a uniform lattice in $\Is{X_i}$.
        
        It remains to show that $\Gamma'$ can be taken to be normal in $\Gamma$. Indeed $\Gamma$ preserves the decomposition of $X$ upon possibly permuting isometric factors, as do all elements of $\Is{X}$. Therefore every conjugate $(\Gamma')^\gamma$ with $\gamma \in \Gamma$ decomposes as a direct product in a manner similar to $\Gamma'$. The intersection of such conjugates with $\gamma$ running over finitely many coset representatives for $\Gamma / \Gamma'$ is as required.
\end{proof}

\subsection{Totally disconnected isometry groups}

We first deal with the special case of Theorem \ref{thm:local rigidity of uniform in general cat0-intro} where $G$ is totally disconnected.  This relies on the results of \S\ref{sec:totally disconnected groups} and in particular Theorem \ref{cor:local rigidity for td}.

%Having in mind a more general situation to be discussed below, we find it useful to assume at this point that $G \le \Is{X}$ splits as a direct product $G = G_1 \times G_2$ with  $G_1$ being totally disconnected. According to Theorem 10 of \cite{Mo} and given the standing assumptions on $G$ and $X$ there is an associated geometric splitting $X = X_1 \times X_2$ such that $G_i \le \Is{X_i}$ for $i=1,2$.

\begin{thm}
        \label{thm:local rigidity purely td cat0}
        Let $X$ be a proper geodesically complete $\CAT$ space with $G \le \Is{X}$ totally disconnected and acting cocompactly.  Then uniform lattices in $G$ are locally rigid in $\Is{X}$.
\end{thm}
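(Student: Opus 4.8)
The plan is to reduce the statement directly to Theorem \ref{cor:local rigidity for td}, whose hypotheses ask that $X$ be a connected and simply connected proper length space and that $G$ be a closed subgroup of $\Is{X}$ acting cocompactly and smoothly. The geometric hypotheses on $X$ are immediate from its being a proper $\CAT$ space: such a space is geodesic, hence a connected length space, and being uniquely geodesic it is contractible (one contracts along the geodesics to a fixed basepoint, which vary continuously by convexity of the metric) and in particular simply connected. Properness and coboundedness of the action are given. Thus the only substantive point is to verify that the totally disconnected group $G$ acts \emph{smoothly} on $X$, i.e.\ that the pointwise stabilizer in $G$ of every compact subset of $X$ is open.

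First I would record that, since $X$ is proper with cocompact isometry group, $\Is{X}$ is locally compact (as recalled in \S\ref{sub:cat0 lattices and their properties}), and so is its closed subgroup $G$; this makes the notion of smoothness meaningful. The crucial step is exactly where geodesic completeness enters: for a proper \emph{geodesically complete} $\CAT$ space, a totally disconnected group of isometries acts smoothly by \cite[Theorem 6.1]{CM1}. This is precisely the class of ``certain $\CAT$ spaces'' referred to after Definition \ref{def:smooth action}. I expect this smoothness to be the main, and indeed essentially the only nontrivial, obstacle, since without geodesic completeness the pointwise stabilizers of compact sets need not be open; the mechanism of Theorem \ref{cor:local rigidity for td} relies on an open stabilizer $O_K$ absorbing the small deformation so that the equivariant map $f$ can be taken to be the identity near a fundamental domain, and this would break down otherwise.

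With smoothness established, I would simply invoke Theorem \ref{cor:local rigidity for td} with this $X$ and $G$: every uniform lattice $\Gamma \le G$ has all sufficiently small deformations in $\mathcal{R}(\Gamma,G)$ realized by conjugation by an element of $\Is{X}$, which is exactly the assertion that $\Gamma$ is locally rigid in $\Is{X}$. No product decomposition is required at this stage; the more elaborate product form Theorem \ref{thm:local rigidity with totally disc factor} will only be needed later, when a nontrivial connected (Lie) factor is present alongside the totally disconnected one.
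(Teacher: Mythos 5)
Your proposal is correct and follows essentially the same route as the paper: the paper's proof likewise consists of invoking \cite[Theorem 6.1]{CM1} to get smoothness of the totally disconnected action on the proper geodesically complete $\CAT$ space, and then applying Theorem \ref{cor:local rigidity for td}. The only difference is that you spell out the (standard, and correct) verification that a proper $\CAT$ space is a connected, simply connected proper length space, which the paper leaves implicit.
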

Namely, a small deformation within $G$ of a uniform lattice $\Gamma \le G$ is a conjugation by an element of $\Is{X}$.

\begin{proof}
        According to Theorem 6.1 of \cite{CM1} and as $G$ is totally disconnected, $G$ is acting on $X$ smoothly in the sense of Definition \ref{def:smooth action}. As $X$ is proper and $G$ is acting cocompactly by the assumptions, Theorem \ref{cor:local rigidity for td} applies in the present situation and uniform lattices are indeed locally rigid in $\Is{X}$.

\end{proof}

\subsection{Proof of Theorem \ref{thm:local rigidity of uniform in general cat0-intro}}

We proceed to establish the local rigidity of lattices in proper geodesically complete $\CAT$ spaces without Euclidean factors. The proof will rely on the following important result, established by Caprace and Monod.  Namely, the $\CAT$ space $X$ admits a splitting
$$ X = M \times Y $$
where $M$ is a symmetric space of non-compact type. Correspondingly, $\Is{X}$ has a finite index open characteristic subgroup $\Is{X}^*$ that decomposes as
$$ \Is{X}^* = G_\text{c} \times G_\text{td}, \quad G_\text{c} = \Is{M}, \, G_\text{td} = \Is{Y} $$
where $G_\text{c}$ is a product of almost-connected non-compact center-free simple Lie groups and $G_\text{td}$ is totally disconnected. See Theorems 1.1, 1.6, 1.8 of  \cite{CM1}.

\begin{proof}[Proof of Theorem \ref{thm:local rigidity of uniform in general cat0-intro}]

%Note that Theorem \ref{thm:local rigidity for lc groups} combined with the information contained in  Section \ref{sub:cat0 lattices and their properties} implies that uniform lattices in $\Is{X}$ are locally topologically rigid. 
In light of Lemma \ref{lem:local rigidity and finite index} it suffices therefore to prove that $\Gamma$ admits a locally rigid normal subgroup of finite index.

Let $\Gamma \le \Is{X}$ be a uniform lattice. Up to passing to a finite index normal subgroup and relying on Lemma \ref{lem:existence of splitting} we obtain a splitting of $\Gamma$ as a direct product of abstractly irreducible subgroups and an associated splitting of $X$. Treating each factor of $X$ separately and without loss of generality we may assume that $\Gamma$ is abstractly irreducible to begin with.

Recall the decomposition $\Is{X}^* = G_\text{c}\times G_\text{td} $ mentioned above. Up to passing to a further finite index normal subgroup of $\Gamma$ we may assume, to begin with, that $\Gamma \le G_\text{c}^\circ \times G_\text{td}$ where $G_\text{c}^\circ$ denotes the connected component of the identity of $G_\text{c}$. 
        
We treat separately three possible cases. If $G_\text{c}^\circ$ is trivial the conclusion follows from Theorem \ref{thm:local rigidity purely td cat0}. On the other hand, if $G_\text{td}$ is trivial we may apply the classical result of Weil. Namely, it is shown in  \cite{Weil2} that uniform lattices in connected semisimple Lie groups without compact factors are locally rigid, provided that the projection to every factor locally isomorphic to $\mathrm{SL}_2(\RR)$ is non-discrete. This additional condition is ensured by our assumptions. 
        
It remains to deal with the third case where both factors $G_\text{c}, G_\text{td}$ are non-trivial and $\Gamma$ is abstractly irreducible. We will rely on the superrigidity theorem of Monod \cite{Mo} for irreducible uniform lattices in products of locally compact groups. See theorem 8.4 of \cite{CM1} for a variant of that theorem adapted to our situation (we also refer to \cite{GKM} for more general results). 
%\footnote{Theorem 8.4 of \cite{CM1} relies strongly on the superrigidity results proved in \cite{Mo}. We mention that a different proof of a closely related superrigidity theorem that can be invoked here is given in \cite{gel_dense}.}. 

%Namely, every minimal action of such a lattice without fixed points at infinity on a proper $\CAT$ space with finite dimensional boundary extends to a continuous action by isometries of the enveloping group.

Consider  any simple factor $S$  of $G_\text{c}^\circ$. As $\Gamma$ is abstractly irreducible the projection of $\Gamma$ to $S$ is non-discrete. It follows from the Borel density theorem (see e.g. chapter 5 of \cite{Rag}) that this projection is in fact dense. Similarly, $\Gamma$ projects non-discretely to every factor of $G_\text{td}$ corresponding to an irreducible factor of $Y$. Let $D \le G_\text{td}$ denote the product of the closures of these projections. It follows that  $\Gamma$ is an irreducible uniform  lattice in $G_\text{c}^\circ \times D$.  Moreover as $\Gamma \le \Is{X}$ is uniform the subgroup $D$ is acting cocompactly on $Y$.

Let $r \in \mathcal{R}(\Gamma,\Is{X})$ be a sufficiently small deformation of $\Gamma$ so that $r(\Gamma)$ is a uniform lattice in $\Is{X}$, as guaranteed by Theorem \ref{thm:local rigidity for lc groups}. The action of $r(\Gamma)$ on $X$ is minimal and without fixed points at infinity, as follows from the results of Caprace and Monod discussed in \S\ref{sub:cat0 lattices and their properties}.  In particular, for every simple factor $S$ of $G_\text{c}^\circ$  the action of $\Gamma$ on the corresponding irreducible symmetric space $M_S$  given by the projection of $r(\Gamma)$ into $S$ is clearly minimal and fixes no point of $\partial M_S$. 

Regarding $\Gamma$ as an irreducible lattice in $G_\text{c}^\circ \times D$  we are now precisely in a situation to apply Theorem 8.4 of \cite{CM1},  for every simple subgroup $S$ of $G_\text{c}^\circ$ and with respect to the given action of $\Gamma$ on $M_S$. Therefore this $\Gamma$-action extends to a continuous $G_\text{c}^\circ \times D$-action by isometries. In other words there is a continuous homomorphism $\alpha : S \to \Is{M_S}^\circ = S$ such that
$$ \pi_S \circ r = \alpha \circ \pi_S \in \mathcal{R}(\Gamma,S)$$
where $\pi_S : G_\text{c}^\circ \times D \to S$ is the projection. Since  $S$ is simple and $\alpha$ is a Lie group homomorphism with a dense image, it must be an automorphism of $S$ relying on \cite{omori1966homomorphic} or  \cite[Corollary 1.4]{bader2014equicontinuous}. 
%The outer automorphism group of $S$ is known to be finite\marginpar{complete %the argument that $\alpha$ is an inner automorphism} 
According to  Lemma \ref{lem:small-is-inner} below, if $r$ is a sufficiently small deformation $\alpha$ must be inner.

We have showed that up to conjugation by an element of $G_\text{c}^\circ$ a sufficiently small deformation of $\Gamma$ may be assumed to be trivial on the $G_\text{c}^\circ$ factor. 
The result now follows from Theorem \ref{thm:local rigidity with totally disc factor} applied to the isometry group $\Is{X}^* = G_\text{c} \times G_\text{td}$.
\end{proof}

\begin{remark}
\label{remark: on normalizers in CAT0}
Assume that the space $X$ is reducible and that the lattice $\Gamma$ is abstractly irreducible. Let $G \le \Is{X}$ denote the product of the closures of the projections of $\Gamma$ to the different factors of $\Is{X}$. Therefore  $\Gamma$ can be regarded as an irreducible lattice in $G$. In this situation Theorem \ref{thm:local rigidity of uniform in general cat0-intro} holds in a slightly stronger sense. Indeed, relying on Proposition \ref{pro:locally rigid in and dense projections} we deduce that $\Gamma$ regarded as a lattice in $G$ is locally rigid in $N_{\Is{X}}(G)$.
\end{remark}

The following result concerning automorphisms of simple Lie groups was used in the  proof of Theorem \ref{thm:local rigidity of uniform in general cat0-intro} to show that a certain automorphism of a Lie group is inner.

\begin{lemma}\label{lem:small-is-inner}
Let $S$ be a connected center-free simple Lie group and $\Sigma\le S$ be a finite set generating a dense subgroup. There is an identity neighborhood
$U \subset S$ such that if \(T\) is an automorphism of $S$ satisfying $T(\sigma)\sigma^{-1}\in U$ for all $\sigma\in \Sigma$, then $T$ is inner. 
\end{lemma}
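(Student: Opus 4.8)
The plan is to exploit the classical structure of the automorphism group of a connected center-free simple Lie group $S$: writing $\mathfrak{s} = \mathrm{Lie}(S)$, the map $T\mapsto dT$ identifies $\mathrm{Aut}(S)$ with the real algebraic group $\mathrm{Aut}(\mathfrak{s})\subseteq GL(\mathfrak{s})$, its identity component $\mathrm{Aut}(S)^\circ$ is exactly $\mathrm{Inn}(S)\cong S$, and $\mathrm{Out}(S)=\mathrm{Aut}(S)/\mathrm{Inn}(S)$ is finite. In particular $\mathrm{Inn}(S)$ is an \emph{open} subgroup, so it suffices to produce an identity neighbourhood $U\subseteq S$ for which the condition $T(\sigma)\sigma^{-1}\in U$ for all $\sigma\in\Sigma$ forces $T$ into the identity component $\mathrm{Inn}(S)$.

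I would argue by contradiction. If no such $U$ existed, one could pick a neighbourhood basis $U_n\downarrow\{e\}$ and outer automorphisms $T_n$ with $T_n(\sigma)\sigma^{-1}\in U_n$, so that $T_n(\sigma)\to\sigma$ for every $\sigma\in\Sigma$ and, multiplicatively, for every element of the dense subgroup $\langle\Sigma\rangle$. Passing to the differentials $A_n:=dT_n\in\mathrm{Aut}(\mathfrak{s})$ and using the identity $\Ad{T_n(\sigma)}=A_n\,\Ad{\sigma}\,A_n^{-1}$, continuity of $\mathrm{Ad}$ gives $A_n\,\Ad{\sigma}\,A_n^{-1}\to\Ad{\sigma}$ for all $\sigma\in\Sigma$. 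The entire goal reduces to showing $A_n\to\mathrm{id}$, since then $T_n$ would lie in $\mathrm{Inn}(S)$ for large $n$, contradicting outerness.

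The main obstacle is that a priori $A_n$ (equivalently the conjugating elements $g_n$, when one writes $T_n=\mathrm{Inn}(g_n)\circ\theta$) may be \emph{unbounded} in $GL(\mathfrak{s})$. I would rule this out as follows. Normalising $B_n:=A_n/\lVert A_n\rVert$ and passing to a subsequence with $B_n\to B$ and $\lVert B\rVert=1$, the displayed relation divided by $\lVert A_n\rVert$ yields $B\,\Ad{\sigma}=\Ad{\sigma}\,B$ for all $\sigma\in\Sigma$; by density of $\langle\Sigma\rangle$ this upgrades to $B$ commuting with all of $\Ad{S}$. Since the adjoint representation of a simple Lie algebra is irreducible, Schur's lemma forces the nonzero operator $B$ to lie in the associated real division algebra, hence to be invertible. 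But every Lie algebra automorphism preserves the Killing form, so $A_n$ lies in the orthogonal group of that form and $\lvert\det A_n\rvert=1$; on the other hand $\lvert\det A_n\rvert=\lVert A_n\rVert^{\dim\mathfrak{s}}\,\lvert\det B_n\rvert$ with $\det B_n\to\det B\neq 0$, which blows up if $\lVert A_n\rVert\to\infty$. This contradiction shows $A_n$ is bounded.

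With $A_n$ bounded I would pass to a convergent subsequence $A_n\to A$. As before $A$ commutes with $\Ad{S}$, hence with every $\mathrm{ad}(x)$, while $A$ being an automorphism gives $A\,\mathrm{ad}(x)=\mathrm{ad}(Ax)\,A$; combining the two yields $\mathrm{ad}(Ax)=\mathrm{ad}(x)$, so $Ax-x$ is central, and semisimplicity of $\mathfrak{s}$ forces $A=\mathrm{id}$. Thus every subsequence of $(A_n)$ has a further subsequence converging to $\mathrm{id}$, so $A_n\to\mathrm{id}$ and $T_n\to\mathrm{id}$ in $\mathrm{Aut}(S)$, the desired contradiction. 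The place where density of $\langle\Sigma\rangle$ is genuinely used is in passing from commutation with the finite set $\{\Ad{\sigma}\}_{\sigma\in\Sigma}$ to commutation with all of $\Ad{S}$, and the finiteness of $\mathrm{Out}(S)$ is what makes $\mathrm{Inn}(S)$ a neighbourhood of the identity.
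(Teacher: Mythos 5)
Your proof is correct, but it takes a genuinely different route from the paper's. Both arguments start from the same structural facts --- $\mathrm{Aut}(S)$ is canonically identified with $\mathrm{Aut}(\mathrm{Lie}(S))$ and $\mathrm{Inn}(S)=\mathrm{Aut}(S)^\circ$ is open --- but they diverge after that. The paper argues directly and quantitatively: since $\langle\Sigma\rangle$ is dense, one can choose $d=\dim S$ words $W_1,\dots,W_d$ in the generators landing so close to the identity that $\log(W_i)$ are defined and span $\mathrm{Lie}(S)$; if $U$ is small enough, $T$ moves each fixed word $W_i$ only slightly (continuity of multiplication applied to finitely many words), so $dT$ moves the spanning set $\log(W_i)$ only slightly, hence $dT$ is close to $\mathrm{id}$ in $GL(\mathrm{Lie}(S))$ and lies in the open identity component. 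Your proof instead is a soft compactness argument by contradiction: you extract a sequence of outer automorphisms $T_n$ with $T_n(\sigma)\to\sigma$, transfer to the differentials via the relation $\mathrm{Ad}(T_n(\sigma))=A_n\,\mathrm{Ad}(\sigma)\,A_n^{-1}$, rule out unboundedness of $(A_n)$ by the Schur-lemma/Killing-form determinant argument, and identify any limit point as $\mathrm{id}$ using center-freeness of $\mathfrak{s}$. What the paper's approach buys is brevity and, in principle, an explicit neighborhood $U$ once the words $W_i$ are fixed; what yours buys is that it never needs to produce spanning logarithms or track how errors propagate through words --- it isolates the cleaner statement that any sequence of automorphisms converging pointwise to the identity on a dense subgroup has differentials converging to $\mathrm{id}$, at the cost of being non-quantitative. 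One cosmetic correction: openness of $\mathrm{Inn}(S)$ in $\mathrm{Aut}(S)$ does not rest on finiteness of $\mathrm{Out}(S)$; the identity component of any Lie group is automatically open (local connectedness), which is also all the paper uses.
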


\begin{proof}
Recall that $\text{Aut}(S)^\circ=\text{Inn}(S)\cong \text{Ad}(S)\cong S$ and that $\text{Aut}(S)$ is canonically isomorphic to $\text{Aut}(\text{Lie}(S))$. Moreover, the analog statement  for the Lie algebra, obtained by replacing $\Sigma$\ with a spanning set of $\text{Lie}(S)$ and $U$ with a neighborhood of $0$ in the vector space $\text{Lie}(S)$, is obvious by the definition of the topology of $\text{Aut}(\text{Lie}(S))$, since $\text{Aut}(\text{Lie}(S))$ is a Lie group and $\text{Aut}(\text{Lie}(S))^\circ$ is open subgroup.
Now since $\langle\Sigma\rangle$ is dense in $S$, we can find $d=\text{dim}(S)$ words $W_1,\ldots,W_d$ in the generators $\Sigma$ which fall sufficiently close to the identity element so that $\log(W_i),~i=1,\ldots, d$ are well defined, and such that $\log(W_i),~i=1,\ldots, d$ span $\text{Lie}(S)$. This implies the lemma. 
\end{proof}

\subsection{Chabauty local rigidity for $\CAT$ groups}

Let $X$ be a proper geodesically complete $\CAT$ space with a cocompact group of isometries $\Is{X}$. The group $\Is{X}$ is compactly presented, as was mentioned in \S\ref{sub:cat0 lattices and their properties} above. Moreover any compact normal subgroup $N \nrm \Is{X}$ is trivial. Indeed, the subspace $X^N$ of $N$-fixed points  is $\Is{X}$-invariant, convex and non-empty. This discussion shows that Corollary \ref{cor:local rigidity in the Chabauty sense-intro} immediately follows from Theorem \ref{thm:Chabauty local rigidity for compacty presented groups}.

The following statement is a  direct consequence of local rigidity for $\CAT$ groups as in Theorem \ref{thm:local rigidity of uniform in general cat0-intro} and of Chabauty local rigidity as in Theorem \ref{thm:Chaubuty local rigidity}.

\begin{cor}
	\label{cor:occ for CAT0 groups}
	Let $X$ be a proper geodesically complete $\CAT$ space without Euclidean factors and with $\Is{X}$ acting cocompactly. Let  $\Gamma \le \Is{X}$ be a uniform lattice and assume that for every de Rham factor $Y$ of $X$ isometric
	to the hyperbolic plane the projection of $\Gamma$ to $\Is{Y}$
	is non-discrete. Then $\Gamma$ has the open conjugacy class property.
\end{cor}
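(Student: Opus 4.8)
The plan is to obtain the open conjugacy class property by feeding local rigidity through the image-closure map $\mathrm{C} : \mathcal{R}(\Gamma,\Is{X}) \to \Sub{\Is{X}}$, exploiting the fact that $\mathrm{C}$ is a local homeomorphism at the inclusion point $r_0$. In effect the corollary should reduce to a short diagram chase once the two main theorems are in place, so most of the work is in checking that their hypotheses are met.

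First I would verify that $G = \Is{X}$ satisfies the assumptions of Theorem \ref{thm:Chaubuty local rigidity}(2). As recalled in \S\ref{sub:cat0 lattices and their properties}, $\Is{X}$ is compactly presented, and it has no non-trivial compact normal subgroup since the fixed-point set $X^N$ of such a subgroup $N$ would be a non-empty $\Is{X}$-invariant convex subspace, forcing $N$ to act trivially and hence to be trivial (this is exactly the observation made just before Corollary \ref{cor:local rigidity in the Chabauty sense-intro}). Consequently Theorem \ref{thm:Chaubuty local rigidity}(2) applies and $\mathrm{C}$ is a local homeomorphism at $r_0$: there is an open neighborhood $\mathcal{V}$ of $r_0$ in $\mathcal{R}(\Gamma,\Is{X})$ that $\mathrm{C}$ maps homeomorphically onto a Chabauty-open neighborhood of $\Gamma = \mathrm{C}(r_0)$. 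In parallel, the hyperbolic-plane hypothesis assumed here is precisely the one needed for Theorem \ref{thm:local rigidity of uniform in general cat0-intro}, which therefore provides an open neighborhood $\mathcal{U}$ of $r_0$ on which every representation is conjugation by an element of $\Is{X}$.

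I would then intersect the two neighborhoods and set $\mathcal{W} = \mathcal{U} \cap \mathcal{V}$. Since $\mathrm{C}$ restricts to a homeomorphism on $\mathcal{V}$, the set $\Omega := \mathrm{C}(\mathcal{W})$ is a Chabauty-open neighborhood of $\Gamma$. Any $H \in \Omega$ equals $\mathrm{C}(r) = \overline{r(\Gamma)}$ for some $r \in \mathcal{W} \subseteq \mathcal{U}$; local rigidity gives $r(\gamma) = g^{-1}\gamma g$ for some $g \in \Is{X}$ and all $\gamma \in \Gamma$, so $r(\Gamma) = g^{-1}\Gamma g$ is already a discrete uniform lattice and the closure in the definition of $\mathrm{C}$ is superfluous. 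Thus $H = g^{-1}\Gamma g$ is a conjugate of $\Gamma$, and $\Omega$ is a Chabauty neighborhood of $\Gamma$ consisting entirely of conjugates, which is exactly Definition \ref{def:open conjugacy class}.

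The one point that genuinely requires the local-homeomorphism clause, rather than local rigidity alone, is the \emph{openness} of $\Omega$ in $\Sub{\Is{X}}$: local rigidity by itself only tells us that the conjugates arising as images of small deformations are parametrized continuously, not that they sweep out a Chabauty-open set. I expect this to be the only step where a reader should pause, and it is resolved precisely because Theorem \ref{thm:Chaubuty local rigidity} asserts that $\mathrm{C}$ is a local homeomorphism and not merely continuous. All the substantial difficulty lives in the two invoked theorems, so the corollary is a clean combination and no genuine obstacle remains.
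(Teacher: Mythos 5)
Your proposal is correct and follows essentially the same route as the paper: combine the local homeomorphism property of $\mathrm{C}$ at $r_0$ from Theorem \ref{thm:Chaubuty local rigidity}(2) (whose hypotheses are verified exactly as in the discussion preceding Corollary \ref{cor:local rigidity in the Chabauty sense-intro}) with the local rigidity neighborhood from Theorem \ref{thm:local rigidity of uniform in general cat0-intro}, and push the latter forward under $\mathrm{C}$ to get a Chabauty-open set of conjugates. Your additional remarks --- intersecting the two neighborhoods explicitly and noting that $\overline{r(\Gamma)} = r(\Gamma)$ since a conjugate of $\Gamma$ is closed --- only make explicit details the paper leaves implicit.
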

\begin{proof}
The image closure map 
$\mathrm{C} : \mathcal{R}(\Gamma,\Is{X}) \to \Sub{\Is{X}}$ is a local homeomorphism at the point $r_0 \in \mathcal{R}(\Gamma,\Is{X})$ corresponding to the inclusion mapping  $r_0 : \Gamma  \hookrightarrow \Is{X}$. Theorem \ref{thm:local rigidity of uniform in general cat0-intro} provides us with an open neighborhood $\Omega$ of $r_0$ in $\mathcal{R}(\Gamma,\Is{X}) $ so that for every $r \in \Omega$ the subgroup $r(\Gamma)$  is  conjugate to $\Gamma$ in $G$. Therefore the Chabauty neighborhood $\mathrm{C}(\Omega)$ of $\Gamma$  consists of  conjugates.
\end{proof}

%\begin{remark}
%There is an alternative argument that can be used in the above proof, relying on properties of dense subgroups of Lie groups instead of our Theorem \ref{thm:local rigidity for lc groups}. Namely, the condition of admitting a dense image is known to be open in $R(\Gamma,S)$ (see  2.1 of \cite{gel_dense}). Therefore a small deformation of $\Gamma$ certainly acts minimally and without fixed points at infinity on every irreducible factor $M_S$.
%\end{remark}
%

\section{First Wang's finiteness --- lattices containing a given lattice}
\label{sec:lattices containing a given lattice}

In this section we prove Theorem \ref{thm:W1KM-intro} which is a finiteness result for the number of lattices that contain a given lattice. This is a generalization of a theorem of H.C. Wang
\cite{wang_finitely}, and the proof that we give is inspired by Wang's original proof; see also Chapter 9 of \cite{Rag}.

%The theorem of Wang deals with the case of lattices in a connected semisimple %Lie groups without compact factors, in which  case  the argument holds without %a priory supposing uniform discreteness.
%This is also the case for any locally compact group \( G\) satisfying the %analog of the Kazhdan--Margulis theorem (see Theorem \ref{thm:W1KM}), i.e. %that there is a lower bound on the co-volume of lattices in $G$. 

%\begin{thm}\label{thm:W1KM}
%Let $G$ be a locally compact group with property $(KM)$ and let $\Gamma\le
%G$ be a finitely generated lattice in $G$. Then there are only finitely many
%lattices $\Gamma'\le G$ with $\Gamma \le \Gamma'$. 
%\end{thm}

\begin{definition}
	\label{def:jointly discrete}
	Let $G$ be a locally compact group. A family  of lattices in $G$ is \emph{jointly discrete}\footnote{\label{footnote:on jointly discrete} In the older literature such a family is called {\it uniformly discrete}. However, in the more recent literature, the notion of uniform discreteness is often used for the stronger property given in Definition \ref{def:uniformly discrete}.} if there is an identity neighborhood $U \subset G$ such that $\Gamma \cap U = \{e\}$ for every lattice $\Gamma$ in that family.
\end{definition}

Recall the notion of property $(KM)$ defined in the introduction. The following lemma relates this property to joint discreteness.

\begin{lemma}\label{lem:KM->UD}
	Let $G$ be group and $\mathcal{F}$ a family of lattices in $G$ with property $(KM)$. If $\mathcal{F}$ has a least element then it is jointly discrete.
\end{lemma}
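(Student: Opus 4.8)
The plan is to exploit property $(KM)$ to replace each $\Gamma\in\mathcal F$ by a conjugate that avoids a fixed identity neighborhood, and then to push this avoidance back onto $\Gamma$ itself by controlling the conjugating elements \emph{uniformly} through the least element $\Gamma_0$. First I would record the standing reductions: since $G$ admits a lattice it is unimodular, so conjugation preserves Haar measure and hence covolume. Let $U$ be the $(KM)$-neighborhood, which I may take to be symmetric, open and relatively compact, let $\Gamma_0\in\mathcal F$ be the least element and set $v_0=\mathrm{covol}(\Gamma_0)$. For each $\Gamma\in\mathcal F$ property $(KM)$ furnishes $g_\Gamma\in G$ with $g_\Gamma\Gamma g_\Gamma^{-1}\cap U=\{e\}$; because $\Gamma_0\le\Gamma$, the conjugate $M_\Gamma:=g_\Gamma\Gamma_0 g_\Gamma^{-1}$ of the \emph{fixed} lattice $\Gamma_0$ also satisfies $M_\Gamma\cap U=\{e\}$, and by unimodularity $\mathrm{covol}(M_\Gamma)=v_0$ for every $\Gamma$.

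The key step, and the one I expect to be the main obstacle, is to bound the conjugators uniformly. By the Mahler--Chabauty compactness criterion the set of lattices $\Lambda$ with $\Lambda\cap U=\{e\}$ and $\mathrm{covol}(\Lambda)=v_0$ is relatively compact in $\mathrm{Sub}(G)$, so the family $\{M_\Gamma\}_{\Gamma\in\mathcal F}$ is Chabauty-precompact. Since all $M_\Gamma$ are conjugates of the single lattice $\Gamma_0$, and the orbit map $g\,N_0\mapsto g\Gamma_0 g^{-1}$ (where $N_0:=N_G(\Gamma_0)$) is proper onto the conjugacy class of $\Gamma_0$, I would conclude that $\{g_\Gamma N_0\}$ is relatively compact in $G/N_0$, i.e. there is a \emph{fixed} compact set $K_0\subset G$ with $g_\Gamma\in K_0\,N_0$ for all $\Gamma$. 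It is exactly here that both hypotheses are used in tandem: $(KM)$ supplies the avoidance needed for Mahler--Chabauty, while the least element pins the moving conjugates to a single conjugacy class so that one compact $K_0$ works simultaneously for the whole family. The delicate point to justify carefully is the properness of this orbit map, which is what converts Chabauty-precompactness into genuine control of the $g_\Gamma$ modulo $N_0$.

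To descend from control modulo $N_0$ to a statement about $\Gamma$ itself, I would invoke the standard fact that a lattice is cocompact in its normalizer: $\Gamma_0\trianglelefteq N_0$ and $N_0/\Gamma_0$ is a locally compact group of finite invariant measure, hence compact, so $N_0=\Gamma_0 K_1$ for some compact $K_1$. Writing $g_\Gamma=k_0\,n$ with $k_0\in K_0$, $n\in N_0$, and then $n=\sigma k_1$ with $\sigma\in\Gamma_0$ and $k_1\in K_1$, one gets the clean identity $n\Gamma n^{-1}=k_1\Gamma k_1^{-1}$: indeed $n\Gamma n^{-1}=\sigma(k_1\Gamma k_1^{-1})\sigma^{-1}$, and since $\sigma\in\Gamma_0=n\Gamma_0 n^{-1}\subseteq n\Gamma n^{-1}$, conjugating by $\sigma^{-1}$ leaves $n\Gamma n^{-1}$ unchanged while visibly producing $k_1\Gamma k_1^{-1}$.

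Finally I would assemble the conclusion. From $g_\Gamma\Gamma g_\Gamma^{-1}\cap U=\{e\}$ and $g_\Gamma=k_0 n$ we obtain $n\Gamma n^{-1}\cap U_0=\{e\}$, where $U_0:=\bigcap_{k\in K_0}k^{-1}Uk$ is an identity neighborhood by the Montgomery-type argument already used in the proof of Lemma~\ref{lem:on small normal subgroups}. Using $n\Gamma n^{-1}=k_1\Gamma k_1^{-1}$ this reads $\Gamma\cap k_1^{-1}U_0 k_1=\{e\}$, so setting $V:=\bigcap_{k\in K_1}k^{-1}U_0 k$, again a neighborhood of the identity since $K_1$ is compact, gives $\Gamma\cap V=\{e\}$ for every $\Gamma\in\mathcal F$. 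As $V$ is independent of $\Gamma$, this is precisely the joint discreteness of $\mathcal F$ in the sense of Definition~\ref{def:jointly discrete}. I note that routing the whole argument through $\Gamma_0$ (of fixed covolume $v_0$), rather than through the conjugates of the varying $\Gamma$ (whose covolumes might degenerate), is what makes the compact set $K_0$ uniform, and thereby avoids any delicate analysis of small torsion elements.
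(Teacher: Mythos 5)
Your outline shares the paper's skeleton: use $(KM)$ to conjugate each $\Gamma$ off a fixed neighborhood $U$, exploit the least element $\Gamma_0$ to control the conjugating elements uniformly, and finish with a Montgomery-type intersection of conjugates of $U$ over a compact set. Your later steps are correct (the identity $n\Gamma n^{-1}=k_1\Gamma k_1^{-1}$ is a nice observation, and the final intersections are fine). But the gap is exactly where you flag it, and it is genuine: the properness of the orbit map $\phi\colon G/N_0\to\Sub{G}$, $gN_0\mapsto g\Gamma_0 g^{-1}$, is not a citable fact, and in the generality of this lemma it is \emph{false}. The lemma concerns arbitrary lattices, not just uniform ones (the paper notes that this part of the paper applies to non-uniform lattices). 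Take $G=\SL{2}{\RR}$, $\Gamma_0=\SL{2}{\ZZ}$, $a_t=\mathrm{diag}(e^t,e^{-t})$: every Chabauty limit point of $a_t\Gamma_0a_t^{-1}$ as $t\to\infty$ contains the full lower unipotent one-parameter group, hence is non-discrete and lies outside the conjugacy class, while the conjugators escape to infinity modulo $N_0$ (otherwise the conjugates would subconverge to a genuine conjugate, which is discrete). Thus the Chabauty closure of $\{a_t\Gamma_0a_t^{-1}\}$ is a compact subset of $\Sub{G}$ whose $\phi$-preimage is closed and non-compact, so $\phi$ is not proper into $\Sub{G}$. The weaker reading of your claim --- properness onto the conjugacy class with its subspace topology --- does not repair the argument either: Mahler--Chabauty only tells you that the limit points of $\{M_\Gamma\}$ are lattices avoiding $U$ of covolume at most $v_0$, not that they are conjugates of $\Gamma_0$; since the closure may leave the orbit, you never produce a subspace-compact subset of the orbit to which such properness could be applied.

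What your key step actually needs is precisely the fact the paper's proof opens with: for a lattice $\Gamma_0$ and a relatively compact identity neighborhood $U$, the thick part $\{g\Gamma_0\in G/\Gamma_0 \,:\, g\Gamma_0g^{-1}\cap U=\{e\}\}$ is relatively compact. This is where finite covolume must enter, and it cannot be recovered from compactness of sets of subgroups --- converting Chabauty precompactness of the conjugates $\{M_\Gamma\}$ into precompactness of the conjugators $\{g_\Gamma N_0\}$ \emph{is} the missing content. The proof is a volume argument: choose $V$ symmetric and relatively compact with $V^2\subset U$; at each point $g\Gamma_0$ of the thick part the map $v\mapsto vg\Gamma_0$ is injective on $V$, so infinitely many points of the thick part lying outside each other's $V^2$-translates would yield infinitely many disjoint subsets of $G/\Gamma_0$, each of measure at least $\mu(V)>0$, contradicting $\mu(G/\Gamma_0)<\infty$. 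Once you have this, your detour through $N_0$ (including the fact that $\Gamma_0$ is cocompact in its normalizer, which itself requires an argument of the same kind) becomes unnecessary: as in the paper, pick the $(KM)$-conjugator $g_\Gamma$ in a fixed fundamental domain of $\Gamma_0$, or write $g_\Gamma=c\sigma$ with $c$ in a fixed compact set and $\sigma\in\Gamma_0\le\Gamma$, so that $g_\Gamma\Gamma g_\Gamma^{-1}=c\Gamma c^{-1}$; then $\Gamma\cap\bigl(\bigcap_{c}c^{-1}Uc\bigr)=\{e\}$, and Montgomery's theorem makes this intersection an identity neighborhood, completing the proof.
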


\begin{proof}
	Let $U \subset G$ be an open neighborhood as in Definition \ref{def:KM property} and  $\Gamma\le G$ be a least element of $\mathcal{F}$. Since
	$\Gamma$ is of co-finite volume, the set 
	$$
	\{g\Gamma\in G/\Gamma:g\Gamma g^{-1}\cap U=\{e\}\}
	$$
	is relatively compact and nonempty.
	Thus, we can construct a Borel fundamental domain $\mathcal{D}$ for $\Gamma$\ in
	$G$ such that 
	$$
	\mathcal{D}_U:=\{ g\in \mathcal{D}:g\Gamma g^{-1}\cap U=\{e\}\}
	$$
	is relatively compact.
	
	Let $\Gamma'\le G$ be any  lattice  in the family $\mathcal{F}$, so that $\Gamma \le \Gamma'$. Then $\Gamma'$ admits
	a fundamental domain $\mathcal{D}'$ which is contained in $\mathcal{D}$.
	Because of property $(KM)$, there is an element $g\in \mathcal{D}'$ such that $g\Gamma'
	g^{-1}\cap U=\{e\}$. As $\Gamma\le\Gamma'$ we have that 
	$g\in\mathcal{D}_U$. Finally since $G$\ is locally compact and 
	$\mathcal{D}_U$ is relatively compact
	$$
	U_\Gamma:=\bigcap_{g \in \mathcal{D}_U}  g^{-1} U g
	$$
	is a neighborhood of the identity in $G$; see e.g. \cite[Theorem 2.4]{montgomery}. By construction the neighborhood $U_\Gamma$ 
	intersects trivially every lattice from the family $\mathcal{F}$.
\end{proof}

Theorem \ref{thm:W1KM-intro} is a consequence of Lemma \ref{lem:KM->UD} and the following result, which is of independent interest. 
%Our argument is borrowed from that of H.C. Wang
%\cite{wang_finitely}, see also Chapter 9 of \cite{Rag}.

\begin{thm}
	\label{thm:trivial centralizer implies finitely many superlattices}
	Let $G$ be a compactly generated locally compact group. Assume that every  lattice in $G$ has a trivial centralizer and let $\Gamma \le G$ be a finitely generated lattice. Then every jointly discrete family of lattices in $G$ all containing $\Gamma$ is finite.
\end{thm}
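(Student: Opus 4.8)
The plan is to turn the statement into a counting problem in two stages: first bound the index $[\Gamma':\Gamma]$ uniformly over the family, and then show that only finitely many $\Gamma'$ are compatible with each admissible ``shape''. First I would record that every $\Gamma' \in \mathcal{F}$ contains $\Gamma$ with \emph{finite} index: since $\Gamma \le \Gamma'$ are both lattices, multiplicativity of covolume under finite covers gives $\text{co-vol}(\Gamma) = [\Gamma':\Gamma]\cdot\text{co-vol}(\Gamma')$, and as both covolumes are finite and positive, $[\Gamma':\Gamma] < \infty$. To make this index uniformly bounded I would invoke joint discreteness: fix a symmetric identity neighborhood $U$ with $\Gamma' \cap U = \{e\}$ for all $\Gamma' \in \mathcal{F}$, and choose $W$ with $W^2 \subset U$. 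Then $G \to G/\Gamma'$ is injective on $W$, so $\text{co-vol}(\Gamma') \ge \text{vol}(W) > 0$ uniformly; combined with the displayed identity this yields $[\Gamma':\Gamma] \le \text{co-vol}(\Gamma)/\text{vol}(W) =: M$ independently of $\Gamma'$.

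Next I would pass to the normal core $C_{\Gamma'} = \bigcap_{\gamma \in \Gamma'} \gamma\Gamma\gamma^{-1}$, which is normal in $\Gamma'$ with $[\Gamma' : C_{\Gamma'}] \le M!$; in particular $C_{\Gamma'} \le \Gamma$ has index at most $M!$ in $\Gamma$. Because $\Gamma$ is finitely generated it has only finitely many subgroups of any bounded index, so as $\Gamma'$ ranges over $\mathcal{F}$ the cores $C_{\Gamma'}$ take only finitely many values $C_1,\ldots,C_m$. It therefore suffices to prove that, for each fixed finite-index subgroup $C \le \Gamma$, only finitely many $\Gamma' \in \mathcal{F}$ have core equal to $C$. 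Every such $\Gamma'$ normalizes $C$, so it is squeezed as $C \le \Gamma' \le N_G(C)$.

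The key step, and the place where the trivial-centralizer hypothesis is used, is to show that $N_G(C)$ is discrete. Here $C$ is a finite-index subgroup of the lattice $\Gamma$, hence itself a lattice, so $\centralizer{G}{C} = \{e\}$ by hypothesis; also $C$ is finitely generated, say by $c_1,\ldots,c_k$. Conjugation defines an injective map $N_G(C) \to \text{Aut}(C)$ with kernel $\centralizer{G}{C} = \{e\}$, realized concretely by the continuous map $g \mapsto (g c_1 g^{-1}, \ldots, g c_k g^{-1})$ into the discrete set $C^k$. As $g \to e$ in $N_G(C)$ each coordinate tends to $c_i$, so by discreteness of $C$ there is an identity neighborhood on which $g c_i g^{-1} = c_i$ for all $i$; such $g$ lie in $\centralizer{G}{C} = \{e\}$, proving $N_G(C)$ discrete. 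Since $C$ is a lattice in $G$ with $C \le N_G(C)$, a fundamental-domain argument gives that $C$ has finite covolume in the discrete group $N_G(C)$, whence $[N_G(C):C] < \infty$. Consequently there are only finitely many subgroups between $C$ and $N_G(C)$, hence only finitely many candidates $\Gamma'$ with core $C$.

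Combining the two finiteness statements, finitely many cores and finitely many $\Gamma'$ per core, shows that $\mathcal{F}$ is finite. I expect the main obstacle to be the discreteness of $N_G(C)$: the rest is bounded-index bookkeeping, whereas this step genuinely requires both the discreteness of $C$ and the vanishing of $\centralizer{G}{C}$, and one must control the conjugation map \emph{near the identity} (not merely check injectivity) to conclude that $N_G(C)$ has no small elements.
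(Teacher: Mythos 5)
Your proof is correct, but it takes a genuinely different route from the paper's. The two arguments share the same opening moves: the uniform bound on $[\Gamma':\Gamma]$ from joint discreteness plus multiplicativity of covolume, and the passage to the normal core of $\Gamma$ in $\Gamma'$ (in the paper this appears as the kernel of the action of $\Gamma'$ on the coset space $\Gamma'/\Gamma$), which has bounded index in $\Gamma$ and hence, by finite generation, takes only finitely many values. From there the paper argues analytically: it intersects all the cores into a single finite-index subgroup $\Delta \le \Gamma$, invokes the Mahler--Chabauty compactness criterion to extract from a putative infinite family of distinct lattices a subnet converging to a discrete subgroup $\Lambda$ containing $\Gamma$, and then exploits discreteness of $\Gamma$ together with $\centralizer{G}{\Delta}=\{e\}$ to show that the approximants of each element of $\Lambda$ eventually stabilize, forcing $\Lambda \le \Gamma_{n_\alpha}$ and an infinite repetition in the net --- a contradiction. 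You argue algebraically instead: you fix a core $C$, squeeze every candidate as $C \le \Gamma' \le N_G(C)$, and prove $N_G(C)$ is discrete via the conjugation-on-generators argument (continuity plus discreteness of $C$ force $gc_ig^{-1}=c_i$ for $g$ near the identity, and $\centralizer{G}{C}=\{e\}$ then kills such $g$), so that $[N_G(C):C]<\infty$ and there are only finitely many intermediate subgroups. Your route dispenses with the Chabauty topology and all compactness/limit arguments, and isolates a clean reusable fact: the normalizer of a finitely generated lattice with trivial centralizer is discrete, hence contains it with finite index. What the paper's route buys is coherence with the machinery developed elsewhere in the paper (Chabauty compactness is also the engine of the second Wang finiteness theorem) and fidelity to Wang's original argument, of which this theorem is a generalization. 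All the auxiliary facts you invoke --- the uniform covolume lower bound via $W$ with $W^2 \subset U$, finiteness of the set of bounded-index subgroups of a finitely generated group, and the fact that a discrete subgroup containing a lattice contains it with finite index --- are standard and correctly applied.
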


%Let us mention that similar results for uniform tree lattices have been obtained by Bass and Kulkarni \cite{bass1990uniform}. \marginpar{look what was done by Bass and Kulkarni}

In the proof we rely on the notion of Chabauty topology as well as the Mahler--Chabauty compactness criterion, see \S\ref{sec:the Chabauty space of closed subgroups}. 
The discussion of these notions in \cite[Chapter 1]{Rag} required separability, but that assumption is in fact redundant.
%The discussion of these notions in \cite[Chapter 1]{Rag} requires a separability assumption. However, according to the theorems of Kakutani--Kodaira and Struble \cite[2.B.4, 2.B.6]{cor_har} the group $G$ admits a compact normal subgroup $K \nrm G$ so that $G/K$ is second countable. It is easy to see that it suffices to establish Theorem \ref{thm:trivial centralizer implies finitely many superlattices} for the quotient $G/K$.

\begin{proof}[Proof of Theorem \ref{thm:trivial centralizer implies finitely many superlattices}]
	Observe that the co-volume of the lattices $\Gamma'$ with $\Gamma \le  \Gamma' \le G$ and $\Gamma' \cap U = \{e\}$ is bounded from below, and since 
	$$ \text{co-vol}(\Gamma) = \left[\Gamma':\Gamma\right] 
	\cdot \text{co-vol}(\Gamma'),$$
	the index $\left[\Gamma':\Gamma\right]$ is bounded from above. We may therefore restrict our attention to lattices $\Gamma'$ as above with $\left[\Gamma':\Gamma\right] = d$ where $d \in \NN$ is fixed.
	Assume that there exists a sequence $\Gamma_n$ of pairwise distinct lattices such that 
	$$\Gamma \le \Gamma_n \le G, \quad  \left[\Gamma_n : \Gamma\right] = d, \quad \text{and} \quad \Gamma_n \cap U = \{e\}.$$
	We will arrive at a contradiction by  showing that $\Gamma_n$ has an infinite repetition. 
	
	Consider the actions of $\Gamma_n$ on the finite sets $\Gamma_n / \Gamma$ given by the natural homomorphisms $\varphi_n : \Gamma_n \to \mathrm{Sym}(\Gamma_n / \Gamma) \cong S_d$ for every $n \in \NN$. Denote $$\Gamma'_n = \ker \varphi_n \nrm \Gamma_n$$ so that  
	$$\Gamma'_n \le \Gamma \quad \text{and}  \quad \left[\Gamma : \Gamma'_n \right] = \frac{\left[\Gamma_n:\Gamma'_n\right]}{\left[\Gamma_n:\Gamma\right]}\le (d-1)!$$
	
	Note that since $\Gamma$ is finitely generated it has finitely many subgroups of every given finite index.  We now consider the group $\Delta \le \Gamma$ given by
	$$ \Delta = \bigcap_n \Gamma'_n$$
	and deduce from the above discussion that $\Delta$ is a finite index subgroup of $\Gamma$. 
	In particular $\Delta$ is finitely generated as well.
	In addition, for every pair of elements $\sigma \in \Delta$ and $\delta_n \in \Gamma_n$ we have that
	$$ \delta_n \sigma \delta_n^{-1} \in \delta_n \Delta \delta_n^{-1} \le \delta_n \Gamma'_n \delta_n^{-1} = \Gamma'_n \le \Gamma. $$
	
	We now apply the Mahler--Chabauty compactness criterion. Namely, from the assumption $\Gamma_n \cap U = \{e\}$ it follows that some subnet $\Gamma_{n_\alpha}$ converges in the Chabauty topology to a discrete subgroup $\Lambda \le G$ containing $\Gamma$ and such that
	$$
	\text{co-vol}(\Lambda) \le \lim_{\alpha} \text{co-vol}(\Gamma_{n_\alpha}) = d^{-1}\cdot \text{co-vol}(\Gamma).
	$$ 
	In particular, it follows that
	$\left[\Lambda : \Gamma \right] \ge d $. On the other hand $\Lambda$ is a lattice containing $\Gamma$ and hence $\Lambda$ is finitely generated.
	
	By the definition of Chabauty convergence, for every $\lambda \in \Lambda$ there is a $\delta_{n_\alpha} \in \Gamma_{n_\alpha}$ with $\delta_{n_\alpha} \to \lambda$. We obtain the following expression for every $\sigma \in \Delta$
	$$ \lim_{\alpha} \delta_{n_\alpha} \sigma \delta_{n_\alpha}^{-1} = \lambda \sigma \lambda^{-1}. $$
	Since the elements $\delta_{n_\alpha} \sigma \delta_{n_\alpha}^{-1}$ all belong to the discrete group $\Gamma$ this converging net must eventually stabilize. It other words
	$$ \delta_{n_\alpha} \sigma \delta_{n_\alpha}^{-1} = \lambda \sigma \lambda^{-1}$$ 
	holds for all $\alpha\ge\alpha_\sigma$. Applying this argument with $\sigma$ ranging over a finite generating set for $\Delta$ and in light of the fact that $\centralizer{G}{\Delta}$ is trivial we see that $\delta_{n_\alpha} = \lambda$ for all $\alpha$ sufficiently large.
	
	%        Clearly there exists some finitely generated subgroup $\Lambda_0 \le \Lambda$ with $\left[\Lambda_0 : \Gamma\right] \ge d$. 
	By the above argument, every generator of $\Lambda$ belongs to $\Gamma_{n_\alpha}$ and in particular $\Lambda \le \Gamma_{n_\alpha}$ for all  $\alpha$ sufficiently large. However the fact that $ \left[\Lambda : \Gamma \right] \ge \left[\Gamma_{n_\alpha} : \Gamma \right] $  shows that the net $\Gamma_{n_\alpha}$ must eventually stabilize, producing the required infinite repetition.
\end{proof}

%\begin{theorem}
%\label{thm:a lattice is contained in finitely many}
%Let $X$ be a proper $\CAT$ space with a cocompact isometry group and without Euclidean factors. Then any uniform lattice of $\Is{X}$ is contained in only finitely many lattices.
%\end{theorem}

%\begin{remark}
%Theorem \ref{thm:a lattice is contained in finitely many} holds equally well for a non-uniform lattice as long as the action of $\Is{X}$ is assumed to be minimal and without fixed points at infinity. Recall that these requirements are satisfied whenever $\Is{X}$ admits a uniform lattice.
%\end{remark}

\section{Second Wang's finiteness --- lattices of bounded covolume}
\label{sec:wangs finiteness theorem}

The classical finiteness theorem of Wang \cite{wangtopics} states that a connected semisimple Lie group without factors locally isomorphic to $\SL{2}{\RR}$ or $\SL{2}{\CC}$ admits only finitely many conjugacy classes of irreducible lattices of volume bounded by $v$ for any $v>0$. 

Recall Theorem \ref{thm:wang-finiteness-intro} of the introduction which is a suitable generalization of Wang's finiteness to the $\CAT$ context, restated here for the reader's convenience.  

%The proof presented below is similar to the original proof of Wang \cite{wangtopics}.

\begin{theorem*}

Let $X$ be a proper geodesically complete $\CAT$ space without Euclidean
factors and with $\Is{X}$ acting cocompactly. Let $\mathcal{F}$  be a uniformly
discrete family of lattices in $\Is{X}$ so that every $\Gamma \in \mathcal{F}$
projects non-discretely to the isometry group of every hyperbolic plane factor.
Then $\mathcal{F}$ admits only finitely many conjugacy classes of lattices
with $\text{co-vol} \le v$ for every fixed $v > 0$.
\end{theorem*}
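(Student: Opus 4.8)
The plan is to combine the open conjugacy class property of Corollary \ref{cor:occ for CAT0 groups} with the Mahler--Chabauty compactness criterion, paralleling the two ingredients of Wang's classical argument but with OCC playing the role of local rigidity. First I would record that $\Is{X}$ is second countable and that the uniform discreteness of $\mathcal{F}$ furnishes a single identity neighbourhood $U$ with $\Gamma^g \cap U = \{e\}$ for all $\Gamma \in \mathcal{F}$ and $g \in \Is{X}$. Consequently the whole collection $\mathcal{A}_v = \{\Gamma^g : \Gamma \in \mathcal{F},\ \text{co-vol}(\Gamma) \le v,\ g \in \Is{X}\}$ consists of discrete subgroups all meeting $U$ trivially and of co-volume $\le v$; by Mahler--Chabauty this set is relatively compact in $\Sub{\Is{X}}$, and each of its Chabauty limit points is again a uniform lattice of co-volume $\le v$ meeting $U$ trivially.

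Next I would argue by contradiction. If $\mathcal{F}$ contained infinitely many pairwise non-conjugate lattices $\Gamma_n$ of co-volume $\le v$, relative compactness would produce a subnet $\Gamma_{n_\alpha}$ converging in the Chabauty topology to a uniform lattice $\Lambda$. Since $\Is{X}$ is compactly presented with no non-trivial compact normal subgroup, Theorem \ref{thm:Chaubuty local rigidity}(2) shows that the image-closure map $\mathrm{C}$ is a local homeomorphism at the inclusion of $\Lambda$; hence for $\alpha$ large the lattice $\Gamma_{n_\alpha}$ is the image of a small deformation of $\Lambda$, and in particular $\Gamma_{n_\alpha} \cong \Lambda$.

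The crux --- and the step I expect to be the main obstacle --- is to verify that the limit $\Lambda$ again projects non-discretely to the isometry group of every hyperbolic-plane de Rham factor, so that Corollary \ref{cor:occ for CAT0 groups} applies to $\Lambda$ itself. I would handle this by noting that, via the splitting of Lemma \ref{lem:existence of splitting} together with geometric Borel density, whether a uniform lattice projects non-discretely to an $\mathbb{H}^2$ de Rham factor $Y$ depends only on how the abstractly irreducible factors of a finite-index subgroup distribute among the de Rham factors of $X$ --- namely on whether the irreducible factor carrying $Y$ also carries a further de Rham factor --- which is an isomorphism-invariant feature of the lattice (compare Remark \ref{rem:having non-discrete deformations is preserved under small deformations}). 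Since $\Gamma_{n_\alpha} \cong \Lambda$ and each $\Gamma_{n_\alpha} \in \mathcal{F}$ has the required non-discrete projections by hypothesis, the same holds for $\Lambda$.

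Finally, with the hypotheses of Corollary \ref{cor:occ for CAT0 groups} verified for $\Lambda$, that corollary supplies a Chabauty neighbourhood of $\Lambda$ consisting entirely of conjugates of $\Lambda$. For all large $\alpha$ the lattice $\Gamma_{n_\alpha}$ lies in this neighbourhood and is therefore conjugate to $\Lambda$; comparing two such indices gives $\Gamma_{n_\alpha}$ conjugate to $\Gamma_{n_\beta}$, contradicting the choice of the $\Gamma_n$ as pairwise non-conjugate. This contradiction shows that $\mathcal{F}$ contains only finitely many conjugacy classes of co-volume at most $v$, as required.
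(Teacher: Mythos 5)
Your skeleton is the same as the paper's --- Mahler--Chabauty compactness plus the open conjugacy class property of Corollary \ref{cor:occ for CAT0 groups} --- and your net-and-contradiction phrasing versus the paper's compactness phrasing is immaterial. You have also correctly isolated the one delicate point: Corollary \ref{cor:occ for CAT0 groups} must be applied to the Chabauty \emph{limit} $\Lambda$, which need not lie in $\mathcal{F}$, so one must check that $\Lambda$ still projects non-discretely to every $\HH^2$ de Rham factor (the paper itself passes over this silently). But your resolution of that point has a genuine gap. The claim that ``whether the irreducible factor carrying $Y$ also carries a further de Rham factor'' is an isomorphism invariant is not formal and is not supplied by anything you cite. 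An abstract isomorphism $\phi\colon \Lambda \to \Gamma_{n_\alpha}$, together with Lemma \ref{lem:existence of splitting} and Monod's splitting theorem, matches the abstractly irreducible \emph{pieces} of the two lattices, but it does not match the de Rham \emph{factors} on which corresponding pieces act: $\phi$ knows nothing about $Y$. Concretely, nothing in your argument excludes the scenario in which a virtually Fuchsian piece of $\Lambda$ sits alone on a genuine $\HH^2$ factor $Y$ (so $\Lambda$ fails the hypothesis), while the corresponding piece of each $\Gamma_{n_\alpha}$ sits alone on a de Rham factor that is quasi-isometric to $\HH^2$ but \emph{not isometric} to it --- a factor on which the hypothesis imposes no condition at all --- so that every $\Gamma_{n_\alpha}$ satisfies the hypothesis. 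Remark \ref{rem:having non-discrete deformations is preserved under small deformations} does not help: it concerns abstract irreducibility, a purely algebraic property, and says nothing about which geometric factor a given piece occupies. Closing this loophole requires real geometric input (e.g.\ that a de Rham factor quasi-isometric but not isometric to $\HH^2$ has discrete full isometry group, hence is occupied alone in \emph{every} lattice, followed by a count of Fuchsian pieces), none of which appears in your proposal.

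The natural repair is to use the stronger information you already extracted from Theorem \ref{thm:Chaubuty local rigidity}: for large $\alpha$ one has $\Gamma_{n_\alpha} = r_\alpha(\Lambda)$ where $r_\alpha$ converges to the inclusion of $\Lambda$, which is much more than $\Gamma_{n_\alpha}\cong\Lambda$ because it pins each piece to its factor. Suppose $\Lambda$ projected discretely to $\Is{Y}$ for some de Rham factor $Y$ isometric to $\HH^2$. By \cite[Theorem 4.2]{CM2} and the splitting theorem, a finite index subgroup of $\Lambda$ splits as $\Lambda_Y \times \Lambda^{\perp}$ with $\Lambda_Y$ a uniform lattice in $\Is{Y}$ acting trivially on the complementary factor. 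For large $\alpha$ the map $r_\alpha$ is injective, so $r_\alpha(\Lambda_Y)\times r_\alpha(\Lambda^{\perp})$ is again a uniform lattice splitting as a product, and Monod's splitting theorem \cite{Mo} gives $X = Z_1 \times Z_2$ with $r_\alpha(\Lambda_Y)$ acting geometrically on $Z_1$ and trivially on $Z_2$. Since $r_\alpha(\Lambda_Y)$ is word hyperbolic, $Z_1$ is a single de Rham factor; and since the finite index subgroup of $\Is{X}$ preserving the de Rham decomposition is open and the projection $\pi_Y$ is continuous on it, for $\alpha$ large one has $\pi_Y(r_\alpha(\sigma)) \neq e$ for a non-trivial generator $\sigma$ of $\Lambda_Y$, which forces $Y \subseteq Z_1$ and hence $Z_1 = Y$. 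Then the projection of $\Gamma_{n_\alpha}$ to $\Is{Y}$ contains the uniform lattice $r_\alpha(\Lambda_Y) \le \Is{Y}$ with finite index and is therefore discrete, contradicting $\Gamma_{n_\alpha}\in\mathcal{F}$. This continuity argument provides exactly the transfer your proof needs, with no appeal to isomorphism invariance.
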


The proof follows rather immediately from a combination of Mahler--Chabauty compactness criterion and the open conjugacy class property for $\CAT$ groups established in \S\ref{sec:the Chabauty space of closed subgroups}.

\begin{proof}[Proof of Theorem \ref{thm:wang-finiteness-intro}]

Assume without loss of generality that the family $\mathcal{F}$ is closed under conjugation by elements of $\Is{X}$. In particular $\mathcal{F}$ is jointly discrete in the sense of Definition \ref{def:jointly discrete}, so that there is an identity neighborhood $ U \subset G$ with $\Gamma \cap U =\{e\}$ for all $\Gamma \in \mathcal{F}$.

%Moreover in light of  we may assume that $\mathcal{F}$ is closed in the Chabauty topology.

Consider some fixed $v > 0$. The Mahler--Chabauty compactness criterion \cite[1.20]{Rag} says that the set of lattices in $\Sub{G}$ of co-volume at most $v$ and intersecting $U$ only at the identity element is compact. In particular $\mathcal{F}$ is relatively compact. 

On the other hand, every lattice  in the Chabauty closure  of $\mathcal{F}$ is uniform by \cite[1.12]{Rag} and therefore has the open conjugacy class property by Corollary \ref{cor:occ for CAT0 groups}. The required finiteness result follows.

\end{proof}

%The following corollary extends Theorem \ref{thm:wang-finiteness-intro} to cocompact subgroups of the full group of isometries.

%\begin{remark}
%The above proof has some arguments in common with Theorem \ref{thm:Chaubuty local rigidity} on local rigidity in the sense of Chabauty. Alternatively, Theorem \ref{thm:wang-finiteness-intro} can be deduced as a consequence of Chabauty's compactness criterion and Corollary \ref{cor:occ for CAT0 groups}.
%\end{remark}

The following  consequence of Theorem \ref{thm:wang-finiteness-intro} covers many classical examples:

\begin{cor}
\label{cor:wangs finiteness for cocompact subgroup of cat0 isometries}
Let $X$ and $\Is{X}$ be as in Theorem \ref{thm:wang-finiteness-intro}. Let $G \le \Is{X}$ be a cocompact subgroup and $\mathcal{F}$ a uniformly discrete family of lattices in $G$. Then $\mathcal{F}$ admits only finitely many $\Is{X}$-conjugacy classes of lattices with $\text{co-vol} \le v$ for every fixed $v > 0$.
\end{cor}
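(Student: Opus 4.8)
The plan is to deduce the corollary directly from Theorem \ref{thm:wang-finiteness-intro}, applied not to $G$ but to the cocompact overgroup $\Is{X}$. The underlying principle is that both uniform discreteness and boundedness of covolume are insensitive to passing from a lattice in $G$ to the same lattice viewed inside $\Is{X}$, precisely because $G$ is cocompact in $\Is{X}$.

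First I would observe that, since $\mathcal{F}$ is uniformly discrete, every $\Gamma\in\mathcal{F}$ is a uniform lattice in $G$ by \cite[1.12]{Rag}. Being discrete and cocompact in the closed cocompact subgroup $G\le\Is{X}$, such a $\Gamma$ is automatically a uniform lattice in $\Is{X}$ as well. Hence $\mathcal{F}$ may be regarded as a family of lattices in $\Is{X}$, and the assumption that every $\Gamma\in\mathcal{F}$ projects non-discretely to the isometry group of every hyperbolic plane factor is exactly the hypothesis demanded by Theorem \ref{thm:wang-finiteness-intro}.

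The main step, which I expect to be the only genuine obstacle, is to promote uniform discreteness in $G$ to uniform discreteness in $\Is{X}$. Let $U\subseteq G$ be the identity neighborhood witnessing uniform discreteness in $G$, so that $\Gamma^g\cap U=\{e\}$ for all $\Gamma\in\mathcal{F}$ and $g\in G$. Using cocompactness of $G$ I would write $\Is{X}=GC$ for some compact $C\subseteq\Is{X}$ and consider the open identity neighborhood $W=U\cup(\Is{X}\setminus G)$, which satisfies $W\cap G=U$ since $U\subseteq G$. By \cite[Theorem 2.4]{montgomery} the set $U'=\bigcap_{c\in C}c^{-1}Wc$ is again an identity neighborhood of $\Is{X}$. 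Now for an arbitrary $h\in\Is{X}$, write $h=gc$ with $g\in G$ and $c\in C$, so that $\Gamma^h=c^{-1}(\Gamma^g)c$. If $x\in\Gamma^h\cap U'$ then $cxc^{-1}\in\Gamma^g\subseteq G$ and, since $x\in U'\subseteq c^{-1}Wc$, also $cxc^{-1}\in W$; therefore $cxc^{-1}\in\Gamma^g\cap W\cap G=\Gamma^g\cap U=\{e\}$, forcing $x=e$. Thus $\Gamma^h\cap U'=\{e\}$ for every $\Gamma\in\mathcal{F}$ and every $h\in\Is{X}$, i.e. $\mathcal{F}$ is uniformly discrete in $\Is{X}$.

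Finally I would compare covolumes. Since $G$ is cocompact in $\Is{X}$ and both groups are unimodular, each containing a lattice, the compact homogeneous space $\Is{X}/G$ carries a finite invariant measure, and the fibration $\Is{X}/\Gamma\to\Is{X}/G$ with fiber $G/\Gamma$ gives $\text{co-vol}_{\Is{X}}(\Gamma)=\mathrm{vol}(\Is{X}/G)\cdot\text{co-vol}_G(\Gamma)$ for compatibly normalized Haar measures. In particular $\text{co-vol}_G(\Gamma)\le v$ implies $\text{co-vol}_{\Is{X}}(\Gamma)\le v'$ for the fixed constant $v'=v\cdot\mathrm{vol}(\Is{X}/G)$. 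Applying Theorem \ref{thm:wang-finiteness-intro} to $\mathcal{F}$ inside $\Is{X}$ with covolume bound $v'$ then yields finitely many $\Is{X}$-conjugacy classes among the members of $\mathcal{F}$ of bounded covolume, which is the assertion of the corollary. The reduction of the first paragraph and the covolume comparison are routine; the content lies in the uniform-discreteness transfer of the third paragraph.
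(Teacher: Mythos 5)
Your proof is correct and takes essentially the same route as the paper's: the paper likewise deduces the corollary from Theorem \ref{thm:wang-finiteness-intro} by observing that the covolumes of $\Gamma$ in $G$ and in $\Is{X}$ are proportional, and by transferring uniform discreteness from $G$ to $\Is{X}$ via a standalone lemma whose argument (extend $U$ to an open subset $V \subset \Is{X}$, shrink it using \cite[Theorem 2.4]{montgomery} so that all conjugates by a compact set $K$ with $\Is{X}=GK$ stay inside $V$, then write $h=gk$ and use $\Gamma^g \subset G$) matches your third paragraph almost verbatim. The only cosmetic difference is your explicit choice $W=U\cup(\Is{X}\setminus G)$ of the extension, where the paper works with an arbitrary open $V\subset\Is{X}$ satisfying $U=G\cap V$.
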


%\begin{proof}
Since $G$ is cocompact in $\Is{X}$, for a lattice $\Gamma \le G$ the covolumes of $\Gamma$ in $G$ and in $\Is{X}$ are proportional. Therefore to deduce Corollary \ref{cor:wangs finiteness for cocompact subgroup of cat0 isometries} from Theorem \ref{thm:wang-finiteness-intro} it suffices to show that $\mathcal{F}$ is uniformly discrete in $\Is{X}$. This follows from: 

\begin{lemma}
Let $H$ be a locally compact group, $G\le H$ a cocompact subgroup and $\mathcal{F}$ a family of subgroups which is uniformly discrete in $G$. Then $\mathcal{F}$ is uniformly discrete regarded as a family of lattices in $H$.
\end{lemma}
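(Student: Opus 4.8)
The plan is to transfer the single identity neighbourhood witnessing uniform discreteness in $G$ to one in $H$, using cocompactness to control the directions of $H$ transverse to $G$. First I would fix an identity neighbourhood $U_G \subset G$ with $\Gamma \cap g U_G g^{-1} = \{e\}$ for every $\Gamma \in \mathcal{F}$ and every $g \in G$, which exists by hypothesis (Definition \ref{def:uniformly discrete}). Since the closed subgroup $G$ carries the subspace topology of $H$, I may choose an identity neighbourhood $V \subset H$ with $V \cap G \subset U_G$. As $G$ is cocompact in $H$ there is a compact subset $K \subset H$ with $H = GK$. I would then set
$$ U_H = \bigcap_{k \in K} k^{-1} V k, $$
which is an identity neighbourhood in $H$ by \cite[Theorem 2.4]{montgomery} and which by construction satisfies $k U_H k^{-1} \subset V$ for every $k \in K$.

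The heart of the argument is a short conjugation computation showing that $U_H$ witnesses uniform discreteness of $\mathcal{F}$ in $H$. Fix $\Gamma \in \mathcal{F}$ and $h \in H$, and write $h = g_0 k$ with $g_0 \in G$ and $k \in K$. Let $\gamma \in \Gamma \cap h U_H h^{-1}$, say $\gamma = g_0 (k u k^{-1}) g_0^{-1}$ with $u \in U_H$. Then $k u k^{-1} \in V$, so $g_0^{-1} \gamma g_0 \in V$; on the other hand $\gamma \in \Gamma \subset G$ and $g_0 \in G$ force $g_0^{-1}\gamma g_0 \in G$. Hence $g_0^{-1}\gamma g_0 \in V \cap G \subset U_G$, that is $\gamma \in g_0 U_G g_0^{-1}$. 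Uniform discreteness in $G$ then gives $\gamma \in \Gamma \cap g_0 U_G g_0^{-1} = \{e\}$, so $\gamma = e$. As $\Gamma$ and $h$ were arbitrary, this establishes $\Gamma \cap h U_H h^{-1} = \{e\}$ for all $h \in H$ and all $\Gamma \in \mathcal{F}$, which is precisely uniform discreteness in $H$.

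The only genuine obstacle is the uniformity over all $h \in H$: a priori the neighbourhood needed to annihilate the conjugates $h^{-1}\Gamma h$ might shrink as $h$ escapes to infinity. The role of cocompactness is exactly to prevent this, since every $h$ factors as $g_0 k$ with $k$ ranging over the \emph{compact} set $K$; the compactness argument underlying \cite[Theorem 2.4]{montgomery} then produces a single $U_H$ handling all the $k$-directions at once, while the residual $G$-direction $g_0$ is absorbed for free by the $G$-conjugation invariance already built into the definition of uniform discreteness. I would add a one-line remark justifying the phrase ``regarded as a family of lattices in $H$'': each $\Gamma \in \mathcal{F}$ remains discrete in $H$ because it is discrete in the closed subgroup $G$, and it has finite covolume in $H$ because $H/\Gamma$ fibres over the compact space $H/G$ with fibre $G/\Gamma$ of finite volume.
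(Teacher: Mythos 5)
Your proof is correct and follows essentially the same route as the paper's: extend the witness $U_G$ to an open $V \subset H$, use cocompactness to write $H = GK$ with $K$ compact, shrink $V$ to a neighbourhood conjugation-stable over $K$ via \cite[Theorem 2.4]{montgomery}, and conclude from the factorization $h = g_0 k$ together with $\Gamma^h = (\Gamma^{g_0})^k$. Your closing remark on why each $\Gamma$ is in fact a lattice in $H$ is a harmless addition the paper leaves implicit.
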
   
   
\begin{proof}        
Suppose that $\mathcal{F}$ is uniformly discrete in $G$ with respect to the open subset    $U \subset G$. Say $ U = G \cap V$ with $V \subset H$ open.  Moreover let $K \subset H$ be a compact symmetric subset so that $H = GK$ and choose an open subset $W \subset V$ so that $k^{-1} W k \subset V$ for every $k \in K$, using e.g. \cite[Theorem 2.4]{montgomery}.  Then $\mathcal{F}$ is uniformly discrete in $H$ with respect to $W$.
Indeed, for $\Gamma \in \mathcal{F}$ and an element $h = gk$ with $h \in H, g \in G$ and $ k \in K$ we have the equation
$ \Gamma^h = (\Gamma^g)^k$.  
%This completes the verification needed in order to apply Theorem \ref{thm:wang-finiteness-intro}.
\end{proof}

\subsection{Linear groups over local fields}
\label{sub:groups over local fields}

Corollary \ref{cor:wangs finiteness for cocompact subgroup of cat0 isometries} immediately implies a Wang-type finiteness result for uniform lattices in simple Chevalley groups over non-Archimedean local fields. Such groups act on the associated Bruhat--Tits building $X$ which is a $\CAT$ space; see e.g. \cite[II.10.A]{BrHa}.

\begin{cor}
\label{cor:wang for simple groups over non-Archimedean field}
Let $G$ be a $k$-simple algebraic $k$-group with  $\mathrm{rank}_k(G) > 0$ where $k$ is  a non-Archimedean local field. Let $X$ be the Bruhat--Tits building associated to $G$. 
Fix an identity neighborhood $U \subset G$ and some $v > 0$. Then there are only finitely many $\Is{X}$-conjugacy classes of lattices $\Gamma$ in $G$ with $\text{co-vol}(\Gamma) < v$ and $\Gamma^g \cap U = \{e\}$ for every $g \in G$.
\end{cor}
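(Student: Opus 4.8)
The plan is to recognize the Bruhat--Tits building $X$ of $G$ as a space to which Corollary \ref{cor:wangs finiteness for cocompact subgroup of cat0 isometries} applies, and to identify the prescribed family of lattices as a uniformly discrete one. First I would recall the standard properties of $X$: for a $k$-simple group $G$ with $\mathrm{rank}_k(G) > 0$ over a non-Archimedean local field, the associated building $X$ is a locally finite, thick, irreducible Euclidean building, and as such it is a proper $\CAT$ space on which $G$ acts properly and cocompactly by isometries; see \cite[II.10.A]{BrHa}. Thickness guarantees that every geodesic segment extends to a bi-infinite line, so $X$ is geodesically complete, while irreducibility together with positive $k$-rank guarantees that $X$ has no Euclidean de Rham factor. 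Consequently $\Is{X}$ is locally compact and acts cocompactly on $X$, and the image of $G$ in $\Is{X}$ is a closed cocompact subgroup; the kernel of the action is the finite center of $G$, which is harmless for the count below since one may pass to the image and lattices, covolumes and $\Is{X}$-conjugacy are preserved up to a bounded factor. Thus $X$ and $G \le \Is{X}$ are exactly of the type required by Corollary \ref{cor:wangs finiteness for cocompact subgroup of cat0 isometries}.

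Next I would observe that the hypothesis concerning hyperbolic plane factors is vacuous here. Indeed, the de Rham factors of a Euclidean building are again Euclidean buildings, whose full isometry groups are totally disconnected, whereas $\Is{\HH^2}$ is an almost-connected Lie group; hence no de Rham factor of $X$ is isometric to $\HH^2$, and the requirement that every $\Gamma$ project non-discretely to the isometry group of each hyperbolic plane factor holds trivially.

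Finally I would identify the relevant family as $\mathcal{F} = \{\Gamma \le G \text{ a lattice} : \Gamma^g \cap U = \{e\}\ \forall g \in G\}$. By Definition \ref{def:uniformly discrete} this is precisely a uniformly discrete family with respect to the fixed identity neighborhood $U$, and in particular every such $\Gamma$ is uniform. Applying Corollary \ref{cor:wangs finiteness for cocompact subgroup of cat0 isometries} to $\mathcal{F}$ and the given $v > 0$ produces only finitely many $\Is{X}$-conjugacy classes of lattices in $\mathcal{F}$ of covolume at most $v$, which is exactly the assertion.

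The main obstacle is not a single difficult step but the careful verification of the geometric hypotheses on $X$ --- geodesic completeness and, above all, the absence of Euclidean factors --- together with the bookkeeping needed to pass between lattices in $G$ and their images in $\Is{X}$ when $G$ has a nontrivial finite center. Everything else is a direct translation of the fixed-$U$ condition into the language of uniform discreteness followed by an invocation of the already-established finiteness result.
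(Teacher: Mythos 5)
Your proposal is correct and follows exactly the paper's route: the paper derives this corollary in one line from Corollary \ref{cor:wangs finiteness for cocompact subgroup of cat0 isometries}, noting only that $G$ acts on its Bruhat--Tits building, which is a proper $\CAT$ space. Your verifications of the hypotheses (geodesic completeness via thickness, absence of Euclidean and $\HH^2$ factors via irreducibility and total disconnectedness of $\Is{X}$, the fixed-$U$ condition giving uniform discreteness, and the bookkeeping for the finite center) are precisely the details the paper leaves implicit.
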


%In particular, Theorem \ref{thm:wang finiteness in char zero-intro} of the introduction follows as an immediate corollary of the above in the case of zero characteristic.

%Another application of our finiteness theorem has to do with $S$-arithmetic groups.
%
%\begin{cor}
%\label{cor:wang for $S$-arithmetic}
%Let $\GG$ be a connected semi-simple $\QQ$-anisotropic group and $S$ a finite set of rational places. 
%Denote
%$ G = \prod_{s\in S} \GG(s) $ and suppose that $G$ is not locally isomorphic to $\text{SL}_2(\RR)$.
%Fix an identity neighborhood $U \subset G$. For every $v > 0 $ there are finitely many $\Is{X}$-conjugacy classes of irreducible lattices $\Gamma$ in $G$ with $\text{co-vol}(\Gamma) < v$ and $\Gamma^g \cap U = \{e\}$ for every $g \in G$.
%\end{cor}

%The space $X$ that appears in the above statement is a product of the symmetric space of $\GG(\RR)$ and the Bruhat--Tits buildings corresponding to the non-Archimedean valuations in $S$.

\begin{remark}
\label{remark:about Tits theorem}
%Let $G$ be a simple Chevalley group over a non-Archimedean local field and $X$ the associated Bruhat--Tits building. 
If $\mathrm{rank}_k(G) \ge 2$ then  $\Is{X} = \mathrm{Aut}(G)$ and the statement of Corollary \ref{cor:wang for simple groups over non-Archimedean field} can be somewhat simplified. See \cite[Proposition C.1]{bader2016linearity}.
\end{remark}

\section{Invariant random subgroups}
\label{sec:invariant random subgroups}

Let $G$ be a locally compact group. It is acting on its space of closed subgroups $\Sub{G}$  by conjugation and this action is continuous. 

\begin{definition}
	\label{def:invariant random subgroup}
	An \textbf{invariant random subgroup} of $G$ is a $G$-invariant Borel probability measure on $\Sub{G}$. Let $\IRS{G}$ denote the space of all invariant random subgroups of $G$ equipped with the weak-$*$ topology.
\end{definition}

The space $\IRS{G}$ is compact. For a further discussion of invariant random subgroups the reader is referred to \cite{7S, gelander2015lecture}.

Let $\mathrm{ULat}(G)$ denote the space of all the uniform lattices in $G$ with the induced topology  from $\Sub{G}$. There is a natural map
$$ 
\mathrm{I} : \mathrm{ULat}(G) \to \IRS{G}, \quad \mathrm{I}(\Gamma) = \mu_{\Gamma},
$$ 
where the  invariant random subgroup $\mu_\Gamma \in \IRS{G}$ is obtained by pushing forward the $G$-invariant probability measure from $G/\Gamma$ to $\Sub{G}$ via the map $g\Gamma \mapsto g\Gamma g^{-1}$.

%In this section we establish the following result.

%An additional topic considered in Subsections \ref{sub:the covolume is continuous} and \ref{sub:invariant random subgroups} is the study of invariant random subgroups associated to uniform lattices; see Proposition  \ref{prop:representation space and irs}. Along the way we show that the co-volume of a uniform lattice depends continuously on small deformations.

\begin{prop}
	\label{prop:Ulat space and irs}
If $G$ is compactly generated then the map $\mathrm{I} : \mathrm{ULat}(G) \to \mathrm{IRS}(G)$ is continuous.
\end{prop}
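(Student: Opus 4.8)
The plan is to verify weak-$*$ continuity directly: since the weak-$*$ topology on $\IRS{G}$ is the initial topology induced by the functionals $\mu \mapsto \int_{\Sub{G}} F\, d\mu$ for $F \in C(\Sub{G})$, it suffices to show that $\Gamma \mapsto \int_{\Sub{G}} F\, d\mu_\Gamma$ is continuous on $\mathrm{ULat}(G)$ for each such $F$. The first step is to transport this integral from the varying space $G/\Gamma$ to the fixed group $G$ by a partition-of-unity (Weil unfolding) device. Fix a continuous compactly supported $\phi \ge 0$ on $G$ and set $S_\Gamma(g) = \sum_{\gamma \in \Gamma} \phi(g\gamma)$, a continuous right-$\Gamma$-invariant function. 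By Remark \ref{rem:uniformly cobounded} there is a Chabauty neighborhood $\Omega$ of the lattice $\Gamma_0$ at which we test continuity and a single compact $K$ with $\Gamma K = G$ for all $\Gamma \in \Omega$; choosing $\phi$ bounded below by a positive constant on a suitable translate of $K$ makes $S_\Gamma \ge \varepsilon > 0$ uniformly over $\Omega$. Writing $f_\Gamma = \phi / S_\Gamma$, the periodization $\sum_\gamma f_\Gamma(g\gamma)$ is identically $1$, and applying Weil's formula to the right-$\Gamma$-invariant function $g \mapsto F(g\Gamma g^{-1})$ (well defined on $G/\Gamma$ because $\gamma\Gamma\gamma^{-1} = \Gamma$) yields, with $v(\Gamma)$ denoting co-volume,
$$ \int_{\Sub{G}} F\, d\mu_\Gamma = \frac{1}{v(\Gamma)} \int_G f_\Gamma(g)\, F(g\Gamma g^{-1})\, dg, \qquad v(\Gamma) = \int_G f_\Gamma(g)\, dg. $$

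Now fix a net $\Gamma_\alpha \to \Gamma_0$ in $\Omega$ and analyse the right-hand side in two independent pieces. For the factor $F(g\Gamma g^{-1})$: the conjugation action $\Sub{G}\times G \to \Sub{G}$ is continuous (see the opening of \S\ref{sec:invariant random subgroups}), so $(g,H)\mapsto F(gHg^{-1})$ is continuous, and compactness of $\mathrm{supp}\,\phi$ upgrades this to $\sup_{g\in\mathrm{supp}\,\phi}\lvert F(g\Gamma_\alpha g^{-1}) - F(g\Gamma_0 g^{-1})\rvert \to 0$. For the densities: the key observation is that $S_\Gamma(g)$ is lower semicontinuous in $\Gamma$. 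Indeed, for each of the finitely many $\gamma_0 \in \Gamma_0 \cap g^{-1}\mathrm{supp}\,\phi$, Chabauty convergence supplies $\gamma_\alpha \in \Gamma_\alpha$ with $\gamma_\alpha \to \gamma_0$; these are eventually distinct and $\phi \ge 0$, so $\liminf_\alpha S_{\Gamma_\alpha}(g) \ge S_{\Gamma_0}(g)$, whence $\limsup_\alpha f_{\Gamma_\alpha}(g) \le f_{\Gamma_0}(g)$ pointwise.

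The crux is to promote this one-sided pointwise bound to genuine $L^1$-convergence $f_{\Gamma_\alpha} \to f_{\Gamma_0}$, and this is exactly where continuity of co-volume (Proposition \ref{prop:covolume is continous}) is used. We have $\int_G f_{\Gamma_\alpha} = v(\Gamma_\alpha) \to v(\Gamma_0) = \int_G f_{\Gamma_0}$, together with $\limsup_\alpha f_{\Gamma_\alpha} \le f_{\Gamma_0}$ pointwise and the uniform domination $0 \le f_{\Gamma_\alpha} \le \lVert \phi\rVert_\infty/\varepsilon$ on the fixed compact set $\mathrm{supp}\,\phi$. Dominated convergence then forces the positive parts $(f_{\Gamma_\alpha}-f_{\Gamma_0})^+ \to 0$ in $L^1$, and equality of the integrals forces the negative parts to vanish in $L^1$ as well, giving $\lVert f_{\Gamma_\alpha}-f_{\Gamma_0}\rVert_1 \to 0$. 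Conceptually, co-volume continuity here does precisely the work that a uniform-discreteness hypothesis would otherwise require: it rules out the sole obstruction, namely elements of $\Gamma_\alpha$ accumulating near the identity and inflating $S_{\Gamma_\alpha}$. Splitting $\int f_{\Gamma_\alpha} F(g\Gamma_\alpha g^{-1}) - \int f_{\Gamma_0} F(g\Gamma_0 g^{-1})$ into the two pieces above (the first controlled by the uniform sup-bound times $\lVert f_{\Gamma_\alpha}\rVert_1$, the second by $\lVert F\rVert_\infty \lVert f_{\Gamma_\alpha}-f_{\Gamma_0}\rVert_1$) and dividing by $v(\Gamma_\alpha) \to v(\Gamma_0) > 0$ yields $\int F\, d\mu_{\Gamma_\alpha} \to \int F\, d\mu_{\Gamma_0}$.

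The main obstacle is this $L^1$-upgrade, whose resolution via co-volume continuity is the whole point of the argument. The one technical caveat is that dominated convergence is a sequential statement while the Chabauty topology need not be metrizable; this is harmless in all cases of interest, since $\Is{X}$ for a proper $\CAT$ space and the other relevant groups are second countable, so $\Sub{G}$ is compact metrizable and continuity may be tested along sequences. I would therefore carry out the argument assuming $G$ second countable (and note the general case is not needed for the applications), rather than attempt a net version of the convergence theorems.
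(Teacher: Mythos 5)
Your proposal is correct, and it takes a genuinely different route from the paper's proof. The paper works with fundamental domains: it exhausts a Borel fundamental domain $\mathcal{D}$ of the limit lattice $\Gamma_0$ by compact sets $K_l$ with $\mu(\mathcal{D}\setminus K_l)$ small, shows that $K_l$ injects into $G/\Gamma_\alpha$ and may be placed inside a fundamental domain $\mathcal{D}_\alpha$ for $\alpha$ large, invokes continuity of the co-volume (Proposition \ref{prop:covolume is continous}) to make $\mu(\mathcal{D}_\alpha\setminus K_l)$ small, and then pushes $\mu_{|K_l}$ forward under the maps $k\mapsto k\Gamma_\alpha k^{-1}$, which converge uniformly on the compact set $K_l$. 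You replace the (discontinuous) indicators of fundamental domains by the continuous Weil partition of unity $f_\Gamma=\phi/S_\Gamma$, and you replace the geometric step ``fundamental domains eventually differ by small measure'' with the analytic step ``one-sided pointwise semicontinuity plus convergence of total masses forces $L^1$-convergence of the densities $f_{\Gamma_\alpha}\to f_{\Gamma_0}$''. Both proofs consume Proposition \ref{prop:covolume is continous} at exactly the same conceptual point --- ruling out mass piling up or escaping in the limit --- and both use the same compactness argument to control the conjugation map on a fixed compact set. What your version buys is that every object in sight is a continuous function on $G$, so no measurable selection of fundamental domains and no Raghunathan-style injectivity argument for $K_l$ in $G/\Gamma_\alpha$ is needed; what the paper's version buys is that its final limiting step is uniform convergence on a compact set, which is valid for nets without further comment.

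Concerning your metrizability caveat: it is a real issue for the argument as you wrote it, but it can be removed rather than sidestepped by assuming second countability, so the full generality of the proposition is recoverable. The function $(g,H)\mapsto S_H(g)$ is jointly lower semicontinuous on $G\times\Sub{G}$, so the same tube-lemma compactness argument you already apply to $F$ upgrades your pointwise bound to a uniform one: for every $\delta>0$ there is a Chabauty neighborhood $W$ of $\Gamma_0$ with $S_{\Gamma}\ge S_{\Gamma_0}-\delta$ on $\mathrm{supp}\,\phi$ for all $\Gamma\in W$, whence $(f_{\Gamma}-f_{\Gamma_0})^+\le \norm{\phi}_\infty\,\delta/\varepsilon^2$ there. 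Integrating over the fixed compact support gives $\int_G (f_{\Gamma_\alpha}-f_{\Gamma_0})^+\dd\mu\to 0$ along any net, with no appeal to dominated convergence; the remainder of your argument (equality of total masses killing the negative part, and the final splitting) is already net-safe, since Proposition \ref{prop:covolume is continous} is a continuity statement and hence applies to nets. Two cosmetic points: the compact set on which $\phi$ should be bounded below is $K^{-1}$ rather than a translate of $K$ (from $\Gamma K=G$ one gets $g\Gamma\cap K^{-1}\neq\emptyset$ for every $g\in G$), and one should note that $S_\Gamma$ is continuous in $g$ because the defining sum is locally finite, so that $f_\Gamma$ is indeed continuous and compactly supported.
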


The proof of this  proposition depends on showing that the covolume function on the space $\mathrm{ULat}(G)$ is continuous, see Proposition \ref{prop:covolume is continous} below.

Recall that a uniform lattice $\Gamma$ in $G$ is  topologically locally rigid. Therefore  Proposition \ref{prop:C is continuous at a uniform lattice}  implies that there exists a neighborhood  $\Omega$ of the inclusion homomorphism $r_0$ in $ \mathcal{R}(\Gamma,G)$ on which is image closure map $C$ is continuous and takes values in $\mathrm{ULat}(G)$. 
%We obtain  the following corollary of Proposition \ref{prop:Ulat space and irs}.

\begin{cor}
	\label{cor:representation space and irs}
Let $\Gamma$ be a uniform lattice in  $G$.  If $G$ is compactly generated then there is a neighborhood $\Omega \subset \mathcal{R}(\Gamma,G)$ of the inclusion homomorphism $r_0 $ so that the composition 
	$$ \Omega \xrightarrow{\mathrm{C}} \mathrm{ULat}(G) \xrightarrow{	\mathrm{I}}  \mathrm{IRS}(G)$$ is continuous. 
\end{cor}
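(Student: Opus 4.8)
The plan is to exhibit the composition $\mathrm{I} \circ \mathrm{C}$ as a composite of two maps each of which is continuous on a suitable neighborhood, and then invoke the fact that a composite of continuous maps is continuous. The two inputs are the continuity of the image-closure map $\mathrm{C}$ near $r_0$ (as discussed in the paragraph preceding the statement, building on Proposition~\ref{prop:C is continuous at a uniform lattice}) and the continuity of $\mathrm{I}$ furnished by Proposition~\ref{prop:Ulat space and irs}.

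First I would fix, using topological local rigidity (Theorem~\ref{thm:local rigidity for lc groups}), a neighborhood $\Omega$ of $r_0$ in $\mathcal{R}(\Gamma,G)$ such that for every $r \in \Omega$ the homomorphism $r$ is injective and $r(\Gamma) \le G$ is a uniform lattice. Since a uniform lattice is discrete, hence closed, we have $\mathrm{C}(r) = \overline{r(\Gamma)} = r(\Gamma)$, so $\mathrm{C}$ carries $\Omega$ into $\mathrm{ULat}(G)$. The substantive point is that $\mathrm{C}$ is continuous at \emph{every} point $r \in \Omega$, and not merely at $r_0$. Here is the mechanism: for $r \in \Omega$ the map $r$ is an isomorphism of the abstract group $\Gamma$ onto the uniform lattice $\Delta = r(\Gamma)$, and precomposition with $r$ yields a homeomorphism $r^{*} : \mathcal{R}(\Delta,G) \to \mathcal{R}(\Gamma,G)$, $s \mapsto s \circ r$. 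This homeomorphism carries the inclusion $\Delta \hookrightarrow G$ to the point $r$, and it intertwines the two image-closure maps, in the sense that $\mathrm{C}(s \circ r) = \overline{s(\Delta)}$. Applying Proposition~\ref{prop:C is continuous at a uniform lattice} to the uniform lattice $\Delta$ in the compactly generated group $G$ shows that the image-closure map of $\Delta$ is continuous at the inclusion point; transporting this statement along the homeomorphism $r^{*}$ yields continuity of $\mathrm{C}$ at $r$. Since this holds for each $r \in \Omega$, the restriction $\mathrm{C}|_{\Omega} : \Omega \to \mathrm{ULat}(G)$ is continuous.

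Finally I would apply Proposition~\ref{prop:Ulat space and irs}: as $G$ is compactly generated, the map $\mathrm{I} : \mathrm{ULat}(G) \to \mathrm{IRS}(G)$ is continuous. The composition $\mathrm{I} \circ \mathrm{C}|_{\Omega} : \Omega \to \mathrm{IRS}(G)$ of the two continuous maps is therefore continuous, which is exactly the assertion of the corollary.

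I expect the only genuine obstacle to be the upgrade from pointwise continuity of $\mathrm{C}$ at $r_0$ to continuity on the whole neighborhood $\Omega$; everything else is a formal composition of continuous maps. This obstacle is dissolved by the isomorphism argument above, which reduces continuity of $\mathrm{C}$ at an arbitrary $r \in \Omega$ to the already-established continuity of the image-closure map for the lattice $r(\Gamma)$ at its own inclusion point. I note that this is precisely the content implicit in the sentence preceding the corollary, so in a streamlined write-up one may simply cite that discussion and reduce the proof to the single line that $\mathrm{I} \circ \mathrm{C}|_{\Omega}$ is a composite of continuous maps.
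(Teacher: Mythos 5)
Your proposal is correct and follows essentially the same route as the paper: the paper's (terse) justification is exactly that topological local rigidity together with Proposition \ref{prop:C is continuous at a uniform lattice} yields a neighborhood $\Omega$ on which $\mathrm{C}$ is continuous with values in $\mathrm{ULat}(G)$, after which one composes with the continuous map $\mathrm{I}$ from Proposition \ref{prop:Ulat space and irs}. Your transport argument via the homeomorphism $r^{*}:\mathcal{R}(r(\Gamma),G)\to\mathcal{R}(\Gamma,G)$ is precisely the detail the paper leaves implicit in upgrading continuity at $r_0$ to continuity on all of $\Omega$, and it is carried out correctly.
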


Note that  local rigidity of $\Gamma$ is equivalent to $\mathrm{I} \circ \mathrm{C}$ being locally constant at $r_0$.

\subsection{Co-volumes of lattices}
\label{sub:the covolume is continuous}

%Let $G$ be a compactly generated group, $\Gamma \le G$ a uniform lattice and $r_0 \in  \mathcal{R}(\Gamma,G)$ the inclusion. Fix a neighborhood $\Omega$ of $r_0$ in $\mathcal{R}(\Gamma,G)$ so that every $r\in \Omega$ is injective and $r(\Gamma)$ is  discrete and cocompact. Such  $\Omega$ exists by Theorem \ref{thm:local rigidity for lc groups}. 

Let $G$ be any locally compact  group with a choice $\mu$ of a Haar measure. The co-volume function is well-defined on the space $\mathrm{ULat}(G)$ of uniform lattices in $G$.
%
%Let $\Gamma$ be a uniform lattice in the compactly generated group  $G$ and $\Omega$ be as considered in the paragraph preceding Proposition \ref{prop:representation space and irs}. The co-volume of the deformation $r(\Gamma)$ in $G$ is well defined\footnote{The covolume of a lattice is only well-defined up to a choice of normazliation for the Haar measure $\mu$ on $G$. To resolve this ambiguity we may assume without loss of generality that $\textrm{co-vol}(G/\Gamma) = 1$.} for $r\in \Omega$. 

\begin{prop}
	\label{prop:covolume is continous}
If $G$ is compactly generated then the co-volume function is continuous on $\mathrm{ULat}(G)$.
\end{prop}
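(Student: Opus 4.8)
The plan is to prove continuity at an arbitrary point $\Gamma_0 \in \mathrm{ULat}(G)$ by expressing the co-volume through Weil's unfolding formula and then applying dominated convergence along a net $\Gamma_\alpha \to \Gamma_0$ in the Chabauty topology. By Proposition~\ref{prop:condition for a uniform lattice to admit a Chabauty neighborhood of cocompact subgroups}, and more precisely Remark~\ref{rem:uniformly cobounded}, there is a Chabauty neighborhood $\Omega$ of $\Gamma_0$ and a compact set $K \subset G$ with $HK = G$ for every $H \in \Omega$. First I would fix $\phi \in C_c(G)$ with $\phi \ge 0$ and $\phi > 0$ on a relatively compact open set containing $K$, and set $\Phi_H(g) = \sum_{h \in H} \phi(hg)$ for a closed subgroup $H$. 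Since $HK = G$, for every $g$ some $h \in H$ satisfies $hg \in K$, and as $\phi$ is continuous and strictly positive on the compact set $K$ it is bounded below there by some $\delta > 0$; hence $\Phi_H(g) \ge \delta$ for all $g \in G$ and all $H \in \Omega$. In particular $\theta_H := \phi/\Phi_H$ is a well-defined Borel function satisfying $\sum_{h \in H} \theta_H(hg) = 1$, using the left $H$-invariance of $\Phi_H$, so unfolding this partition of unity gives
\[ \mathrm{covol}(H) = \int_{H\backslash G} \sum_{h \in H}\theta_H(hg)\,\mathrm{d}\bar\mu(Hg) = \int_G \frac{\phi(g)}{\Phi_H(g)}\,\mathrm{d}\mu(g). \]

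With this formula in hand, continuity becomes an application of the dominated convergence theorem. The integrand is supported in the fixed compact set $\mathrm{supp}(\phi)$ and is dominated there by $\phi/\delta \in L^1(G)$, uniformly over $H \in \Omega$. It therefore suffices to show that $\Phi_{\Gamma_\alpha}(g) \to \Phi_{\Gamma_0}(g)$ for each $g$, for then $\phi/\Phi_{\Gamma_\alpha} \to \phi/\Phi_{\Gamma_0}$ pointwise and the integrals converge. For a fixed $g$ the sum defining $\Phi_H(g)$ ranges only over the finite set $H \cap (\mathrm{supp}(\phi)\,g^{-1})$, a compact region, so this is a purely local statement about how the points of $\Gamma_\alpha$ inside a fixed compact set approximate those of $\Gamma_0$.

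The main obstacle is exactly this last pointwise convergence, and it is more delicate than the mere upper semicontinuity of co-volume: Chabauty convergence guarantees that each point of $\Gamma_0$ in $\mathrm{supp}(\phi)g^{-1}$ is approached by points of $\Gamma_\alpha$ and that no $\Gamma_\alpha$-point strays far from $\Gamma_0$, but a priori several distinct points of $\Gamma_\alpha$ could collapse onto a single point of $\Gamma_0$, inflating $\Phi_{\Gamma_\alpha}$ and breaking continuity. Ruling this out amounts to producing a Chabauty neighborhood of $\Gamma_0$ inside $\mathrm{ULat}(G)$ that is jointly discrete, i.e. a fixed identity neighborhood meeting every nearby lattice only at $e$; this is precisely the content of Proposition~\ref{prop:uniform lattice has neighborhood of uniform lattices}, and it is the genuine input into the argument. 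Granting such a neighborhood, for each $g$ and all large $\alpha$ the points of $\Gamma_\alpha$ in the compact region $\mathrm{supp}(\phi)g^{-1}$ are in bijection with those of $\Gamma_0$, each converging to its partner, so continuity of $\phi$ yields $\Phi_{\Gamma_\alpha}(g) \to \Phi_{\Gamma_0}(g)$ and dominated convergence completes the proof. I would additionally choose $\phi$ so that negligibly many points of the relevant orbits lie on $\partial\,\mathrm{supp}(\phi)$, a harmless adjustment since $\phi$ is continuous.
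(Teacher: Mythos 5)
Your route is genuinely different from the paper's, and analytically cleaner. The paper's proof goes through Lemma~\ref{lem:volume function of locally constant}: it works with the indicator--function version of your formula, namely the counting function $n_Q(x,\Gamma)=\left|\{\gamma\in\Gamma : x\gamma\in Q\}\right|$ and the identity $v_Q(\Gamma)=\int_Q n_Q(x,\Gamma)^{-1}\,\mathrm{d}\mu(x)$, and then proves lower and upper semicontinuity separately, the upper bound requiring an $\varepsilon$-argument with a slightly larger compact set $Q\supset K$. Your continuous partition of unity $\theta_H=\phi/\Phi_H$ replaces this two-step scheme by a single application of dominated convergence and dispenses with the boundary bookkeeping. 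Both arguments use Remark~\ref{rem:uniformly cobounded} in the same way: you to get the uniform lower bound $\Phi_H\ge\delta$ on a Chabauty neighborhood, the paper to identify $v_K$ with the co-volume there.

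There is, however, a genuine mismatch between your proof and the statement. The pointwise convergence $\Phi_{\Gamma_\alpha}(g)\to\Phi_{\Gamma_0}(g)$, which you correctly single out as the crux, is obtained in your argument from Proposition~\ref{prop:uniform lattice has neighborhood of uniform lattices}, and that proposition assumes that $G$ has \emph{no non-trivial compact normal subgroups} --- an assumption absent from Proposition~\ref{prop:covolume is continous}. As written, your proposal therefore proves the proposition only under that extra hypothesis. What makes this worth dwelling on is that your diagnosis of the difficulty is exactly right, and the gap is not one you could have closed: the paper's own proof buries the same issue in the first of the two ``facts'' asserted in the proof of Lemma~\ref{lem:volume function of locally constant}, namely $\lim_{\Gamma\to\Gamma_0}n_K(x,\Gamma)\le n_K(x,\Gamma_0)$, which is claimed to be a consequence of the definition of the Chabauty topology but is precisely the no-collapsing statement you flagged, and it is \emph{not} automatic. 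Concretely, let $G=(\ZZ/p\ZZ)^{\NN}\times\RR$, a compactly generated group, and let $\xi_n$ denote the $n$-th coordinate generator of the compact factor, so that $\xi_n\to e$. The subgroups $\Gamma_n=\langle\xi_n\rangle\times\ZZ$ are uniform lattices converging in the Chabauty topology to the uniform lattice $\Gamma_0=\{0\}\times\ZZ$ (every point of $\Gamma_0$ lies in each $\Gamma_n$, and a compact set disjoint from $\Gamma_0$ eventually misses $\Gamma_n$ since $j\xi_n\to e$ uniformly in $0\le j<p$); yet $\mathrm{covol}(\Gamma_n)=\tfrac{1}{p}\,\mathrm{covol}(\Gamma_0)$, because the $p$ points $j\xi_n$ collapse onto the identity.

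So the proposition, in the stated generality, needs an additional hypothesis guaranteeing a jointly discrete (Definition~\ref{def:jointly discrete}) Chabauty neighborhood of $\Gamma_0$ --- for instance the absence of non-trivial compact normal subgroups, which is exactly what Proposition~\ref{prop:uniform lattice has neighborhood of uniform lattices} converts into joint discreteness. Granting such a hypothesis, your argument is complete: the unfolding formula, the uniform domination by $\phi/\delta$, and the bijective matching of lattice points in a fixed compact region are all correct, and your proof justifies the one delicate step that the paper's proof asserts without justification.
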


The proof of Proposition \ref{prop:covolume is continous} will rely on the following lemma, which is related to Serre's geometric volume formula \cite[Chapter 1.5]{bass2001tree}. 

%Note that Proposition \ref{prop:covolume is continous} and Lemma \ref{lem:volume function of locally constant} apply equally well to get  continuity at any other $r \in \Omega$.

\begin{lemma}
	\label{lem:volume function of locally constant}
	Let $K \subset G$ be a fixed compact set. Given a uniform lattice $\Gamma \in \mathrm{ULat}(G)$ let 
	$ \pi_\Gamma : G \to G/\Gamma $ denote the projection and  
	let $\mu_\Gamma$  denote  the induced  measure on $G/\Gamma$. Then the following function
	$$v_K : \mathrm{ULat}(G) \to \RR_{\ge 0}, \quad v_K(\Gamma) = \mu_\Gamma(\pi_\Gamma(K)). $$
is continuous. 
\end{lemma}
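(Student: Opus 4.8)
The plan is to reduce $v_K(\Gamma)$ to a finite alternating sum of Haar measures of intersections of translates of $K$, and then to verify continuity term by term.

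First I would record the defining normalization of $\mu_\Gamma$: for every nonnegative Borel $F$ on $G$ one has the unfolding identity $\int_{G/\Gamma}\sum_{\gamma\in\Gamma}F(g\gamma)\,d\mu_\Gamma(g\Gamma)=\int_G F\,d\mu$. Writing $m_\Gamma(g)=\#(g\Gamma\cap K)=\sum_{\gamma\in\Gamma}\mathbf{1}_K(g\gamma)$, a $\Gamma$-invariant bounded function (bounded by proper discontinuity of the discrete $\Gamma$), the set $\pi_\Gamma(K)$ has indicator $\mathbf{1}[m_\Gamma\ge 1]$, so $v_K(\Gamma)=\int_{G/\Gamma}\mathbf{1}[m_\Gamma\ge 1]\,d\mu_\Gamma$. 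Expanding $\mathbf{1}[m\ge 1]=\sum_{k\ge 1}(-1)^{k+1}\binom{m}{k}$, writing $\binom{m_\Gamma(g)}{k}$ as $\tfrac1{k!}$ times the number of ordered $k$-tuples of distinct $\gamma$ with $g\gamma\in K$, reindexing each tuple by its first entry and unfolding that entry to $G$, I obtain the exact formula
\[ v_K(\Gamma)=\mu(K)+\sum_{k\ge 2}\frac{(-1)^{k+1}}{k!}\sum_{(\epsilon_2,\dots,\epsilon_k)}\mu\Big(K\cap K\epsilon_2^{-1}\cap\cdots\cap K\epsilon_k^{-1}\Big), \]
where the inner sum runs over ordered tuples of distinct elements of the finite set $\Sigma_\Gamma:=(\Gamma\cap K^{-1}K)\setminus\{e\}$ (a translate $K\epsilon^{-1}$ meets $K$ only if $\epsilon\in K^{-1}K$). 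The whole expression is finite because $\Sigma_\Gamma$ is finite, and the interchange of summation and integration is justified by boundedness of $m_\Gamma$.

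Next I would establish continuity of each individual term in the translating elements. Each summand equals $\int_G\mathbf{1}_K(h)\prod_{i=2}^k\mathbf{1}_K(h\epsilon_i)\,d\mu(h)$. Since $\mathbf{1}_K\in L^2(G)$ and translation acts continuously on $L^2(G)$, each factor $\mathbf{1}_K(\,\cdot\,\epsilon_i)$ varies continuously in $L^2$, hence in measure; as the integrand is bounded by $1$ and supported in a fixed compact set, dominated convergence shows the term is a continuous function of $(\epsilon_2,\dots,\epsilon_k)\in G^{k-1}$. Crucially this requires no regularity hypothesis such as $\mu(\partial K)=0$. It then remains to control $\Sigma_\Gamma$ along a Chabauty-convergent net $\Gamma_\alpha\to\Gamma$. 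I would fix a compact $C$ with $K^{-1}K\subset\operatorname{int} C$ and $\Gamma\cap\partial C=\emptyset$, so that $\Gamma\cap C=\{e\}\cup\Sigma_\Gamma$ lies in $\operatorname{int} C$; using the sub-basic sets $\mathcal{O}_1$ I force an element of $\Gamma_\alpha$ near each point of $\Sigma_\Gamma$, and using $\mathcal{O}_2$ on $C$ with small neighborhoods of these points removed I forbid $\Gamma_\alpha$-elements elsewhere in $C$. Granting that near each $\delta\in\Sigma_\Gamma$ there is exactly one element of $\Gamma_\alpha$, the formula above has an eventually constant number of terms whose translating elements converge, so $v_K(\Gamma_\alpha)\to v_K(\Gamma)$ by the previous step.

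The main obstacle is exactly this multiplicity-one claim: ruling out two distinct elements $\gamma'_\alpha,\gamma''_\alpha\in\Gamma_\alpha$ collapsing onto a single $\delta\in\Sigma_\Gamma$, equivalently a nontrivial $t_\alpha=\gamma_\alpha'^{-1}\gamma''_\alpha\in\Gamma_\alpha$ with $t_\alpha\to e$. Such collapsing would enlarge $\Sigma_{\Gamma_\alpha}$ by an element tending to $e$ and, through the $k=2$ term $-\tfrac12\mu(K\cap Kt_\alpha^{-1})\to-\tfrac12\mu(K)$, genuinely break continuity of $v_K$ (and equally of the covolume). This is precisely a local uniform discreteness statement for the nearby lattices, which must be extracted from discreteness of the limit $\Gamma$ together with the structure of $G$. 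Concretely I would invoke Proposition \ref{prop:uniform lattice has neighborhood of uniform lattices}, which provides a Chabauty neighborhood of $\Gamma$ and a fixed identity neighborhood $U$ with $\Delta\cap U=\{e\}$ for every lattice $\Delta$ in it, thereby supplying the uniform injectivity radius that forbids the collapse. Everything else in the argument is soft: finiteness of $\Sigma_\Gamma$, continuity of the regular representation, and the elementary inclusion–exclusion combinatorics.
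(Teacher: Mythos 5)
Your proposal is structurally correct but takes a genuinely different route from the paper, and the one external ingredient you invoke changes the hypotheses. The paper's proof works with the counting function $n_Q(x,\Gamma)=|\{\gamma\in\Gamma : x\gamma\in Q\}|$ and the exact formula $v_Q(\Gamma)=\int_Q\frac{1}{n_Q(x,\Gamma)}\,d\mu(x)$, then deduces continuity from two semicontinuity properties of $n_Q$ along Chabauty-convergent nets, dominated convergence, and an outer approximation $Q\supset K$ that is finally let shrink to $K$. Your finite inclusion--exclusion expansion, with each term continuous in the translating elements by $L^1$/$L^2$-continuity of translation, is an exact substitute that avoids the outer regularization entirely, and you are right that no regularity of $\partial K$ is needed. (One small point you should spell out: the element of $\Gamma_\alpha$ approximating $\delta\in\Sigma_{\Gamma}$ need not itself lie in $K^{-1}K$, so $\Sigma_{\Gamma_\alpha}$ need not biject with $\Sigma_\Gamma$; this is harmless because every term involving some $\epsilon_i\notin K^{-1}K$ vanishes, so both sums may be taken over tuples drawn from $\Gamma_\alpha\cap C$, respectively $\Gamma\cap C$.) Both arguments then pivot on the identical crux: forbidding two distinct elements of $\Gamma_\alpha$ from collapsing onto one element of the limit lattice. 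The paper's ``fact (1)'', namely $\lim_{\Gamma\to\Gamma_0}n_K(x,\Gamma)\le n_K(x,\Gamma_0)$, \emph{is} your multiplicity-one claim, asserted there as a consequence of the definition of the Chabauty topology; you instead recognize that it is not automatic and import it from Proposition \ref{prop:uniform lattice has neighborhood of uniform lattices}.

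That import is the genuine discrepancy: Proposition \ref{prop:uniform lattice has neighborhood of uniform lattices} assumes $G$ is compactly generated with no non-trivial compact normal subgroups, a hypothesis that Lemma \ref{lem:volume function of locally constant} does not carry (and which Proposition \ref{prop:covolume is continous} does not impose either). As written, your argument therefore proves the lemma only in that restricted generality, and you must state the restriction. Your caution is, however, well placed rather than an artifact of your method: without some hypothesis forbidding collapse, the conclusion itself can fail. In the compact group $G=(\ZZ/2\ZZ)^{\NN}$, the order-two subgroups $F_n=\{0,\delta_n\}$, where $\delta_n$ has its only non-zero coordinate in position $n$, are uniform lattices Chabauty-converging to the trivial lattice $\{0\}$, yet with $K=G$ one gets $v_K(F_n)=\mu(G)/2$ for all $n$ while $v_K(\{0\})=\mu(G)$ --- precisely the jump in the $k=2$ term that you describe. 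So your proof is correct where it applies, and the joint-discreteness input you isolated (supplied by Proposition \ref{prop:uniform lattice has neighborhood of uniform lattices}, or by restricting attention to a jointly discrete family as in Definition \ref{def:jointly discrete}) is genuinely needed to justify the semicontinuity step on which both your argument and the paper's rely; it is exactly at this step that the paper's own proof is least detailed.
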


\begin{proof}
	
	Given a compact subset $Q$ of $G$, consider the function
	$$
	n_Q : G \times \mathrm{ULat}(G) \to \NN_{\ge 0 }, \quad n_Q(x,\Gamma) = \left|\{\gamma \in  \Gamma \: : \: x \gamma \in Q \}\right|. 
	$$
	%      As $r(\Gamma)$ is discrete and $K$ is compact, it is easy to see that $\lim_{r\to r_0} n_K(r) \le n_K(r_0)$.
	Note that $n_Q(\cdot, \Gamma)$ is $\Gamma$-invariant from the right, and so descends to a well defined function on $G/\Gamma$.
	
	In terms of the function $n_Q$ we obtain the formula:
	\begin{align*}
	v_Q(\Gamma) &= \mu_\Gamma(\pi_\Gamma(Q)) = 
	\int_{G/ \Gamma} \mathbbm{1}_{\pi_\Gamma(Q)}(x) \mathrm{d} \mu_\Gamma (x) = \\
	&= \int_{G/\Gamma} \frac{1}{n_Q(x,\Gamma)}\left(\sum_{\gamma \in \Gamma } \mathbbm{1}_{Q}(\tilde{x}\gamma)\right)   \mathrm{d} \mu_\Gamma (x) = 
	\int_G \frac{1}{n_Q(x,\Gamma)} \mathbbm{1}_{Q}(x) \mathrm{d} \mu (x) = \\
	&= \int_Q \frac{1}{n_Q(x,\Gamma)} \mathrm{d} \mu (x),
	\end{align*}
	where $\tilde{x}$ is an arbitrary lift of $x$.
%	Here as well as below $\mu$ stands for the Haar measure on $G$.
	
Let $\Gamma_0 \in \mathrm{ULat}(G)$ be some fixed uniform lattice.	The following two facts\footnote{The convergence $\Gamma \to \Gamma_0$ is understood in the sense of nets. If $G$ is metrizable then it is enough to consider sequences.}  are consequences of the definition of the Chabauty topology:
	\begin{enumerate}
		\item  $\lim_{\Gamma\to \Gamma_0}n_K(x,\Gamma)\le n_K(x,\Gamma_0)$, for all $x \in G$.
		\item If $Q$ is a compact set containing $K$ in its interior, then $$ \lim_{\Gamma\to \Gamma_0}n_Q(x,\Gamma)\ge n_K(x,\Gamma_0)$$
		for all $ x \in G$.
		%\item For every $r\in V$, $$v_K(r)=inf \{ v_Q(r):Q~\text{ is a compact set containing $K$ in its interior}\}.$$
	\end{enumerate}  
	
	Since $n_K(x,\Gamma) \ge 1$ for all $x \in K$, it follows from Item $(1)$, the above volume formula and the dominated convergence theorem that   $v_K(\Gamma_0)\le\lim_{\Gamma\to \Gamma_0} v_K(\Gamma)$. 
	
	To obtain the reverse inequality,  let $\varepsilon>0$ and pick a compact set $Q$ containing $K$ in its interior and satisfying $\mu(Q\setminus K ) < \varepsilon$. Note that this implies
	$$
	\int_{Q\setminus K}\frac{1}{n_Q(x,\Gamma)} \mathrm{d} \mu (x)<\epsilon
	$$  
	%$$ \mu_r(\pi_r(Q\setminus K)) \le \mu(Q\setminus K) < \varepsilon $$
%	for all $r \in \Omega$. 
	%In view of item $(1)$ applied to $Q$ and the DCT we have 
	% $$
	% \lim_{r\to r_0}\int_{Q\setminus K} \frac{1}{n_Q(x,r)} \mathrm{d} \mu (x)<\epsilon.
	% $$
	for all $\Gamma \in \mathrm{ULat}(G)$.
	It follows from Item $(2)$ and the volume formula that 
	$$
	v_K(\Gamma_0) =\int_K \frac{1}{n_K(x,\Gamma_0)} \mathrm{d} \mu (x)\ge\lim_{\Gamma\to \Gamma_0}\int_K \frac{1}{n_Q(x,\Gamma)} \mathrm{d} \mu (x).
	$$                     
	Combining these two last inequalities, we get 
	$$
	v_K(\Gamma_0)+\epsilon>\lim_{\Gamma\to \Gamma_0}\int_Q \frac{1}{n_Q(x,\Gamma)} \mathrm{d} \mu (x)=\lim_{\Gamma \to \Gamma_0}v_Q(\Gamma)\ge\lim_{\Gamma\to \Gamma_0}v_K(\Gamma). $$           
	The continuity of the map $v_K$ at the point $\Gamma_0$ follows by letting $\varepsilon \to 0$.	
\end{proof}

\begin{proof}[Proof of Proposition \ref{prop:covolume is continous}]
Let $\Gamma$ be a uniform lattice in $G$. Since $G$ is compactly generated it follows from Proposition \ref{prop:condition for a uniform lattice to admit a Chabauty neighborhood of cocompact subgroups} and  Remark \ref{rem:uniformly cobounded} that there is a Chabauty neighborhood $\Gamma \in \Omega \subset \mathrm{ULat}(G)$ and a compact subset $K \subset G$ so that $\Gamma_0 K = G$ for every $\Gamma_0 \in \Omega$.
Taking into account Lemma \ref{lem:volume function of locally constant} and as soon as $\Gamma' \in \Omega$ we obtain
	$$ \text{co-vol}(G/\Gamma) = v_K(\Gamma) = \lim_{\Gamma' \to \Gamma} v_K(\Gamma') = \lim _{\Gamma' \to \Gamma} \text{co-vol}(G/\Gamma')  $$
as required.
\end{proof}

\begin{remark}
If $(\Gamma_\alpha)_{\alpha}$ is a net of lattices in $G$ converging in the Chabauty topology to the lattice $\Gamma$  then
	$$ \text{co-vol}(G/\Gamma) \le  \liminf _\alpha  \text{co-vol}(G/\Gamma_\alpha).  $$
This does not require  compact generation or co-compactness  \cite[I.1.20]{Rag}. Proposition \ref{prop:covolume is continous} can be regarded as a refinement of this fact.
	\end{remark}
%\subsection{Invariant random subgroups depend continuously on deformations}
%\label{sub:invariant random subgroups}

\begin{proof}[Proof of Proposition \ref{prop:Ulat space and irs}]
Let $G$ be a compactly generated group and  $\Gamma_\alpha \in \mathrm{ULat}(G)$  be a net\footnote{Once more, if $G$ is metrizable we may speak of sequences instead of nets.} of uniform lattices in $G$ indexed by $\alpha \in A$ and converging to $\Gamma \in \mathrm{ULat}(G)$.
	%For every $\alpha \in A$ denote $\Gamma_\alpha = r_\alpha(\Gamma) \le G$.
	
	Let $\mathcal{D}$ be a Borel fundamental domain for $\Gamma$ in $G$.  There  is an increasing sequence of compact subsets $K_m \subset \mathcal{D}$ so that the Haar measure of $\mathcal{D} \setminus K_m$ goes to zero as $ m\to \infty$. Moreover the argument on p. 28 of \cite{Rag} shows  that every $K_m$ injects into  $G/\Gamma_{\alpha}$ for all $\alpha > \beta_1 = \beta_1(m) $ sufficiently large in $A$.  
	
	%        The proof of \cite[Theorem 1.20]{Rag} shows that we can find an open fundamental domain\footnote{More precisely, $\mathcal{D} \subset G$ is an open set that injects into $G/\Gamma$ and $G/\Gamma - \pi(\mathcal{D})$ has measure zero.} $\mathcal{D}$ for $\Gamma$ in $G$. Moreover there  is an increasing sequence of compact subsets $K_m \subset \mathcal{D}$ so that the Haar measure of $\mathcal{D} \setminus K_m$ goes to zero as $ m\to \infty$ and that every $K_m$ injects into  $G/\Gamma_{\alpha}$ for all $\alpha > \beta_1 = \beta_1(m) $ sufficiently large in $A$.  
	%        %By Baire category we may assume that the sets $K_m$ have non-empty interior. 
	%        
	Let $\varepsilon > 0$ be arbitrary and take $l = l(\varepsilon)$ so that $\mu(\mathcal{D}\setminus K_{l}) < \varepsilon$. Let $\mathcal{D}_\alpha$ be a Borel fundamental domain for $\Gamma_\alpha$ in $G$ for every $\alpha \in A$. The subsets $\mathcal{D}_\alpha$ can be chosen in such a way that $K_{l} \subset \mathcal{D}_\alpha$ for all $\alpha > \beta_1(l)$. 
	
	Let $\mu$ be the Haar measure on $G$ and $\mu_{|\mathcal{D}}$ its restriction to $\mathcal{D}$. Then $\mu_{|\mathcal{D}}$ projects to the $G$-invariant probability measure on $G/\Gamma$. Similarly let $\mu_{|\mathcal{D}_\alpha}$ denote the restrictions of $\mu$ to $\mathcal{D}_\alpha$ for $\alpha \in A$.  Write
	$$ \mu_{|\mathcal{D}} = \mu_{|K_{l}} + \mu_{|\mathcal{D} \setminus K_{l}}, \quad \mu_{|\mathcal{D}_\alpha} = \mu_{|K_{l}} + \mu_{|\mathcal{D}_\alpha \setminus K_{l}} $$
	Recall that $\mu(\mathcal{D}\setminus K_{l}) < \varepsilon$ by the choice of $l$.  In addition,  the covolume function is continuous at the point $\Gamma \in \mathrm{ULat}(G)$ according to Proposition \ref{prop:covolume is continous} and so 
	$$\mu(\mathcal{D}_\alpha) = \text{co-vol}(G/\Gamma_\alpha) \xrightarrow{\alpha \in A} \text{co-vol}(G/\Gamma) = \mu(\mathcal{D}). $$ 
	This implies that $\mu(\mathcal{D}_\alpha \setminus K_{l}) < 2\varepsilon$ for all $\alpha > \beta_2 = \beta_2(\varepsilon)$ in $A$.
	
	Consider the maps $\varphi_{\alpha}$ defined for every $\alpha \in A$ 
	$$ \varphi_{\alpha} : K_{l}  \to \Sub{G}, \quad \varphi_{\alpha}(k) = k \Gamma_\alpha k^{-1} .$$
	Since $K_{l}$ is compact, the functions $\varphi_\alpha$ converge uniformly to the function $\varphi : K_{l} \to \Sub{G}$ where $\varphi(k) = k \Gamma k^{-1}$. In particular
	$$ (\varphi_\alpha)_* \mu_{|K_{l}} \xrightarrow{\alpha \in A} (\varphi)_* \mu_{|K_{l}}. $$
	Taking $\varepsilon \to 0$ in the above argument gives the continuity of $\mathrm{I}$ at the point $\Gamma$.
%	The second assertion of the proposition concerning local rigidity is obvious.
\end{proof}

%\addbibresource{localrig.bib}
\bibliographystyle{alpha}
\bibliography{localrig}

\end{document}